\DeclareMathAlphabet{\mathpzc}{OT1}{pzc}{m}{it}
\newtheorem{theorem}{Theorem}[section]
\newtheorem{lemma}[theorem]{Lemma}
\newtheorem{proposition}[theorem]{Proposition}
\theoremstyle{definition}
\newtheorem{definition}[theorem]{Definition}
\newtheorem{remark}[theorem]{Remark}
\numberwithin{equation}{section}
\acrodef{KPZ}{Kardar--Parisi--Zhang}
\acrodef{SHE}{Stochastic Heat Equation}
\acrodef{LDP}{Large Deviation Principle}
\renewcommand{\Pr}{\mathbb{P}}
\newcommand{\Ex}{\mathbb{E}}
\newcommand{\E}{\mathbf{E}}	
\renewcommand{\d}{\mathrm{d}}	
\newcommand{\ind}{\mathbf{1}}	
\newcommand{\p}{\partial}
\newcommand{\Pf}{\mathfrak{F}}
\newcommand{\Br}{\mathfrak{B}}
\def\note#1{\textup{\textsf{\color{blue}(#1)}}}
\newcommand{\norm}[1]{\Vert#1\Vert}
\newcommand{\R}{\mathbb{R}} 
\newcommand{\Z}{\mathbb{Z}} 
\newcommand{\e}{\varepsilon}
\newcommand{\m}{\mathsf}
\renewcommand{\hat}{\widehat}
\newcommand{\til}{\widetilde}
\newcommand{\az}{\textcolor{cyan}}
\title[Fluctuations exponents of the open KPZ equation in the maximal current phase]{Fluctuation exponents of the open KPZ equation in the maximal current phase}
\author[A.\ A.\ C.\ Hip]{Andres A. Contreras Hip}
\address{A.\ A.\ C.\ Hip,
	Department of Mathematics, University of Chicago,
	\newline\hphantom{\quad \ \ S. Das}
	5734 S.~University Avenue, Chicago, Illinois 60637 USA
}
\email{acontreraship@uchicago.edu}
\author[S.\ Das]{Sayan Das}
\address{S.\ Das,
	Department of Mathematics, University of Chicago,
	\newline\hphantom{\quad \ \ S. Das}
	5734 S.~University Avenue, Chicago, Illinois 60637 USA
}
\email{sayan.das@columbia.edu}
\author[A.\ Zitridis]{Antonios Zitridis}
\address{A.\ Zitridis,
	Department of Mathematics, University of Chicago,
	\newline\hphantom{\quad \ \ S. Das}
	5734 S.~University Avenue, Chicago, Illinois 60637 USA
}
\email{zitridisa@uchicago.edu}
\begin{document}
	\begin{abstract}
		We consider the open KPZ equation $\mathcal{H}(x,t)$ on the interval $[0,L]$ with Neumann boundary conditions depending on parameters $u,v\ge 0$ (the so-called maximal current phase). For $L \sim t^{\alpha}$ and stationary initial conditions, we obtain matching upper and lower bounds on the variance of the height function $\mathcal{H}(0,t)$ for $\alpha \in [0,\frac23]$. Our proof combines techniques from \cite{yu2}, which treated the periodic KPZ equation, with Gibbsian line ensemble methods based on the probabilistic structure of the stationary measures developed in \cite{ck,bld22,bkww,zong,zoe}.
	\end{abstract}
	
	
	\maketitle
	{
		\hypersetup{linkcolor=black}
		\setcounter{tocdepth}{1}
		\tableofcontents
	}
	
	\section{Introduction}

   \subsection{The model and main results} The Kardar–Parisi–Zhang (KPZ) equation is a singular stochastic partial differential equation (SPDE) given by
\begin{align}\label{kpz}
\partial_t \mathcal{H} = \tfrac{1}{2} \partial_{xx} \mathcal{H} + \tfrac{1}{2} (\partial_x \mathcal{H})^2 + \xi,
\end{align}
where $\xi$ denotes {1+1 dimensional} space-time white noise. Introduced in \cite{kpz} as a model for stochastic interface growth, the KPZ equation has since become a central object of study in both mathematics and physics due to its deep connections with directed polymers in random environments, last passage percolation, interacting particle systems, and random matrix theory. We refer the reader to the surveys \cite{corwin2012kardar,hht15,qs15,gan21} for comprehensive overviews of the KPZ equation and its universality class.

In this paper, we study the open KPZ equation, where \eqref{kpz} is restricted to a finite interval $x \in [0, L]$ for $L > 0$ and is subject to Neumann boundary conditions
\begin{equation}\label{openkpz}
\partial_x\mathcal{H}(0,t) = u, \quad \partial_x\mathcal{H}(L,t) = -v,
\end{equation}
for parameters $u, v \in \mathbb{R}$. To emphasize the dependence on the domain length, we denote the solution by $\mathcal{H}_L = \mathcal{H}$. The solution admits a natural interpretation as the log-partition function of the continuum directed random polymer (CDRP) in a strip, with attractive or repulsive interactions at the boundaries $x = 0$ and $x = L$.

Due to the nonlinearity and the singular nature of the noise, the equation with these boundary conditions is ill-posed in the classical sense. A standard way to make sense of this setup is via the Cole–Hopf transformation, defining $\mathcal{Z}_L := e^{\mathcal{H}_L}$. Then $\mathcal{Z}_L$ formally satisfies the stochastic heat equation (SHE)
\begin{align}\label{opshe}
\partial_t \mathcal{Z}_L = \tfrac{1}{2} \partial_{xx} \mathcal{Z}_L + \mathcal{Z}_L \xi,
\end{align}
with Robin boundary conditions
\begin{align}\label{robin}
\partial_x \mathcal{Z}_L(0,t) = (u - \tfrac{1}{2}) \mathcal{Z}_L(0,t), \qquad \partial_x \mathcal{Z}_L(L,t) = -(v - \tfrac{1}{2}) \mathcal{Z}_L(L,t).
\end{align}
The inclusion of this 1/2 factor is just a convention that ensures that the point $u=v=0$ is special in terms of the phase diagram for the open KPZ equation (see Figure \ref{fig:enter-labelx}). It is known that this equation admits a mild solution that is unique, almost surely positive (for a certain class of initial data), and adapted to the natural filtration of the noise; see \cite{cs18,parekh2019kpz}. The solution to the open KPZ equation is then defined via $\mathcal{H}_L := \log \mathcal{Z}_L$.

\medskip

A fundamental question in the study of the open KPZ equation is to understand the nature of its stationary state. We say that a probability measure $\Lambda(\cdot)$ on $C[0, L]$ is stationary if the solution $\mathcal{H}_L$ to \eqref{kpz} with boundary conditions \eqref{openkpz} and initial condition $\Lambda$ satisfies the invariance relation
\begin{align*}
 {\mathcal{H}_L(\cdot,t)- \mathcal{H}_L(0,t)}\stackrel{d}{=}\Lambda(\cdot) \mbox{ for all }t\ge 0.
 \end{align*}
Recently, there has been significant progress in characterizing these stationary measures \cite{ck,bld01,bld22,bkww,zong,zoe} (see also the review \cite{cor22}). It is now known that for every $u, v \in \mathbb{R}$, there exists a unique stationary measure $\Lambda = \Lambda_{u,v}$, given in terms of a suitably reweighted Brownian motion (see Section \ref{sec:2.1} for precise description).

\smallskip

In this work, we focus on the fluctuation of the solution itself. Our first result provides an asymptotic formula for the variance.

\begin{theorem}\label{varde} Fix any $t,L>0$. Assume $u,v\ge 0$. Let $\mathcal{H}_L(0,t)$ be the stationary solution of the open KPZ equation \eqref{kpz} on $[0,L]$ with boundary data \eqref{openkpz}. There exists an absolute constant $C>0$ such that
    \begin{align*}
        \left|\sqrt{\operatorname{Var}(\mathcal{H}_L(0,t))}-\sqrt{t}\sigma_L\right| \le C\sqrt{L}.
    \end{align*} 
    Here, $\sigma_L^2$ is the asymptotic variance which has the following functional formula:
    \begin{align}\label{def:sigma}
        \sigma_L^2 = \Ex\left[\frac{\int\limits_0^L e^{\Lambda_1(x)+2\Lambda_2(x)+\Lambda_3(x)}dx}{\int\limits_0^L e^{\Lambda_1(x)+\Lambda_2(x)}dx\int\limits_0^L e^{\Lambda_2(x)+\Lambda_3(x)}dx}\right]
    \end{align}
    where $\Lambda_1,\Lambda_2,\Lambda_3$ are three independent copies of the stationary solution $\Lambda$.
\end{theorem}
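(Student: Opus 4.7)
The plan is to adapt the stochastic-calculus strategy of \cite{yu2} (which handled the periodic KPZ equation) to the Robin-boundary setting \eqref{openkpz}, and to match the resulting asymptotic variance with the three-copy formula \eqref{def:sigma} using the reweighted-Brownian description of $\Lambda_{u,v}$ from \cite{ck,bld22,bkww,zong,zoe}.

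The first main step is to produce a stochastic-integral decomposition $\mathcal{H}_L(0,t)-\mathcal{H}_L(0,0)=M_t+A_t$, with $M_t$ a continuous $L^2$-martingale in the filtration of $\xi$ and $A_t$ of finite variation. This is obtained by applying It\^o's formula to $\log \mathcal{Z}_L(0,t)$ after mollifying the noise in \eqref{opshe}--\eqref{robin}; $M_t$ will be the stochastic integral against $\xi$ of an explicit functional of the ratios $\mathcal{Z}_L(y,s)/\mathcal{Z}_L(0,t)$ propagated by the Robin heat kernel, while $A_t$ collects It\^o corrections and boundary drifts from the Robin conditions. Taking variance, $\operatorname{Var}(\mathcal{H}_L(0,t))=\Ex[\langle M\rangle_t]$ plus drift and cross-covariance terms that must be controlled separately.

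Next, at stationarity the integrand of $\langle M\rangle_t$ depends only on $\{\mathcal{Z}_L(\cdot,r)/\mathcal{Z}_L(0,r)\}_{r\in[s,t]}$, which is a shift-invariant functional of the stationary process. Combined with the Markov property of the SHE, this yields $\Ex[\langle M\rangle_t]=t\cdot\sigma_L^2$ for a stationary functional $\sigma_L^2=\Ex[\Phi(\Lambda_{u,v})]$. The delicate step is to identify $\Phi(\Lambda_{u,v})$ with the integrand of \eqref{def:sigma}. Here is where the line-ensemble/reweighted-Brownian representation of $\Lambda_{u,v}$ enters: the two factors in the denominator of \eqref{def:sigma} can be recognized as normalizing partition functions for the forward and backward conditional polymer distributions along the stationary profile, while the numerator is their expected overlap. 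The three independent copies $\Lambda_1,\Lambda_2,\Lambda_3$ arise by a tower-expansion, in which one copy plays the role of the stationary state at the observation time and two further copies come from the forward and backward conditional chains, coupled through the shared middle copy $\Lambda_2$.

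Finally, the stated $O(\sqrt{L})$ error follows from the elementary inequality $|\sqrt{a}-\sqrt{b}|\le\sqrt{|a-b|}$ once one proves $|\operatorname{Var}(\mathcal{H}_L(0,t))-t\sigma_L^2|\le CL$ uniformly in $t$. The main obstacle, I expect, is the rigorous implementation of the It\^o decomposition together with uniform-in-$s$ moment bounds on $\log \mathcal{Z}_L$ and on the ratios $\mathcal{Z}_L(y,s)/\mathcal{Z}_L(0,t)$; these estimates are not visible from the SPDE alone and will require the monotonicity and stochastic-domination properties of the open KPZ line ensemble from \cite{bld22,zoe,zong}. A secondary difficulty is the exact identification of $\Phi(\Lambda_{u,v})$ with the integrand of \eqref{def:sigma}, which requires careful manipulation of the Radon--Nikodym description of $\Lambda_{u,v}$ to generate three independent copies from the one-copy stationary functional.
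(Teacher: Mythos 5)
Your high-level plan (stochastic-integral representation of $\mathcal{H}_L(0,t)$, Itô isometry to extract a linear-in-$t$ variance, identification with a three-copy stationary functional) is in the right spirit, but as stated it has a gap that would block the final estimate, and it misses the structural device the paper actually exploits.

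\textbf{The final error step does not close.} You propose proving $|\operatorname{Var}(\mathcal{H}_L(0,t))-t\sigma_L^2|\le CL$ and then applying $|\sqrt{a}-\sqrt{b}|\le\sqrt{|a-b|}$. But with any decomposition $\mathcal{H}_L(0,t)-\Ex\mathcal{H}_L(0,t)=M_t+R_t$ in which $\operatorname{Var}(M_t)=t\sigma_L^2$ and $\operatorname{Var}(R_t)=O(L)$, the cross-covariance $\operatorname{Cov}(M_t,R_t)$ is only controlled by Cauchy--Schwarz as $O(\sqrt{tL\,\sigma_L^2})$, so the best you get for $|\operatorname{Var}(\mathcal{H}_L)-t\sigma_L^2|$ is $O(L+\sqrt{tL\,\sigma_L^2})$, not $O(L)$. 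The correct way to obtain the $O(\sqrt{L})$ bound is to apply the $L^2$ triangle inequality directly to the decomposition, i.e.\ $\bigl|\sqrt{\operatorname{Var}(\mathcal{H}_L)}-\sqrt{\operatorname{Var}(M_t)}\bigr|\le\sqrt{\operatorname{Var}(R_t)}$, which is exactly what the paper does with $M_t\equiv I_L(t)$ and $R_t\equiv Y_L(0)-Y_L(t)$. Without that step, your route cannot produce the stated theorem.

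\textbf{Decomposition and the role of Clark--Ocone vs.\ Itô.} The paper does not apply Itô's formula to $\log\mathcal{Z}_L(0,t)$. It first proves the exact identity (Proposition~3.1 of the paper)
\begin{align*}
\mathcal{H}_L(0,t)-\Ex\mathcal{H}_L(0,t)= I_L(t)+Y_L(0)-Y_L(t),
\end{align*}
with $I_L(t)=\Ex_{\Lambda_2}[\mathcal{H}(e^{\Lambda_2},t)]-\Ex_{\Lambda_2}\E[\mathcal{H}(e^{\Lambda_2},t)]$ and $Y_L(s)=\Ex_{\Lambda_2}[\mathcal{H}(e^{\Lambda_2},s)]-\mathcal{H}(0,s)$, where the key input is the propagator swap invariance $\mathcal{Z}(y,0;x,t)\stackrel{d}{=}\mathcal{Z}(x,0;y,t)$ and the stationarity $\mathcal{Z}(e^{\Lambda},0;\cdot,t)/\mathcal{Z}(e^{\Lambda},0;0,t)\stackrel{d}{=}e^{\Lambda(\cdot)}$. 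The Clark--Ocone formula is then applied to $F=\mathcal{H}(g,0;f,t)$ to write $F-\E F$ as a single stochastic integral against $\xi$; this avoids the extra finite-variation piece $A_t$ that an Itô decomposition would generate and which you would then have to show does not contribute to leading order. Your plan would need to independently establish that $A_t$ is deterministic or negligible, which is an additional nontrivial step the Clark--Ocone route skips entirely.

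\textbf{The three copies fall out of algebra, not of the line-ensemble structure.} You suggest identifying the integrand of $\sigma_L^2$ via the reweighted-Brownian / line-ensemble description of $\Lambda_{u,v}$, but for this theorem no such description is needed. In the paper, $\Lambda_1$ is the stationary initial datum, $\Lambda_2$ is the stationary random terminal datum introduced in the definition of $I_L$, and $\Lambda_3$ appears when one squares the conditional expectation $\Ex_{\Lambda_2}[\cdots]$ inside the Itô isometry (turning $(\Ex_{\Lambda_2}[\cdot])^2$ into $\Ex_{\Lambda_2,\Lambda_3}[\cdot]$ for independent copies). Stationarity of $\rho(e^{\Lambda_1},0;\cdot,s)$ then collapses the propagators to $e^{\Lambda_1(\cdot)}/\!\int e^{\Lambda_1}$, and a relabeling gives \eqref{def:sigma}. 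The Gibbsian / two-layer representation of the stationary measure enters only in Sections~4--5, to estimate the size of $\sigma_L^2$, not to derive its formula.
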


We remark that the recent work \cite{barraquand2025large} expresses cumulants of the KPZ equation in
terms of a functional equation involving an integral operator. It would be interesting to match the formulas in \cite{barraquand2025large} with ours described above in \eqref{def:sigma}.

\medskip

As for the exact nature of the fluctuations, when the domain length $L$ is fixed and $t\to \infty$, due to known ergodicity results (see, e.g., \cite{parekh2022ergodicity,km22}), we expect Gaussian fluctuations for the height function $\mathcal{H}_L(0,t)$. A more intriguing regime arises when both $L$ and $t$ go to infinity \textit{jointly}. For concreteness, we take $L=\lambda t^{\alpha}$ for $\lambda,\alpha>0$. Naturally, the fluctuation behavior in this regime depends sensitively on the boundary parameters $u,v$ as they influence the large-scale geometry of the polymer paths and the structure of the stationary measure (Figure \ref{fig:enter-labelx}).

When $u < 0,v > u$ (the \textit{low-density phase}), or when $v < 0,u>v$ (the \textit{high-density phase}), it is predicted that the polymer paths are localized near $x=0$ or $x=L$, respectively. In the symmetric case $u = v < 0$, localization occurs near either boundary with probability $1/2$.  Because of this predicted localization near the boundaries, one expects the height function to exhibit Gaussian fluctuations of order $O(\sqrt{t})$ regardless of the value of $\alpha$ in these phases.

\begin{figure}[h!]
    \centering
    \includegraphics[width=0.5\linewidth]{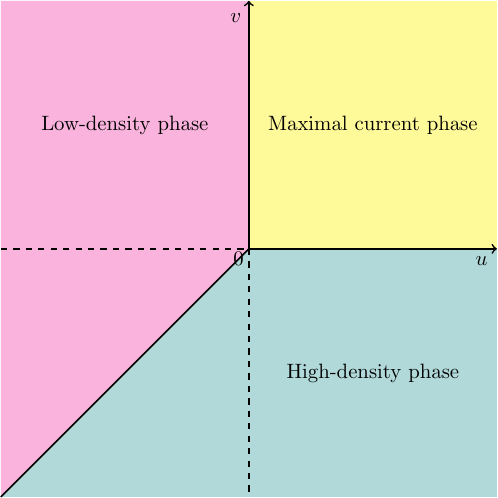}
    \caption{Phase diagram of the open KPZ equation.}
    \label{fig:enter-labelx}
\end{figure}

In contrast, when $u, v \ge 0$ (the so-called \textit{maximal current phase}), the polymer paths are expected to be delocalized in the bulk. In this phase, the fluctuation exponents are predicted to depend nontrivially on the scaling parameter $\alpha$ in this phase. The model is expected to interpolate between the Gaussian class $(\alpha=0)$ and the KPZ class $(\alpha=\infty)$, where one recovers the familiar KPZ scaling exponents: $1/3$ for height fluctuations and $2/3$ for transversal fluctuations -- the exponents observed in full-space and supercritical half-space KPZ settings \cite{wu1,ds25}.

Our second result stated below captures the precise fluctuation exponent for $\mathcal{H}_{\lambda t^{\alpha}}(0,t)$ when $\alpha\in (0,2/3]$ in the maximal current phase. We achieve this by computing exact decay rate of the asymptotic variance $\sigma_L^2$ defined in \eqref{def:sigma}.

	\begin{theorem} \label{t.main1} Fix any $t\ge 1$, $\lambda>0$, and $\alpha\in (0,2/3)$. For each $L\ge1$, let $\mathcal{H}_L$ be the solution to the open KPZ equation \eqref{kpz} on $[0,L]$ subjected to boundary conditions \eqref{openkpz} started from stationary initial data. Assume $u,v\ge 0$. There exists constant $C(u,v)>1$ such that for all $L\ge 1$ $$C^{-1}L^{-1/2} \le \sigma_L^2 \le CL^{-1/2}$$ where $\sigma_L^2$ is the asymptotic variance  defined in \eqref{def:sigma}. Consequently, by Theorem \ref{varde},
    there exist constants $C_1(u,v,\lambda,\alpha)$,  $C_2(u,v,\lambda,\alpha)>1$  such that 
    \begin{align*}
      C_1^{-1} \cdot  t^{1-\alpha/2}  \le & \operatorname{Var}(\mathcal{H}_{\lambda t^{\alpha}}(0,t)) \le C_1\cdot  t^{1-\alpha/2}, \\
      & \operatorname{Var}(\mathcal{H}_{\lambda t^{2/3}}(0,t)) \le C_2\cdot  t^{2/3},
    \end{align*}
    and there exists $\delta>0$ and $C_3(u,v,\delta)>0$ such that for all $\lambda\in (0,\delta)$
\begin{align*}
    C_3^{-1}\cdot  t^{2/3} \le \operatorname{Var}(\mathcal{H}_{\lambda t^{2/3}}(0,t)). \phantom{\le C\cdot  t^{2/3}.}
\end{align*}
	\end{theorem}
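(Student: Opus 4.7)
The heart of Theorem~\ref{t.main1} is the two-sided estimate $C^{-1}L^{-1/2}\leq \sigma_L^2\leq CL^{-1/2}$ for the asymptotic variance $\sigma_L^2$; all the remaining assertions then follow from Theorem~\ref{varde} by elementary bookkeeping, which I handle at the end. A conceptually useful reformulation is that
\[
\sigma_L^2=\Ex\left[\int_0^L \mu_{12}(x)\mu_{23}(x)\,dx\right],
\]
the expected overlap of the random probability measures $\mu_{ij}(dx):=e^{\Lambda_i(x)+\Lambda_j(x)}dx\big/\int_0^L e^{\Lambda_i+\Lambda_j}$ on $[0,L]$; the factor $e^{\Lambda_2}$ shared by $\mu_{12}$ and $\mu_{23}$ is precisely what makes this overlap non-trivial. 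Heuristically, the target scaling $L^{-1/2}$ corresponds to both measures being effectively supported on a common window of size $\sqrt L$, with density of order $L^{-1/2}$ there.

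\medskip

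The probabilistic input is the description of $\Lambda_{u,v}$ from Section~\ref{sec:2.1} as a reweighted Brownian-type path, together with the Gibbsian line-ensemble estimates developed in \cite{ck,bld22,bkww,zong,zoe}, which provide local Brownian comparability of $\Lambda_{u,v}$ on windows of length $\sqrt L$ and moment bounds for $\int_0^L e^{a\Lambda}$ with $a\in\{1,2\}$. For the upper bound the plan is to show that outside an event of small probability,
\[
\|\mu_{ij}\|_\infty \leq CL^{-1/2}.
\]
Granting this, H\"older's inequality gives $\int \mu_{12}\mu_{23}\,dx \leq \|\mu_{12}\|_\infty \leq CL^{-1/2}$, and the atypical event is absorbed using the moment bounds above. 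The uniform estimate on $\|\mu_{ij}\|_\infty$ is equivalent to the anti-concentration $\log\int_0^L e^{\Lambda_i+\Lambda_j}\geq \max(\Lambda_i+\Lambda_j)+\tfrac12\log L - O(1)$, i.e.\ that the partition function captures at least a $\sqrt L$-scale neighborhood of the argmax; this follows from the Brownian comparison by standard Bessel-type estimates near the maximum.

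\medskip

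The principal difficulty is the matching lower bound. My plan is to construct a good event $\mathcal{G}_L$, of probability bounded below uniformly in $L$, on which one can exhibit an interval $I\subset[0,L]$ of length $\asymp \sqrt L$ such that (i) each $\Lambda_i$ is comparable on $I$ to a translated Brownian motion, with $\Lambda_2$ achieving a well-separated global maximum inside $I$, and (ii) both $\Lambda_1+\Lambda_2$ and $\Lambda_2+\Lambda_3$ attain their global maxima in $I$ with only $O(1)$ oscillation there, so that $\mu_{12}(I),\mu_{23}(I)\geq c>0$ and both densities are at least $cL^{-1/2}$ on $I$. Then $\int \mu_{12}\mu_{23}\,dx\geq cL^{-1/2}$ on $\mathcal{G}_L$, yielding $\sigma_L^2\geq C^{-1}L^{-1/2}$ after taking expectations. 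The construction of $\mathcal{G}_L$ relies crucially on the Gibbsian resampling of the stationary line ensemble: conditionally on suitable boundary data on $\partial I$, each $\Lambda_i|_I$ is comparable to an unconditioned Brownian motion of variance $\asymp 1$, for which the event of a ``flat maximum'' of oscillation $O(1)$ on a $\sqrt L$-window has positive probability uniformly in $L$.

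\medskip

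Finally, the variance claims follow from Theorem~\ref{varde}. With $L=\lambda t^\alpha$ and $\sigma_L^2\asymp L^{-1/2}$ one has $\sqrt t\,\sigma_L\asymp \lambda^{-1/4}t^{1/2-\alpha/4}$ and $\sqrt L = \lambda^{1/2}t^{\alpha/2}$. For $\alpha\in(0,2/3)$ the former dominates, giving $\sqrt{\operatorname{Var}}\asymp t^{1/2-\alpha/4}$, i.e.\ $\operatorname{Var}\asymp t^{1-\alpha/2}$. At $\alpha=2/3$ both are of order $t^{1/3}$, yielding the upper bound $\operatorname{Var}\leq C_2 t^{2/3}$ at once. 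For the lower bound at $\alpha=2/3$ with $\lambda<\delta$ small enough, $\sqrt t\,\sigma_L - C\sqrt L \geq (c\lambda^{-1/4}-C\lambda^{1/2})t^{1/3}\geq (c/2)\lambda^{-1/4}t^{1/3}$, hence $\operatorname{Var}\geq C_3^{-1}t^{2/3}$ as claimed.
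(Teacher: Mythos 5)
The reformulation $\sigma_L^2=\Ex\bigl[\int_0^L \mu_{12}\mu_{23}\,dx\bigr]$ is correct and natural, but the analysis you build on top of it has two genuine gaps, both of which come from a mistaken picture of how $\int_0^L e^{\Lambda_i+\Lambda_j}$ behaves near its maximum.

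For the upper bound, the claim that $\|\mu_{ij}\|_\infty\le CL^{-1/2}$ with high probability is false. For a diffusive path $W$ on $[0,L]$, the Laplace-type integral $\int_0^L e^{W}\,dx$ is concentrated on an $O(1)$-sized window around the argmax (since $W(\mathrm{argmax})-W(x)\sim\sqrt{|x-\mathrm{argmax}|}$, so $\int e^{W-\max W}\,dx\asymp\int e^{-\sqrt{|s|}}\,ds=O(1)$), not a $\sqrt L$-sized window. Consequently $\log\int e^{\Lambda_i+\Lambda_j}=\max(\Lambda_i+\Lambda_j)+O(1)$ and $\|\mu_{ij}\|_\infty\asymp 1$ typically. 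Your proposed anti-concentration lower bound $\log\int e^{\Lambda_i+\Lambda_j}\ge\max+\tfrac12\log L-O(1)$ is off by a $\tfrac12\log L$, which is precisely the gain you need, so H\"older's inequality gives only $\int\mu_{12}\mu_{23}\le O(1)$ rather than $O(L^{-1/2})$.

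For the lower bound, the event $\mathcal{G}_L$ you propose—that $\Lambda_1+\Lambda_2$ and $\Lambda_2+\Lambda_3$ oscillate by $O(1)$ across a common $\sqrt L$-interval containing their maxima—does not have probability bounded below. Near its maximum a Brownian-type path looks like a two-sided Bessel-$3$ process, which escapes an $O(1)$-band in $O(1)$ time; the probability of staying within $O(1)$ of the maximum over a $\sqrt L$-window decays \emph{superpolynomially} in $L$. The $L^{-1/2}$ scaling cannot be realized by a single positive-probability event on which the overlap is $\asymp L^{-1/2}$. In fact, the typical value of $\int\mu_{12}\mu_{23}$ is exponentially small (of size $e^{-\Theta(\sqrt L)}$, the gap between $\max(\Lambda_1+2\Lambda_2+\Lambda_3)$ and $\max(\Lambda_1+\Lambda_2)+\max(\Lambda_2+\Lambda_3)$), and $\Ex[\int\mu_{12}\mu_{23}]\asymp L^{-1/2}$ arises from rare configurations where the three argmaxes nearly coincide. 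The paper captures this by decomposing $\sigma_L^2=\sum_{y=0}^{L-1}\Pf(y)$ (see \eqref{sigmaLexp}--\eqref{def:pfy}) and proving, for each fixed bulk $y$, the two-sided estimate $\Pf(y)\asymp L^{-3/2}$; the lower bound confines the six-curve configuration to a slowly widening wedge around $y$ (barriers growing like $|x-y|^{1/3}$, not flat ones) which has probability $\asymp L^{-3/4}$ per side rather than the exponentially small probability of a flat barrier. Your event construction would need to be replaced by such a $y$-by-$y$ wedge argument, together with the stochastic-monotonicity reduction to Brownian bridges with random endpoints. The bookkeeping in your final paragraph, deducing the variance exponents from $\sigma_L^2\asymp L^{-1/2}$ and Theorem~\ref{varde}, is correct and matches the paper.
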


As the transversal fluctuations of the KPZ equation in full space are of order $O(t^{2/3})$, the scale $L = O(t^{2/3})$ represents a critical regime in which the height function exhibits nontrivial spatial correlations across the domain. This is often referred to as the relaxation regime. The cases $\alpha < 2/3$ and $\alpha > 2/3$ are known as the super-relaxation and sub-relaxation regimes, respectively. Theorem \ref{t.main1} captures the fluctuation exponent in the super-relaxation regime and part of the relaxation regime. We expect that $\operatorname{Var}(\mathcal{H}_L(0,t))$ should be of order $t^{2/3}$ when $L = \lambda t^{2/3}$ for large $\lambda$, and more generally when $L \gg t^{2/3}$; however, our current methods are not sufficient to establish this rigorously.

\subsection{Context} Our main theorems contribute to a growing line of work aimed at understanding phase transition behavior in KPZ-type models with boundaries or finite-size effects, where the nature of fluctuations changes as the system size or boundary parameters are varied. This has been studied extensively in the periodic and half-space settings, which we now briefly review.

\subsubsection*{Periodic setting}
The long-time behavior of finite-size KPZ systems, where the system size also grows with time, was first systematically studied in the mathematics literature in the context of the totally asymmetric simple exclusion process (TASEP) on the torus. In this setting, \cite{derrida1993exact} was the first to derive that the asymptotic variance decays as $L^{-1/2}$ and explained how this decay relates to the $2/3$ KPZ exponent. In the setting of the periodic KPZ equation, the work of Dunlap, Gu, and Komorowski \cite{yu2} established analogous variance bounds to those in Theorem~\ref{t.main1}, covering both the super-relaxation regime and part of the sub-relaxation regime. Their techniques serve as an important point of comparison for the present work (see Section~\ref{sec:1.3} for details).

Subsequently, by exploiting the integrable structure of the model, a series of works \cite{tasep0,tasep1,tasep2,tasep3,tasep4,tasep5,tasep6} obtained exact formulas, fluctuation exponents, and limiting distributions for TASEP on the torus. In particular, it has been shown that the rescaled height fluctuations in the relaxation regime interpolate between Gaussian and Tracy–Widom statistics. For first-passage percolation and longest increasing subsequences, precise fluctuation exponents and Gaussian fluctuations have been rigorously established in the super-relaxation regime \cite{chatterjee2013central,dey2018longest}.

\subsubsection*{Half-space setting}
In the setting where $L=\infty$ and $v=0$, one obtains the half-space KPZ equation. A phase transition from Gaussian statistics to Tracy–Widom-type statistics is observed in this model by setting $u=ct^{-1/3}$
  and varying the constant $c$. This transition was first identified in a series of works by Baik and Rains \cite{br1,br01,br3} in the context of last passage percolation. Multi-point fluctuations were subsequently analyzed in \cite{sis}, and similar results were later proven for exponential LPP in \cite{bbcs0, bbcs} using the framework of Pfaffian Schur processes. For the half-space KPZ equation and its integrable discretization\textemdash the half-space log-gamma polymer\textemdash analogous phase transitions and fluctuation exponents have been studied in mathematics works \cite{bbc20,bw22,ims22,bcd,dz23,halfairy,vic,ds25} and in physics works 
  \cite{gld,bbc,ito,de,kr,bkld2,bld1,bkld}.  In a recent work \cite{tao}, the authors study the stationary half-space KPZ equation with boundary parameter $u=ct^{-\beta}$ for $\beta \in [0,1/3]$ and $c<0$, and establish variance bounds for the height function in a spirit similar to ours. We note, however, that their proof relies on entirely different techniques. In addition to the above models, the Baik--Rains phase transition has recently been rigorously proven for half-space ASEP and six-vertex models in the works \cite{he1, he2}.

\medskip

Finally, we note that all of the aforementioned results pertain to the spatial dimension $d=1$. For $d\ge 2$, we refer to \cite{CSZ17, DG22, GHL23a, GHL23b, KN24, Tao24, spa, Ch19, ck19, gy23}, where various important properties of the SHE and/or the KPZ equation such as fluctuations, spatial ergodicity, and intermittency have been investigated.


\subsection{Proof ideas} \label{sec:1.3}

In this section, we sketch the key ideas behind the proofs of our main results. As mentioned earlier, our approach is inspired by the proof techniques developed in \cite{yu2}. We begin by outlining the main ideas from their work and then highlight the key similarities and differences in our setting.

\subsubsection{Key ideas in \cite{yu2}} In \cite{yu2}, the authors consider the KPZ equation in the periodic setting and prove analogous variance results. The stationary measure in this case is given by $\mathfrak{B}$, a Brownian bridge on $[0,L]$ from $0$ to $0$. Using the Clark–Ocone formula, they derive bounds on the variance for each fixed $t$ and $L$, and obtain the following expression for the asymptotic variance $\sigma_{L,\mathrm{per}}^2$:
\begin{align*}
    \sigma_{L,\mathrm{per}}^2 = \Ex\left[\frac{\int\limits_0^L e^{B_1(x)+2B_2(x)+B_3(x)}dx}{\int\limits_0^L e^{B_1(x)+B_2(x)}dx\int\limits_0^L e^{B_2(x)+B_3(x)}dx}\right]
\end{align*}
where $B_1, B_2, B_3$ are independent copies of $\mathfrak{B}$. The core idea in their analysis is to show that $\sigma_{L,\mathrm{per}}^2 \sim L^{-1/2}$ by analyzing this expression. This analysis is nontrivial and relies crucially on the structure of the Brownian bridge. Exploiting its properties, they reformulate the variance as
\begin{align}\label{ired}
    \sigma_{L,\mathrm{per}}^2 =L\cdot \Ex\left[\frac{1}{\int\limits_0^L e^{B_1(x)+B_2(x)}dx\int\limits_0^L e^{B_2(x)+B_3(x)}dx}\right]= L\cdot \Ex\left[\prod_{k=1}^2 \left(\int_0^L e^{v_k\cdot V(x)}dx\right)^{-1}\right]
\end{align}
where $V$ is a $2$-dimensional Brownian bridge and $v_k=-2^{-1/2}(\sqrt{3},(-1)^k)$, $k=1,2$. The first reduction above is via circular translation invariance of Brownian bridges and the second reduction is by writing correlated Brownian bridges $(B_1+B_2,B_2+B_3)$ in terms of a $2D$ Brownian bridge from the origin to origin. The key insight here is that the dominant contribution to the above expectation comes from paths for which $$\omega(V):=\sup_{x\in [0,L]} \max_{k=1,2} v_k\cdot V(x)$$ is bounded above. Their proof then proceeds by estimating the probability that $\omega(V)\le a$ which involves understanding Brownian bridges killed upon exiting the wedge $\omega^{-1}((-\infty,a])$. This is made possible by explicit transition density formulas for such killed bridges in terms of Bessel functions, which play a central role in their technical arguments. 

\begin{figure}[h!]
    \centering
   \includegraphics[width=0.5\linewidth]{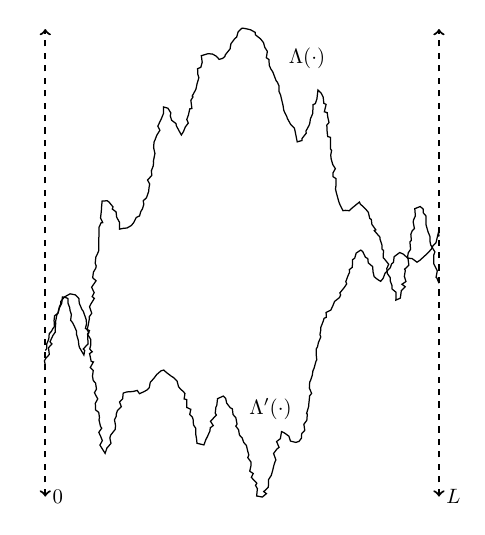}
    \caption{A typical sample from $\Pr_{u,v}^L$ law for $u,v>0$. The term $\int_0^L e^{-(\Lambda(s)-\Lambda'(s))} ds$ in the exponent imposes a strong penalty whenever $\Lambda < \Lambda'$, effectively pushing $\Lambda$ above $\Lambda'$. On the other hand, the boundary terms $-u(\Lambda(0)-\Lambda'(0))$ and $-v(\Lambda(L)-\Lambda'(L))$ encourage the two processes to be close at $x = 0$ and $x = L$. }
    \label{puv}
\end{figure}

\subsubsection{Our proof} 
We now turn to the proof idea of our theorems. The proof of Theorem \ref{varde} is also an application of the Clark–Ocone formula and largely follows the same structure as in \cite{yu2}, with necessary modifications to account for the presence of boundary conditions. We therefore focus on explaining the proof strategy for Theorem \ref{t.main1}. While our goal is likewise to show that $\sigma_L^2$, as defined in \eqref{def:sigma}, exhibits the same $L^{-1/2}$ decay rate, neither a reduction analogous to \eqref{ired} nor explicit formulas -- such as those involving Bessel functions used in \cite{yu2} -- are available in our setting. This is due to the more intricate structure of the stationary measure $\Lambda$, which we now explain.

\subsubsection*{Stationary measure description} 
Let $\Pr_{\mathrm{free}}$ denote the law of a pair $(\Lambda, \Lambda')$ of independent Brownian motions, with $\Lambda$ started from $0$ and $\Lambda'$ started from a point distributed according to Lebesgue measure on $\mathbb{R}$. This defines an infinite measure. We define a new probability measure $\Pr_{u,v}^L$ on the pair $(\Lambda, \Lambda') : [0,L] \to \mathbb{R}$ via the Radon–Nikodym derivative
\begin{align*}
\frac{d\Pr_{u,v}^L}{d\Pr_{\mathrm{free}}}(\Lambda,\Lambda') \propto \exp\left(-u(\Lambda(0)-\Lambda'(0)) - v(\Lambda(L)-\Lambda'(L)) - \int_0^L e^{-(\Lambda(s)-\Lambda'(s))}\,ds\right).
\end{align*}
A recent sequence of works \cite{ck,bld22, bkww, zong, zoe} has shown that the law of $\Lambda(x)$ under this measure yields the stationary measure for the open KPZ equation \eqref{openkpz} on $[0,L]$ when $u + v > 0$. Figure \ref{puv} describes how a sample from the measure $\Pr_{u,v}^L$ typically looks like (for $u,v>0$).

\begin{figure}[h!]
    \centering
     \begin{subfigure}[b]{0.45\textwidth}
			\centering \includegraphics[width=\linewidth]{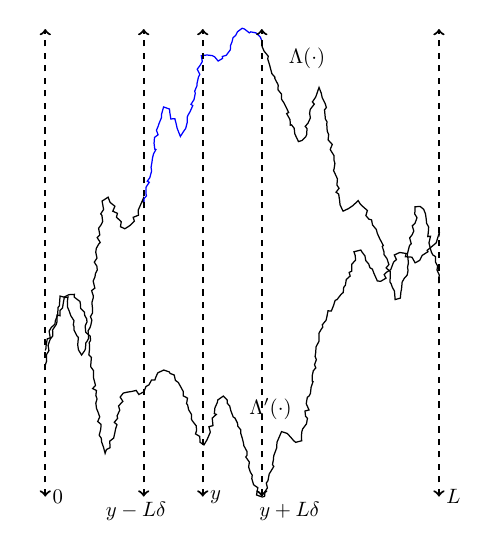}
            \caption{}
            \end{subfigure}
            \begin{subfigure}[b]{0.45\textwidth}
			\centering \includegraphics[width=\linewidth]{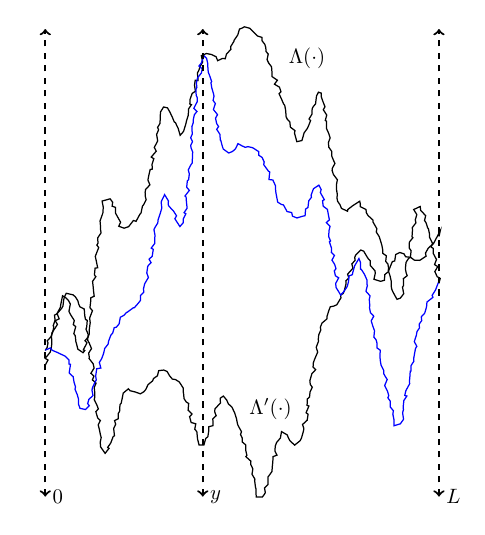}
            \caption{}
            \end{subfigure}
    \caption{(A) $\Lambda$ on the region $[y-L\delta,y+L\delta]$ is so far from $\Lambda'$ (with high probability) that it roughly looks like a Brownian bridge. (B) The blue curve denotes a Brownian bridge joining $(0,\Lambda(0)), (y,\Lambda(y)),$ and $(L,\Lambda(L))$. By stochastic monotonicity, $\Lambda$ is stochastically larger than the blue curve.}
    \label{ide}
\end{figure}

\subsubsection*{Variance decay rate} 
Returning to the expression for $\sigma_L^2$ in \eqref{def:sigma}, we note that unlike in the periodic setting, there is no circular translation invariance for the stationary measure. To proceed, we rewrite:
\begin{align}\label{sigmaLexp}
\sigma_L^2 = \Ex\left[\frac{\int_0^L e^{\Lambda_1(x)+2\Lambda_2(x)+\Lambda_3(x)}\,dx}{\int_0^L e^{\Lambda_1(x)+\Lambda_2(x)}\,dx \cdot \int_0^L e^{\Lambda_2(x)+\Lambda_3(x)}\,dx}\right] = \sum_{y=0}^{L-1} \Pf(y),
\end{align}
where
\begin{align}\label{def:pfy}
\Pf(y) := \Ex\left[\frac{\int_y^{y+1} e^{\Lambda_1(x)+2\Lambda_2(x)+\Lambda_3(x) - \Lambda_1(y) - 2\Lambda_2(y) - \Lambda_3(y)}\,dx}{\int_0^L e^{\Lambda_1(x)+\Lambda_2(x) - \Lambda_1(y) - \Lambda_2(y)}\,dx \cdot \int_0^L e^{\Lambda_2(x)+\Lambda_3(x) - \Lambda_1(y) - \Lambda_2(y)}\,dx}\right].
\end{align}
To get a sense of how $\Pf(y)$ behaves, it is helpful to study the decay of the following simpler quantity:
\begin{align}\label{def:ppfy}
\hat\Pf(y) := \Ex\left[\frac{1}{\int_0^L e^{\Lambda_1(x)+\Lambda_2(x) - \Lambda_1(y) - \Lambda_2(y)}\,dx \cdot \int_0^L e^{\Lambda_2(x)+\Lambda_3(x) - \Lambda_1(y) - \Lambda_2(y)}\,dx}\right].
\end{align}
At an intuitive level, $\Pf(y)$ and $\hat\Pf(y)$ should have the same decay rate. This is because the numerator in \eqref{def:pfy} is an integral over a unit interval and hence is expected to be of constant order $O(1)$. So to understand the behavior of $\sigma_L^2$, it suffices to focus on estimating the typical size of $\hat\Pf(y)$. We do this in two steps.

\medskip

\textbf{Step 1. Reduction to Brownian bridges with random endpoints.} 
To study the decay rate of \eqref{def:ppfy}, we reduce the analysis to computations involving Brownian bridges with random endpoints. The key idea is that when \( y \) lies in the bulk (say, in the interval \( [L/4, 3L/4] \)), there is sufficient separation between the three curves \( \Lambda_i \) and \( \Lambda_i' \) for \( i=1,2,3 \). As a result, the processes \( \Lambda_i \) near \( y \) should behave approximately like Brownian motions.

To make this precise, we show that conditional on the sigma-field \( \sigma(\Lambda_i(y \pm L\delta), \Lambda_i(y) \mid i=1,2,3) \), for sufficiently small \( \delta > 0 \), the Radon–Nikodym derivatives in the interval \( [y - L\delta, y + L\delta] \) are bounded from below. This means that for the purpose of obtaining a lower bound, we may effectively ignore the RN derivatives and instead work with Brownian bridges having random endpoints \( \Lambda_i(y \pm L\delta) \) and \( \Lambda_i(y) \), see Figure \ref{ide}(A).

For the upper bound, we use a stochastic monotonicity argument, which says that the process \( (\Lambda_i(x) - \Lambda_i(y)) \), conditional on \( \sigma(\Lambda_i(0), \Lambda_i(y), \Lambda_i(L)) \), is stochastically larger than a Brownian bridge connecting \( \Lambda_i(0) \), \( \Lambda_i(y) \), and \( \Lambda_i(L) \), see Figure \ref{ide}(B). This provides a comparison framework that allows us to control the fluctuations from above.

\medskip

\textbf{Step 2. Estimating functionals under Brownian bridges with varying endpoints.} 
Once reduced to the Brownian bridge setting, we apply a linear transformation (as in \cite{yu2}) to express the functionals as those of 2D Brownian bridge from the origin to a random endpoint. The key difference in our case is the presence of the random endpoint, which requires extending the estimates in \cite{yu2} to ones that depend on the endpoints. This extension is relatively straightforward for the lower bound, but significantly more challenging for the upper bound. To keep the exposition light, we defer a detailed discussion of the lower bound to Section \ref{sec:5.1}, noting here that the main difficulty arises from the fact that the argument in \cite{yu2} relies on stopping times for 2D Brownian bridges from the origin to the origin, for which precise tail estimates are available via a conformal transformation -- a tool not available in our setting.

\subsubsection*{Other technical aspects} 
We emphasize that our actual proof works with $\Pf(y)$, not $\hat\Pf(y)$. While the idea of estimating $\Pf(y)$ is morally similar to that for $\hat\Pf(y)$, there are several technical differences that must be addressed. We chose to present the simplified version involving $\hat\Pf(y)$ above, as it captures the core intuition while avoiding the more delicate technicalities required for $\Pf(y)$. In particular, to show that the numerator in $\Pf(y)$ is of order $O(1)$, and to control the random endpoints appearing in the steps above, one needs exponential moment estimates for the stationary measures. A significant portion of our work is devoted to understanding the diffusive scaling limits of the two layer Gibbs representation of the stationary measures and establishing various exponential moments for the underlying process. This is done in Section \ref{sec:2} of the paper.

\subsection{Extensions to other models and phases} 

Our proof strategy offers a general framework for establishing variance bounds in open KPZ-type models that admit a similar two-layer representation of the stationary measure -- such as geometric last passage percolation and the log-gamma polymer on a strip \cite{zong}.

In this work, however, we do not address the low and high density phases. In these regimes, we expect that $\sigma_L^2$ does not decay with $L$. Let us briefly outline the main challenges involved in analyzing this setting.

First, while the two-layer description of the stationary measure is conjectured to hold for all $u,v \in \mathbb{R}$ (see Theorem 1.8 in \cite{zong}), a rigorous proof is currently available only when $u,v \in \mathbb{R}$ with $u+v > 0$. Suppose we restrict attention to the low density phase ($u<0$, $v>u$) under the assumption that $u+v>0$ so that the existing theory applies. In this case, it is known that $\Lambda(x)$ converges to $\mathfrak{B}(x) + ux$, where $\mathfrak{B}$ is a Brownian motion \cite{hy}. Heuristically, then, we should have the following weak convergence:
\begin{align*}
\frac{\int\limits_0^L e^{\Lambda_1(x)+2\Lambda_2(x)+\Lambda_3(x)}dx}{\int\limits_0^L e^{\Lambda_1(x)+\Lambda_2(x)}dx\int\limits_0^L e^{\Lambda_2(x)+\Lambda_3(x)}dx} \stackrel{d}{\to} \frac{\int\limits_0^\infty e^{\Br_1(x)+2\Br_2(x)+\Br_3(x)+4ux}\,dx}{ \int\limits_0^\infty e^{\Br_1(x)+\Br_2(x)+2ux}\,dx \int\limits_0^\infty e^{\Br_2(x)+\Br_3(x)+2ux}\,dx },
\end{align*}
where $\Br_1, \Br_2, \Br_3$ are independent standard Brownian motions. Since $\Br_i(x) = O(\sqrt{x})$ and $u<0$, the above improper integrals converge almost surely, and the entire ratio on the right-hand side is almost surely finite. However, it appears nontrivial to prove that this limiting random variable lies in $L^1$, particularly for small values of $|u|$. Extending the above convergence to $L^1$ would require a delicate analysis of the two-layer Gibbs structure in this regime -- one that is qualitatively different from the maximal current phase studied in this paper. A rigorous treatment of these low and high density phases remains an interesting direction for future work.


    \subsection*{Outline} The rest of the paper is organized as follows. In Section \ref{sec:2}, we investigate the diffusive properties of the stationary measure. In Section \ref{sec:3}, we prove our main theorems, assuming upper and lower bounds on the asymptotic variance. The matching upper and lower bounds on the variance are established in Sections \ref{sec:4} and \ref{sec:5}, respectively. Appendix \ref{appC} collects moment bounds for the open SHE.

    \subsection*{Acknowledgements} We thank Guillaume Barraquand, Ivan Corwin, and Yu Gu for helpful discussions, as well as for their encouragement and feedback on an earlier draft of the paper. SD thanks Shalin Parekh and Christian Serio  for useful discussions.

	\section{Diffusive properties of stationary measures} \label{sec:2}

\subsection{Gibbsian line ensembles and stationary measures} \label{sec:2.1}
    In this section, we state the description of the stationary measure and relate it with Gibbs measures. 

\cite{ck} proved the existence of a stationary measure for the open KPZ equation on $[0,1]$ for all $u, v \in \R$ and characterized it via a multi-point Laplace transform under the condition $u + v > 0$. Later, \cite{bld22,bkww} provided a probabilistic description of this measure by inverting the multi-point Laplace transform, valid for $L=1$, $u+v>0$. Most recently, \cite{zoe} extended the validity of this description to all $L>0$ and $u+v>0$. We state this description below.

\medskip
    
    Let $U$ be a Brownian motion started from $0$ with diffusion coefficient $2$. Let $V: [0,L]\to \R$ be independent from $U$. The law of $V$ is
absolutely continuous with respect to the free Brownian (infinite) measure with Lebesgue measure for $V(0)$
and Brownian law from there with diffusion coefficient $2$, with Radon-Nikodym derivative proportional to
\begin{align*}
    \exp\left(-uV(0)-vV(L)-\int_0^L e^{-V(s)}ds\right).
\end{align*}
\begin{theorem}[Theorem 1.4 in \cite{zoe}]\label{thminv} Assume $u+v>0$. With the above definitions,
  $\frac12(U(x)+V(x)-V(0))$ is the stationary measure for the open KPZ equation \eqref{openkpz} on $[0,L]$.
\end{theorem}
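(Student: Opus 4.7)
The plan is to verify the claim by matching Laplace transforms against the known characterization of the stationary measure from \cite{ck}. Under $u+v>0$, Corwin--Knizel showed that the open KPZ equation on $[0,L]$ admits a unique stationary measure and computed its multi-point Laplace transforms explicitly, via moments of an Askey--Wilson process whose parameters depend on $u$, $v$, and $L$. By this uniqueness, it suffices to check that the process $\Lambda(x) := \tfrac{1}{2}(U(x) + V(x) - V(0))$ reproduces those Laplace transforms.

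Concretely, I would fix a nonnegative test weight $f$ on $[0,L]$ and study
\begin{equation*}
\Phi(f) := \mathbb{E}\!\left[\exp\!\left(-\int_0^L f(x)\, e^{-2\Lambda(x)}\, dx\right)\right] = \mathbb{E}\!\left[\exp\!\left(-e^{V(0)} \int_0^L f(x)\, e^{-U(x) - V(x)}\, dx\right)\right].
\end{equation*}
Since $U$ is independent of $V$, I would first integrate out $U$ while conditioning on $V$. The conditional expectation of $\exp(-c\int_0^L g(x)\, e^{-U(x)}\, dx)$ for a scaled Brownian motion $U$ can be handled through a Feynman--Kac / Bessel-process representation, or equivalently through a Bougerol-type identity relating exponential functionals of Brownian motion to hitting functionals of geometric Brownian motion. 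The remaining $V$-integration then combines the resulting kernel with the explicit tilt $\exp(-uV(0) - vV(L) - \int_0^L e^{-V(s)}\, ds)$ defining the law of $V$. After a Girsanov shift that absorbs the $e^{-V}$ penalty into a Bessel-type drift, together with a change of variable $V(0) \mapsto e^{V(0)}$, the resulting expression should recast as the Askey--Wilson moment formula of \cite{ck}.

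For $L=1$, exactly such an inversion was carried out in \cite{bld22, bkww}, using Bougerol's identity combined with Askey--Wilson algebra. The main obstacle in extending this to general $L$ is that the Askey--Wilson parameters governing the Corwin--Knizel formula scale nontrivially with $L$, while the $L=1$ matching leans on special-function identities tied to that specific parameter range. One must therefore either analytically continue the Askey--Wilson side in its parameters, or uncover a probabilistic identity between Brownian exponential functionals and the Askey--Wilson process that is uniform in $L$. Either route requires genuine new input beyond the $L=1$ case, and is where I expect the hardest step of \cite{zoe} to lie.

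A secondary issue to dispatch is showing that the Radon--Nikodym description of $V$ defines a finite measure on $C([0,L],\mathbb{R})$. Here the hypothesis $u+v>0$ is what saves us: it controls the tails of the Lebesgue distribution of $V(0)$ against the boundary tilts $-uV(0) - vV(L)$ together with the almost-sure growth of $\int_0^L e^{-V(s)}\, ds$. Once this integrability is in hand and all Fubini exchanges are justified, the Laplace-transform plan above should close the argument modulo the $L$-scaling issue flagged in the previous paragraph, which I anticipate to be the main technical hurdle.
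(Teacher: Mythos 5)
The paper does not supply a proof of this statement; it is imported verbatim as Theorem~1.4 of \cite{zoe}, stated with an explicit external attribution, and used as a black box in what follows. There is therefore no in-paper argument to measure your proposal against.

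Taken on its own terms, your sketch accurately traces the historical route: \cite{ck} characterizes the stationary measure on $[0,1]$ via multi-point Laplace transforms and Askey--Wilson moments; \cite{bld22,bkww} invert those transforms probabilistically for $L=1$; and the remaining task is to extend that inversion to all $L>0$. Your observation that $u+v>0$ is precisely what makes the tilted infinite measure for $V$ normalizable is the right compatibility check. However, you explicitly leave the hard step --- passing from $L=1$ to general $L$ --- unresolved, and you flag it yourself as the place where genuinely new content must enter. That is a real gap, not a routine one: the KPZ equation is not scale-invariant on $[0,L]$ (the dilation $x\mapsto x/L$, $t\mapsto t/L^2$ rescales the coefficient of the nonlinearity), so the Askey--Wilson parameters in the Corwin--Knizel formula depend nontrivially on $L$, and the $L=1$ computation cannot be bootstrapped by a scaling argument. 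Your two proposed fixes (analytic continuation in the Askey--Wilson parameters, or a uniform-in-$L$ Bougerol-type identity) are plausible directions, but neither is executed, and it is precisely this step that constitutes the content of \cite{zoe}. As written, the proposal is a well-informed outline rather than a proof.
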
    

Uniqueness of the above stationary measure follows from the work of \cite{km22,parekh2022ergodicity}.
We now give an  alternative description of the stationary measure.

\begin{definition}[Open-KPZ Gibbs measure] \label{openkpzgibbs} Assume $u+v>0$. Let $\Pr_{\m{free}}$ denotes the law of two independent Brownian motions started from $0$ and a value that is distributed according to Lebesgue measure on $\R$ respectively. This is an infinite measure. Suppose $u,v \ge 0$ and $L\ge 1$. We define the Open-KPZ Gibbs measure to be the law $\Pr_{u,v}^L$ on $\Lambda,\Lambda' : [0,L]\to \R$ given in terms of the following Radon-Nikodym derivative
	\begin{align}\label{Wuv}
    \frac{d\Pr_{u,v}^L}{d\Pr_{\m{free}}}(\Lambda,\Lambda') \propto W_{u,v}:=\exp\left(-u(\Lambda(0)-\Lambda'(0))-v(\Lambda(L)-\Lambda'(L))-\int_0^L e^{-(\Lambda(s)-\Lambda'(s))}ds\right).  
\end{align}
\end{definition}
\vspace{2mm}

Using properties of Brownian motions (see the discussions in Section 1.6.1 of \cite{zong}) and a change of variables, one can check that the law of $\Lambda(\cdot)$ under $\Pr_{u,v}^L$ (defined in Definition \ref{openkpzgibbs}) is the stationary measure for the open KPZ equation \eqref{openkpz} in $[0,L]$. We remark that a slightly different Gibbsian formulation is given in \cite{zong}, which is valid for all $u,v\in \R$, modulo a polymer convergence result (see Theorem 1.8 therein). For the purposes of the present work, however, it is more advantageous to use the formulation in Definition \ref{openkpzgibbs},since it can be directly related to the $H$-Brownian Gibbs measure studied in \cite{kpzle}.

\begin{definition}[$H$-Brownian Gibbs measure] \label{hbrm} Fix $k \ge 1$, $\vec{a},\vec{b}\in \R^k$ and $f,g: [a,b]\to \R\cup \{\pm\infty\}$. Let $H : \R\to [0,\infty)$ be a convex continuous function.
  Let $\Pr_{[t_1,t_2]}^{k;\vec{a};\vec{b}}$ denotes the law of $k$ independent Brownian bridges $(B_i)_{i=1}^k$ on $[t_1,t_2]$ from $\vec{a}$ to $\vec{b}$. Define a measure $\Pr_{W;[t_1,t_2]}^{k;\vec{a};\vec{b};f,g}$ absolutely continuous with respect to $\Pr_{[t_1,t_2]}^{k;\vec{a};\vec{b};f,g}$ with the following Radon-Nikodym derivative:
    \begin{align*}
        \frac{d\Pr_{H;[t_1,t_2]}^{k;\vec{a};\vec{b};f,g}}{d\Pr_{[t_1,t_2]}^{k;\vec{a};\vec{b}}}(B_1,B_2,\ldots, B_k) =W_H:=\prod_{i=0}^k \exp\bigg(-\int_{t_1}^{t_2} H(B_{i+1}(s)-B_{i}(s)) ds\bigg)
    \end{align*}
    where $B_0:=f$ and $B_{k+1}:=g$. We call $\Pr_{H;[t_1,t_2]}^{k;\vec{a};\vec{b};f,g}$ as the $H$-Brownian Gibbs measure.
\end{definition}

    The above measure was first introduced in \cite{kpzle}. They showed that the full-space KPZ equation with narrow wedge initial data can be viewed as the top curve of a line ensemble whose conditional distribution restricted to finite region is a $H$-Brownian Gibbs measure with $H(x)=e^{x}$. Using this line ensemble structure, various properties of the full-space KPZ equation are proved in the literature. We refer to \cite{cgh,dg21,dz22,dz22b,wu1,wu2,ds25} for references.   and it arises naturally in the context of full-space and half-space KPZ line ensembles \cite{kpzle,ds25}.

    In our context, the above measures are related to stationary measures as follows. Suppose $(\Lambda, \Lambda') \sim \Pr_{u,v}^L$ according to Definition \ref{openkpzgibbs}. Take any $0\le t_1<t_2 \le L$. From Gibbsian description, it is clear that conditional distributions can be expressed as $H$-Brownian Gibbs measures defined in Definition \ref{hbrm} with $H(x)=e^x$. We have
    \begin{equation}
        \label{gprop}
        \begin{aligned}
  &  \mathrm{Law}\bigg((\Lambda, \Lambda')|_{[t_1,t_2]} \mbox{ conditioned on } (\Lambda, \Lambda')|_{(t_1,t_2)^c} \bigg) = \Pr_{W; [t_1,t_2]}^{2;(a,a');(b,b'),+\infty,-\infty},
\\ &
    \mathrm{Law}\bigg(\Lambda|_{[t_1,t_2]} \mbox{ conditioned on } \Lambda|_{(t_1,t_2)^c}, \Lambda' \bigg) = \Pr_{W; [t_1,t_2]}^{1;a;b,+\infty,\Lambda'},
\\ &
    \mathrm{Law}\bigg(\Lambda'|_{[t_1,t_2]} \mbox{ conditioned on } \Lambda, \Lambda'|_{(t_1,t_2)^c} \bigg) = \Pr_{W; [t_1,t_2]}^{1;a';b'; \Lambda,-\infty},
\end{aligned}
    \end{equation}
where $a=\Lambda(t_1)$, $a'=\Lambda'(t_1)$, $b=\Lambda(t_2)$, and $b'=\Lambda'(t_2)$. $H$-Brownian Gibbs measure enjoys certain stochastic monotonicity, which we record below.

\begin{lemma}[Lemma 2.6 in \cite{kpzle}] Fix $k \ge 1$ and $t_1<t_2$. Suppose $\vec{a}^{(1)}, \vec{a}^{(2)}, \vec{b}^{(1)}, \vec{b}^{(2)} \in \R^k$ such that $\vec{a}_k^{(1)} \ge \vec{a}_k^{(2)}$ and $\vec{b}_k^{(1)} \ge \vec{b}_{k}^{(2)}$. Suppose $f^{(1)},f^{(2)},g^{(1)},g^{(2)} : [t_1,t_2]\to \R$ such that $f^{(1)} \ge f^{(2)}$ and $g^{(1)} \ge g^{(2)}$. There exists a coupling of $\vec{\mathcal{Q}}^{(1)} \sim \Pr_{W;[t_1,t_2]}^{k;\vec{a}^{(1)};\vec{b}^{(1)};f^{(1)};g^{(1)}}$ and $\vec{\mathcal{Q}}^{(2)} \sim \Pr_{W;[t_1,t_2]}^{k;\vec{a}^{(2)};\vec{b}^{(2)};f^{(2)};g^{(2)}}$ such that  almost surely $\mathcal{Q}_j^{(1)}(s) \ge \mathcal{Q}_j^{(2)}(s)$ for all $j\in \llbracket 1, k \rrbracket$, and $s\in [t_1,t_2]$. \end{lemma}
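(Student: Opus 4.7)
The plan is to establish the stochastic monotonicity via a heat-bath Markov chain that reduces the $k$-curve problem to a single-curve monotonicity statement, with the convexity of $H$ providing the one-dimensional coupling input.

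The first step is the case $k=1$: I would show that the tilted bridge measure $\Pr_{W;[t_1,t_2]}^{1;a;b;f,g}$ is stochastically monotone in each of the four parameters $a,b,f,g$. To prove this, I would discretize $[t_1,t_2]$ on a uniform mesh of $N$ points and approximate the Brownian bridge by the corresponding Gaussian random-walk bridge. The joint density of the path values at the grid points is a product of Gaussian transition factors (log-concave) and the interaction weight $\exp(-\sum_i[H(X_i-f(s_i))+H(g(s_i)-X_i)])$, which is log-concave because $H$ is convex. Joint log-concavity, together with the translation-monotone structure of the interaction in $f$ and $g$, yields a Holley-type monotone coupling producing coordinate-wise dominated discrete paths; tightness of the tilted bridge laws (standard Girsanov estimates, using $H\ge 0$) together with a Skorokhod representation then passes the monotone coupling to the Brownian continuum limit.

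The second step extends this to general $k$ via a heat-bath Markov chain on $C([t_1,t_2],\R)^k$ whose update at each step selects an index $j\in\llbracket 1,k\rrbracket$ uniformly and resamples $B_j$ from its conditional law. By the Gibbs property \eqref{gprop}, this conditional law is precisely a single-curve $H$-Brownian Gibbs measure with the neighboring curves $B_{j-1}$ and $B_{j+1}$ playing the roles of $f$ and $g$. The full $H$-Brownian Gibbs measure is the unique invariant distribution of this chain, and ergodicity follows from standard arguments (positive Lebesgue density of the Brownian-bridge resampling kernel and continuity of $W_H$). Running two copies of the chain with the same index sequence and the same uniform randomness driving each one-dimensional resampling via the monotone coupling of Step 1, starting from any pair of ordered initial configurations (e.g.\ linear interpolants of the respective endpoints), preserves coordinate-wise domination at every step, and the time-infinity limit produces the desired coupling of $\vec{\mathcal{Q}}^{(1)}$ and $\vec{\mathcal{Q}}^{(2)}$.

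The main technical obstacle I expect is handling the infinite boundary data $f\equiv+\infty$ or $g\equiv-\infty$ that is allowed in the statement and appears in the applications of \eqref{gprop}. I would address this by a monotone limiting procedure: first establish the coupling for finite boundary data, then send $f\uparrow+\infty$ and $g\downarrow-\infty$. Because $H\ge 0$, the interaction terms $H(B_1-f)$ and $H(g-B_k)$ decrease monotonically to zero in these limits, so the Radon--Nikodym weights form a bounded monotone family and the measures converge. Coordinate-wise domination is then preserved in the limit by a Skorokhod representation together with the continuity of the pointwise order on $C([t_1,t_2],\R)$.
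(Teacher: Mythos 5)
The paper does not actually give a proof of this statement; it is imported verbatim from Corwin--Hammond \cite{kpzle} (their Lemma~2.6), whose argument is exactly the discretize--MCMC--take-a-limit strategy you describe: approximate the Brownian bridges by random-walk bridges, establish the discrete monotone coupling by running two coupled Gibbs samplers for the interacting ensemble, and pass to the continuum. So your overall route is the same as the one in the cited reference, modulo whether the resampling is done one whole curve at a time (as you propose, with the single-curve case handled first) or one lattice site at a time; both decompositions are valid and rely on the same one-dimensional input.

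One step as written is not quite right and would need to be repaired. In Step~1 you assert that \emph{joint log-concavity} of the discretized tilted bridge density ``yields a Holley-type monotone coupling.'' Log-concavity is neither the right hypothesis nor sufficient: Holley's inequality (and hence Strassen's monotone coupling) requires the FKG lattice condition, i.e.\ log-\emph{super}modularity of the joint density, together with the appropriate increasing-differences structure in the boundary parameters. A correlated Gaussian vector is always log-concave but is MTP$_2$ only when the off-diagonal entries of its precision matrix are nonpositive, so log-concavity alone proves nothing here. What actually makes the argument work is that (i) the discrete bridge energy $\sum_i (X_{i+1}-X_i)^2$ has only nearest-neighbour cross-terms with the correct sign, so the Gaussian factor is log-supermodular; (ii) the interaction $\sum_i\bigl[H(X_i-f(s_i))+H(g(s_i)-X_i)\bigr]$ is a sum of functions of a single $X_i$, hence trivially supermodular in $(X_i,X_j)$; and (iii) convexity of $H$ and of $z\mapsto(z-a)^2$ gives $\partial^2_{X_i,f}\bigl[-H(X_i-f)\bigr]=H''(X_i-f)\ge 0$ and similarly for $g,a,b$, which is the increasing-differences condition needed to compare the two measures with ordered boundary data. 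If you replace ``joint log-concavity'' by these supermodularity checks, Step~1 is correct and the rest of your proposal (curve-level heat bath, preservation of order under coupled updates, and the monotone $f\uparrow+\infty$, $g\downarrow-\infty$ limit using $H\ge0$) goes through.
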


\subsection{Diffusive limits of the Open-KPZ Gibbs measure}

In this section, we study diffusive limits of  $\Pr_{u,v}^L$ measure defined in Definition \ref{openkpzgibbs}. Throughout this subsection we assume $(\Lambda,\Lambda')$ is distributed as $\Pr_{u,v}^L$ and consider the following processes on $[0,1]$:
\begin{align}
   \nonumber & B_L(x):=\tfrac1{\sqrt{L}} \Lambda(xL), \quad B_L'(x):=\tfrac1{\sqrt{L}} \Lambda'(xL), \\ \label{ulvl}
   & U_L(x):=B_L(x)+B_L'(x), \quad V_L(x):=B_L(x)-B_L'(x).
\end{align}
Due to the diffusive scaling we have
	\begin{align}
		 \Pr_{u,v}^L((B_L, B_L') \in A):=\frac{\Ex_{\m{free}}[W_{u,v}^L\ind_{(B_L,B_L')\in A}]}{\Ex_{\m{free}}[W_{u,v}^L]}. \label{3.2a}
	\end{align}
    where \begin{align*}
		W_{u,v}^L=\exp\left(-u\sqrt{L}V_L(0)-v\sqrt{L}V_L(1)-L\int_0^1 e^{-\sqrt{L}V_L(s)}ds\right)  
	\end{align*}
is same as $W_{u,v}$ defined in \eqref{Wuv}. The main result of this subsection is the following.

\begin{theorem}[Diffusive limits] \label{l.difflimits}  We have the following.
		\begin{enumerate}[label=(\alph*),leftmargin=18pt]
			\item If $u,v>0$, $B_L, B_L'$ converge to non-intersecting Brownian motions that start from zero and end at the same point.
			\item If $u>0, v=0$, $B_L, B_L'$  converges to non-intersecting Brownian motions that start from zero.
			\item If $u=0, v>0$, $B_L, B_L'$  converge to $B,B'$ where $B(x)$ and $B'(x)-B(1)$ are independent standard Brownian motions that are conditioned to be $B(x) \ge B'(x)$ on $[0,1]$.
		\end{enumerate}
        Furthermore, we have pointwise $L^2$ convergence.
	\end{theorem}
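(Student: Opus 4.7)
The plan is to analyze the weight $W_{u,v}^L$ asymptotically as $L \to \infty$ and show that $\Pr_{u,v}^L$ concentrates on paths obeying the expected boundary and non-intersection constraints. In the rescaled coordinates $(B_L, B_L')$, the base measure becomes: $B_L$ is a standard Brownian motion from $0$, while $B_L'(0)$ is Lebesgue on $\R$ and $B_L' - B_L'(0)$ is an independent standard Brownian motion. The three terms in $\log W_{u,v}^L$ then read as (i) $-u\sqrt{L}\,V_L(0)$, which concentrates $V_L(0)$ near $0$ on scale $L^{-1/2}$ when $u > 0$; (ii) $-v\sqrt{L}\,V_L(1)$, which does the same at $x = 1$ when $v > 0$; and (iii) $-L\int_0^1 e^{-\sqrt{L}V_L(s)}\,ds$, which acts as a soft constraint forcing $V_L \ge 0$ on $[0,1]$.

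The main analytic step is to make (iii) rigorous. Choosing a cutoff $\kappa \in (0,1/2)$, on the event $\{V_L(s) \ge L^{-1/2+\kappa}\text{ for all }s\in[0,1]\}$ the integral in (iii) is at most $L\,e^{-L^{\kappa}} = o(1)$, so the bulk weight is essentially $1$; conversely, any dip of $V_L$ below $-L^{-1/2+\kappa}$ on an interval of positive length $\e$ contributes a factor at least $\e L\,e^{L^{\kappa}}$ to the integrand, which crushes the weight. Combined with standard small-ball estimates for Brownian increments near the boundaries, this reduces the measure $\Pr_{u,v}^L$ to one where $V_L$ is constrained to be non-negative on $[0,1]$, with the boundary behavior determined by (i) and (ii). The resulting limit can be identified with a Doob $h$-transform of two-dimensional Brownian motion killed upon hitting the diagonal $\{V=0\}$, with boundary conditions at $\{x=0,1\}$ dictated by the signs of $u$ and $v$. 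This produces the three stated limits: in case (a) both endpoints are pinned, giving two non-intersecting Brownian motions on $[0,1]$ with matched start and end; in case (b) only the left endpoint is pinned; and in case (c) only the right endpoint is pinned, giving the stated product decomposition after a time-reversal identification.

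To upgrade weak convergence to pointwise $L^2$ convergence, I would establish uniform-in-$L$ moment bounds $\sup_L \Ex_{\Pr_{u,v}^L}[|B_L(x)|^{2+\delta}] < \infty$ for some $\delta > 0$ and each fixed $x \in [0,1]$. These would follow by invoking the stochastic monotonicity of the $H$-Brownian Gibbs measure (Lemma 2.6 of \cite{kpzle}) to sandwich $B_L(x)$ between explicit non-intersecting Brownian reference ensembles, which have Gaussian tails. Uniform integrability of $B_L(x)^2$ then promotes convergence in distribution to $L^2$ convergence.

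The main obstacle I anticipate is case (c), where the absence of a confining weight at $x = 0$ leaves $V_L(0)$ distributed on a macroscopic scale under the base measure. Here one must simultaneously handle the infinite base measure for $V_L(0)$, the soft non-intersection constraint in the bulk, and the concentration at $V_L(1) = 0$, and show that the limit admits the precise decomposition stated in the theorem. The identification of the correct $h$-transform with a free left boundary, together with the proper normalization of the partition function in this regime, is the most delicate part of the argument.
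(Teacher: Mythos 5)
Your high-level plan (rescale the weight, argue that the exponential term acts as a soft non-intersection constraint, and that $u>0$ or $v>0$ pins the corresponding endpoint) is the right heuristic, and your use of time reversal for case (c) is exactly what the paper does. But there is a structural simplification you miss that makes the paper's proof much more tractable, and without it a key step in your plan is left as an assertion rather than an argument.

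The paper observes that, after conditioning on $B_L'(0)$, the sum $U_L=B_L+B_L'$ and the difference $V_L=B_L-B_L'$ are \emph{independent}, with $U_L$ an untilted Brownian motion (since the Radon--Nikodym derivative depends only on $V_L$) and $V_L$ a one-dimensional tilted Brownian motion whose convergence to a Brownian meander or excursion can be quoted from \cite[Theorem~7.1]{ds25}. Your proposal instead works directly with the correlated pair $(B_L,B_L')$ and asserts that the limit ``can be identified with a Doob $h$-transform of two-dimensional Brownian motion killed upon hitting the diagonal.'' That identification is where the real content lies, and you do not carry it out. In two dimensions one must show that the soft constraint $-L\int_0^1 e^{-\sqrt{L}V_L}\,ds$ converges to the hard killing at the diagonal, match the normalizations of the partition function (which is exactly what the paper's Lemma~\ref{polylbd} and Lemma~\ref{wupbd} supply, with rates $L^{-3/2}$ or $L^{-1/2}$ depending on whether both endpoints are pinned), and verify the boundary entrance law at each end. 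Your preliminary estimate that the integral is negligible on $\{V_L\ge L^{-1/2+\kappa}\}$ and huge on a dip of fixed width is not enough: the dominant contributions to the Gibbs measure come from small excursions of $V_L$ down to scale $L^{-1/2}$ near the pinned endpoints, and one must control these to the correct polynomial order. The one-dimensional reduction is what makes this analysis manageable; without it, your sketch leaves a genuine gap.

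A second, smaller point: your plan for $L^2$ convergence (uniform moment bounds via stochastic monotonicity) aligns with the paper's treatment of $V_L$, but for $U_L$ the $L^2$ convergence is immediate once one knows $\Ex_{u,v}^L[V_L(0)^2]\to 0$ (the paper's Lemma~\ref{pconv}), since $U_L$ is simply a Brownian motion started from $B_L'(0)$. You would get there eventually, but you are re-deriving by hand a fact the sum/difference decomposition gives for free.

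In summary: same picture, but you are missing the observation that conditioning on $B_L'(0)$ decouples $U_L$ and $V_L$, which both collapses the problem to one dimension and lets one quote the $V_L$ limit from the literature. The two-dimensional $h$-transform identification you gesture at is the hard part and is not established.
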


    \begin{figure}[h!]
        \centering
        \includegraphics[width=14cm]{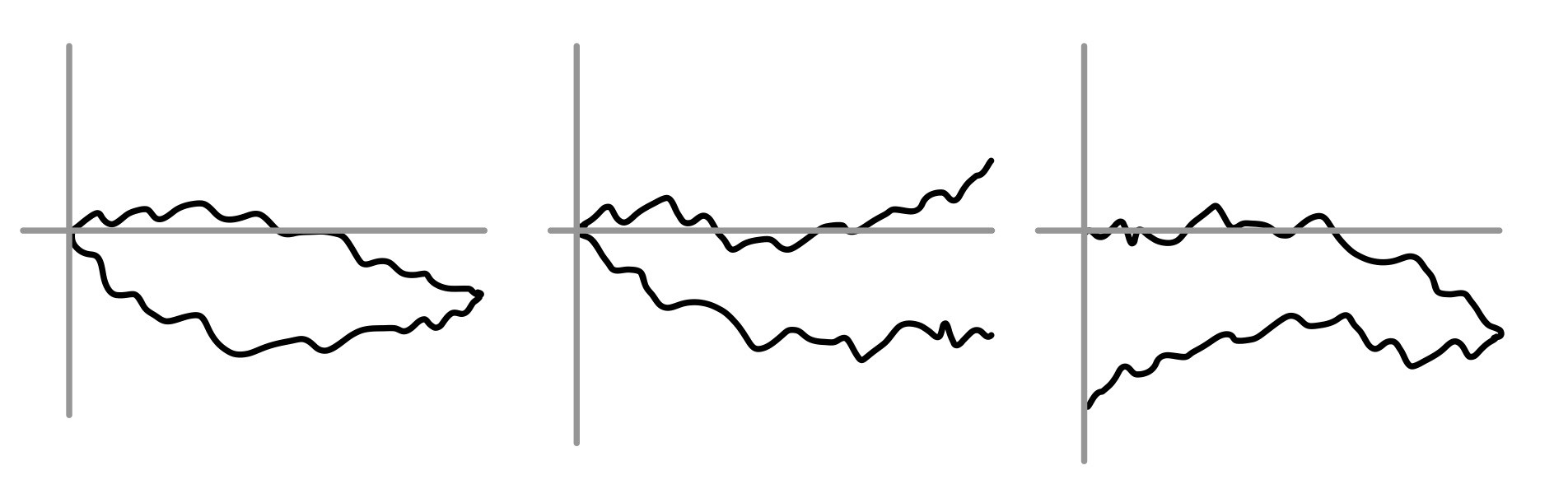} 
        \caption{Limiting measures when $u,v>0$ (Left), $u>0,v=0$ (Middle), and $u=0,v>0$ (Right).}
        \label{lims}
    \end{figure}

We refer to Figure \ref{lims} for depiction of the limiting measures in the three cases. We require a couple of preliminary lemmas before proving Theorem \ref{l.difflimits}. Our first lemma provides a lower bound for the denominator of \eqref{3.2a}.
\begin{lemma} \label{polylbd} Suppose $u+v>0$.
\begin{enumerate}[label=(\alph*),leftmargin=20pt]
    \item \label{part1} Suppose $u,v>0$. There exists  a constant $c>0$ depending on $u,v$ such that $\Ex_{\m{free}}[W_{u,v}^L] \ge cL^{-3/2}$ for all $L\ge 1$.
    \item \label{part2} Suppose $u=0$ or $v=0$. There exists  a constant $\til{c}>0$ depending on $u,v$ such that $\Ex_{\m{free}}[W_{u,v}^L] \ge \til{c} L^{-1/2}$ for all $L\ge 1$.
\end{enumerate}
\end{lemma}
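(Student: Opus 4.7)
My plan is to exhibit a ``good event'' for the path $V := \Lambda - \Lambda'$ on which the weight $W_{u,v}^L$ is bounded from below by a constant while having free measure of the stated order. Noting that under $\Pr_{\m{free}}$, $V(0)$ is Lebesgue on $\R$ and $V - V(0)$ is a Brownian motion with diffusion coefficient $2$ on $[0,L]$, one has
\begin{align*}
\Ex_{\m{free}}[W_{u,v}^L] = \int_\R dv_0\, e^{-u v_0}\, \E\Bigl[ e^{-v V(L) - \int_0^L e^{-V(s)}\, ds} \Bigm| V(0) = v_0\Bigr].
\end{align*}
For part (a) with $u,v>0$, the natural target is
\[
E := \{V(0), V(L) \in [0,1],\ V(s) \ge 0 \text{ for all } s \in [0,L]\},
\]
on which $e^{-u V(0) - v V(L)} \ge e^{-(u+v)}$. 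The classical Brownian bridge formula $\Pr(\min BB \ge 0 \mid BB(0) = v_0,\, BB(L) = v_L) = 1 - e^{-v_0 v_L/L}$ (for diffusion $2$), paired with the transition density $p_L(v_0, v_L) = (4\pi L)^{-1/2} e^{-(v_L - v_0)^2/(4L)}$, yields
\[
\Pr_{\m{free}}(E) = \int_0^1\!\int_0^1 p_L(v_0, v_L)\,(1 - e^{-v_0 v_L/L})\, dv_0\, dv_L \sim c\, L^{-3/2}.
\]

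The plan to close the argument is to show that $\Ex_{\m{free}}[\ind_E \int_0^L e^{-V(s)}\, ds] \lesssim L^{-3/2}$, after which Markov's inequality supplies a constant $K = K(u,v)$ with $\Pr_{\m{free}}(E \cap \{\int e^{-V} \le K\}) \ge \tfrac{1}{2}\Pr_{\m{free}}(E)$, giving
\[
\Ex_{\m{free}}[W_{u,v}^L] \ge e^{-(u+v) - K} \cdot \tfrac{1}{2}\Pr_{\m{free}}(E) \gtrsim L^{-3/2}.
\]
The required upper bound on $\Ex_{\m{free}}[\ind_E \int_0^L e^{-V(s)}\, ds]$ uses the killed-BM transition density $p^+_t(a,b) = p_t(a,b) - p_t(a,-b) \approx \tfrac{ab}{t\sqrt{4\pi t}}\, e^{-(a^2 + b^2)/(4t)}$ for small $a,b$. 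For $s$ in the bulk $[1, L-1]$ one integrates $\int_0^\infty e^{-y}\, p^+_s(v_0, y)\, p^+_{L-s}(y, v_L)\, dy$ over $(v_0, v_L) \in [0,1]^2$ to obtain an $s$-density of order $(s(L-s))^{-3/2}$, whose $s$-integral is $O(L^{-3/2})$; the boundary regions $s \in [0,1] \cup [L-1, L]$ are controlled by the trivial bound $e^{-V(s)} \le 1$ on $E$ combined with $\Pr_{\m{free}}(E) \sim L^{-3/2}$.

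Part (b) follows the same template with the constraint on $V(L)$ (resp.\ $V(0)$) dropped when $v = 0$ (resp.\ $u = 0$): for $v = 0,\, u > 0$ one takes $E' := \{V(0) \in [0,1],\ V(s) \ge 0 \text{ on } [0,L]\}$, and $\Pr_{\m{free}}(E') = \int_0^1 (1 - 2\Phi(-v_0/\sqrt{2L}))\, dv_0 \sim L^{-1/2}$ by the reflection principle; the same Markov-type argument for $\int e^{-V(s)}\, ds$ yields $\Ex_{\m{free}}[W_{u,v}^L] \gtrsim L^{-1/2}$. The main technical obstacle in either case is the tight upper bound on $\Ex_{\m{free}}[\ind_E \int_0^L e^{-V(s)}\, ds]$: heuristically $V$ restricted to $E$ behaves like a Brownian excursion, of size $\sqrt{L}$ in the bulk (making $e^{-V(s)}$ exponentially small there) and of size $\sqrt{s}$ (resp.\ $\sqrt{L-s}$) near the endpoints, where it contributes a finite order-one piece via $\int e^{-c\sqrt{s}}\, ds$. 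A perhaps cleaner alternative is to stochastically dominate $V \mid E$ below by a Bessel-$3$ bridge and invoke known moment bounds for exponential functionals of Bessel-$3$ processes.
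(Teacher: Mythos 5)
Your proposal is correct and follows the same broad outline as the paper's: isolate a good event $E$ on which the boundary terms $e^{-uV(0)-vV(L)}$ are $\Theta(1)$, show $\Pr_{\m{free}}(E)$ has the stated polynomial order via the killed-bridge formula $(1-e^{-xy/L})$ (resp.\ reflection for part~(b)), and use Markov's inequality to control the exponential functional $\int_0^L e^{-V(s)}\,ds$ on $E$. The only real difference is how that last step is executed. The paper invokes stochastic monotonicity of the conditioned Brownian bridge to push the endpoints of $V$ down to $0$, thereby reducing the conditional law on $E$ to a Brownian excursion, and then bounds $\Ex_+\bigl[\int_0^L e^{-V}\bigr]$ by a constant using the explicit one-point density of the excursion; Markov on the excursion measure yields $\Ex_+\bigl[\exp(-\int_0^L e^{-V})\bigr]\ge \tfrac12 e^{-2c}$. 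You instead bound $\Ex_{\m{free}}\bigl[\ind_E\int_0^L e^{-V(s)}\,ds\bigr]\lesssim L^{-3/2}$ directly from the killed transition density $p_t^+(a,b)=p_t(a,b)-p_t(a,-b)$ and then apply Markov on the restricted event $E$. These are essentially the same computation in different packaging: the paper's excursion reduction buys a closed-form density and a cleaner conditional calculation, while your route avoids the stochastic-monotonicity-plus-limiting argument entirely (at the price of tracking the two-sided killed kernel by hand, including the MVT bound $p_s^+(v_0,y)\lesssim (1+y)y/s^{3/2}$ for $s\ge 1$, $v_0\le 1$, which makes your $s$-integral converge). Both are correct, and your closing suggestion of Bessel-$3$ domination is in fact the device the paper's monotonicity step realizes.
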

\begin{proof} Clearly it suffices to prove the lemma for all large enough $L$. We shall drop the subscript $\m{free}$ from the notation for simplicity. Suppose $u,v>0$. Consider the event
\begin{align*}
    \m{A}:=\left\{V_L(0), V_L(1) \in [0,1/\sqrt{L}], \ V_L(x) > 0 \mbox{ for all }x\in[0,1]\right\}.
\end{align*}
In words, the $\m{A}$ event requires the boundary points to be within certain $O(1/\sqrt{L})$ window and $B_L$ always stay above $B_L'$.  
Note that
\begin{equation} \label{inder}
    \begin{aligned}
\Ex[W_{u,v}^L] \ge \Pr(\m{A})\cdot \frac{\Ex[W_{u,v}^L\ind_{\m{A}}]}{\Pr(\m{A})} & \ge e^{-|u|-|v|}\cdot \Pr(\m{A}) \cdot  \frac{\Ex\left[\exp\left(-L\int\limits_0^1 e^{-\sqrt{L}V_L(s)}ds\right)\ind_{\m{A}}\right]}{\Pr(\m{A})}.
\end{aligned}
\end{equation}
The ratio on the r.h.s.~of the above equation can be viewed as the conditional expectation of $\exp\left(-L\int\limits_0^1 e^{-\sqrt{L}V_L(s)}ds\right)$ conditioned on the event $\m{A}$. By stochastic monotonicity, this conditional expectation is increasing w.r.t.~ the endpoints of $V_L(\cdot)$. Thus sending $V_L(0),V_L(1) \downarrow 0$, we see that
\begin{align*}
    \mbox{r.h.s.~of \eqref{inder}} \ge e^{-|u|-|v|}\cdot \Pr(\m{A})\cdot \Ex_+\left[\exp\left(-L\int_0^1 e^{-\sqrt{L}V_L(s)}ds\right)\right]
\end{align*}
where $\Pr_+$ is the law of a Brownian excursion. 
We now claim that 
\begin{align}\label{claim01}
    \Pr(\m{A}) \ge c_2 L^{-3/2}, \quad \Ex_+\left[\exp\left(-L\int_0^1 e^{-\sqrt{L}V_L(s)}ds\right)\right] \ge c_1
\end{align}
holds for all large enough $L$. Part~\ref{part1} of the lemma follows by plugging this bound back in \eqref{inder}. We thus focus on proving \eqref{claim01}. Towards this end, note that conditioned on $V_L(0), V_L(1)$, $V_L$ is a Brownian bridge from $V_L(0)$ to $V_L(1)$ with diffusion coefficient $2$. By the reflection principle, the probability of a Brownian bridge from $x>0$ to $y>0$ with diffusion coefficient $2$ stays positive is $(1-e^{-xy})$. 
Thus,
\begin{align*}
\Pr(\m{A})=\sqrt{L}\int_{0}^{1/\sqrt{L}}\int_{0}^{1/\sqrt{L}} \frac{1}{\sqrt{4\pi}}e^{-(x-y)^2/4}(1-e^{-xy})dx dy.
\end{align*}
which is bounded from below by $c_2L^{-3/2}$ for some constant $c_2>0$. This verifies the first part of the claim. 

Using the one point density of Brownian excursion from \cite{durrett1977weak} we have that
\begin{align*}
    \Ex_+\left[L\int_0^1 e^{-\sqrt{L}V_L(s)}ds\right] & = L\int_0^1 \Ex_+[e^{-\sqrt{L}V_L(s)}]ds \\ & = L\int_0^1 \int_0^\infty e^{-\sqrt{L}y}\cdot \frac{2y^2e^{-y^2/2s(1-s)}}{\sqrt{2\pi s^3(1-s)^3}}dyds \\ & = L\int_0^1 \int_0^\infty e^{-z\sqrt{Ls(1-s)}}\cdot \frac{2z^2e^{-u^2/2}}{\sqrt{2\pi}}dz ds \\ & = 2L\int_0^{\infty} \frac{2z^2e^{-z^2/2}}{\sqrt{2\pi}} \int_0^{1/2} e^{-z\sqrt{Ls(1-s)}} ds dz.
\end{align*}
For the inner integral we note that
\begin{align*}
    \int_0^{1/2} e^{-z\sqrt{Ls(1-s)}} ds \le \int_0^{1} e^{-z\sqrt{Ls/2}} ds \le \frac{4}{z^2L}.
\end{align*}
Thus $\Ex_+\left[L\int_0^1 e^{-\sqrt{L}V_L(s)}ds\right] \le c_3$ for some $c_3>0$. This implies 
\begin{align*}
    \Pr_+\bigg(L\int_0^1 e^{-\sqrt{L}V_L(s)}ds \le 2c_3\bigg) \ge 1/2
\end{align*}
which forces
\begin{align*}
    \Ex_+\left[\exp\left(-L\int_0^1 e^{-\sqrt{L}V_L(s)}ds\right)\right ] \ge e^{-2c_3}\cdot \Pr_+\bigg(L\int_0^1 e^{-\sqrt{L}V_L(s)}ds \le 2c_3\bigg) \ge \frac12e^{-2c_3}.
\end{align*}
This verifies the second part of \eqref{claim01} completing the proof of part~\ref{part1}.

\medskip

For part~\ref{part2}, without loss of generality, assume $u>0$ and $v=0$. We consider the following event instead:
\begin{align*}
    \til{\m{A}}:=\left\{V_L(0) \in [0,1/\sqrt{L}], \  V_L(1) \in [0,1], \ V_L(x) > 0 \mbox{ for all }x\in[0,1]\right\}.
\end{align*}
By the same arguments as part~\ref{part1} we have
\begin{align*}
    \Ex[W_{u,v}^L] \ge e^{-|u|}\cdot \Pr(\til{\m{A}})\cdot \Ex_+\left[\exp\left(-L\int_0^1 e^{-\sqrt{L}V_L(s)}ds\right)\right].
\end{align*}
We have already noted that the above expectation is at least $c_2$. By the same argument used to bound $\Pr(\m{A})$, we may similarly bound $\Pr(\til{\m{A}})$ as follows.
\begin{align*}
\Pr(\til{\m{A}})=\sqrt{L}\int_{0}^{1/\sqrt{L}}\int_{0}^{1} \frac{1}{\sqrt{4\pi}}e^{-(x-y)^2/4}(1-e^{-xy})dx dy \ge \til{c}_1L^{-1/2}.
\end{align*}
This verifies part~\ref{part2}.
\end{proof}

Having established a lower bound on the partition function $\Ex[W_{u,v}^L]$, we may now deduce various probability statements under $\Pr_{u,v}^L$ by estimating the numerator. Our first lemma in this direction shows that $u$ and $v$ weight parameters has a pinning effect at starting and ending points.

\begin{lemma}\label{pconv} If $u>0$, then $\Ex_{u,v}^L[V_L(0)^2] \xrightarrow{L\rightarrow +\infty} 0$. If $v>0$, then $\Ex_{u,v}^L[V_L(1)^2] \xrightarrow{L\rightarrow +\infty} 0$.
\end{lemma}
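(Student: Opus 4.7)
Since $V_L(0) = Y/\sqrt{L}$ with $Y := \Lambda(0) - \Lambda'(0)$, the first assertion reduces to showing $\Ex_{u,v}^L[Y^2] = o(L)$; the plan is to establish the stronger uniform bound $\Ex_{u,v}^L[Y^2] = O(1)$. The pointwise identity $e^{\theta Y}W_{u,v}^L = W_{u-\theta, v}^L$ (valid because $Y$ appears in $\log W_{u,v}^L$ only through the $-uY$ term) yields the moment generating function
\[
M_Y(\theta) := \Ex_{u,v}^L[e^{\theta Y}] = \frac{\Ex_{\m{free}}[W_{u-\theta, v}^L]}{\Ex_{\m{free}}[W_{u,v}^L]}, \qquad \theta < u+v,
\]
so $\Ex_{u,v}^L[Y^2] = M_Y''(0)$. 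It therefore suffices to show that $M_Y$ is twice continuously differentiable at $0$ with derivatives bounded uniformly in $L$.

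Under $\Pr_{\m{free}}$, parameterize $\Lambda = B$ and $\Lambda' = -Y + B'$ with $B, B'$ independent Brownian motions from $0$ and $Y$ Lebesgue-distributed; set $D := B - B'$ (a $\sqrt{2}$-Brownian motion independent of $Y$) and $I := \int_0^L e^{-D(s)}\,ds$. Then $W_{u,v}^L = e^{-(u+v)Y - vD(L) - e^{-Y}I}$, and the substitution $z = e^{-Y}I$ reduces the $Y$-integral to a Gamma function, giving
\[
\Ex_{\m{free}}[W_{u,v}^L] = \Gamma(u+v)\,\Ex_D\!\left[e^{-vD(L)}\,I^{-(u+v)}\right].
\]
Writing $a := u+v$, the MGF becomes
\[
M_Y(\theta) = \frac{\Gamma(a-\theta)}{\Gamma(a)} \cdot \Ex_{\m{tilted}}[I^{\theta}], \qquad \frac{d\Pr_{\m{tilted}}}{d\Pr_D} \propto e^{-vD(L)}I^{-a}.
\]

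The crucial step is to verify that $\Ex_{\m{tilted}}[I^{\pm \delta}]$ is uniformly bounded in $L$ for some small $\delta > 0$. Successive Girsanov tilts (first $D \mapsto D - 2v\,\cdot$ to absorb $e^{-vD(L)}$, then a time-reversal $s \mapsto L - s$ followed by a second Girsanov shift) yield the representation
\[
\Ex_D\!\left[e^{-vD(L)}I^{-a'}\right] = e^{(a'-v)^2 L}\,\Ex_{D^*}\!\left[\bigg(\int_0^L e^{2(a'-v)s + D^*(s)}\,ds\bigg)^{-a'}\right]
\]
for any $a' > 0$, with $D^*$ a fresh $\sqrt{2}$-Brownian motion. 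A Laplace-type asymptotic (the integrand peaks at $s = L$ when $a' > v$, and the lower tail $\int_0^1 e^{-D^*(s)}ds \ge e^{-\max_{[0,1]}|D^*|}$ is bounded below uniformly in $L$) shows the inner expectation equals $C_{a'}\,e^{[-(a'-v)^2 + v^2]L}/\sqrt{L}\,(1+o(1))$ with $C_{a'}$ continuous in $a'$. Combining, $\Ex_D[e^{-vD(L)}I^{-a'}] = C_{a'}\,e^{v^2L}\,L^{-1/2}\,(1+o(1))$; the factors $e^{v^2L}$ and $L^{-1/2}$ are independent of $a'$ and therefore cancel in the MGF ratio, producing a finite limit for $M_Y(\theta)$ uniformly in $L$. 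Hence $\Ex_{u,v}^L[Y^2] = M_Y''(0) = O(1)$ and $\Ex_{u,v}^L[V_L(0)^2] = O(1/L) \to 0$.

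The second claim, $\Ex_{u,v}^L[V_L(1)^2] \to 0$ for $v > 0$, will follow from the time-reversal symmetry $s \mapsto L-s$, which maps $\Pr_{u,v}^L$ to $\Pr_{v,u}^L$ and sends $V_L(1)$ under the former to $V_L(0)$ under the latter, reducing to the first claim with $u \leftrightarrow v$. The main obstacle lies in executing the Laplace asymptotic with explicit uniform-in-$L$ error control: the naive upper bound $\Ex_D[e^{-vD(L)}I^{-a'}] \le e^{v^2L}$ combined with Lemma~\ref{polylbd}'s lower bound $\ge cL^{-3/2}$ is far too lossy to control the MGF ratio, so the analysis must track the cancellation of the $e^{v^2L}$ prefactor between the numerator and denominator precisely.
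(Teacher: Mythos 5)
Your reduction to a moment generating function is clean and the first two displayed identities are correct: the pointwise relation $e^{\theta Y}W_{u,v}^L=W_{u-\theta,v}^L$, the Gamma-integral collapse $\Ex_{\mathsf{free}}[W_{u,v}^L]=\Gamma(u+v)\,\Ex_D[e^{-vD(L)}I^{-(u+v)}]$, and the Girsanov/time-reversal representation $\Ex_D[e^{-vD(L)}I^{-a'}]=e^{(a'-v)^2L}\,\Ex_{D^*}\big[\big(\int_0^Le^{2(a'-v)s+D^*(s)}ds\big)^{-a'}\big]$ all check out. The problem is the Laplace asymptotic you assert for the inner expectation. Your claim implies $\Ex_D[e^{-vD(L)}I^{-a'}]\sim C_{a'}e^{v^2L}L^{-1/2}$, but by the very Gamma identity you derived this would force $\Ex_{\mathsf{free}}[W_{u,v}^L]\sim\Gamma(u+v)C_{u+v}e^{v^2L}L^{-1/2}$, which is exponentially large, whereas Lemma~\ref{polylbd} and Lemma~\ref{wupbd} together pin $\Ex_{\mathsf{free}}[W_{u,v}^L]\asymp L^{-3/2}$ when $u,v>0$. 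So the asymptotic is off by a factor of $e^{v^2L}L$. The error is in treating the endpoint $D^*(L)$ as effectively decoupled from the integral: the factor $e^{-a'D^*(L)}$ coming out of the Laplace peak shifts the dominant value of $D^*(L)$ to $\approx-2a'L$, and for an endpoint that negative the conditioned bridge has slope $\approx-2a'$, so $2(a'-v)s+D^*(s)\approx-2vs$ is \emph{decreasing} and the integral is no longer dominated at $s=L$ at all. Conditioning on the endpoint correctly shows the Gaussian weight concentrates at $D(L)=O(\sqrt L)$, giving the $L^{-3/2}$ answer; your saddle is at the wrong place.

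Beyond this, even with the correct $\asymp L^{-3/2}$ asymptotic, you would need the ratio $\Ex_D[e^{-vD(L)}I^{-a+\theta}]/\Ex_D[e^{-vD(L)}I^{-a}]$ together with its first two $\theta$-derivatives to converge \emph{uniformly} in $L$ near $\theta=0$, which is a nontrivial refinement of Lemmas~\ref{polylbd}--\ref{wupbd} (those only give matching upper/lower bounds with undetermined constants). Also note that the order of magnitude jumps from $L^{-3/2}$ to $L^{-1/2}$ as $v\downarrow0$ (Lemma~\ref{polylbd}(b)), so the constants $C_{a'}$ degenerate there, and the case $v=0$, $u>0$ -- which is covered by the lemma -- must be handled separately. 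In contrast, the paper sidesteps all of this: it bounds $\Ex[W_{u,v}^L\,V_L(0)^2\ind_{\{V_L(0)\le-\e\}}]$ (super-exponentially small because the integral penalty blows up) and $\Ex[W_{u,v}^L\,V_L(0)^2\ind_{\{V_L(0)\ge\e\}}]$ (exponentially small via $e^{-u\sqrt L\,V_L(0)}$ after controlling $V_L(1)$), and then divides by the crude lower bound of Lemma~\ref{polylbd}; this only needs $\Ex[W_{u,v}^L]\gtrsim L^{-3/2}$, not its precise asymptotics. The MGF route could plausibly be made to work and would give the stronger $\Ex[Y^2]=O(1)$, but it requires a substantially more careful partition-function analysis than what you sketch.
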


\begin{proof} \textbf{Can't be too low.} Suppose $u>0$. Fix any $\e>0$. We first show that
\begin{align}\label{hi1}
    \Ex_{u,v}^L[V_L(0)^2\ind_{\{V_L(0)\le -\e\}}] \to 0 \mbox{ if } u>0, \quad \Ex_{u,v}^L[V_L(1)^2\ind_{\{V_L(1)\le -\e\}}] \to 0 \mbox{ if } v>0.
\end{align}
Let us rewrite $W_{u,v}^L$ as
\begin{align*}
    W_{u,v}^L=\exp\left(-(u+v)\sqrt{L}V_L(0)-v\sqrt{L}(V_L(1)-V_L(0))-L\int_0^1 e^{-\sqrt{L}(V_L(s))}ds\right).
\end{align*}
Note that under the free law $V_L(0)$ is distributed as Lebesgue measure and $V_L(s)-V_L(0) \stackrel{d}{=} B(s)$ where $B$ is a Brownian motion started from $0$ with diffusion coefficient $2$. Thus by Cauchy-Schwarz inequality:
\begin{align}\label{int1}
    \Ex[W_{u,v}^L V_L(0)^2\ind_{\{V_L(0) \le -\e\}}] = \int_{-\infty}^{-\e} x^2 e^{-(u+v)x\sqrt{L}} \sqrt{\Ex[e^{-2Y_1}]}\sqrt{\Ex[e^{-2Y_2(x)}]} \,dx
\end{align}
where $Y_1:=\exp(v\sqrt{2L}B(0))$ and $Y_2(x):=L\int_0^1 e^{-\sqrt{L}(\sqrt{2} B(s)+x)}ds$.  Let us define the event
\begin{align*}
    \m{A}:= \left\{\sup_{s\in [0,L^{-8}]} B(s) \le \frac{|x|}{2}\right\}.
\end{align*}
We have $\Pr(\m{A}^c) \le \exp(-x^2L^{8}/16)$ and on  $\m{A}$, $$Y_2(x) \ge L\int_0^{L^{-8}} e^{-\sqrt{L}(|x|/2+x)}ds = L^{-7} e^{-\sqrt{L}(|x|/2+x)}.$$
When $x\le 0$, we have $|x|/2+x = -|x|/2$. Thus,
\begin{align*}
    \Ex[e^{-2Y_2(x)}] \le \exp(-x^2L^8/16)+\exp\big(-2L^{-8}e^{\sqrt{L}|x|/2}\big).
\end{align*}
Plugging this estimate back in \eqref{int1} along with the fact that $\Ex[e^{2Y_1}]=e^{v^2L}$, we see that the integral in \eqref{int1} is going to zero exponentially fast as $L\to \infty$ for each fixed $\e>0$.
\begin{align*}
    \Ex[W_{u,v}^L] = \int_{-\infty}^{-\e} e^{-(u+v)x\sqrt{L}}e^{v^2L/2}[e^{-L^{-7}e^{-\sqrt{L}|x|/2}}+e^{-cx^2L^8}] \,dx.
\end{align*}
Thanks to the polynomial lower bound from Lemma \ref{polylbd}, we can conclude  the first part of \eqref{hi1}.
An analogous argument shows that if $u>0$, we also have $\Ex_{u,v}^L\left[V_L(1)^2\ind_{\{V_L(0) \le -\e\}}\right] \to 0.$ and if $v>0$ we have
\begin{align}\label{etog}
    \Ex_{u,v}^L\left[V_L(0)^2\ind_{\{V_L(1) \le -\e\}}\right] \to 0, \quad \Ex_{u,v}^L\left[V_L(1)^2\ind_{\{V_L(1) \le -\e\}}\right] \to 0.
\end{align}
This verifies the second part of \eqref{hi1}.

\medskip
\noindent\textbf{Can't be too high.} We now claim that
\begin{align}\label{hi11}
    \Ex_{u,v}^L[V_L(0)^2\ind_{\{V_L(0)\ge \e\}}] \to 0 \mbox{ if } u>0, \quad \Ex_{u,v}^L[V_L(1)^2\ind_{\{V_L(1)\ge \e\}}] \to 0\mbox{ if } v>0.
\end{align}
Suppose $u,v>0$. Note that
\begin{align*}
    & \Ex\left[W_{u,v}^L \cdot V_L(0)^2\ind_{\{V_L(0)\ge \e, V_L(1) \ge -u\e/2v\}}\right] \\ & \le \Ex\left[e^{-u\sqrt{L}V_L(0)-v\sqrt{L}V_L(1)}\cdot V_L(0)^2\ind_{\{V_L(0)\ge \e, V_L(1) \ge u\e/2v\}}\right] \\ & \le \Ex\left[e^{-u\sqrt{L}V_L(0)+u\sqrt{L}\e/2}\cdot V_L(0)^2\ind_{\{V_L(0)\ge \e\}}\right]   \le e^{u\sqrt{L}\e/2}\int_{\e}^\infty  x^2e^{-ux\sqrt{L}}dx.
\end{align*}
The last integral is of the order $O(e^{-c\e \sqrt{L}})$ which decays faster than polynomial. Hence again thanks to  the polynomial lower bound from Lemma \ref{polylbd}, we can conclude 
\begin{align}\label{hi2}
    \Ex_{u,v}^L\left[V_L(0)^2\ind_{\{V_L(0) \ge \e, V_L(1) \ge -u\e/2v\}}\right] \to 0.
\end{align}
However, by \eqref{etog} we also have 
\begin{align}\label{hi3}
    \Ex_{u,v}^L\left[V_L(0)^2\ind_{\{V_L(1) \le -u\e/2v\}}\right] \to 0.
\end{align}
Thus combining \eqref{hi2} and \eqref{hi3} we arrive at the first part of \eqref{hi11} when both $u,v>0$. If $v=0$, then considering an additional event is not necessary. Using $W_{u,v}^L \le e^{-u\sqrt{L}V_L(0)}$, we can directly verify that $\Ex[W_{u,v}^{L}\ind_{\{V_L(0) \ge \e\}}]=O(e^{-c\e\sqrt{L}})$ which implies the first part of \eqref{hi11} (again via Lemma \ref{polylbd}). An analogous argument leads to the second part of \eqref{hi11} as well. This completes the proof.
\end{proof}

With the control on the end-points, we may now establish Theorem \ref{l.difflimits}.

	\begin{proof}[Proof of Theorem \ref{l.difflimits}] Note that the parameters $u,v$ can be swapped by inverting the time. Indeed, if  $(\Lambda,\Lambda')$ is distributed as $\Pr_{u,v}^L$, then $(\til\Lambda,\til\Lambda')$ is distributed as $\Pr_{v,u}^L$ where $$\til\Lambda(x):=\Lambda(L-x)-\Lambda(L)+\Lambda(0)\mbox{ and }\til\Lambda'(x):=\Lambda'(L-x)-\Lambda'(L)+\Lambda'(0).$$ 
Hence it suffices to consider the case when $u>0$ and $v\ge 0$. Recall the sum and difference processes, $U_L$ and $V_L$, from \eqref{ulvl}. As the Radon-Nikodym derivative depends only on the difference, conditioned on $B_L'(0)$, the following three things hold simultaneously:
\begin{itemize}[leftmargin=20pt]
    \item $U_L$ and $V_L$ are independent,
    \item  $U_L$ is Brownian motion with diffusion coefficient $2$ started from $B_L'(0),$
    \item $V_L$ is absolutely continuous w.r.t.~ a Brownian motion with diffusion coefficient $2$ started from $-B_L'(0)$  with a Radon-Nikodym derivative proportional to:
$$\exp\left(-v\sqrt{L}\cdot V_L(1)-L\int_0^1 e^{-\sqrt{L}V_L(s)}ds\right),$$
\end{itemize}
where in above we crucially used the fact that the sum and difference of two independent Brownian motions are independent. 
We know that $B_L'(0)\to 0$ by Lemma \ref{polylbd}. 
Thus as $L\to \infty$, $U_L$ converges to a Brownian motion $U$ with diffusion coefficient $2$ started from $0$. The law of $V_L$ was studied recently in \cite[Theorem 7.1]{ds25} where it was shown to converge to $V$, where
\begin{itemize}
    \item If $v=0$, $V$ is a Brownian motion with diffusion coefficient $2$ started from $0$ conditioned to stay positive on $(0,1]$.
    \item If $v>0$, $V$ is a Brownian excursion with diffusion coefficient $2$.
\end{itemize}
and $U,V$ are independent. Thus $B_L \to \frac12(U+V)$ and $B_L'\to \frac12(U-V)$. The law of the pair $(\frac12(U+V),\frac12(U-V))$ is precisely the one stated in the theorem. As $\Ex_{u,v}^L[V_L(0)^2]\to 0$ via Lemma \ref{pconv}, $L^2$ convergence for $U_L(x)$ is immediate. The $L^2$ convergence of $V_L$ can be done using stochastic monotonicity. 
	\end{proof}

    \subsection{Tail estimates of the stationary measures}

In this subsection, we report Gaussian tail estimates for the various observables of the stationary measure. 

\begin{proposition}
    \label{tailest} There exists a constant $C>0$ such that for all $L, M\ge 1$ and $y\in [0,L-1]$ we have
    \begin{align}\label{131}
        \Pr_{u,v}^L\bigg(\sup_{x\in [y,y+1]}|\Lambda(x)-\Lambda(y)| \ge M\bigg) \le Ce^{-M^2/C}.
    \end{align}
     \begin{align}\label{1310}
        \Pr_{u,v}^L\bigg(\sup_{x\in [y,y+1]}|\Lambda'(x)-\Lambda'(y)| \ge M\bigg) \le Ce^{-M^2/C}.
    \end{align}
     \begin{align}\label{121}
        \Pr_{u,v}^L\bigg(\sup_{x\in [0,y]}|\Lambda(x)| \ge M\sqrt{y+1}\bigg) \le Ce^{-M^2/C}.
    \end{align}
    \begin{align}\label{111}
        \Pr_{u,v}^L\bigg(\sup_{x\in [y+1,L]}|\Lambda(x)-\Lambda(L)| \ge M\sqrt{L-y}\bigg) \le Ce^{-M^2/C}.
    \end{align}
\end{proposition}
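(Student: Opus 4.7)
My plan is to derive all four estimates from local Gaussian increment bounds \eqref{131}, \eqref{1310}, then propagate them to the global bounds \eqref{121}, \eqref{111} via a standard chaining argument. The local bounds rely on the $H$-Brownian Gibbs representation \eqref{gprop}.

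For \eqref{131}, I would condition on the sigma-algebra $\mathcal F := \sigma(\Lambda(y), \Lambda(y+1), \Lambda')$. By \eqref{gprop}, the conditional law of $\Lambda|_{[y,y+1]}$ is that of a Brownian bridge from $\Lambda(y)$ to $\Lambda(y+1)$ reweighted by $W := \exp\bigl(-\int_y^{y+1} e^{-(\Lambda-\Lambda')}\,ds\bigr)$. Since $W \le 1$, the Radon--Nikodym bound gives
\[
\Pr\Bigl(\sup_{x\in[y,y+1]}|\Lambda(x)-\Lambda(y)|\ge M \,\Bigm|\, \mathcal F\Bigr) \;\le\; \frac{\Pr_{BB}\bigl(\sup|B(x)-\Lambda(y)|\ge M\bigr)}{E_{BB}[W]}.
\]
The numerator is at most $C\exp\bigl(-(M-|\Lambda(y+1)-\Lambda(y)|)^2/C\bigr)$ by the standard Brownian bridge modulus-of-continuity estimate. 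For the denominator I would restrict to the bridge event $\{B(s) \ge \sup_{[y,y+1]}\Lambda' - 1 \text{ for all } s\}$: this has probability bounded below provided $\min(\Lambda(y), \Lambda(y+1))$ is not much smaller than $\sup_{[y,y+1]}\Lambda'$, and on it $W \ge \exp(-e)$. Integrating over $\mathcal F$ then reduces to controlling the bad event where either $|\Lambda(y+1) - \Lambda(y)| > M/2$ or $\min(\Lambda(y), \Lambda(y+1)) - \sup_{[y,y+1]}\Lambda' < -c\log M$. The gap tail is handled via the strong exponential penalty $\int e^{-(\Lambda - \Lambda')}$ in $W_{u,v}$ together with the diffusive scaling in Theorem \ref{l.difflimits}, while the increment tail is closed by a bootstrap starting from a weak polynomial tail derived from Lemma \ref{polylbd}. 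The bound \eqref{1310} for $\Lambda'$ is proved identically with the roles of $\Lambda$ and $\Lambda'$ exchanged in \eqref{gprop}.

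The global bounds \eqref{121}, \eqref{111} then follow by chaining. For \eqref{121}, writing $\Lambda(x) = \sum_{k=0}^{\lfloor x \rfloor - 1}(\Lambda(k+1) - \Lambda(k)) + (\Lambda(x) - \Lambda(\lfloor x \rfloor))$ with $\Lambda(0) = 0$, applying \eqref{131} to each of the $O(y)$ unit-interval increments, and combining via a standard sub-Gaussian sum bound yields Gaussian concentration at scale $\sqrt{y+1}$. Estimate \eqref{111} follows symmetrically by working backwards from $x = L$ along the integer grid.

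The main obstacle is closing the bootstrap in the local tail argument: the conditional bound depends on the joint marginal of $(\Lambda(y), \Lambda(y+1), \Lambda')$, which is governed in part by the very increment we are trying to estimate. I anticipate proceeding by first establishing a weak polynomial tail using Lemma \ref{polylbd} together with the diffusive scaling of Theorem \ref{l.difflimits}, then iteratively sharpening via the Gibbs comparison above, with the $H$-Brownian Gibbs stochastic monotonicity providing the key uniform-in-$L$ gap estimates that make the iteration terminate at Gaussian decay.
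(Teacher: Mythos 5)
Your overall framework---condition on an endpoint $\sigma$-field, use the $H$-Brownian Gibbs representation \eqref{gprop}, bound the conditional probability by a Brownian-bridge tail divided by $\Ex_{BB}[W]$---captures the right shape, but two steps do not close as written.

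First, lower-bounding the normalizer $\Ex_{BB}[W]$ uniformly in $L$ requires quantitative control on how far $\Lambda$ can drop below $\Lambda'$ near $y$. You propose to get this from the exponential penalty in $W_{u,v}$ together with Theorem~\ref{l.difflimits} and a bootstrap, but this does not suffice. On the event that $\Lambda-\Lambda'\le -a$ throughout a unit interval, one has $W_{u,v}\le W_{u,v}^- e^{-e^a}$ where $W_{u,v}^-$ deletes that interval from the integral; converting this into a $\Pr_{u,v}^L$ probability estimate requires bounding the ratio $\Ex_{\mathrm{free}}[W_{u,v}^{-}]/\Ex_{\mathrm{free}}[W_{u,v}]$. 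The lower bound $\Ex_{\mathrm{free}}[W_{u,v}^L]\ge cL^{-3/2}$ from Lemma~\ref{polylbd} alone gives you nothing here; you need the matching \emph{upper} bound on the deleted-interval partition function, which is exactly Lemma~\ref{wupbd} (whose proof, via Lemma~\ref{rbound}, is a substantial portion of the section). Theorem~\ref{l.difflimits} is a qualitative convergence statement and cannot supply such $L$-uniform tails. Moreover the bootstrap you sketch does not upgrade a polynomial increment tail to a Gaussian one: the increment $|\Lambda(y+1)-\Lambda(y)|$ feeds back at scale $M/2$ with a multiplicative loss, so iterating produces a geometrically growing prefactor against a geometrically shrinking exponent and stalls at a polynomial bound. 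The paper sidesteps both difficulties by conditioning on a \emph{stopping domain} $[\tau_1,\tau_2]$, where $\tau_1$, $\tau_2$ are hitting times of $\Lambda-\Lambda'=-M/4$ near $y$: on the good event the boundary difference at the stopping times is exactly $-M/4$, so $\Ex[\hat{W}\mid\mathcal F]$ is a universal constant with no need to integrate over endpoint marginals, and the complementary event is killed once and for all by the Lemma~\ref{polylbd}/Lemma~\ref{wupbd} pair.

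Second, the chaining argument for \eqref{121} and \eqref{111} fails. The unit-interval increments of $\Lambda$ under $\Pr_{u,v}^L$ are neither independent nor martingale differences, so there is no ``standard sub-Gaussian sum bound'' to invoke; a collection of marginally sub-Gaussian but adversarially correlated increments need exhibit no concentration at scale $\sqrt{y}$. The paper instead applies the Gibbs representation directly on the stopping domain $[0,\tau_2]$ with $\tau_2\in[y,y+1]$, so that conditionally $\Lambda$ on $[0,\tau_2]$ is a reweighted Brownian bridge of length $\approx y+1$ started from $\Lambda(0)=0$, and the Gaussian concentration at scale $\sqrt{y+1}$ comes from that single bridge rather than from summing increments.
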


A key input of the proof of the above proposition is the following upper bound for $\Ex[W_{u,v}^L]$ which matches with the lower bound in Lemma \ref{polylbd}.

\begin{lemma}\label{wupbd} Fix $L\ge 1$ and $u,v>0$. Let $y\ge 0$ be such that $[y/L,y/L+1/L]\subset [0,1]$.  
There exists a constant $C > 0$ depending on $u, v$ such that
\begin{align}\label{wupbdeq}
   \Ex\left[\exp\left(-u\sqrt{L}V_L(0)-v\sqrt{L}V_L(1)-L\int_{[0,1]\setminus [y/L,y/L+1/L]} e^{-\sqrt{L}V_L(s)}ds\right)\right] \le CL^{-3/2}.
\end{align}
If $u=0, v>0$ or $u>0, v=0$ we have
\begin{align}\label{wupbdeq2}
   \Ex\left[\exp\left(-u\sqrt{L}V_L(0)-v\sqrt{L}V_L(1)-L\int_{[0,1]\setminus [y/L,y/L+1/L]} e^{-\sqrt{L}V_L(s)}ds\right)\right] \le CL^{-1/2}.
\end{align}
\end{lemma}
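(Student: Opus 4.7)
The plan is to express the left-hand side of \eqref{wupbdeq} as a double integral over the endpoint values $(a,b)=(V_L(0),V_L(1))$, bound the inner Brownian-bridge expectation by a reflection-principle probability, and then split the outer integral into a bulk and tail regions. I will work through the case $u,v>0$; the case $u=0$ or $v=0$ is analogous, losing exactly one $\sqrt{L}$ factor from the missing boundary weight. Under the free law, $V_L(0)$ has Lebesgue density $\sqrt{L}$ (from the diffusive rescaling of $\Lambda'$) and, conditional on $V_L(0)=a,V_L(1)=b$, the process $V_L$ is a Brownian bridge on $[0,1]$ with diffusion coefficient $2$ from $a$ to $b$. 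Writing $I=[0,1]\setminus[y/L,y/L+1/L]$, this rewrites the left-hand side as
\begin{align*}
\sqrt{L}\iint_{\R^2}\frac{e^{-(b-a)^2/4}}{\sqrt{4\pi}}\,e^{-u\sqrt{L}a-v\sqrt{L}b}\,\Ex_{a,b}^{\m{br}}\!\Big[\exp\Big(-L\int_I e^{-\sqrt{L}V_L(s)}ds\Big)\Big]\,da\,db,
\end{align*}
where $\Ex_{a,b}^{\m{br}}$ denotes the Brownian-bridge expectation.

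The central step is the inner estimate, valid for each large $M\ge 1$:
\begin{align*}
\Ex_{a,b}^{\m{br}}\!\Big[\exp\Big(-L\int_I e^{-\sqrt{L}V_L(s)}ds\Big)\Big] \le C\min\!\Big(1,\big(a+\tfrac{M}{\sqrt{L}}\big)_+\big(b+\tfrac{M}{\sqrt{L}}\big)_+\Big)+Ce^{-cM^2e^{M/2}}.
\end{align*}
The first term comes from the reflection identity $\Pr_{a,b}^{\m{br}}(\min V_L\ge -M/\sqrt{L}) = 1-\exp(-(a+M/\sqrt{L})_+(b+M/\sqrt{L})_+)$ together with the trivial bound $\exp(-L\int_I\cdots)\le 1$. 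The second term handles paths with $\min V_L<-M/\sqrt{L}$: the bridge has fluctuations of order $\sqrt{L\eta}$ across a time window of length $\eta$, so on this event $V_L\le -M/(2\sqrt{L})$ on a window of length $\Theta(M^2/L)$ around the minimum with a positive probability; since $M^2/L\gg 1/L$ for large $L$, this window overlaps $I$ and contributes $\gtrsim M^2 e^{M/2}$ to $L\int_I e^{-\sqrt{L}V_L(s)}ds$, producing the doubly exponential penalty.

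Substituting $\alpha=\sqrt{L}a,\beta=\sqrt{L}b$, I split the outer integral into the bulk $\{|\alpha|,|\beta|\le M\}$, the positive tails, and the negative tails. In the bulk, the first term of the inner bound gives $L^{-3/2}\iint_{[-M,M]^2}e^{-u\alpha-v\beta}(\alpha+M)(\beta+M)\,d\alpha\,d\beta = O(L^{-3/2})$. In the positive tails, I split further on whether $(\alpha+M)(\beta+M)\le L$ (use the product, which integrates against $e^{-u\alpha-v\beta}$ to give another $O(L^{-3/2})$) or $>L$ (use $\min=1$, where the forced large $(\alpha,\beta)$ together with $u,v>0$ gives super-exponential decay). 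In the negative tails, a sharpened version of the error term coming from dips forced by the boundary value itself (roughly $\exp(-cLa^2 e^{\sqrt{L}|a|/2})$ when $a<0$) absorbs the growth of $e^{-u\sqrt{L}a}$ and produces a super-polynomially small contribution. Summing the three regions yields \eqref{wupbdeq}; the argument for \eqref{wupbdeq2} is identical, except the missing decay in (say) $a$ requires integrating the Gaussian factor $e^{-(b-a)^2/4}$ freely in $a$, which costs exactly $\sqrt{L}$ and gives $O(L^{-1/2})$. The main technical obstacle I foresee is making the continuity-around-the-minimum argument genuinely uniform in $(a,b)$ and in the location of the minimum, in particular ruling out the scenario where the removed interval $[y/L,y/L+1/L]$ swallows the entire low-dip region; the choice $\eta=\Theta(M^2/L)\gg 1/L$ is precisely what ensures this cannot happen.
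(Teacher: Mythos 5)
Your overall framework (condition on the endpoints $(a,b)=(V_L(0),V_L(1))$, bound the inner bridge expectation, split the outer $(a,b)$-integral into bulk and tails) is the same as the paper's, but the crucial inner estimate contains a genuine gap that you yourself flag with the words ``with a positive probability.'' The issue is precisely there: the proposed bound
\[
\Ex_{a,b}^{\m{br}}\!\Big[\exp\Big(-L\int_I e^{-\sqrt{L}V_L(s)}ds\Big)\Big] \le C\min\!\Big(1,\big(a+\tfrac{M}{\sqrt{L}}\big)_+\big(b+\tfrac{M}{\sqrt{L}}\big)_+\Big)+Ce^{-cM^2e^{M/2}}
\]
is not correct. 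On the event $\{\min V_L < -M/\sqrt{L}\}$, the dip window around the minimum (or around the first downcrossing of $-M/\sqrt{L}$) on which $V_L\le -M/(2\sqrt{L})$ has a \emph{random} length, and the probability that the bridge rises above $-M/(2\sqrt{L})$ within the prescribed window is only Gaussian-small, i.e.\ of order $e^{-cM^2}$, not doubly exponentially small. So the correct analogue of your inner estimate would carry an error of size $e^{-cM^2}$ in addition to the doubly-exponential term. For $(a,b)$ in the bulk this contributes $L^{-1/2}\,e^{-cM^2}$ to the outer integral, which for any fixed $M$ is of order $L^{-1/2}\gg L^{-3/2}$; taking $M\sim\sqrt{\log L}$ to beat that produces an extra $\log L$ factor in the main term $(a+M/\sqrt{L})_+(b+M/\sqrt{L})_+\sim M^2/L$. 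In short, a single cutoff $M$ cannot make both contributions $O(1/L)$ simultaneously, so the argument as written does not yield $CL^{-3/2}$.

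The paper resolves this by replacing the single cutoff with a level-set decomposition (Lemma~\ref{rbound}): one sums over the depth $r$ of the minimum, using (i) the reflection bound $\Pr(\m{A}_{r+1})\le (x+r+1)(y+r+1)/L$ for the probability of the bridge staying above level $-(r+1)/\sqrt{L}$, and (ii) the penalty $\exp(-e^{r/2})$ from the forced dip window (of deterministic length $2/L$, taken immediately after the first hitting time of $-r/\sqrt{L}$). The product $(r+1)^2L^{-1}e^{-e^{r/2}}$ sums in $r$ to give the sharp $O(1/L)$ inner bound, exactly because the Gaussian-order probability of the complement event is absorbed into the next level $r+1$. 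Your proposal is missing this iteration/summation; without it, the Gaussian error is simply dropped, and the stated inner estimate is false. The negative-tail bound $\exp(-cLa^2 e^{\sqrt{L}|a|/2})$ has the same problem: the paper's Lemma~\ref{rbound}(a) carries an additional term $C\exp(-(|x|-4(y-x)_+/L)^2/C)$ coming from the Gaussian probability that the bridge escapes the dip quickly, and this Gaussian term is what actually competes with $e^{u|\alpha|}$ in the integral.
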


The proof of Lemma \ref{wupbd} hinges on the following technical estimates on expectation of certain functional of Brownian bridge.
\begin{lemma}\label{rbound} Fix $L\ge 1$ and $x,y\in \R$. Let $a\ge 0$ be such that $[a/L,a/L+1/L]\subset [0,1]$. Let $B$ be a Brownian bridge on $[0,1]$ from $x/\sqrt{L}$ to $y/\sqrt{L}$ with diffusion coefficient $2$. Define \begin{align*}
    R:=\exp\left(-L\int_{[0,1]\setminus [a/L,a/L+1/L]} e^{-\sqrt{L} B(s)}ds\right).
\end{align*}
We have the following bounds on $\Ex[R]$.
\begin{enumerate}[label=(\alph*),leftmargin=20pt]
    \item \label{parta} If $x<0$ we have
    \begin{align*}
    \Ex[R] \le \exp\left(-e^{|x|/2}\right)+C\exp(-(|x|-4(y-x)_+/L)^2/C).
\end{align*}
\item \label{partb} If $y<0$ we have
\begin{align*}
    \Ex[R] \le \exp\left(-e^{|y|/2}\right)+C\exp(-(|y|-4(x-y)_+/L)^2/C).
\end{align*}
\item \label{partc} If $0\le x,y\le (\log L)^3$ we have
\begin{align*}
    \Ex[R] \le C(\max\{x,y\}+1)/L.
\end{align*}
\item \label{partd} If $\min\{x,y\} \le 0$ and $|x|,|y|\le (\log L)^3$, we have
\begin{align*}
    \Ex[R] \le Ce^{-(-\min\{x,y\})^2/C}(|x-y|+1)/L.
\end{align*}
\end{enumerate}
\end{lemma}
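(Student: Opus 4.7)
The plan is to analyze $\Ex[R]$ via the time-rescaled bridge $\hat B(u) := \sqrt{L}\,B(u/L)$ for $u\in[0,L]$, which is a Brownian bridge from $x$ to $y$ on $[0,L]$ with diffusion coefficient $2$. Substituting $u=Ls$ gives
\[
L\int_{[0,1]\setminus[a/L,a/L+1/L]} e^{-\sqrt{L}B(s)}\,ds \;=\; \int_{[0,L]\setminus[a,a+1]} e^{-\hat B(u)}\,du,
\]
so $R = \exp\!\big({-}\!\int_{[0,L]\setminus[a,a+1]} e^{-\hat B(u)}\,du\big)$. The general principle is that $R$ is exponentially small whenever $\hat B$ takes sufficiently negative values on a long enough stretch; the four cases differ in how this is quantified from the information about $x$ and $y$.

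\textbf{Parts (a) and (b).} By the time reversal $u\mapsto L-u$, which swaps $(x,y)$ and preserves the bridge law, part (b) reduces to part (a). For (a), with $x<0$, consider the event $\mathcal{E}=\{\hat B(u)\le x/2\text{ for all }u\in[0,2]\}$. On $\mathcal{E}$, $e^{-\hat B(u)}\ge e^{|x|/2}$ throughout $[0,2]$, and since the excluded unit interval $[a,a+1]$ removes at most a unit from $[0,2]$, one has $\int_{[0,L]\setminus[a,a+1]}e^{-\hat B}\,du\ge e^{|x|/2}$, giving $R\le\exp(-e^{|x|/2})$. For $\mathcal{E}^c$, decompose $\hat B(u) = x + u(y-x)/L + \tilde W(u)$ with $\tilde W$ a bridge of diffusion $2$ from $0$ to $0$ on $[0,L]$. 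The linear drift contributes at most $2(y-x)_+/L$ on $[0,2]$, while $\sup_{u\in[0,2]}\tilde W(u)$ is (approximately) a half-Gaussian of variance $O(1)$ by reflection. A standard Gaussian tail bound then gives $\Pr(\mathcal{E}^c)\le C\exp(-(|x|-4(y-x)_+/L)^2/C)$, finishing (a).

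\textbf{Parts (c) and (d).} With $|x|,|y|\le(\log L)^3$, the bridge $\hat B$ typically fluctuates to negative depths of order $\sqrt{L}$, so that $R$ is super-polynomially small on the bulk of the sample space. The task is to bound the contribution of the atypical events where the bridge's minimum is not sufficiently deep. The explicit formula
\[
\Pr\!\Big(\min_{u\in[0,L]}\hat B(u)\le m\Big) \;=\; \exp\!\big({-}(x-m)(y-m)/L\big),\qquad m\le\min(x,y),
\]
gives the distribution of $\ell := -\min_{u\in[0,L]}\hat B(u)$. For $\ell>0$ (bridge dips below zero), concentration of the integrand near the minimizer yields $\int e^{-\hat B}\,du\gtrsim e^{\ell}$, and hence $R\lesssim\exp(-ce^{\ell})$; integrating this against the density $\approx(x+y+2\ell)/L$ near $\ell=0$ and using $\int_0^\infty\exp(-ce^\ell)(1+\ell)\,d\ell<\infty$ produces the main contribution $\lesssim(\max\{x,y\}+1)/L$. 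For part (d) with $\min(x,y)\le 0$, we further combine this analysis with the fluctuation bound from (a) applied at the negative endpoint (contributing the Gaussian factor $e^{-(-\min(x,y))^2/C}$), while the linear behavior of the minimum density near $\ell=0$ produces the $(|x-y|+1)/L$ factor.

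\textbf{Main obstacle.} The sharp prefactor $(\max\{x,y\}+1)/L$ in part (c) requires a careful treatment of the event $\{\min_u\hat B(u)>0\}$ that the bridge stays positive, whose probability $1-e^{-xy/L}\approx xy/L$ is of order $(\log L)^6/L$ when both $x$ and $y$ approach $(\log L)^3$, much larger than the claimed bound. Controlling this contribution at the correct scale requires exploiting the $3$D Bessel-bridge description of $\hat B$ conditional on $\{\min>0\}$: the conditioned bridge is pushed to scale $\sqrt{L}$ in the middle, so that $\int e^{-\hat B}\,du$ is dominated by the short Bessel-meander excursions near the two endpoints contributing order $e^{-x}+e^{-y}$, giving a useful upper bound on $R$ on this event. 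Synthesizing these refined bounds on the sub-events $\{\min>0\}$ and $\{\min\le 0\}$ so as to recover the uniform estimates in (c) and (d), and keeping careful track of the Gaussian prefactor in (d), is the most delicate step.
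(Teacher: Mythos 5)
For parts (a) and (b) your argument is the paper's: confine the path near the endpoint on a window of length $2/L$, obtain the $\exp(-e^{|x|/2})$ bound on that event, and estimate its complement by Gaussian/reflection bounds. For (c) and (d) your decomposition by the level of the bridge minimum mirrors the paper's layered events $\m{B}_r$, and your handling of the $\{\min<0\}$ portion (integrate $\exp(-ce^\ell)$ against the minimum density $\approx(x+y+2\ell)/L$) is correct and gives the right order.

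The ``main obstacle'' you single out, however, is not a delicate-but-fixable step but an actual failure point, and the Bessel-bridge repair you sketch would not close it. On the event $\{\sqrt{L}\,B(s)\ge m_0\ \text{for all }s\}$ with $2\log L\le m_0\le\min\{x,y\}$ (a nonempty range once $x,y$ approach $(\log L)^3$), the entire exponent satisfies $L\int_0^1 e^{-\sqrt{L}B(s)}\,ds\le L e^{-m_0}\le L^{-1}$, so $R\ge 1-L^{-1}$; the endpoint Bessel-excursion contributions you invoke are of size $\sim e^{-x}+e^{-y}$, which is even smaller and certainly does not force $R$ to decay. Since $\Pr(\sqrt{L}\min B\ge m_0)\approx (x-m_0)(y-m_0)/L$, summing over such $m_0$ gives $\Ex[R]\gtrsim xy/L$; for $x=y\approx(\log L)^3$ this is $(\log L)^6/L$, strictly larger than the asserted $C(\max\{x,y\}+1)/L$, so part~(c) as stated should carry the factor $(x+1)(y+1)$ rather than $\max\{x,y\}+1$ (and similarly for part (d)). Note the paper's own proof exhibits the same issue: the step $\Ex[R\,\ind_{\m{B}_{r+1}\cap\m{C}_{r,1}}]\le e^{-e^{r/2}}\Pr(\m{A}_{r+1})\le e^{-e^{r/2}}(x+r+1)(y+r+1)/L$ produces a quadratic factor that, at $r$ near $0$, does not reduce to the claimed $Ce^{-r^2/C}(\max\{x,y\}+r+1)/L$. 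This is harmless for the paper's downstream use, since in the proof of Lemma~\ref{wupbd} the estimate is integrated against the rapidly decaying weight $e^{-ux-vy}$, which absorbs the extra polynomial factor; but it does mean the bound cannot be established in the form stated.
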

Let us first prove Lemma \ref{wupbd} assuming Lemma \ref{rbound}.
\begin{proof}[Proof of Lemma \ref{wupbd}]  We shall prove \eqref{wupbdeq}. The proof of \eqref{wupbdeq2} is similar.
Note that $V_L(0)$ $V_L(s)-V_L(0) \stackrel{d}{=} B(s)$ where $B$ is a Brownian motion with diffusion coefficient $2$. Conditioning on the endpoints, we get 
\begin{align}
\Ex&\left[\exp\left(-u\sqrt{L}V_L(0)-v\sqrt{L}V_L(1)-L\int_{[0,1]\setminus [a/L,a/L+1/L]} e^{-\sqrt{L}V_L(s)}ds\right)\right]\nonumber\\
&\hspace{1cm}= L^{-1/2}\int_\R \int_\R e^{-ux-vy} \mathbb{E}[R]  \frac{1}{\sqrt{4\pi}}e^{-\frac{(x-y)^2}{4L}} dxdy \label{equation1}
\end{align}
where $R$ is as in Lemma \ref{rbound}. To estimate the double integral in \eqref{equation1}, we split the range of integrals into several regions.

\begin{enumerate}[label=(\alph*), leftmargin=20pt]
    \item Consider the range of integral on the region: $x<-(\log L)^2$ and $x\le y \le x + L|x|/8$. Thanks to the bound in Lemma \ref{rbound}\ref{parta}, we have that $\Ex[R] \le Ce^{-x^2/C}$ on this region. Plugging this bound and the bound $e^{-(x-y)^2/4L}\le 1$, for $L$ large enough we get
    \begin{align*}
        & \int_{-\infty}^{-(\log L)^2}\int_{x}^{x+L|x|/8} e^{-ux-vy} \mathbb{E}[R]  \frac{1}{\sqrt{4\pi}}e^{-\frac{(x-y)^2}{4L}} dydx \\ & \le C\int_{-\infty}^{-(\log L)^2}\int_{0}^{L|x|/8} e^{-(u+v)x-vz-x^2/C}    dz dx \le CL^{-1}.
    \end{align*}
    On the region $x<-(\log L)^2$ and $y \ge x + L|x|/8$, we use the trivial bound $R\le 1$ and use the gaussian factor $e^{-(x-y)^2/4L}$ to get
    \begin{align*}
       & \int_{-\infty}^{-(\log L)^2}\int_{x+L|x|/8}^\infty e^{-ux-vy}\mathbb{E}[R]  \frac{1}{\sqrt{4\pi}}e^{-\frac{(x-y)^2}{4L}} dydx \\ & \le C\int_{-\infty}^{-(\log L)^2}\int_{L|x|/8}^\infty e^{-(u+v)x-vz-z^2/4L}    dz dx \le CL^{-1}.
    \end{align*}
    Combining the above two displays we obtain,
    \begin{align*}
        \int_{-\infty}^{-(\log L)^2}\int_{x}^\infty e^{-ux-vy}\mathbb{E}[R]  \frac{1}{\sqrt{4\pi}}e^{-\frac{(x-y)^2}{4L}} dxdy \le CL^{-1}.
    \end{align*}
    An analogous argument using Lemma \ref{rbound}\ref{partb} instead shows that \begin{align*}
        \int_{-\infty}^{-(\log L)^2}\int_{y}^\infty e^{-ux-vy}\mathbb{E}[R]  \frac{1}{\sqrt{4\pi}}e^{-\frac{(x-y)^2}{4L}} dxdy \le CL^{-1}.
    \end{align*}
    This take cares of the region where $x\le -(\log L)^2$ or $y\le -(\log L)^2$. 
    \item We may now restrict ourselves to the region where $x,y \ge -\log^2 L$. When any one of the variables is larger than $\log^3L$, using the bound $R\le 1$ and $e^{-(x-y)^2/4L}\le 1$ we get
    \begin{align*}
        \int_{-(\log L)^2}^\infty \int_{(\log L)^3}^\infty e^{-ux-vy}\Ex[R]\frac{1}{\sqrt{4\pi}}e^{-\frac{(x-y)^2}{4L}} dydx \le \frac{1}{\sqrt{4\pi}}\int_{-(\log L)^2}^\infty \int_{(\log L)^3}^\infty e^{-ux-vy} dydx \le CL^{-1}, \\
        \int_{-(\log L)^2}^\infty \int_{(\log L)^3}^\infty e^{-ux-vy}\Ex[R]\frac{1}{\sqrt{4\pi}}e^{-\frac{(x-y)^2}{4L}} dxdy \le \frac{1}{\sqrt{4\pi}}\int_{-(\log L)^2}^\infty \int_{(\log L)^3}^\infty e^{-ux-vy} dxdy \le CL^{-1}.
    \end{align*}

    \item We now control the region where $|x|,|y|\le \log^3L$. We divide this range into three parts depending on the sign of $x$ and $y$. First, when $x,y\ge 0$, using Lemma \ref{rbound}\ref{partc} we obtain
\begin{align*}
       & \int_{0}^{\log^3 L}\int_{0}^{\log^3 L} e^{-ux-vy} \mathbb{E}[R]  \frac{1}{\sqrt{4\pi}}e^{-\frac{(x-y)^2}{4L}} dxdy \\ & \le CL^{-1}\int_{0}^{\log^3 L}\int_{0}^{\log^3 L} e^{-ux-vy} (\max\{x,y\}+1)  dxdy \le C'L^{-1}.
    \end{align*}  
On the other hand, if $x<0$ and $x\le y$, using Lemma \ref{rbound}\ref{partd} we obtain
\begin{align*}
   & \int_{-(\log L)^3}^{0 }\int_{x}^{\log^3 L} e^{-ux-vy} \mathbb{E}[R]  \frac{1}{\sqrt{4\pi}}e^{-\frac{(x-y)^2}{4L}} dydx \\ & \le CL^{-1}\int_{-(\log L)^3}^{0 }\int_{x}^{\log^3 L} e^{-ux-vy-|x|^2/C} (|x-y|+1)  dydx  \le C'L^{-1},
\end{align*}
and analogously
\begin{align*}
   & \int_{-(\log L)^3}^{0 }\int_{y}^{\log^3 L} e^{-ux-vy} \mathbb{E}[R]  \frac{1}{\sqrt{4\pi}}e^{-\frac{(x-y)^2}{4L}} dxdy   \le CL^{-1}.
\end{align*}
Combining the three above estimates we derive
\begin{align*}
       & \int_{-\log^3 L}^{\log^3 L}\int_{-\log^3 L}^{\log^3 L} e^{-ux-vy} \mathbb{E}[R]  \frac{1}{\sqrt{4\pi}}e^{-\frac{(x-y)^2}{4L}} dxdy \le CL^{-1}.
    \end{align*} 
\end{enumerate}
Plugging in the bounds obtained in above three cases back in \eqref{equation1} leads to \eqref{wupbdeq}. 
\end{proof}

\begin{proof}[Proof of Lemma \ref{rbound}] Suppose $x<0$. Consider the event:
\begin{align*}
    \m{E}:=\left\{B(s) \le \frac{x}{2\sqrt{L}} \mbox{ for all }s\in [0,2/L]\right\}.
\end{align*}
As $R\le 1$, we have $\Ex[R] \le \Ex[R\cdot \ind_{\m{E}}]+\Pr(\m{E}^c)$. Note that $R\cdot \ind_{\m{E}} \le \exp\left(-e^{|x|/2}\right)$. By Brownian bridge computations we have $\Pr(\m{E}^c)$ is bounded above by $C\exp(-(|x|-4(y-x)_+/L)^2/C)$. We thus arrive at the bound in \ref{parta}. When $y<0$, an analogous argument by controlling the Brownian bridge near $s=1$ leads to the bound in \ref{partb}.

\medskip

We now turn towards the proof of \ref{partc} and \ref{partd}. For each $r\in \Z$, define the events:
\begin{align*}
    \m{A}_r:=\{B(s) \ge -r/\sqrt{L} \mbox{ for all }s\in [0,1]\}, \quad \m{B}_r:=\m{A}_{r}\cap \m{A}_{r-1}^c.
\end{align*}
Let $\tau_{r,1}$ and $\tau_{r,2}$ be the first and last time  the process $B(s)$ hits $r/\sqrt{L}$ respectively. Define
\begin{align*}
    \m{C}_{r,1} := \left\{\sup_{s\in [\tau_{r,1},\tau_{r,2}+2/L]} B(s) \le -\frac{r}{2\sqrt{L}} \right\},  \quad \m{C}_{r,2}:=\left\{\sup_{s\in [\tau_{r,2}-2/L,\tau_{r,2}]} B(s) \le -\frac{r}{2\sqrt{L}} \right\}.
\end{align*}
Note that $\m{A}_r \uparrow \Omega$ as $r\to \infty$ and $\m{A}_r$ is empty  for $r < -\min\{x,y\}$. Thus, 
\begin{align}\label{discr}
    \Ex[R]=\sum_{r=-\min\{x,y\}}^\infty \Ex[R\cdot \ind_{\m{B}_r}].
\end{align}
We focus on bounding $\Ex[R\cdot \ind_{\m{B}_r}]$. We first bound $\Ex\left[R\cdot \ind_{\m{B}_{r+1}\cap \{\tau_{r,1}\le \frac12\}}\right]$. 
Note that $$\ind_{\m{B}_{r+1} \cap \{\tau_{r,1}\le 1/2\}} = \ind_{\m{B}_{r+1} \cap \{\tau_{r,1}\le 1/2\} \cap \m{C}_{r,1}} +\ind_{\m{B}_{r+1} \cap \{\tau_{r,1}\le 1/2\}\cap \m{C}_{r,1}^c}.$$
On $\m{C}_{r,1}\cap \{\tau_{r,1}\le 1/2\}$, we have $R\le e^{-e^{r/2}}$. So, 
\begin{equation}\label{rbre}
    \begin{aligned}
    \Ex[R\cdot \ind_{\m{B}_r\cap \{\tau_{r,1}\le 1/2\} \cap \m{C}_{r,1}}] & \le e^{-e^{r/2}}\cdot \Pr(\m{A}_{r+1}) \\ & = e^{-e^{r/2}}\cdot \big(1-e^{-(x+r+1)(y+r+1)/L}\big) \\ & \le e^{-e^{r/2}}\frac{(x+r+1)(y+r+1)}{L}.
\end{aligned}
\end{equation}

On the other hand, using the trivial bound $R\le 1$ we have
$$\Ex[R\cdot \ind_{\m{B}_r\cap \{\tau_{r,1}\le 1/2\} \cap \m{C}_{r,1}^c}]\le \Pr( \m{B}_{r+1}\cap \{\tau_{r,1}\le 1/2\} \cap \m{C}_{r,1}^c).$$ 
Since $\tau_{r,1}$ is a stopping time, the process $B(s)$ on $[\tau_{r,1},1]$ is a Brownian bridge from $-r/\sqrt{L}$ to $y/\sqrt{L}$. Note that $\m{B}_{r+1} \subset \{ B(s) \ge -(r+1)/\sqrt{L} \mbox{ for all }s\in [\tau_{r,1}+2/L,1]\}.$ Conditioning on the value of $\sqrt{L} B(\tau_{r,1}+2/L)=u$ we have 
\begin{align*}
    \Pr( \m{B}_{r+1}\cap \{\tau_{r,1}\le 1/2\} \cap \m{C}_{r,1}^c) \le \sup_{c\in [0,1/2]}\int_{-r-1}^\infty \min\{1, e^{(r/2)(r/2+u)}\}(1-e^{-(u+r+1)(y+r+1)/L})f_c(u)du
\end{align*}
where $f_c(u)$ is the density of $\sqrt{L} B(\tau_{r,1}+2/L)$ with $\tau_{r,1}=c$ \textemdash a Gaussian random variable with mean $-r/\sqrt{L}+\frac{2/L}{(1-c)}(y+r)/\sqrt{L}$ and variance $2-\frac{4}{L(1-c)}$. It is not hard to check that the above explicit integral is at most $Ce^{-r^2/C}(y+r+1)/L$.  Combining this with the estimate in \eqref{rbre} we get that
$$\Ex\left[R\cdot \ind_{\m{B}_{r+1}\cap \{\tau_{r,1}\le \frac12\}}\right] \le Ce^{-r^2/C}(y+r+1)/L.$$ Using a similar argument (using $\m{C}_{r,2}$ event instead), we see that $$\Ex[R\cdot \ind_{\m{B}_{r+1}\cap \{\tau_{r,2}\ge \frac12\}}] \le Ce^{-r^2/C}(x+r+1)/L. $$ Adding the last two estimates  we get $\Ex[R\cdot \ind_{\m{B}_{r+1}}] \le Ce^{-r^2/C}(\max\{x,y\}+r+1)/L$. Plugging this bound back in \eqref{discr} and summing over $r$ leads to both \ref{partc} and \ref{partd}.
\end{proof}

\begin{proof}[Proof of Proposition \ref{tailest}] For convenience, we split the proof into two steps. 

\medskip

\noindent\textbf{Step 1}. In this and next step we prove \eqref{131} for $y\in [1,L-2]$. The proof of other cases in Lemma \ref{tailest} follows upon certain modifications of this case (explained at the end of the proof). 

Suppose $y\in [1,L-2]$. Fix $M$ large enough. Let $\tau_1$ be the first time in $[y-1,y]$ such that $\Lambda(x)-\Lambda'(x)\ge -\frac{M}{4}$. If no such time exists we set $\tau_1=-\infty$. Let $\tau_2$ be the last time in $[y,y+1]$ such that $\Lambda(x)-\Lambda'(x)\ge -\frac{M}{4}$. If no such time exists we set $\tau_2=-\infty$. Let us define the events
\begin{align*}
    \m{A}_1:=\left\{\sup_{x\in [y-1,y]} \Lambda(x)-\Lambda'(x) \ge -M/4\right\}, \qquad \m{A}_2:=\left\{\sup_{x\in [y+1,y+2]} \Lambda(x)-\Lambda'(x) \ge -M/4\right\}.
\end{align*}
We claim that 
\begin{align} \label{132}
    & \Pr_{u,v}^L(\m{A}_1^c) \le Ce^{-e^{M/4}}, \quad \Pr_{u,v}^L(\m{A}_1^c) \le Ce^{-e^{M/4}}, \\ \label{133}
    & \Pr_{u,v}^L\bigg(\m{A}_1\cap \m{A}_2 \cap \bigg\{\sup_{x\in [y,y+1]}|\Lambda(x)-\Lambda(y)| \ge M\bigg\}\bigg) \le Ce^{-M^2/C},
\end{align}
for some constant $C>0$. Applying an union bound gives us \eqref{131}. Let us thus focus on proving \eqref{132} and \eqref{133}. We prove \eqref{132} in this step. We prove the first inequality of \eqref{132}, the second inequality is analogous. Define
\begin{align*}
   {W}_{u,v}^{-}:=\exp\left(-u(\Lambda(0)-\Lambda'(0))-v(\Lambda(L)-\Lambda'(L))-\int_{[0,L]\setminus [y-1,y]} e^{-(\Lambda(s)-\Lambda'(s))}ds\right).
\end{align*}
Note that on $\m{A}_1^c$, we have  $W_{u,v} \le {W}_{u,v}^{-}\cdot e^{-e^{M/4}}$. Thus,
\begin{align}\label{rex}
\Pr_{u,v}^L(\m{A}_1^c)=\frac{\Ex[W_{u,v}\ind_{\m{A}_1^c}]}{\Ex[W_{u,v}]} \le e^{-e^{M/4}}\frac{\Ex\left[{W}_{u,v}^-\right]}{\Ex[W_{u,v}]} = e^{-e^{M/4}}\frac{\Ex\left[{W}_{u,v}^{L,-}\right]}{\Ex[W_{u,v}^{L}]}. 
\end{align}
where $W_{u,v}^{L,-}$ is the scaled version of $W_{u,v}^{-}$:
\begin{align*}
   {W}_{u,v}^{L,-}:=\exp\left(-u\sqrt{L}V_L(0)-v\sqrt{L}V_L(1)-L\int_{[0,1]\setminus [y/L-1/L,y/L]} e^{-\sqrt{L}V_L(s)}ds\right).
\end{align*}
By Lemmas \ref{polylbd} and \ref{wupbd}, we see that the ratio of expectations on the r.h.s.~of \eqref{rex} is bounded. Thus, $\Pr_{u,v}^L(\m{A}_1^c) \le e^{-e^{M/4}}$. 

\medskip

\noindent\textbf{Step 2.} In this step, we prove \eqref{133}. Note that $[\tau_1,\tau_2]$ is a stopping domain. Consider the $\sigma$-field $\mathcal{F}=\sigma((\Lambda(x),\Lambda'(x)) : x\notin (\tau_1,\tau_2))$. Note that $\m{A}_1\cap\m{A}_2\subset \mathcal{F}$.
\begin{equation}
    \begin{aligned}
        \label{23a}
    & \Pr_{u,v}^L\bigg(\m{A}_1\cap \m{A}_2 \cap \bigg\{\sup_{x\in [y,y+1]}|\Lambda(x)-\Lambda(y)| \ge M\bigg\}\bigg) \\ & = \Ex_{u,v}^L\left[\ind_{\m{A}_1\cap\m{A}_2}\Ex\left[\ind\{\sup_{x\in [y,y+1]}|\Lambda(x)-\Lambda(y)| \ge M\}\mid \mathcal{F}\right]\right].
    \end{aligned}
\end{equation}
By the Gibbs property \eqref{gprop} we have that
\begin{align}
\Ex_{u,v}^L\left[\ind_{\{\sup_{x\in [y,y+1]}|\Lambda(x)-\Lambda(y)| \ge M\}}\mid \mathcal{F}\right]  & = \Pr_{\hat{W}}^{(a,a')\to (b,b')}\left(\sup_{x\in [y,y+1]}|\Lambda(x)-\Lambda(y)| \ge M\right) \nonumber\\ & \le \Pr_{\hat{W}}^{(a,a')\to (b,b')}\left(\sup_{x\in [\tau_1,\tau_2]}|\Lambda(x)-a| \ge M/2\right). \label{23b}
\end{align}
where $a=\Lambda(\tau_1), a'=\Lambda'(\tau_1)$, $b=\Lambda(\tau_2), a'=\Lambda'(\tau_2)$ and $\Pr_{\hat{W}}^{(a,a')\to (b,b')}$ is the absolutely continuous with respect to $\Pr^{(a,a')\to (b,b')}$, which is the law of two independent Brownian bridges on $[\tau_1,\tau_2]$ from $(a,a')$ to $(b, b')$, via the Radon-Nikodym derivative:
\begin{align*}
    \hat{W}:=\exp\left(-\int_{\tau_1}^{\tau_2} e^{\Lambda'(s)-\Lambda(s)}ds\right).
\end{align*}
By stochastic monotonicity:
\begin{align}  \ind_{\m{A}_1\cap\m{A}_2} \cdot &\Pr_{\hat{W}}^{(a,a')\to (b,b')}\left(\sup_{x\in [\tau_1,\tau_2]}\Lambda(x)-a \ge M/2\right) \nonumber\\ 
   &\le \ind_{\m{A}_1\cap\m{A}_2} \cdot\Pr_{\hat{W}}^{(a+M/4,a'+M/4)\to (b,b')}\left(\sup_{x\in [\tau_1,\tau_2]} \Lambda(x)-a \ge M/2\right). \label{234}
\end{align}
On the event, $\m{A}_1\cap\m{A}_2$, $\tau_2-\tau_1\le 3$ and $a+M/4 =a'$ and $b+M/4=b'$ (by continuity of processes equality must occur at stopping times). Under this scenario $\Ex^{(a+M/4,a')\to (b+M/4,b')}[\hat{W}] =\Ex^{(0,0)\to (0,0)}[\hat{W}]=c$, which is free of $M$. As $0<\hat{W}\le 1$ we have
\begin{align}
   \mbox{r.h.s.~of~\eqref{234}}  & \le c^{-1} \cdot \ind_{\m{A}_1\cap\m{A}_2} \cdot \Pr^{(a+M/4,a'+M/4)\to (b,b')}\left(\sup_{x\in [\tau_1,\tau_2]}\Lambda(x)-a \ge M/2\right)\nonumber\\ & \le c^{-1}\cdot Ce^{-M^2/C}, \label{235}
\end{align}
where the last inequality follows by Brownian bridge tail estimates. Similarly by stochastic monotonicity:
\begin{align*}
    \Pr_{\hat{W}}^{(a,a')\to (b,b')}\left(\sup_{x\in [\tau_1,\tau_2]}\Lambda(x)-a \ge M/2\right) \le \Pr_{\hat{W}}^{(a,-\infty)\to (b,-\infty)}\left(\inf_{x\in [\tau_1,\tau_2]} \Lambda(x)-a \le -M/2\right) \le Ce^{-M^2/C}.
\end{align*}
where again we use Brownian bridge tail estimates for the last inequality. Combining the above inequality with \eqref{235} we get that
\begin{align*}
    \ind_{\m{A}_1\cap\m{A}_2} \cdot \Pr_{\hat{W}}^{(a,a')\to (b,b')}\left(\sup_{x\in [\tau_1,\tau_2]}|\Lambda(x)-a| \ge M/2\right) \le Ce^{-M^2/C}.
\end{align*}
Plugging this back in \eqref{23b}, in view of \eqref{23a}, we derive \eqref{133}. 

\medskip

The proof of \eqref{121} and and the proof of \eqref{131} when $y\in [0,1]$ is exactly the same as the above proof upon consider the stopping domain $[0,\tau_2]$ and the $\sigma$-field $\mathcal{F}_+:=\sigma((\Lambda(x),\Lambda'(x)) : x\notin [0,\tau_2))$. The proof of \eqref{111} and the proof of \eqref{131} when  $y\in [L-2,L-1]$ is exactly the same as the above proof upon consider the stopping domain $[\tau_1,L]$ and the $\sigma$-field $\mathcal{F}_-:=\sigma((\Lambda(x),\Lambda'(x)) : x\notin (\tau_1,L])$. Finally, the proof of \eqref{1310} follows from \eqref{131} by noting that if $(\Lambda,\Lambda') \sim \Pr_{u,v}^L$, then $(-\Lambda'+\Lambda'(0),-\Lambda+\Lambda'(0)) \sim \Pr_{u,v}^L$.
\end{proof}

    \section{Proof of Theorems \ref{varde} and  \ref{t.main1}} \label{sec:3}

    In this section, we prove Theorems \ref{varde} and \ref{t.main1} assuming bounds on the asymptotic variance. As explained in the introduction, the proof of Theorem \ref{varde} follows similar ideas to those in Section 2 of \cite{yu2}, where an analogous statement was proved for the periodic KPZ case.

\smallskip

To outline the proof, we first introduce some notation. Let $\mathcal{Z}(x,s;y,t)$ be the time-$t$ solution to the SHE \eqref{opshe} with boundary conditions \eqref{robin} started from Dirac delta initial data $\delta(x)$ at time $s$. These are often called propagators of the SHE from $(x,s)$ to $(y,t)$.  Let $C_+([0,L])$ be the space of all non-negative continuous functions on $[0,L]$ that are not identical to zero.
For $f,g\in C_+([0,L])$, using the propagators, we may define  generalizations of partition functions:
\begin{align*}
   \mathcal{Z}(g,s;y,t) & :=\int_0^L \mathcal{Z}(x,s;y,t)g(x)dx, \quad \mathcal{Z}(x,s;f,t)  :=\int_0^L \mathcal{Z}(x,s;y,t)f(y)dy, \\ \mathcal{Z}(g,s;f,t) & :=\int_0^L\int_0^L \mathcal{Z}(x,s;y,t)g(x)f(y)dxdy.
\end{align*}
Here, $\mathcal{Z}(g,s;x,t)$ is the solution to the SHE \eqref{opshe} with boundary conditions \eqref{robin} started from initial data $g(x)$ at time $s$ and
$\mathcal{Z}(g,s;f,t)$ can be viewed as the partition function of the point-to-point
directed polymer on the strip of length $t-s$, with the two endpoints sampled from $g$ and $f$ respectively. When the $g\equiv 1$ or $f\equiv1$, we simply write $\mathcal{Z}(\mathbf{1},s;y,t)$ or $\mathcal{Z}(y,s;\mathbf{1},t)$ for $\mathcal{Z}(g,s;y,t)$ or $\mathcal{Z}(x,s;f,t)$ respectively. For the log-partition functions, we write
\begin{align*}
    \mathcal{H}(g,s;y,t):=\log \mathcal{Z}(g,s;y,t), \quad \mathcal{H}(x,s;f,t):=\log \mathcal{Z}(x,s;f,t),\quad \mathcal{H}(g,s;f,t):=\log\mathcal{Z}(g,s;f,t).
\end{align*}
and for the normalized partition function we write
\begin{align}\label{npart}
    \rho(g,s;y,t):=\frac{ \mathcal{Z}(g,s;y,t)}{\mathcal{Z}(g,s;\mathbf{1},t)}, \quad \rho(x,s;f,t):=\frac{ \mathcal{Z}(x,s;f,t)}{\mathcal{Z}(\mathbf{1},s;f,t)}.
\end{align}

We will allow $g,f$ to be random. Since allowing random initial and ending data allows several sources of randomness, to avoid confusion,  we use $\E$ to denote the expectation only with respect to the noise $\xi$. The expectations with respect to $f, g$,  will
be denoted by $\Ex_f , \Ex_g$ respectively. When there is no possibility of
confusion, we use $\Ex$ as the expectation with respect to all possible randomnesses. When $g$ is equal in distribution to $e^{\Lambda(x)}$, where $\Lambda(x)$ is the stationary measure, $\mathcal{H}(e^{\Lambda},0;y,t)$ is the height function corresponding to the stationary open KPZ equation. In this case, we drop $(e^{\Lambda},0)$ from the notation and simply write $\mathcal{H}(y,t)$ and $\mathcal{H}(f,t)$. 

\medskip

The key idea behind Theorem \ref{varde} is the following decomposition formula for the centered height function (the analogous decomposition for the periodic KPZ appears in Proposition 3.1 in \cite{yu2}).
\begin{proposition}\label{decom} Fix any $t>0$ and adopt the above notations and conventions. Let $\mathcal{H}(0,t)$ be the stationary solution of the open KPZ equation \eqref{kpz} on $[0,L]$ with boundary data \eqref{openkpz}. We have
\begin{align*}
    \mathcal{H}(0,t)-\Ex[\mathcal{H}(0,t)] = I_L(t)+Y_L(0)-Y_L(t)
\end{align*}
where 
\begin{align}\label{iljl}
    & I_L(t)=\Ex_{\Lambda_2}\left[\mathcal{H}(e^{\Lambda_2},t)\right]-\Ex_{\Lambda_2}\E\left[\mathcal{H}(e^{\Lambda_2},t)\right], \qquad Y_L(s)=\Ex_{\Lambda_2}\left[\mathcal{H}(e^{\Lambda_2},s)\right]-\mathcal{H}(0,s).
\end{align}
where $\Lambda_2$ is distributed according to the stationary measure independent of the noise and the initial data.
\end{proposition}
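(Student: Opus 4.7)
My plan is to verify the identity in two stages: first reduce it algebraically to a single scalar identity, then establish that identity using time-reversal of the noise together with stationarity of the open KPZ.

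Substituting the definitions of $I_L(t)$ and $Y_L(s)$ from \eqref{iljl} into the right-hand side, the two copies of $\Ex_{\Lambda_2}[\mathcal{H}(e^{\Lambda_2},t)]$ cancel, and since the stationary measure has $\mathcal{H}(0,0)=\Lambda(0)=0$, matching against $\mathcal{H}(0,t)-\Ex[\mathcal{H}(0,t)]$ reduces the proposition to
\begin{align*}
\Ex_{\Lambda_2}\E[\mathcal{H}(e^{\Lambda_2},t)]-\Ex_{\Lambda_2}[\mathcal{H}(e^{\Lambda_2},0)] = \Ex[\mathcal{H}(0,t)].
\end{align*}
The left-hand side is a priori a function of the random initial data $\Lambda$ (only $\Lambda_2$ and the noise are averaged out), while the right-hand side is a deterministic number; so the real content is that this random function of $\Lambda$ is almost surely equal to the constant $\Ex[\mathcal{H}(0,t)]$.

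To establish this, I will first invoke time-reversal: since $\tilde\xi(s,x):=\xi(t-s,x)$ has the same law as $\xi$, the Robin boundary conditions are time-independent, and the Robin heat semigroup is spatially symmetric, the Picard iteration for the open SHE gives the pathwise identity $\mathcal{Z}(y,0;x,t)[\tilde\xi]=\mathcal{Z}(x,0;y,t)[\xi]$, and hence the joint distributional identity $(\mathcal{Z}(x,0;y,t))_{x,y}\stackrel{d}{=}(\mathcal{Z}(y,0;x,t))_{x,y}$. Integrating against the deterministic weights $e^{\Lambda(x)+\Lambda_2(y)}$, passing to logs, and applying $\Ex_{\Lambda_2}\E$ gives $\Ex_{\Lambda_2}\E[\mathcal{H}(e^{\Lambda_2},t)] = \Ex_{\Lambda_2}\E[\mathcal{H}(e^{\Lambda_2},0;e^\Lambda,t)]$. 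Next, I use stationarity to evaluate the right-hand side: setting $\mathcal{H}^{\Lambda_2}(y,t):=\log\mathcal{Z}(e^{\Lambda_2},0;y,t)$ for the stationary solution started from $e^{\Lambda_2}$, the decomposition
\begin{align*}
\mathcal{H}(e^{\Lambda_2},0;e^\Lambda,t) = \mathcal{H}^{\Lambda_2}(0,t)+\log\int_0^L e^{\Lambda(y)+[\mathcal{H}^{\Lambda_2}(y,t)-\mathcal{H}^{\Lambda_2}(0,t)]}\,dy
\end{align*}
contributes, after $\Ex_{\Lambda_2}\E$, the term $\Ex[\mathcal{H}(0,t)]$ from the first summand (because $\mathcal{H}^{\Lambda_2}(0,t)\stackrel{d}{=}\mathcal{H}(0,t)$) and the term $\Ex_{\Lambda_2}[\mathcal{H}(e^{\Lambda_2},0)]$ from the second: the defining invariance of the stationary measure makes $\mathcal{H}^{\Lambda_2}(\cdot,t)-\mathcal{H}^{\Lambda_2}(0,t)\stackrel{d}{=}\Lambda$ under the joint law of $(\Lambda_2,\xi)$, and since $\Lambda$ is independent of $(\Lambda_2,\xi)$ we may replace this process by a fresh stationary sample and rename it $\Lambda_2$, producing exactly $\Ex_{\Lambda_2}[\log\int_0^L e^{\Lambda(y)+\Lambda_2(y)}\,dy]=\Ex_{\Lambda_2}[\mathcal{H}(e^{\Lambda_2},0)]$. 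Chaining the two steps yields the required scalar identity.

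The principal obstacle is making the time-reversal step rigorous at the level of the mild Cole–Hopf solution of the open SHE, and justifying the exchange of integral, logarithm, and expectation used in the stationarity step; both rest on symmetry of the Robin heat semigroup together with the moment bounds for the open SHE collected in Appendix~\ref{appC}.
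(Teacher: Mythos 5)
Your proof is correct and follows essentially the same approach as the paper: both reduce by algebra to the scalar identity \eqref{claim3}, then combine the swap invariance \eqref{swap} (time-reversal together with symmetry of the Robin heat kernel, exactly the Feynman--Kac argument the paper cites from \cite{parekh2022ergodicity}) with stationarity of the open KPZ measure. The organization of the stationarity step differs only cosmetically---the paper evaluates both sides of \eqref{claim3} as $\Ex_{\Lambda_2}\left[\log\int_0^L e^{\Lambda_1(x)+\Lambda_2(x)}\,dx\right]$, whereas you decompose $\mathcal{H}(e^{\Lambda_2},0;e^{\Lambda},t)$ into the stationary height at $0$ plus the log-integral of the relative profile---but the underlying argument and the two key inputs are identical.
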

\noindent  Given the above decomposition, we observe that
\begin{align*}
\left|\sqrt{\operatorname{Var}(\mathcal{H}(0,t))}-\sqrt{\operatorname{Var}(I_L(t))}\right| \le \sqrt{\operatorname{Var}(Y_L(0)-Y_L(t))}.
\end{align*}
Theorem \ref{varde} then follows from the following two propositions.

\begin{proposition}[Leading order] \label{proplead} For each $t>0$ we have $\operatorname{Var}(I_L(t))=t\sigma_L^2$ where $\sigma_L^2$ is defined in \eqref{def:sigma}.
    \end{proposition}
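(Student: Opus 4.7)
The plan is to apply the Clark--Ocone formula to $I_L(t)$ and then invoke Itô isometry, following \cite[Proposition~3.2]{yu2}. Since $\Lambda_2$ is independent of the noise $\xi$, the operators $\Ex_{\Lambda_2}$ and $\E$ commute and $\E[I_L(t)]=0$. Writing $(\mathcal{F}_s)_{s\in[0,t]}$ for the filtration of the space-time white noise, Clark--Ocone gives
\[
I_L(t)=\int_0^t\!\!\int_0^L \E[D_{s,x}I_L(t)\mid\mathcal{F}_s]\,\xi(s,x)\,dx\,ds,
\]
and Itô isometry then yields
\[
\operatorname{Var}(I_L(t))=\int_0^t\!\!\int_0^L \E\bigl[(\E[D_{s,x}I_L(t)\mid\mathcal{F}_s])^2\bigr]dx\,ds.
\]
The task is to identify this integral with $t\sigma_L^2$.

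For the Malliavin derivative, a direct computation on the open SHE propagator gives $D_{s,x}\mathcal{Z}(e^{\Lambda_2},0;\mathbf{1},t)=\mathcal{Z}(e^{\Lambda_2},0;x,s)\,\mathcal{Z}(x,s;\mathbf{1},t)$, so $D_{s,x}\mathcal{H}(e^{\Lambda_2},t)$ equals the polymer midpoint density
\[
p^{\Lambda_2}_s(x):=\frac{\mathcal{Z}(e^{\Lambda_2},0;x,s)\,\mathcal{Z}(x,s;\mathbf{1},t)}{\mathcal{Z}(e^{\Lambda_2},0;\mathbf{1},t)},
\]
and since $\Lambda_2$ is independent of $\xi$, $D_{s,x}I_L(t)=\Ex_{\Lambda_2}[p^{\Lambda_2}_s(x)]$. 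Introducing an i.i.d.~copy $\Lambda_2'$ of $\Lambda_2$ and an independent copy $\widetilde\xi$ of the noise on $[s,t]$, and writing $\widetilde p^{\Lambda_2'}_s(x)$ for $p^{\Lambda_2'}_s(x)$ with $\widetilde\xi$ substituted for $\xi|_{[s,t]}$ (so that $p^{\Lambda_2}_s(x)$ and $\widetilde p^{\Lambda_2'}_s(x)$ are conditionally independent given $\mathcal{F}_s$), the identity $\E[\E[X\mid\mathcal{F}_s]\E[Y\mid\mathcal{F}_s]]=\E[X\widetilde Y]$ yields
\[
\operatorname{Var}(I_L(t))=\int_0^t\!\!\int_0^L \Ex_{\Lambda_2,\Lambda_2'}\E\bigl[p^{\Lambda_2}_s(x)\,\widetilde p^{\Lambda_2'}_s(x)\bigr]dx\,ds,
\]
i.e.~the integrated expected overlap of two polymer midpoint densities built from two independent stationary initial data, sharing the $[0,s]$-noise but using independent $[s,t]$-noises.

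The key step is the identification of this overlap with $\sigma_L^2$. By Theorem~\ref{thminv}, starting the open KPZ equation from the stationary law $e^{\Lambda_2}$ keeps the profile stationary, so the centered forward profile $\log\mathcal{Z}(e^{\Lambda_2},0;\cdot,s)-\log\mathcal{Z}(e^{\Lambda_2},0;0,s)$ is distributed as a stationary copy of $\Lambda$, and the normalizing constants cancel from the ratio defining $p^{\Lambda_2}_s(x)$. Combining this with the two-layer Gibbsian representation of the stationary measure (Definition~\ref{openkpzgibbs}) and the self-adjointness of the Robin Laplacian (yielding a time-reversal symmetry $\mathcal{Z}(x,s;y,t)\stackrel{d}{=}\mathcal{Z}(y,s;x,t)$ for the backward propagator $\mathcal{Z}(\cdot,s;\mathbf{1},t)$), the double integral over $\Lambda_2,\Lambda_2'$ and the two $[s,t]$-noise realizations reduces after a careful manipulation to the three-independent-copies formula \eqref{def:sigma}: the doubled $\Lambda_2$ in the numerator arises from the coincidence of the two polymer midpoints at the common position $x$, while the integrals $\int e^{\Lambda_1+\Lambda_2}$ and $\int e^{\Lambda_2+\Lambda_3}$ in the denominator come from the forward and backward normalizing factors. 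The main obstacle is precisely this identification, since the two forward profiles share the noise $\xi|_{[0,s]}$ and are not a priori two independent stationary copies, and the backward propagator $\mathcal{Z}(\cdot,s;\mathbf{1},t)/\mathcal{Z}(0,s;\mathbf{1},t)$ is not literally stationary for finite $t-s$; reducing them to three truly independent copies $\Lambda_1,\Lambda_2,\Lambda_3$ requires a delicate stationarity-based argument using the Gibbsian structure developed in Section~\ref{sec:2}. The moment bounds of Appendix~\ref{appC} suffice to justify the interchanges of Malliavin derivative, $\Ex_{\Lambda_2}$, and the stochastic integral.
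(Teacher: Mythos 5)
Your proposal misreads the paper's notational convention, and the error propagates. In the convention set just before Proposition~\ref{decom}, when the initial data is stationary the symbol $\mathcal{H}(f,t)$ abbreviates $\mathcal{H}(e^{\Lambda_1},0;f,t)$, i.e.\ the initial data is a stationary profile $e^{\Lambda_1}$ and $f$ is the \emph{terminal} data; moreover the paper explicitly states that $\Lambda_2$ is ``independent of the noise \emph{and the initial data}.'' So $\mathcal{H}(e^{\Lambda_2},t)=\log\mathcal{Z}(e^{\Lambda_1},0;e^{\Lambda_2},t)$, not $\log\mathcal{Z}(e^{\Lambda_2},0;\mathbf{1},t)$ as you assume. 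Consequently your Malliavin derivative
\begin{align*}
p^{\Lambda_2}_s(x)=\frac{\mathcal{Z}(e^{\Lambda_2},0;x,s)\,\mathcal{Z}(x,s;\mathbf{1},t)}{\mathcal{Z}(e^{\Lambda_2},0;\mathbf{1},t)}
\end{align*}
is the midpoint density for the wrong polymer. The correct formula is
\begin{align*}
D_{s,y}\mathcal{H}(e^{\Lambda_1},0;e^{\Lambda_2},t)
=\frac{\mathcal{Z}(e^{\Lambda_1},0;y,s)\,\mathcal{Z}(y,s;e^{\Lambda_2},t)}{\mathcal{Z}(e^{\Lambda_1},0;e^{\Lambda_2},t)}
=\frac{\rho(e^{\Lambda_1},0;y,s)\,\mathcal{Z}(y,s;e^{\Lambda_2},t)}{\int_0^L \rho(e^{\Lambda_1},0;y',s)\,\mathcal{Z}(y',s;e^{\Lambda_2},t)\,dy'},
\end{align*}
with $\Lambda_1$ on the $[0,s]$-side (the $\mathcal{F}_s$-measurable part) and $\Lambda_2$ on the $[s,t]$-side.

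This matters because the ``hard step'' you flag — that the backward propagator $\mathcal{Z}(\cdot,s;\mathbf{1},t)$ is not stationary and that two forward profiles share the noise $\xi|_{[0,s]}$ — is an artifact of the misidentification, not a genuine obstacle. With the correct formula, there is a \emph{single} forward profile $\rho(e^{\Lambda_1},0;\cdot,s)$, which is stationary ($\rho(e^{\Lambda_1},0;\cdot,s)\stackrel{d}{=}e^{\Lambda_1(\cdot)}/\int_0^Le^{\Lambda_1}$), and the backward part becomes stationary by the swap invariance \eqref{swinv}:
\begin{align*}
\frac{\mathcal{Z}(y,s;e^{\Lambda_2},t)}{\mathcal{Z}(0,s;e^{\Lambda_2},t)}\stackrel{d}{=}\frac{\mathcal{Z}(e^{\Lambda_2},s;y,t)}{\mathcal{Z}(e^{\Lambda_2},s;0,t)}\stackrel{d}{=}e^{\Lambda_2(y)}.
\end{align*}
Once both pieces are expressed through stationary profiles, the integrand in the Itô isometry is independent of $s$; the $ds$-integral then produces the factor $t$, the squaring of $\Ex_{\Lambda_2}[\cdot]$ is resolved by introducing an independent copy $\Lambda_3$ exactly as you suggest with $\Lambda_2'$, and one lands directly on \eqref{def:sigma}. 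Your $\widetilde\xi$-doubling is a legitimate alternative to the paper's direct factorization of the conditional expectation (the paper exploits that $\rho(e^{\Lambda_1},0;\cdot,s)$ is $\mathcal{F}_s$-measurable while $\mathcal{Z}(\cdot,s;\cdot,t)$ is $\mathcal{F}_s$-independent), but it is not needed and adds complexity without fixing the underlying misidentification.
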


\begin{proposition}[Subleading term] \label{prop C.6}
  For any $t\geq 0$, then we have as $L\to \infty$  
$$\frac{1}{L}\operatorname{Var}[Y_L(t)]\rightarrow \operatorname{Var}\left[ \mathbb{E}_{B_2}\max_{x\in[0,1]}\{B_1(x)+B_2(x)\} \right],$$
where $B_1,B_2$ are independent copies of $B$ coming from Theorem \ref{l.difflimits}.
\end{proposition}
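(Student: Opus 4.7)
The plan is to first reduce the claim to the case $t=0$ by exploiting stationarity of the open KPZ equation, and then to identify the limit of $L^{-1}\operatorname{Var}(Y_L(0))$ via a Laplace-type asymptotic combined with the diffusive limits of Section \ref{sec:2}. For the reduction, set $\Lambda^{(t)}(y):=\mathcal{H}(y,t)-\mathcal{H}(0,t)$. The defining property of the stationary measure gives $\Lambda^{(t)}\stackrel{d}{=}\Lambda_1$, and since the independent copy $\Lambda_2$ is sampled independently of both the noise and $\Lambda_1$, the joint equality $(\Lambda^{(t)},\Lambda_2)\stackrel{d}{=}(\Lambda_1,\Lambda_2)$ holds. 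A short manipulation, factoring $\mathcal{Z}(e^{\Lambda_1},0;0,t)$ out of the ratio defining $\mathcal{H}(e^{\Lambda_2},t)-\mathcal{H}(0,t)$, yields
\[
Y_L(t)=\Ex_{\Lambda_2}\!\left[\log\int_0^L e^{\Lambda^{(t)}(y)+\Lambda_2(y)}\,dy\right],
\]
whence $Y_L(t)\stackrel{d}{=}Y_L(0)$ and $\operatorname{Var}(Y_L(t))=\operatorname{Var}(Y_L(0))$ for every $t\ge 0$.

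At $t=0$, using $\Lambda_1(0)=0$, one has $Y_L(0)=\Ex_{\Lambda_2}[\log\int_0^L e^{\Lambda_1(y)+\Lambda_2(y)}\,dy]$. Introducing the diffusive rescaling $B_L^{(i)}(x):=L^{-1/2}\Lambda_i(xL)$, the next step is to rewrite
\[
\tfrac{1}{\sqrt L}Y_L(0) \;=\; \tfrac{\log L}{\sqrt L} + \Ex_{\Lambda_2}\!\left[\tfrac{1}{\sqrt L}\log\int_0^1 e^{\sqrt L(B_L^{(1)}(x)+B_L^{(2)}(x))}\,dx\right]
\]
and to apply the Laplace principle on a ball of radius $L^{-1}$ around the maximizer: the upper bound $\tfrac{1}{\sqrt L}\log\int\le\sup_x(B_L^{(1)}+B_L^{(2)})$ is trivial, and the matching lower bound $\tfrac{1}{\sqrt L}\log\int\ge\sup_x(B_L^{(1)}+B_L^{(2)})-\tfrac{\log L}{\sqrt L}-\tfrac{\Xi_L}{\sqrt L}$ follows from the local modulus bounds \eqref{131} and \eqref{1310}, where $\Xi_L$ denotes the unit-scale oscillation of $\Lambda_1+\Lambda_2$ at the argmax and has Gaussian tails uniformly in $L$. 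Consequently,
\[
\tfrac{1}{\sqrt L}Y_L(0)-\Ex_{\Lambda_2}\sup_{x\in[0,1]}\bigl(B_L^{(1)}(x)+B_L^{(2)}(x)\bigr) \longrightarrow 0 \quad\text{in } L^2.
\]

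To pass to the limit, we will combine Theorem \ref{l.difflimits} with the tightness provided by \eqref{131} to upgrade the pointwise $L^2$ convergence to joint functional convergence in $C([0,1])^2$ of $(B_L^{(1)},B_L^{(2)})$ to independent copies $(B_1,B_2)$ of the limit process $B$. Since the map $(f,g)\mapsto\sup_x(f(x)+g(x))$ is $1$-Lipschitz in sup norm, and the global supremum bound \eqref{121} provides uniform $L^p$ control for every $p$, a standard continuous-mapping argument combined with uniform integrability gives
\[
\Ex_{\Lambda_2}\sup_{x\in[0,1]}\bigl(B_L^{(1)}(x)+B_L^{(2)}(x)\bigr) \;\xrightarrow{L^2}\; \Ex_{B_2}\sup_{x\in[0,1]}\bigl(B_1(x)+B_2(x)\bigr),
\]
after which taking variances yields the stated limit. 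The most delicate step is the Laplace remainder estimate: the oscillation $\Xi_L$ must be controlled uniformly in $L$ and simultaneously against the outer expectation $\Ex_{\Lambda_2}$, which is exactly what the Gaussian tail estimates from Proposition \ref{tailest} are tailored to deliver.
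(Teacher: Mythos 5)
Your proof follows essentially the same approach as the paper's: reduce to $t=0$ via stationarity (this is exactly \eqref{eqlaw}), rescale diffusively, apply a Laplace principle to $\frac1{\sqrt L}\log\int_0^1 e^{\sqrt L(B_L^{(1)}+B_L^{(2)})}$, and conclude using uniform integrability and the diffusive limits of Theorem~\ref{l.difflimits}. One small inaccuracy: the unit-scale oscillation $\Xi_L$ of $\Lambda_1+\Lambda_2$ at the (random) argmax does \emph{not} have Gaussian tails uniformly in $L$ --- a union bound over the $L$ unit intervals using \eqref{131} and \eqref{1310} gives only $\Pr(\Xi_L\ge M)\le CLe^{-M^2/C}$, hence $\Ex[\Xi_L^2]=O(\log L)$ rather than $O(1)$. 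This is still enough for your conclusion since you only need $\Xi_L/\sqrt L\to 0$ in $L^2$, but the claim as stated is false. The paper sidesteps the issue by using the cruder two-sided sandwich $\min(B_{L,1}+B_{L,2})\le \frac1{\sqrt L}\log\int\le\max(B_{L,1}+B_{L,2})$, whose endpoints have $L$-uniform $L^2$ moments directly from \eqref{121}, together with Skorokhod almost-sure convergence and dominated convergence.
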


In the following subsection, Section \ref{sec3.1}, we prove the decomposition formula (Proposition \ref{decom}) and the asymptotics for the subleading term (Proposition \ref{prop C.6}). The proof of Proposition \ref{proplead}, on the other hand, requires a few additional tools. We defer its proof to Section \ref{sec:lead}.

\subsection{Proof of Propositions \ref{decom} and \ref{prop C.6}} \label{sec3.1}

In this section, we prove Propositions \ref{decom} and \ref{prop C.6}. We continue with the same notation introduced before Proposition \ref{decom}.

\begin{proof}[Proof of Proposition \ref{decom}] Let us denote $e^{\Lambda_1}$ to denote the initial data for $\mathcal{H}$ so that $\mathcal{H}(e^{\Lambda_2},t)=\mathcal{H}(e^{\Lambda_1},0;e^{\Lambda_2},t)$. Given the definitions of $I_L(t)$ and $Y_L(s)$ from \eqref{iljl}, we have that
\begin{align*}
    I_L(t)+Y_L(0)-Y_L(t)=\mathcal{H}(0,t)+Y_L(0)-\Ex_{\Lambda_2}\E[\mathcal{H}(e^{\Lambda_2},t)]
\end{align*}
Thus it suffices to show
\begin{align}\label{claim3}
    \Ex_{\Lambda_2}\E[\mathcal{H}(e^{\Lambda_2},t)]-\Ex[\mathcal{H}(0,t)]= Y_L(0).
\end{align}
Towards this end, we first claim that the initial and final data for $\mathcal{H}$ can be swapped, i.e., for any given $f,g \in C_+([0,L])$ we have
\begin{equation}
    \label{swap}
    \begin{aligned}
    \mathcal{H}(g,0;f,t)  & =\int_0^L\int_0^L \mathcal{Z}(y,0;x,t)g(y)f(x)  dxdy \\ & \stackrel{d}{=} \int_0^L\int_0^L \mathcal{Z}(y,0;x,t)g(x)f(y)  dxdy=\mathcal{H}(f,0;g,t).
\end{aligned}
\end{equation}
Indeed, \eqref{swap} follows as at the level of propagators we have the following invariance in swapping:
\begin{align}
    \label{swinv}
    \{\mathcal{Z}(y,0;x,t)\}_{x,y\in [0,L]} \stackrel{d}{=} \{\mathcal{Z}(x,0;y,t)\}_{x,y\in [0,L]}.
\end{align}
The above claim is standard in the SHE setting and can be proved by approximation to the smooth noise case, which in turn can be shown using the Feynman–Kac formula \cite[Proposition 2.2]{parekh2022ergodicity}. We refer to Appendix B of \cite{yu2} for details related to periodic KPZ.

\medskip

Thanks to \eqref{swap} we have that
\begin{equation}
    \label{inde}
    \begin{aligned}
    \Ex_{\Lambda_2}\E[\mathcal{H}(e^{\Lambda_2},t)]-\Ex[\mathcal{H}(0,t)] 
 & =\Ex_{\Lambda_2}\E[\mathcal{H}(e^{\Lambda_2},0;e^{\Lambda_1},t)]-\Ex[\mathcal{H}(0,t)] \\ & =\Ex_{\Lambda_2}\E\left[\log \int_0^L\frac{\mathcal{Z}(e^{\Lambda_2},0;x,t)e^{\Lambda_1(x)}dx}{\mathcal{Z}(e^{\Lambda_2},0;0,t)}\right] \\ & =\Ex_{\Lambda_2}\left[\log \int_0^L e^{\Lambda_2(x)}e^{\Lambda_1(x)}dx\right]
\end{aligned}
\end{equation}
where the last equality is due to the stationarity: $\mathcal{Z}(e^{\Lambda_2},0;\cdot,t)/\mathcal{Z}(e^{\Lambda_2},0;0,t)\stackrel{d}{=} e^{\Lambda_2(\cdot)}$.
 On the other hand, as $\mathcal{H}(0,s)=\Ex_{\Lambda_2}[\log \mathcal{Z}(e^{\Lambda_1},0;0,s)]$, for any $s\ge 0$ we have
 \begin{equation}
     \label{eqlaw}
     \begin{aligned}
    Y_L(s) & =\Ex_{\Lambda_2}[\mathcal{H}(e^{\Lambda_2},t)]-\mathcal{H}(0,s) \\ & = \Ex_{\Lambda_2}\left[\log \int_0^L\frac{\mathcal{Z}(e^{\Lambda_1},0;x,s)e^{\Lambda_2(x)}dx}{\mathcal{Z}(e^{\Lambda_1},0;0,s)}\right]=\Ex_{\Lambda_2}\left[\int_0^L e^{\Lambda_1(x)}e^{\Lambda_2(x)}dx\right]
\end{aligned}
 \end{equation}

where the last equality is again via stationarity: $\mathcal{Z}(e^{\Lambda_1},0;\cdot,t)/\mathcal{Z}(e^{\Lambda_1},0;0,t)\stackrel{d}{=} e^{\Lambda_1(\cdot)}$. In view of \eqref{inde}, this verifies \eqref{claim3}, completing the proof.
\end{proof}

    \begin{proof}[Proof of Proposition \ref{prop C.6}]
        Thanks to \eqref{eqlaw}, applying a diffusive scaling yields
        \begin{align*}
            \frac{Y_L(s)-\log L}{\sqrt{L}} = \frac1{\sqrt{L}}\Ex_{B_{L,2}} \left[\log \int_0^1 e^{\sqrt{L}(B_{L,1}(x)+B_{L,2}(x))}dx\right],
        \end{align*}
        where $B_{L,i}(x):=\frac1{\sqrt{L}}\Lambda_i(xL)$. In Theorem \ref{l.difflimits}, we studied diffusive limits of $B_{L,i}$ and showed that $B_{L,i}\to B_i$ (in distribution and in $L^2$) where $B_i$'s are independent copies of $B$ coming from Theorem \ref{l.difflimits}. In particular, by Skorohod theorem and Laplace principle, we may get a common probability space where
        \begin{align*}
            \frac1{\sqrt{L}}\log \int_0^1 e^{\sqrt{L}(B_{L,1}(x)+B_{L,2}(x))}dx \to \max_{x\in [0,1]} (B_1(x)+B_2(x)). 
        \end{align*}
        almost surely as $L\to \infty$. On the other hand,
         \begin{align*}
            \min_{x\in [0,1]} (B_{L,1}(x)+B_{L,2}(x)) \le \frac1{\sqrt{L}}\log \int_0^1 e^{\sqrt{L}(B_{L,1}(x)+B_{L,2}(x))}dx  \le  \max_{x\in [0,1]} (B_{L,1}(x)+B_{L,2}(x)).
        \end{align*}
        By Theorem \ref{l.difflimits}, both upper and lower bounds in above equation are in $L^2$. Thus the desired convergence follows by an application of dominated convergence theorem. 
    \end{proof}

\subsection{Proof of Proposition \ref{proplead}} \label{sec:lead}

We prove Proposition \ref{proplead} in this section. As before, we assume $\Lambda_1,\Lambda_2$ are two independent copies of the stationary measure. By definition \eqref{iljl},
\begin{align}\label{fomr}
I_L(t) \stackrel{d}{=} \Ex_{\Lambda_2}\left[\mathcal{H}(e^{\Lambda_1},0;e^{\Lambda_2},t)-\E\left[\mathcal{H}(e^{\Lambda_1},0;e^{\Lambda_2},t)\right]\right].
\end{align}

We divide the proof into three steps. In \textbf{Step 1}, we derive a suitable formula for the random variable inside the outer expectation, assuming the square integrability of a certain random variable. In \textbf{Step 2}, we use this expression to prove the proposition. In \textbf{Step 3}, we verify the square integrability of the aforementioned random variable.

\medskip

\noindent\textbf{Step 1.} In this step, we derive a formula for $\mathcal{H}(g,0;f,t)-\E[\mathcal{H}(g,0;f,t)]$ for any $f,g\in C_{+}([0,L])$. Let $(\mathcal{F}_s)_{s\ge 0}$ be the filtration generated by $\xi$ and $D$ denote the Malliavin derivative w.r.t.~$\xi$. For any square integrable random variable $F$ measurable w.r.t.~$\mathcal{F}_t$, the Clark-Ocone formula \cite{spa} states that
\begin{align*}
F-\E[F] = \int_0^t \int_0^L \E\left[D_{y,s}F\mid \mathcal{F}_s\right]\xi(y,s)dyds
\end{align*}
provided $D_{y,s}F$ is square integrable. We apply the above formula with $F:=\mathcal{H}(g,0;f,t)$ where $g,f\in C_+([0,L])$ to get
\begin{align*}
\mathcal{H}(g,0;f,t)-\E[\mathcal{H}(g,0;f,t)] = \int_0^t \int_0^L \E\left[D_{s,y}\mathcal{H}(g,0;f,t)\mid \mathcal{F}_s\right]\xi(y,s)dyds.
\end{align*}
We postpone the verification of square integrability of $D_{y,s}\mathcal{H}(g,0;f,t)$ to \textbf{Step 3}. Let us simplify the expression for the Malliavin derivative. 
Observe that
\begin{align}\nonumber
   D_{y,s}\mathcal{H}(g,0;f,t)  = D_{y,s}\log \mathcal{Z}(g,0;f,t)  = \frac{D_{y,s}\mathcal{Z}(g,0;f,t)}{\mathcal{Z}(g,0;f,t)}  & = \frac{\mathcal{Z}(g,0;y,s)\mathcal{Z}(y,s;f,t)}{\int_{0}^L \mathcal{Z}(g,0;y',s)\mathcal{Z}(y',s;f,t)dy'}\\ & =\frac{\rho(g,0;y,s)\mathcal{Z}(y,s;f,t)}{\int_{0}^L \rho(g,0;y',s)\mathcal{Z}(y',s;f,t)dy'} \label{mald}
\end{align}
where the second equality follows by known formula for the Malliavin derivative of the SHE \cite{mal} and the last equality follows from the definition of normalized partition function \eqref{npart}. 
Note that $\rho(g,0;\cdot,s)$ is measurable w.r.t.~$\mathcal{F}_s$ and $\mathcal{Z}(\cdot,s;\cdot,t)$ is independent of $\mathcal{F}_s$. Thus, if we define
\begin{align}
            \label{def:mg}
            \mathscr{G}_{s,t}(h,y;f):=\E\left[\frac{\mathcal{Z}(y,s;f,t)}{\mathcal{Z}(h,s;f,t)}\right]
        \end{align}
we have
\begin{align*}
    \E\left[D_{y,s}\mathcal{H}(g,0;f,t)\mid \mathcal{F}_s\right]= \mathscr{G}_{s,t}(\rho(g;\cdot,s),y;f)\cdot \rho(g,0;y,s).
\end{align*}
This leads us to the following formula:
\begin{align}\label{step1fomr}
            \mathcal{H}(g,0;f,t)-\E[\mathcal{H}(g,0;f,t)]=\int_0^t \int_0^L \mathscr{G}_{s,t}(\rho(g,0;\cdot,s),y;f)\rho(g,0;y,s)\xi(y,s)dyds.
        \end{align}

\medskip

\noindent\textbf{Step 2.} If we take $f=e^{\Lambda_2}$ in \eqref{def:mg} and then take expectation w.r.t.~$\Lambda_2$. we obtain

\begin{align*}
\Ex_{\Lambda_2}\E\left[\mathscr{G}_{s,t}(h,y;f)\right] & = \Ex_{\Lambda_2}\E\left[\frac{\mathcal{Z}(y,s;e^{\Lambda_2},t)/\mathcal{Z}(0,s;e^{\Lambda_2},t)}{\mathcal{Z}(h,s;e^{\Lambda_2},t)/\mathcal{Z}(0,s;e^{\Lambda_2},t)}\right] \\ & = \Ex_{\Lambda_2}\E\left[\frac{\mathcal{Z}(e^{\Lambda_2},s;y,t)/\mathcal{Z}(e^{\Lambda_2},s;0,t)}{\mathcal{Z}(e^{\Lambda_2},s;h,t)/\mathcal{Z}(e^{\Lambda_2},s;0,t)}\right] = \Ex\left[\frac{e^{\Lambda_2(y)}}{\int_0^L e^{\Lambda_2(x)}h(x)dx}\right].
\end{align*}
The first equality above is by definition. The second equality is via the swap invariance stated in \eqref{swinv}. The third equality follows via stationarity: $\mathcal{Z}(e^{\Lambda_2},s;\cdot,t)/\mathcal{Z}(e^{\Lambda_2},s;0,t) \stackrel{d}{=} e^{\Lambda_2(\cdot)}$.
Thus in view of \eqref{fomr} and \eqref{step1fomr} we have
\begin{align*}
   I_L(t)\stackrel{d}{=}\int_0^t \int_0^L \Ex\left[\frac{e^{\Lambda_2(y)}}{\int_0^L e^{\Lambda_2(x)}\rho(e^{\Lambda_1};x,s)dx}\right]\rho(e^{\Lambda_1},0;y,s)\xi(y,s)dyds.
\end{align*}
By Ito's isometry we thus have
\begin{align}\label{varfomr}
    \operatorname{Var}(I_L(t))=\Ex[I_L(t)^2]=\int_0^t\int_0^L \Ex\left[\left(\Ex_{\Lambda_2}\left[\frac{e^{\Lambda_2(y)}}{\int_0^L e^{\Lambda_2(x)}\rho(e^{\Lambda_1};x,s)dx}\right]\right)^2\rho^2(e^{\Lambda_1},0;y,s)\right]dyds.
\end{align}
Assuming $\Lambda_3$ to be another copy of the stationary measure we may write the above integrand as
\begin{align*}
    & \left(\Ex_{\Lambda_2}\left[\frac{e^{\Lambda_2(y)}}{\int_0^L e^{\Lambda_2(x)}\rho(e^{\Lambda_1};x,s)dx}\right]\right)^2\rho(e^{\Lambda_1},0;y,s)\\ & = \Ex_{\Lambda_2,\Lambda_3}\left[\frac{e^{\Lambda_2(y)+\Lambda_3(y)}}{\int_0^L e^{\Lambda_2(x)}\rho(e^{\Lambda_1};x,s)dx\int_0^L e^{\Lambda_3(x)}\rho(e^{\Lambda_1};x,s)dx}\right]\rho(e^{\Lambda_1},0;y,s) \\ & \stackrel{d}{=} \Ex_{\Lambda_2,\Lambda_3}\left[\frac{e^{\Lambda_2(y)+\Lambda_3(y)}}{\int_0^L e^{\Lambda_2(x)+\Lambda_1(x)}dx\int_0^L e^{\Lambda_3(x)+\Lambda_1(x)}dx}\right] \cdot e^{2\Lambda_1(y)}
\end{align*}
where the last equality in distribution follows via stationarity: $\rho(e^{\Lambda_1},0;\cdot,s) \stackrel{d}{=} \frac{e^{\Lambda_1(\cdot)}}{\int_0^L e^{\Lambda_1}(x)dx}$. Taking an expectation w.r.t.~$\Lambda_1$ in the above formula, in view of \eqref{varfomr}, yields Proposition \ref{proplead}.

\medskip

\noindent\textbf{Step 3.} In this step, we verify that $D_{y,s}\mathcal{H}(g,0;f,t)$ is indeed square integrable, i.e.,
\begin{align*}
    \int_0^t \int_0^L \Ex\left|D_{y,s}\mathcal{H}(g,0;f,t)\right|^2 dyds <\infty.
\end{align*}
The proof essentially relies on moment estimates for open SHE which we collect in Appendix \ref{appC}. Without loss of generality assume $\int_0^L f(x)dx=\int_0^L g(x)dx=1$. We shall show 
\begin{align}\label{tosg}
    \int_0^{t/2} \int_0^L \Ex\left|D_{y,s}\mathcal{H}(g,0;f,t)\right|^2 dyds <\infty.
\end{align}
The proof for the boundedness of the integral over $s\in [t/2,t]$ is analogous. 
From \eqref{mald} we have
\begin{align*}
    |D_{y,s}\mathcal{H}(g,0;f,t)|^2 & =\frac{\rho(g,0;y,s)^2\mathcal{Z}(y,s;f,t)^2}{\left(\int_0^L \rho(g,0;y',s)\mathcal{Z}(y',s;f,t)dy'\right)^2} \\ & \le \rho(g,0;y,s)^2\mathcal{Z}(y,s;f,t)^2\int_0^L \rho(g,0;y',s)\mathcal{Z}(y',s;f,t)^{-2}dy'
\end{align*}
where the above inequality follows from Jensen's inequality. As $\mathcal{Z}(\cdot,s;\cdot,t)$ and $\rho(g,0;\cdot,s)$ are independent, we get
\begin{align*}
    \Ex|D_{y,s}\mathcal{H}(g,0;f,t)|^2  \le \Ex\left[\rho(g,0;y,s)^2\int_0^L \rho(g,0;y',s) \cdot \Ex\left[\mathcal{Z}(y,s;f,t)^{2}\mathcal{Z}(y',s;f,t)^{-2}\right]dy'\right]
\end{align*}
Thanks to Proposition \ref{p:negmombd}, $\Ex\left[\mathcal{Z}(y,s;f,t)^{2}\mathcal{Z}(y',s;f,t)^{-2}\right]\le C$. As $\rho(g,0;\cdot,s)$ is a density, we arrive at
\begin{align}\label{dbd}
    \Ex|D_{y,s}\mathcal{H}(g,0;f,t)|^2  \le C\cdot \Ex\left[\rho(g,0;y,s)^2\right]
\end{align}
Recall the expression for $\rho(g,0;y,s)$ from \eqref{npart}. By Cauchy-Schwarz inequality:
\begin{align}\label{tsg2}
    \Ex[\rho(g,0;y,s)^2] =\Ex[\mathcal{Z}(g,0;y,s)^2\mathcal{Z}(g,0;\mathbf{1},s)^{-2}] \le \sqrt{\Ex[\mathcal{Z}(g,0;\mathbf{1},s)^{-4}]\Ex[\mathcal{Z}(g,0;y,s)^{4}]}.
\end{align}
By Proposition \ref{p.unbd}, we have
\begin{align*}
  \Ex[\mathcal{Z}(g,0;\mathbf{1},s)^{-4}] \le \int_0^L \Ex[\mathcal{Z}(x,0;\mathbf{1},s)^{-4}]g(x)dx  \le C,
\end{align*}
whereas Proposition \ref{p:mombd} {and Jensen's} inequality lead to
\begin{align*}
    \sqrt{\Ex[\mathcal{Z}(g,0;y,s)^{4}]} \le \left(\int_0^L \Ex[\mathcal{Z}(x,0;y,s)^4]^{1/4} g(x)dx\right)^2 & \le \int_0^L \sqrt{\Ex[\mathcal{Z}(x,0;y,s)^4]} g(x)dx \\ & \le C\int_0^L  s^{-1}e^{-|x-y|/\sqrt{s}} g(x)dx.
\end{align*}
Plugging the above two bounds back in \eqref{tsg2}, in view of \eqref{dbd}, yields
\begin{align*}
    \int_0^{t/2} \int_0^L \Ex\left|D_{y,s}\mathcal{H}(g,0;f,t)\right|^2 dyds \le C\int_0^{t/2} \int_0^L \int_0^L s^{-1} e^{-|x-y|/\sqrt{s}}g(x)dx dyds 
\end{align*}
which can be seen to be finite by performing the integration over $y$ first and then over $s$ followed by $x$. This verifies \eqref{tosg}.
\subsection{Proof of Theorem \ref{t.main1}} Given Theorem \ref{varde}, the proof of Theorem \ref{t.main1} follows by computing the asymtotics of $\sigma_L^2$ as $L\to \infty$. Recall $\Pf(y)$ from \eqref{def:pfy}. The following two proposition gives uniform upper and lower bounds for each $\Pf(y)$.
	
	\begin{proposition}\label{p.lowbd} There exists $c>0$ such that for all $L\ge 2$,
		$$\Pf(y) \ge cL^{-3/2}\mbox{ for all }y\in [L/4,3L/4].$$ 
	\end{proposition}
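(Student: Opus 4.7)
The plan is to exhibit a ``good'' event $G$, measurable with respect to the triple $(\Lambda_i,\Lambda_i')_{i=1,2,3}$, such that $\Pr(G)\ge c_0 L^{-1/2}$ and such that the integrand in \eqref{def:pfy} is at least $c_1L^{-1}$ on $G$. The bound
\begin{equation*}
\Pf(y)\;\ge\;\Ex\bigl[(\text{integrand})\,\ind_G\bigr]\;\ge\;\frac{c_1}{L}\,\Pr(G)\;\ge\;\frac{c_1c_0}{L^{3/2}}
\end{equation*}
then gives the claim. For $G$ I want the numerator integral over $[y,y+1]$ to be $\Theta(1)$ and each of the two denominator integrals to be $O(\sqrt{L})$; the exponent $1/2$ in $\Pr(G)$ is the cost of imposing a wedge-type constraint on Brownian-bridge proxies for the three $\Lambda_i$.

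First I reduce to a Brownian-bridge problem in the bulk. Fix $\delta\in(0,1/8)$ so that $J:=[y-L\delta,y+L\delta]$ lies in $[L/8,7L/8]$ for every $y\in[L/4,3L/4]$. Let $\mathcal{F}$ be the $\sigma$-algebra generated by $\{\Lambda_i,\Lambda_i'\}_{i=1,2,3}$ outside $J$ together with the six values $\Lambda_i(y)$, $\Lambda_i'(y)$. By the Gibbs description \eqref{gprop}, conditional on $\mathcal{F}$ the law of each pair $(\Lambda_i,\Lambda_i')|_J$ is that of two independent Brownian bridges with the prescribed endpoints at $y-L\delta$, $y$, $y+L\delta$, weighted by the Gibbs factor $\exp\bigl(-\int_J e^{-(\Lambda_i(s)-\Lambda_i'(s))}\,ds\bigr)$. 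On the sub-event where $\inf_{x\in J}(\Lambda_i-\Lambda_i')(x)\ge 2\log L$ for each $i$, this weight is at least $\exp(-2L\delta\cdot L^{-2})\ge \tfrac12$ for large $L$; hence, at the cost of an overall constant factor, I may replace each $\Lambda_i|_J$ by an unweighted Brownian bridge between its three pinned values.

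Next, I run a random-endpoint version of the 2D Brownian-bridge argument of \cite{yu2}. The pair $(\Lambda_1+\Lambda_2,\,\Lambda_2+\Lambda_3)|_J$ centered at $y$ is, up to a $\sqrt{2}$ factor, a 2D Brownian bridge $V$ between endpoints determined by the $\mathcal{F}$-conditioning, and the two denominator contributions from $J$ become integrals of the form $\int_J e^{\sqrt{2}\,v_k\cdot V(x-y)}\,dx$ with the vectors $v_k$ of \eqref{ired}. I include in $G$ the wedge event $\omega(V):=\sup_{x\in J}\max_{k}v_k\cdot V(x-y)\le C_*$ together with smallness of the six random endpoint displacements $\Lambda_i(y\pm L\delta)-\Lambda_i(y)$ (all of order $O(1)$). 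On this event the integrals inside $J$ are $O(\sqrt{L})$ because the wedge-killed bridge concentrates near the apex, giving the $\sqrt{L}$ scaling. To control the contribution from $[0,L]\setminus J$, I further restrict to a sub-event on which $\sup_{x\in[0,L]\setminus J}(\Lambda_i+\Lambda_j)(x)-(\Lambda_i+\Lambda_j)(y)$ is not too positive, which by the diffusive limits of Theorem \ref{l.difflimits} and the tail bounds of Proposition \ref{tailest} has conditional probability bounded away from $0$. The numerator is $\Theta(1)$ since the exponent vanishes at $x=y$ and Proposition \ref{tailest} gives $O(1)$ control on $[y,y+1]$ with constant probability. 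Combining the (bounded-away-from-$0$) probabilities of the layer-separation and outside-control sub-events with the $L^{-1/2}$ wedge probability (extended to random endpoints) yields $\Pr(G)\ge c_0L^{-1/2}$.

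The main obstacle is the wedge probability estimate with random endpoints. In \cite{yu2}, $V$ is pinned at the origin at both ends and the sharp $L^{-1/2}$ scaling comes from a conformal map and Bessel-process identities; with random endpoints in a bounded window this explicit structure is lost and one must integrate the exit probability against the joint density of endpoints under the bridge law. Fortunately, for the lower bound here it suffices to produce an event with the correct \emph{order of magnitude} (not the sharp constant), so I expect the extension to go through by restricting attention to endpoints in a fixed small ball around the apex and applying Brownian-bridge small-ball estimates inside the wedge; the more delicate version of this extension (which controls the upper bound) is treated in Section \ref{sec:5.1}.
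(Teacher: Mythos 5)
Your overall strategy (reduce to Brownian bridges pinned at $y\pm L\delta$ and $y$, pass to a 2D bridge, impose a wedge constraint) matches the paper's, but the quantitative allocation of the $L^{-3/2}$ is wrong, and in a way that does not self-correct.

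First, the claim that the endpoint displacements $\Lambda_i(y\pm L\delta)-\Lambda_i(y)$ should all be $O(1)$ is fatal: these are Brownian increments over macroscopic times $L\delta$, so each is typically $\Theta(\sqrt{L})$, and restricting all six to $O(1)$ already costs $\sim L^{-3}$ in probability — far below your budget. What the paper does instead is pin each displacement in a window of size $O(\sqrt{L})$ (the $\m{MLD}_i(y)$ events), which has probability $\Omega(1)$.

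Second, the central estimate $\Pr(G)\ge c_0 L^{-1/2}$ for the wedge event is incorrect. With the bridge pinned at the apex at $y$ and at a point a distance $\Theta(\sqrt{L})$ deep inside the wedge at $y\pm L\delta$, each half-bridge stays in the wedge $N_{C_*}$ with probability $\Theta(L^{-3/4})$, not $\Theta(L^{-1/4})$: this follows from the killed transition density $p_L^{N_0}$ (Lemma \ref{l4.11equiv}, or the Bessel formula \eqref{ineq100} — the dominant $j=1$ term gives $p_L^{N_0}(\mathbf{0},\mathbf{u})\sim L^{-7/4}$ for $|\mathbf u|\asymp\sqrt L$, so the ratio to the free kernel $p_L\sim L^{-1}$ is $L^{-3/4}$). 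The two half-bridges are independent given the pinned data, so the total wedge probability is $\Theta(L^{-3/2})$, not $\Theta(L^{-1/2})$.

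Third, on the bare wedge event $\omega(V)\le C_*$ the denominator integrals are \emph{not} $O(\sqrt{L})$ deterministically: the integrand $e^{v_k\cdot V}$ is merely bounded by $e^{\sqrt 2 C_*}$, so the integral over $J$ could be as large as $O(L)$. The paper avoids this by imposing a stronger, cubic-decay barrier $1-q_{L\delta}(|x-y|)$ (the events $\m A_\pm(y)$ with the cutoff function from \eqref{cutoff}), which forces $\int_J e^{\Lambda_i+\Lambda_j}\,dx = O(1)$ deterministically while \emph{still} having probability $\Omega(L^{-3/4})$ per side (Lemma \ref{lw34bd}). This is the crucial extra idea: the barrier is chosen so that the integrals become $O(1)$ at no additional polynomial cost in probability, whence the integrand of $\Pf(y)$ is $\Omega(1)$ on an event of probability $\Omega(L^{-3/2})$ — the correct split is $(\text{integrand})\times\Pr(G)=\Omega(1)\times\Omega(L^{-3/2})$, not $\Omega(L^{-1})\times\Omega(L^{-1/2})$. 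If you wanted to avoid the explicit cubic barrier, an alternative would be to use the bare wedge event together with a conditional Markov bound $\Pr(\text{denominator}>K\mid\omega(V)\le C_*)\le C/K$, but you would still be working on the $L^{-3/2}$-probability event, not an $L^{-1/2}$-probability one. As written, your accounting is off by a full factor of $L$, and the mechanism by which you would get $O(\sqrt L)$ denominators on a $\Theta(L^{-1/2})$-probability event is not available.

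Finally, a more minor point: once you have $O(\sqrt L)$ displacements at $y\pm L\delta$, the two half-bridges are genuinely independent, so there is no need to treat the wedge constraint over all of $J$ at once; the paper's factorization into $\m A_-(y)$ and $\m A_+(y)$ is what makes the $L^{-3/4}$-per-side computation tractable, and the stochastic-monotonicity/Gibbs ($H$-Brownian) structure (Step 3 in the paper's proof) is needed to handle the Gibbs reweighting in a way that is uniform over the random endpoint data — your layer-separation heuristic is in the right direction but does not by itself control the reweighting.
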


\begin{proposition}\label{p.upbd} There exists $C>0$ such that for all $L \ge 2$
    \begin{align}\label{part11}
		\Pf(y) & \le Cy^{-3/4}(L-y)^{-3/4}\mbox{ for all }y\in [L^{1/4},L-L^{1/4}-1], \\ \label{part12}
		\Pf(y) &  \le CL^{-3/4}\mbox{ for all }y\in [0,L^{1/4}]\cup [L-L^{1/4},L-1].
	\end{align}
\end{proposition}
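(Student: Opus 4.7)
Decompose $\Pf(y)=\Ex[N(y)/D(y)]$ where $N(y):=\int_y^{y+1}e^{(\Lambda_1+2\Lambda_2+\Lambda_3)(x)-(\Lambda_1+2\Lambda_2+\Lambda_3)(y)}\,dx$ is the numerator and $D(y):=A(y)B(y)$ is the product of the two denominator integrals in \eqref{def:pfy}. Cauchy--Schwarz gives
\[
\Pf(y)\le \sqrt{\Ex[N(y)^2]\cdot \Ex[D(y)^{-2}]}.
\]
Since $N(y)\le \exp\bigl(\sup_{x\in[y,y+1]}|(\Lambda_1+2\Lambda_2+\Lambda_3)(x)-(\Lambda_1+2\Lambda_2+\Lambda_3)(y)|\bigr)$, the Gaussian tail \eqref{131} combined with independence of $\Lambda_1,\Lambda_2,\Lambda_3$ produces $\Ex[N(y)^2]\le C$ uniformly in $y$ and $L$. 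So the task reduces to showing $\Ex[D(y)^{-2}]\le C\,y^{-3/2}(L-y)^{-3/2}$ in the bulk and $\Ex[D(y)^{-2}]\le C\,L^{-3/2}$ near the boundary.

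For the denominator I split each integral at $y$: $A(y)=A_L+A_R$ with $A_L=\int_0^y,\ A_R=\int_y^L$, and likewise $B=B_L+B_R$. AM--GM yields $D(y)^{-2}\le \tfrac{1}{16}(A_LB_L)^{-1}(A_RB_R)^{-1}$. Conditioning on $\mathcal{F}:=\sigma(\{\Lambda_i(0),\Lambda_i(y),\Lambda_i(L),\Lambda_i'\}_{i=1,2,3})$, the Gibbs property \eqref{gprop} together with Lemma 2.6 of \cite{kpzle} (applied separately on $[0,y]$ and $[y,L]$) supplies a coupling in which $\Lambda_i(x)\ge B_i(x)$ for every $x\in[0,L]$ and $i=1,2,3$, where $B_i$ is the free Brownian bridge through the three points $(0,0),\,(y,\Lambda_i(y)),\,(L,\Lambda_i(L))$. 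Since $\Lambda_i(y)=B_i(y)$ in this coupling, centering at $y$ preserves the inequality, so $A_L\ge\tilde A_L$, $A_R\ge\tilde A_R$, etc., with tildes denoting the same integrals built from the $B_i$. Conditionally on $\mathcal{F}$ the three bridges are independent, and each splits into independent halves at $y$, so $(\tilde A_L,\tilde B_L)$ and $(\tilde A_R,\tilde B_R)$ are conditionally independent and
\[
\Ex[D(y)^{-2}]\le \tfrac{1}{16}\,\Ex\Bigl[\Ex\bigl[(\tilde A_L\tilde B_L)^{-1}\bigm|\mathcal{F}\bigr]\cdot\Ex\bigl[(\tilde A_R\tilde B_R)^{-1}\bigm|\mathcal{F}\bigr]\Bigr].
\]

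The remaining ingredient is a conditional estimate of the form $\Ex[(\tilde A_L\tilde B_L)^{-1}\mid\mathcal{F}]\le y^{-3/2}F_L$ with $\Ex[F_L]=O(1)$ (and the analogue on $[y,L]$, with $L-y$ replacing $y$ and $F_R$ depending on $\{\Lambda_i(L)-\Lambda_i(y)\}$). To obtain it I apply the linear transformation from \cite{yu2} to $(B_1+B_2,B_2+B_3)$, producing a $2$D Brownian bridge $V$ on $[0,y]$; rescaling $\tilde V(t):=(V(ty)-V(y))/\sqrt{y}$ gives
\[
(\tilde A_L\tilde B_L)^{-1}=y^{-2}\prod_{k=1,2}\Bigl(\int_0^1 e^{\sqrt{y}\,v_k\cdot \tilde V(t)}\,dt\Bigr)^{-1},
\]
with $\tilde V$ a $2$D Brownian bridge on $[0,1]$ from $\tilde V(0)=-V(y)/\sqrt{y}$ (a centered Gaussian of scale $O(1)$ by \eqref{121}) to $0$. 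The extra $y^{1/2}$ needed to match the periodic result arises from an exit-time estimate for $\tilde V$ from the wedge $\{z:v_k\cdot z\le a,\,k=1,2\}$, and integrating out the random endpoint against its Gaussian law produces the $F_L$ factor. Assembling yields the bulk bound $\Ex[D(y)^{-2}]\le C\,y^{-3/2}(L-y)^{-3/2}$ and hence $\Pf(y)\le C\,y^{-3/4}(L-y)^{-3/4}$. For $y\in[0,L^{1/4}]$ the integrals $\tilde A_L,\tilde B_L$ are over an interval of length at most $L^{1/4}$ of bounded-Gaussian exponentials, so $\Ex[(\tilde A_L\tilde B_L)^{-1}\mid\mathcal{F}]=O(1)$; only the right factor contributes $(L-y)^{-3/2}\asymp L^{-3/2}$, giving $\Pf(y)\le C\,L^{-3/4}$. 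The case $y\in[L-L^{1/4},L-1]$ is symmetric via the time-reversal $x\mapsto L-x$.

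\textbf{Main obstacle.} The hard step is the conditional $2$D Brownian bridge estimate with random endpoints. The argument in \cite{yu2} for origin-to-origin bridges relies on a conformal map and explicit Bessel-function transition densities for bridges killed upon exiting a wedge, and these formulas are unavailable when the bridge has a random endpoint. I expect to handle this by a case split on $|\tilde V(0)|$: when $|\tilde V(0)|$ is moderate, compare with the origin-to-origin bridge after enlarging the wedge by $|\tilde V(0)|$ and invoke the \cite{yu2} estimate; when $|\tilde V(0)|$ is large, the Gaussian tail from \eqref{121} absorbs a crude deterministic bound on the reciprocal product.
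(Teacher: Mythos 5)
Your Cauchy--Schwarz step is where the argument breaks. You write $\Pf(y)\le \sqrt{\Ex[N(y)^2]\cdot\Ex[D(y)^{-2}]}$ and then aim to show $\Ex[D(y)^{-2}]\le C\,y^{-3/2}(L-y)^{-3/2}$, but this second-moment bound is unattainable: the actual order of $\Ex[D(y)^{-2}]$ is $y^{-3/4}(L-y)^{-3/4}$, not the square of that. The reason is that $D(y)^{-1}$ is a heavy-tailed random variable whose mass is concentrated on a rare event (essentially, $y$ lies near the simultaneous argmaxes of $\Lambda_1+\Lambda_2$ and $\Lambda_2+\Lambda_3$), on which $D(y)^{-1}=\Theta(1)$; that event has probability $\asymp y^{-3/4}(L-y)^{-3/4}$ and dominates both $\Ex[D^{-1}]$ and $\Ex[D^{-2}]$. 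Indeed your own AM--GM reduction plus a Proposition-\ref{coretech}-type estimate gives $\Ex[(\tilde A_L\tilde B_L)^{-1}]\asymp y^{-3/4}$ (not $y^{-3/2}$: the bridge endpoints $\Lambda_i(y)$ fluctuate on the scale $\sqrt{y}$, so the rescaled endpoint $b$ is $O(1)$, which is precisely the regime of \eqref{oneljbd} giving $y^{-3/4}$; the $y^{-3/2}$ scaling of \cite{yu2} is specific to both endpoints sitting at the apex of the wedge). Thus Cauchy--Schwarz only yields $\Pf(y)\lesssim y^{-3/8}(L-y)^{-3/8}$, a square-root short of the claim.

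The paper sidesteps this by \emph{never} squaring $D^{-1}$. It keeps the local numerator $A(y)$ and the ratio-shift factor $R(y)^{-2}$ attached to $\prod_j(D_{1j}(y)+D_{2j}(y))^{-1}$ (all three being $\mathcal{G}(y)$-measurable or conditionally estimable), then takes the conditional expectation over $\mathcal{G}(y)$ of the $D$-product alone --- obtaining the deterministic-in-$\mathcal{G}(y)$ bound $C e^{CU^{3/2}} y^{-3/4}(L-y)^{-3/4}$ from Proposition~\ref{coretech} --- and only afterwards applies H\"older on the outer expectation, where each factor ($A(y)$, $R(y)^{-2}$, $e^{CU^{3/2}}$) has all moments bounded uniformly in $y,L$. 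This costs nothing because the power-of-$y$ factor has already been extracted as a constant. Your AM--GM split of the denominator at $y$ is compatible with this structure in spirit, but the preceding Cauchy--Schwarz destroys it.

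A secondary gap, which you flag but which is more than a technicality: before feeding the bridge functionals into a Proposition~\ref{coretech}-type estimate, the paper \emph{clamps} the bridge endpoints via $U_i^{(j)}=\max\{\cdot,1\}$, so the rescaled endpoint $b$ always satisfies $b\ge1$ (i.e.\ the endpoint is pushed at least $\sqrt{y+1}$ below the centering value, by a second application of stochastic monotonicity). Proposition~\ref{coretech} is stated only for $b\ge1$. Your proposed case-split on $|\tilde V(0)|$ could in principle substitute, but it is not merely an inconvenience: if you do not handle the small-$b$ and wrong-sign cases carefully, the endpoint of the 2D bridge may lie essentially at the apex of the wedge or outside it, and the Bessel/conformal estimates of \cite{yu2} do not directly apply. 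The paper's clamping is designed exactly to ensure the hypothesis $b\ge1$ is always available, after which a single application of stochastic monotonicity (lowering all endpoints to the worst $-\sqrt{\cdot}\,U$) reduces to Proposition~\ref{coretech} with $b=U$, and $e^{CU^{3/2}}$ has all exponential moments by Lemma~\ref{techle}.
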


Proposition \ref{p.lowbd} and \ref{p.upbd} are the core technical parts of the paper and are proven in Sections \ref{sec:4} and \ref{sec:5} respectively. Let us prove Theorem \ref{t.main1} assuming them.

\begin{proof}[Proof of Theorem \ref{t.main1}] Thanks to Proposition \ref{p.lowbd} and \eqref{sigmaLexp} we have
$\sigma_L^2 =\sum_{y=0}^{L-1} \Pf(y) \ge \sum_{y=\lceil L/4 \rceil}^{\lfloor 3L/4 \rfloor} \mathcal{F}(y) \ge cL^{-1/2}/4.$
On the other hand, using \eqref{part11} and \eqref{part12} we have
\begin{align*}
    \sigma_L^2=\sum_{y=0}^{L-1}\Pf(y) & =\sum_{y\in [0,L^{1/4}]\cup [L-L^{1/4},L-1]\cap \Z}\Pf(y)+\sum_{y\in [L^{1/4},L-L^{1/4}-1]\cap \Z}\Pf(y) \\ & \le CL^{-1}+C\sum_{y=1}^{L-1} y^{-3/4}(L-y)^{3/4}  \le C'\left(1+\int_0^1x^{-3/4}(1-x)^{-3/4}dx\right)\le C''L^{-1/2}.
\end{align*}
Thus, there exists $C>0$ such that for all large enough $L$ we have $C^{-1}L^{-1/2} \le \sigma_L^2 \le CL^{-1/2}$. This verifies the first claim of the theorem.
When $L=\lambda t^{\alpha}$, using Theorem \ref{varde} we get that
\begin{align*}
    \sqrt{\operatorname{Var}(\mathcal{H}(0,t))} & \le C\sqrt{t}L^{-1/4}+C\sqrt{L}=C\lambda^{1/4}t^{1/2-\alpha/4}+C\sqrt{\lambda}t^{\alpha/2}, \\
    \sqrt{\operatorname{Var}(\mathcal{H}(0,t))} & \ge C^{-1}\sqrt{t}L^{-1/4}-C\sqrt{L} \ge C^{-1}\lambda^{1/4}t^{1/2-\alpha/4}-C\lambda^{1/2}t^{\alpha/2}.
\end{align*}
When $\alpha \in (0, 2/3)$, or when $\lambda \in (0, \delta)$ and $\alpha = 2/3$ for sufficiently small $\delta$, we have $C^{-1}\lambda^{1/4} t^{1/2 - \alpha/4} \ge 1/2C\lambda^{1/2}t^{\alpha/2}$. Theorem \ref{t.main1} then follows from the above matching upper and lower bounds (up to constants).
\end{proof}

	\section{Lower Bound for the asymptotic variance} \label{sec:4}

In this section, we prove Proposition \ref{p.lowbd}. The key idea, as explained in the introduction, is to exploit the stochastic monotonicity inherent in the Gibbsian line ensemble structure of the stationary measures. This allows us to reduce the problem to a computation involving Brownian bridges. In Section \ref{sec:lwbd1}, we detail the monotonicity argument, and in Section \ref{sec:lwbd2}, we carry out the computation in the Brownian bridge setting.
    
	\subsection{Local Brownianity in the bulk} \label{sec:lwbd1} We split the proof of Proposition \ref{p.lowbd} into four steps for clarity. We work under the assumptions  $u, v \ge 0$ with $u + v > 0$ in first three steps. In Step 4, we will explain  how the proof can be modified to handle the simpler case $u = v = 0$.

		\medskip
		
		\noindent\textbf{Step 1: Outer and Inner events.} In this step define a bunch of events that are necessary for our proof. Fix $M,\delta>0$ and define 
		\begin{equation}
			\label{outerevents}
			\begin{aligned}
				\m{Loc}_i(y) & := \big\{ |\Lambda_i(x)-\Lambda_i(y)| \le M \mbox{ for all }x\in [y,y+1] \big\}, \\ 
                \m{Loc}_i'(y) & := \big\{  |\Lambda_i'(x)-\Lambda_i'(y)| \le M \mbox{ for all }x\in [y,y+1] \big\} \\
				\m{Sep}_i(y) & :=\big\{\Lambda_i(y) \in [\sqrt{L},2\sqrt{L}], \ \Lambda_i'(y) \in [-2\sqrt{L},-\sqrt{L}]\big\}, \\
				\m{MLD}_i(y) & :=\Big\{\Lambda_i(y\pm L\delta)-\Lambda_i(y) \in [-2\sqrt{\delta L},-\sqrt{\delta L}]\big\}. \\ \m{MLD}_i'(y) & := \Big\{ \Lambda_i'(y\pm L\delta)-\Lambda_i'(y) \in [-2\sqrt{\delta L},2\sqrt{\delta L}] \Big\}, \\ 
				\m{NMax}_i(y) & :=\big\{\Lambda_i(x)-\Lambda_i(y) \le -\sqrt{\delta L}/2 \mbox{ for all }x\in [0,y-L\delta] \cup [y+L\delta,L]\big\}, \\
				\m{D}_{\m{out}}(y) & := \bigcap_{i=1}^3 \big(\m{Loc}_i(y)\cap \m{Loc}_i'(y) \m{Sep}_i(y) \cap \m{MLD}_i(y) \cap \m{MLD}_i'(y) \cap \m{NMax}_i(y)\big). 
			\end{aligned}
		\end{equation}
		
		Let us briefly describe what do the above events mean. In words, the event $\m{Loc}_i(y), \m{Loc}_i'(y)$ controls the process $\Lambda_i, \Lambda_i'$ on a \textit{local scale}, namely, on an interval of size $1$ around $y$. The event $\m{Sep}_i(y)$ fixes the location of $\Lambda_i(y)$ and $\Lambda_i'(y)$ on certain $O(\sqrt{L})$ intervals. Note that the intervals are so chosen that it ensures an $O(\sqrt{L})$ \textit{separation} between $\Lambda_i(y)$ $\Lambda_i'(y)$. This is crucial and hence derives the name of the event. The $\m{MLD}_i(y), \m{MLD}_i'(y)$ events are \textit{macroscopic local difference} events that controls the difference of $\Lambda_i,\Lambda_i'$ on two points that are separated by a small window of macroscopic length (i.e., of order $O(L)$).  Finally the event $\m{NMax}_i(y)$ (\textit{near-max}) essentially states that $\Lambda_i(y)$ is close to the maximum of $\Lambda_i$ in a certain quantitative sense.
		Let us define the sigma field:
		\begin{align*}
			\mathcal{F}(y):=\sigma\big\{(\Lambda_i(x),\Lambda_i'(x)) \mid x\in [0,y-L\delta]\cup [y,y+1] \cup [y+L\delta,L], i=1,2,3\big\}.
		\end{align*}
		
		\begin{figure}[h!]
			\centering
			\begin{overpic}[width=10cm]{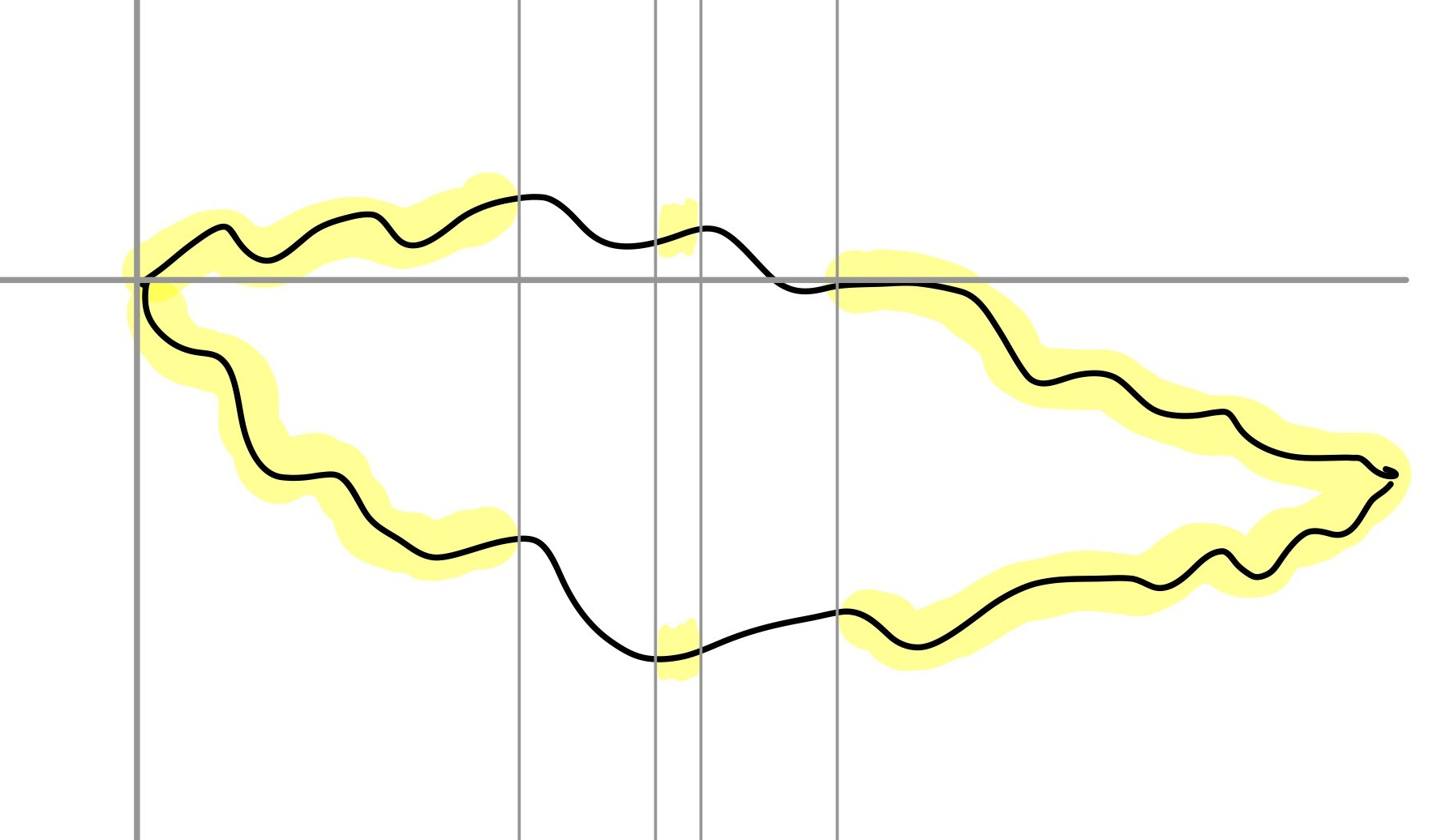}
            \put(48,0){\scriptsize{$y+1$}}
            \put(45,0){\scriptsize{$y$}}
            \put(60,0){\scriptsize{$y+L\delta$}}
            \put(50,45){\scriptsize{$\Lambda_i(\cdot)$}}
            \put(60,20){\scriptsize{$\Lambda_i'(\cdot)$}}
            \put(30,0){\scriptsize{$y-L\delta$}}
			\end{overpic}
			\caption{In the above figure, the yellow highlighted parts indicate which parts of the processes $\Lambda_i, \Lambda_i'$ are measurable w.r.t.~$\mathcal{F}(y)$.}
			\label{fig:enter-label}
		\end{figure}
	We refer to Figure \ref{fig:enter-label} for a visualization of $\mathcal{F}(y)$. Clearly the event $\m{D}_{\m{out}}(y) \in \mathcal{F}(y)$. The following lemma studies the probabilities of the above events. Its proof is deferred to Section \ref{sec:lwbd2}.
		
		\begin{lemma}\label{dout} For each $\delta>0$, there exists $c_\delta>0$ and $M=M(\delta)>0$ such that $\Pr\big(\m{D}_{\m{out}}(y)\big) \ge c_{\delta}.$   
		\end{lemma}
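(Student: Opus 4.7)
The plan is to reduce the claim to a single-copy estimate using independence of $(\Lambda_i, \Lambda_i')_{i=1,2,3}$, and then combine the diffusive-scale convergence of Theorem \ref{l.difflimits} with the Gaussian local fluctuation bounds of Proposition \ref{tailest}. Writing $\Pr(\m{D}_{\m{out}}(y)) = \prod_{i=1}^3 \Pr(\m{E}(y))$, where $\m{E}(y) := \m{Loc}(y) \cap \m{Loc}'(y) \cap \m{Sep}(y) \cap \m{MLD}(y) \cap \m{MLD}'(y) \cap \m{NMax}(y)$ denotes the analogous event for a single copy $(\Lambda, \Lambda')$ under $\Pr_{u,v}^L$, it suffices to show $\Pr(\m{E}(y)) \ge c'_\delta > 0$ uniformly in $y \in [L/4, 3L/4]$ and $L$ large enough. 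I would then further split $\m{E}(y) = \m{E}_{\m{diff}}(y) \cap \m{Loc}(y) \cap \m{Loc}'(y)$, where $\m{E}_{\m{diff}}(y) := \m{Sep}(y) \cap \m{MLD}(y) \cap \m{MLD}'(y) \cap \m{NMax}(y)$ depends only on the process at the diffusive scale.

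After the rescaling $B_L(x) := L^{-1/2} \Lambda(xL)$, $B_L'(x) := L^{-1/2} \Lambda'(xL)$ and setting $\tau := y/L \in [1/4, 3/4]$, the event $\m{E}_{\m{diff}}(y)$ is a bounded functional of $(B_L, B_L')$ on $[0,1]$ depending on the values at $\tau$ and $\tau \pm \delta$ and on the supremum of $B_L$ over the complementary intervals; its discontinuity set has zero mass under the limit law. Theorem \ref{l.difflimits} provides weak convergence of $(B_L, B_L')$ to a pair $(\mathcal{B}, \mathcal{B}')$ of non-intersecting Brownian paths, with boundary behavior determined by the signs of $u,v$. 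Under the limit law, the corresponding event has strictly positive probability at every fixed $\tau \in [1/4, 3/4]$: the joint density of $(\mathcal{B}(\tau), \mathcal{B}'(\tau), \mathcal{B}(\tau \pm \delta), \mathcal{B}'(\tau \pm \delta))$ is positive on the prescribed box (and the non-intersection constraint is automatically satisfied there for $\delta$ small, since the boxes for $\mathcal{B}$ and $\mathcal{B}'$ are separated by order $1$), and conditional on these values the processes on $[0, \tau - \delta]$ and $[\tau + \delta, 1]$ are non-intersecting Brownian bridges with prescribed endpoints, which stay below level $\mathcal{B}(\tau) - \sqrt{\delta}/2$ with positive probability by standard Brownian-bridge reflection estimates.

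To upgrade this to uniformity in $y$, I would argue by compactness and contradiction. If the bound failed there would exist $L_n \to \infty$ and $y_n$ with $y_n/L_n \to \tau_* \in [1/4, 3/4]$ and $\Pr(\m{E}_{\m{diff}}(y_n)) \to 0$. The joint convergence of $(B_{L_n}, B_{L_n}', y_n/L_n)$ to $(\mathcal{B}, \mathcal{B}', \tau_*)$, together with the Portmanteau theorem applied to a functional with null-boundary, forces the limit to equal the positive probability of the analogous event at $\tau_*$ under the limit law, a contradiction. Combined with the tail bounds \eqref{131}--\eqref{1310} of Proposition \ref{tailest}, which give $\Pr(\m{Loc}(y)^c \cup \m{Loc}'(y)^c) \le 2 C e^{-M^2/C}$, one obtains
\[
\Pr(\m{E}(y)) \ge \Pr(\m{E}_{\m{diff}}(y)) - 2 C e^{-M^2/C} \ge c_\delta / 2
\]
by choosing $M = M(\delta)$ sufficiently large.

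The main obstacle is verifying strict positivity of the limit probability under the law of $(\mathcal{B}, \mathcal{B}')$; specifically, the conjunction of $\m{NMax}$ with the pinning imposed by $\m{Sep}$ and $\m{MLD}$ requires conditioning the non-intersecting bridges on the complementary intervals and forcing them to remain $\sqrt{\delta}/2$-below the value at $\tau$. Since both endpoints of the relevant bridge are already at least $\sqrt{\delta}$-below $\mathcal{B}(\tau)$, an explicit bridge-reflection estimate gives a positive (order $\sqrt{\delta}$) probability of the required supremum bound, and the non-intersection constraint can be maintained by forcing $\mathcal{B}'$ to stay well below $\mathcal{B}$ on those intervals. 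For Step 4 ($u = v = 0$), the Gibbs measure $\Pr_{0,0}^L$ is still well-defined by Definition \ref{openkpzgibbs}, and either a direct adaptation of the proof of Theorem \ref{l.difflimits} or a soft comparison to $u_\epsilon = v_\epsilon = \epsilon > 0$ followed by $\epsilon \to 0$ should yield an analogous diffusive limit to which the same strategy applies.
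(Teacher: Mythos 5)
Your proof follows essentially the same route as the paper: independence across the three copies, Proposition \ref{tailest} for the $\m{Loc}_i$ and $\m{Loc}_i'$ events, and Theorem \ref{l.difflimits} combined with a limiting argument (the paper phrases it as Fatou's lemma with an infimum over $u\in[1/4,3/4]$; your compactness/contradiction argument is the same step made explicit, since $y/L$ lives in the compact set $[1/4,3/4]$) for the diffusive-scale events, with positivity of the limit probability coming from elementary Brownian-bridge computations. One small caveat in your final paragraph: $\Pr_{0,0}^L$ is \emph{not} defined by Definition \ref{openkpzgibbs} (the normalization $\Ex_{\m{free}}[W_{u,v}]$ diverges when $u+v=0$, which is precisely why that definition carries the hypothesis $u+v>0$); the paper's Step 4 handles $u=v=0$ by using the plain Brownian-motion description of the stationary measure directly and never invokes Lemma \ref{dout}, so no $\epsilon\to 0$ limit is needed and this lemma requires no modification for that case.
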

		We fix the above choice of $M$ (depending on $\delta$) for the rest of the proof. 
		
		\smallskip

		We now define a bunch of `inner' events (`inner' in the sense that they will be measurable w.r.t.~the processes restricted to $[y-L\delta,y]\cup[y+1,y+L\delta]$). First we define a cut-off function. Let $\chi:\R\rightarrow \R$ be a smooth function such that $\chi|_{[-1,1]}=0$ and $\chi|_{[-2,2]^c}=1$. For every $r>0$, we consider the function $q_r : [0,\infty)\to \R$ defined via 
		\begin{align}
			\label{cutoff}
			q_r(y)=\frac{1}{3}\int_0^y\chi(x)\chi\left( \frac{4}{r+2}x+\frac{2-3r}{r+2}\right)x^{-2/3}dx.
		\end{align}
		We notice that $q_r$ is smooth, $q_r$ is of order at $|y|^{1/3}$ and $q'_r(y)=\frac{1}{3}\chi(y)\chi\left( \frac{4}{r+2}y+\frac{2-3r}{r+2}\right)y^{-2/3} $, so $q_r'(y)=0$ when $y\in [0,1]\cap [\frac{r}{2}-1,r]$. A plot of the $q_r|_{y\ge 1}$ function is given in Figure \ref{fig:enter-label2}.

		We set $q:=q_{L\delta}$ and define the following events:
		\begin{equation}
			\label{innerevents}
			\begin{aligned}
				\m{A}_{+}(y) & :=\Big\{\Lambda_1(x)+\Lambda_2(x)-\Lambda_1(y+1)-\Lambda_2(y+1) \le 1-q(|x-y-1|) \mbox{ and } \\ &  \hspace{2cm}\Lambda_2(x)+\Lambda_3(x)-\Lambda_2(y+1)-\Lambda_3(y+1) \le 1-q(|x-y-1|) \\ & \hspace{8cm}\mbox{ for all } x\in [y+1,y+L\delta] \Big\}, \\
				\m{A}_{-}(y) & :=\Big\{\Lambda_1(x)+\Lambda_2(x)-\Lambda_1(y)-\Lambda_2(y) \le 1-q(|x-y|) \mbox{ and } \\ &  \hspace{1.5cm}\Lambda_2(x)+\Lambda_3(x)-\Lambda_2(y)-\Lambda_3(y) \le 1-q(|x-y|) \mbox{ for all } x\in [y- L\delta,y] \Big\},  \\
				\m{B}_{+}(y) & := \big\{\Lambda_i(x)-\Lambda_i'(x) \ge \sqrt{L} \mbox{ for all } x\in [y+1,y+L\delta]\mbox{ and } i=1,2,3\big\}, \\
				\m{B}_{-}(y) & := \big\{\Lambda_i(x)-\Lambda_i'(x) \ge \sqrt{L} \mbox{ for all } x\in [y-L\delta,y] \mbox{ and } i=1,2,3\big\}, \\  \m{D}_{\m{in}}(y) & := \m{A}_-(y)\cap\m{A}_+(y)\cap \m{B}_-(y) \cap \m{B}_+(y). 
			\end{aligned}
		\end{equation}

		The importance of these events is that on the event $\m{D}_{\m{out}}(y) \cap \m{D}_{\m{in}}(y)$, the random variable appearing in \eqref{def:pfy} is bounded below uniformly. We will prove this in the next step. 
		
		\medskip
		
		\noindent\textbf{Step 2: Reduction to Brownian bridges.} We claim that there exists a deterministic constant $\til{c}_\delta>0$ such that 
		\begin{align}\label{e.claim1}
			\frac{\int\limits_y^{y+1} e^{\Lambda_1(x)+2\Lambda_2(x)+\Lambda_3(x)-\Lambda_1(y)-2\Lambda_2(y)-\Lambda_3(y)}dx}{\int\limits_0^L e^{\Lambda_1(x)+\Lambda_2(x)-\Lambda_1(y)-\Lambda_2(y)}dx\int\limits_0^L e^{\Lambda_2(x)+\Lambda_3(x)-\Lambda_1(y)-\Lambda_2(y)}dx} \ge \til{c}_{\delta} \cdot \ind_{\m{D}_{\m{out}}(y) \cap \m{D}_{\m{in}}(y)}
		\end{align}
		holds almost surely for all $y\in [L/4,3L/4]$ and $L$ large enough.  This follows easily by noticing that
		\begin{itemize}
			\item The $\bigcap_{i=1}^3 \m{Loc}_i(y)$ event controls the integrals on the range $[y,y+1]$.
			\item The $\m{A}_+(y)\cap \m{A}_-(y)$ event controls the integrals on the range $[y-L\delta,y]$ and $[y+1,y+L\delta]$.
			\item The $\bigcap_{i=1}^3 (\m{NMax}_i(y)\cap \m{Loc}_i(y))$ event controls the integrals on the range $[0,y-L\delta]$, $[y+L\delta,L]$.
		\end{itemize}
		Given \eqref{e.claim1}, we thus have
		\begin{align}\label{lwbd10}
			\mathfrak{F}(y) \ge \til{c}_\delta \cdot \Ex\big[\ind_{\m{D}_{\m{out}}(y) \cap \m{D}_{\m{in}}(y)}\big] = \til{c}_\delta \Ex\big[\ind_{\m{D}_{\m{out}}(y)} \Ex[\ind_{\m{D}_{\m{in}}(y)}\mid \mathcal{F}(y)\big]\big].
		\end{align}
		Note that the law of $(\Lambda_i(x),\Lambda_i'(x))_{x\in [y-L\delta,y]\cup [y+1,y+L\delta]}$ conditioned on $\mathcal{F}(y)$ are absolutely continuous w.r.t.~two independent copies of pair of Brownian bridges (with endpoints that are in $\mathcal{F}(y)$, i.e., they are random) with a Radon-Nikodym derivative of the form:
		\begin{align*}
			W_{i,+} & :=\exp\bigg(-\int_{y+1}^{y+L\delta} e^{-(\Lambda_i(s)-\Lambda_i'(s))}ds\bigg), \quad 
			W_{i,-}  :=\exp\bigg(-\int_{y-L\delta}^{y} e^{-(\Lambda_i(s)-\Lambda_i'(s))}ds\bigg).
		\end{align*}
		More precisely, we have
		\begin{align}\label{lwbd11}
			\Ex[\ind_{\m{D}_{\m{in}}(y)}\mid \mathcal{F}(y)\big] = \frac{\Ex_{B}\bigg[\ind_{\m{D}_{\m{in}}(y)}\prod\limits_{i=1}^3 W_{i,+}W_{i,-}\bigg]}{\Ex_{B}\bigg[\prod\limits_{i=1}^3 W_{i,+}W_{i,-} \bigg]} \ge \Ex_{B}\bigg[\ind_{\m{D}_{\m{in}}(y)}\prod\limits_{i=1}^3 W_{i,+}W_{i,-}\bigg],
		\end{align}
		where we write $\Ex_{B}$ to denote that the expectation is computed under Brownian bridge laws. The above inequality follows from the fact that the $W_{i,\pm}$s are all less than $1$. Recall the $\m{B}_{+}(y)$ and $\m{B}_-(y)$ events from \eqref{innerevents} ($\m{D}_{\m{in}}(y)$ is contained in both of them). Note that under $\m{B}_-(y)\cap \m{B}_+(y)$, $W_{i,\pm} \ge \gamma_{\delta}$ for some $\gamma_{\delta}>0$ for all $L\ge 1$. Thus, 
		\begin{equation}
			\label{lwbd12}
			\begin{aligned}
				\Ex_{B}\bigg[\ind_{\m{D}_{\m{in}}(y)}\prod\limits_{i=1}^3 W_{i,+}W_{i,-}\bigg] &  \ge \gamma_{\delta}^6 \cdot \Pr_{B}\big(\m{D}_{\m{in}}(y)\big) \\ & \ge \gamma_\delta^6 \cdot \Pr_{B}\big(\m{A}_+(y)\cap \m{A}_-(y)\big) \Pr_B\big(\m{B}_+(y) \cap \m{B}_-(y) \mid \m{A}_+(y)\cap \m{A}_-(y)).
			\end{aligned}
		\end{equation}
		
		Since the conditional processes on $[y-L\delta,y]$ and $[y+1,y+L\delta]$ are independent, we have $\Pr_{B}\big(\m{A}_+(y)\cap \m{A}_-(y)\big)=\Pr_{B}\big(\m{A}_+(y)\big)\Pr_{B}\big(\m{A}_-(y)\big)$. We have the following estimate on $\Pr_{B}(\m{A}_\pm(y))$.
		
		\begin{lemma}\label{lw34bd} Recall the $\m{MLD}_i(y)$ event from \eqref{outerevents}. There exists $c_{\delta}'>0$ such that for all $y\in [L/4,3L/4]$ we have
			$$\ind_{\cap_{i=1}^3\m{MLD}_i(y)}\cdot \Pr_{B}(\m{A}_\pm(y)) \ge c_{\delta}'\cdot \ind_{\cap_{i=1}^3\m{MLD}_i(y)} \cdot L^{-3/4}.$$
		\end{lemma}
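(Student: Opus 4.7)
The plan is to handle $\m{A}_-(y)$; the bound for $\m{A}_+(y)$ follows by the analogous argument on $[y+1, y+L\delta]$. Conditional on $\mathcal{F}(y)$, the three processes $\xi_i(x) := \Lambda_i(x) - \Lambda_i(y)$ for $x \in [y-L\delta, y]$ and $i = 1, 2, 3$ are independent Brownian bridges from $a_i := \xi_i(y-L\delta)$ to $0$, with $a_i \in [-2\sqrt{\delta L}, -\sqrt{\delta L}]$ on $\cap_i \m{MLD}_i(y)$. Setting $s = y - x$ and $W_1(s) := \xi_1(y-s)+\xi_2(y-s)$, $W_2(s) := \xi_2(y-s)+\xi_3(y-s)$, the event $\m{A}_-(y)$ becomes $\{W_k(s) \le 1 - q(s) \text{ for all } s \in [0, L\delta] \text{ and } k = 1, 2\}$, with $(W_1, W_2)(0) = (0, 0)$ and $(W_1, W_2)(L\delta) = (a_1 + a_2, a_2 + a_3) \in [-4\sqrt{\delta L}, -2\sqrt{\delta L}]^2$.

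Following \cite{yu2}, the Cholesky linear change of variables $(W_1, W_2)^T = A (U_1, U_2)^T$ with $A = \bigl(\begin{smallmatrix} \sqrt{2} & 0 \\ 1/\sqrt{2} & \sqrt{3/2} \end{smallmatrix}\bigr)$ turns $(U_1, U_2)$ into a standard 2D Brownian bridge from $(0,0)$ to the random terminal point $U^{\star} := A^{-1}(a_1 + a_2, a_2 + a_3)^T$. The two linear constraints $\sqrt{2}\, U_1 \le 1 - q(s)$ and $U_1/\sqrt{2} + \sqrt{3/2}\, U_2 \le 1 - q(s)$ cut out a planar wedge of interior angle $2\pi/3$ whose apex sits at signed distance $O(\max\{1, s^{1/3}\})$ from the origin along a fixed ray (the $2\pi/3$ angle follows from the $60^\circ$ separation of the normals to the two constraint lines). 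On $\cap_i \m{MLD}_i(y)$, the endpoint $U^{\star}$ lies in a compact subset of the wedge interior at distance $\Theta(\sqrt{\delta L})$ from the apex, with angular coordinate bounded away from both boundary rays.

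The heart of the argument is a Cameron--Martin / tube estimate producing the $L^{-3/4}$ factor, in place of the Bessel-bridge / conformal-map approach of \cite{yu2} (which is tailored to bridges from the apex to itself and does not transfer cleanly to a random endpoint). Choose a smooth deterministic $\gamma: [0, L\delta] \to \R^2$ with $\gamma(0) = \gamma(L\delta) = 0$, equal to $-c_0 \sqrt{s}\, v$ on $[2, L\delta/2]$ for a fixed inward unit vector $v$, with smooth tapering on $[0, 2]$ and linear return to zero on $[L\delta/2, L\delta]$. Cauchy--Schwarz applied to the Cameron--Martin Radon--Nikodym derivative gives $\Pr(U \text{ stays in wedge}) \ge \Pr(U + \gamma \text{ stays in wedge})^2 \cdot \exp(-\|\gamma'\|^2_{L^2})$. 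The dominant contribution to $\|\gamma'\|^2_{L^2}$ is $(c_0^2/4)\int_2^{L\delta/2} ds/s = (c_0^2/4)\log(L\delta) + O(1)$, so tuning $c_0 = \sqrt{3}$ yields the $L^{-3/4}$ factor. For the shifted bridge, the constraint reads $(AU)_k \le 1 - q(s) + c_0 \sqrt{s}\,|(Av)_k|$, which comfortably exceeds the typical fluctuation scale $\Theta(\sqrt{s})$ of $(AU)_k$; a uniform Gaussian maximum bound then gives $\Pr(U + \gamma \text{ stays in wedge}) \ge c_1$.

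The main obstacle is the uniformity in the random endpoint $U^{\star}$. The piecewise construction handles this: the profile of $\gamma$ on $[0, L\delta/2]$ is chosen independently of $U^{\star}$ and contributes the $\log(L\delta)$ term, while the return to zero on $[L\delta/2, L\delta]$ is also $U^{\star}$-independent (only $\gamma(L\delta) = 0$ is required). The random target is reached through the mean $\mu(s) = s U^{\star}/(L\delta)$ of the bridge $U$ itself, and near $s = L\delta$ the constraint on $U + \gamma$ reduces to $U^{\star}$ lying inside the wedge, which holds by construction on $\cap_i \m{MLD}_i(y)$. Assembling the pieces and absorbing constants into $c_\delta'$ yields the desired $c_\delta' L^{-3/4}$ lower bound.
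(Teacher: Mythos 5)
Your reduction to a planar Brownian bridge and the overall framing are fine, but the step that is supposed to produce the $L^{-3/4}$ factor does not hold. The claim ``$\Pr(U+\gamma \text{ stays in wedge}) \ge c_1>0$ uniformly'' is false for a shift $\gamma(s)\sim -c_0\sqrt{s}\,v$: the shifted constraint $(AU)_k(s)\le 1-q(s)+c_0\sqrt{s}\,|(Av)_k|$ is of the \emph{same order} $\sqrt{s}$ as the fluctuations of $(AU)_k(s)$, not asymptotically larger, so there is no uniform Gaussian maximum bound. Indeed, via the Ornstein--Uhlenbeck time change $Z(t):=e^{-t/2}B(e^t)$, the event $\{B(s)\le a\sqrt{s},\,1\le s\le T\}$ becomes $\{Z(t)\le a,\,0\le t\le \log T\}$, whose probability decays like $T^{-\lambda(a)}$ with $\lambda(a)>0$ the principal Dirichlet eigenvalue of the killed OU generator on $(-\infty,a]$. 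The bridge endpoint at distance $\Theta(\sqrt{L\delta})$ contributes a drift of order $s/\sqrt{L\delta}$, which only exceeds $\sqrt{s}$ for $s\gtrsim L\delta$ and hence does not help on the bulk range $s\in[O(1),L\delta/2]$ where the OU decay accumulates. Consequently $\Pr(U+\gamma\text{ stays in wedge})$ itself decays polynomially in $L$, and the Cauchy--Schwarz bound $\Pr(U)\ge\Pr(U+\gamma)^2\,e^{-\|\gamma'\|^2}$ with $c_0=\sqrt3$ yields only $\Pr(U)\gtrsim L^{-3/4-2\lambda'}$ for some $\lambda'>0$, which is strictly worse than the target. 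More structurally, the Cauchy--Schwarz step is lossy by a factor of two in the exponent of the change-of-measure cost, and no tuning of $c_0$ (or of a more general drift) repairs this: already in $1$D, $\Pr(B\le 1 \text{ on }[1,T])\sim T^{-1/2}$, but $\inf_{c_0>0}\bigl(2\lambda(c_0)+c_0^2/4\bigr)>1/2$.

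The paper avoids this by decoupling the two scales. The $L^{-3/4}$ factor for the \emph{flat} wedge event $\m{E}_0$ is obtained from the exact killed heat kernel $p_L^{N_1}$ in the $2\pi/3$-wedge via the Bessel-function series (Lemma \ref{lemma12}); note that this formula is valid for an arbitrary interior endpoint at distance $\Theta(\sqrt{L})$ and angle bounded away from the wedge boundary, so the ``random endpoint'' is handled there directly --- the obstruction you attribute to the Bessel approach does not arise at this stage (the conformal-map argument that is truly origin-to-origin is used much later, in Lemma \ref{vtau}). A Cameron--Martin comparison is then used only to pass from $\m{E}_0$ to $\m{E}_{q_L}$ (Lemma \ref{prop1}), where the shift is the $s^{1/3}$-growing function $q_L$, so the Girsanov cost $\|q_L'\|_{L^2}^2\lesssim\int_2^{L}s^{-4/3}\,ds=O(1)$ is uniformly bounded, and $\int q_L'' V$ is controlled via a Markov inequality and the conditional estimate $\Ex[|V(x)|\mid \m{E}_0]\lesssim\sqrt{x}$. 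Replacing this $O(1)$ Girsanov cost with the $\sqrt{s}$-shift's $\Theta(\log L)$ cost is precisely where your argument departs from (and cannot recover) the sharp exponent.
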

		The proof of the above lemma follows from exact calculations of Brownian motion probability and is deferred to next subsection. 
		Similarly we also have $\Pr_B\big(\m{B}_+(y) \cap \m{B}_-(y) \mid \m{A}_+(y)\cap \m{A}_-(y)\big)=\Pr_B\big(\m{B}_+(y) \mid \m{A}_+(y)\big)\Pr_B\big( \m{B}_-(y) \mid \m{A}_-(y)\big)$. We claim that there exists $\delta>0$ small enough such that
		\begin{align}\label{unisto}
			\ind_{\m{D}_{\m{out}}(y)} \cdot \Pr_B\big(\m{B}_\pm(y) \mid \m{A}_\pm(y)\big) \ge \frac12 \cdot \ind_{\m{D}_{\m{out}}(y)}.
		\end{align}
		We shall prove \eqref{unisto} via a stochastic monotonicity argument in the next step. Note that combining \eqref{lwbd11}, \eqref{lwbd12}, Lemma \ref{lw34bd}, and the claim in \eqref{unisto}, we get that there exists $c_{\delta}''>0$ such that for all $y\in [L/4,3L/4]$ and $L$ large enough the following deterministic bound
		\begin{align*}
			\ind_{\m{D}_{\m{out}}(y)} \Ex[\ind_{\m{D}_{\m{in}}(y)}\mid \mathcal{F}(y)\big] \ge c_{\delta}'' \cdot \ind_{\m{D}_{\m{out}}(y)} \cdot L^{-3/2}.
		\end{align*}
		holds for some $c_{\delta}''>0$. Plugging this back in \eqref{lwbd10} and in view of Lemma \ref{dout} we have $\Pf(y) \ge c_{\delta}c_{\delta}'c_{\delta}''L^{-3/2}$. This completes the proof of the proposition modulo \eqref{unisto} (and Lemmas \ref{dout} and \ref{lw34bd}).
		
		\medskip
		
		\noindent\textbf{Step 3: Stochastic monotonicity argument.} 
		In this step we prove \eqref{unisto}. We shall demonstrate the bound only for the quantity $\Pr_B\big(\m{B}_+(y) \mid \m{A}_+(y)\big)$, the other part being analogous. Set $\overline{\Lambda_i}(x):=\Lambda_i(x)- \Lambda_i(y+1)$ and $\overline{\Lambda_i'}(x):=\Lambda_i'(x)- \Lambda_i'(y+1)$. and define events
		\begin{align*}
			\m{Rise}_{i,+}(y) & := \big\{\overline{\Lambda_i}(x) \le \sqrt{L}/8 \mbox{ for all } x\in [y+1,y-L\delta]\big\}, \\ \m{Rise}_{i,+}'(y) & := \big\{\overline{\Lambda_i'}(x) \le \sqrt{L}/8 \mbox{ for all } x\in [y+1,y-L\delta]\big\}, \\
			\m{Fall}_{i,+}(y) & := \big\{\overline{\Lambda_i}(x) \ge -\sqrt{L}/2 \mbox{ for all } x\in [y+1,y-L\delta]\big\}.
		\end{align*}
		Note that
		\begin{align*}
			\m{D}_{\m{out}}(y) \cap \m{B}_+(y) \supset \m{D}_{\m{out}}(y)\cap \bigcap_{i=1}^3 \m{Rise}_{i,+}'(y) \cap \m{Fall}_{i,+}'(y)
		\end{align*}
		which implies
		\begin{align}\label{dinineq}
			\ind_{\m{D}_{\m{out}}(y)} \Pr_B\big(\m{B}_+(y) \mid \m{A}_+(y)) \ge \ind_{\m{D}_{\m{out}}(y)} \Pr_B\big(\bigcap_{i=1}^3 \m{Rise}_{i,+}'(y) \cap \m{Fall}_{i,+}(y) \mid \m{A}_+(y)).
		\end{align}
		So, if we can control $\Pr(\m{Rise}_{i,+}'(y) \mid \m{A}_+(y))$ and $\Pr(\m{Fall}_{i,+}(y) \mid \m{A}_+(y))$, we are done. We shall demonstrate this control using stochastic monotonicity. 
		Fix an $\e>0$. We claim that
		we can choose $\delta(\e)>0$ such that
		\begin{align}\label{risebd}
			\Pr(\m{Rise}_{i,+}(y) \mid \m{A}_+(y)) \ge 1-\e, \quad \Pr(\m{Rise}_{i,+}'(y) \mid \m{A}_+(y)) \ge 1-\e.
		\end{align}
		We shall prove this claim only for the $\m{Rise}_{1,+}(y)$ event. Rest cases are analogous. Let us define the $\sigma$-field
		\begin{align*}
			\mathcal{G}:=\sigma\{\mathcal{F}(y) \cup \sigma\big\{(\Lambda_i(x),\Lambda_i'(x)) \mid x\in [0,L], i=2,3\big\}\big\}
		\end{align*}
		Conditioned on $\mathcal{G}$, under $\mathbb{P}_B$ law, $\Lambda_1(\cdot)$ is still a Brownian bridge on $[y+1,y+L\delta]$ from $\Lambda_1(y+1)$ to $\Lambda_1(y+L\delta)$. Let us denote this conditional law as $\mathbb{P}_{B|\mathcal{G}}$. Note that $\m{Rise}_{1,+}(y)$ and $\m{A}_+(y)$ are measurable events w.r.t.~ $\sigma\{\Lambda_1(x) \mid x\in [y+1,y+L\delta]\}\cup \mathcal{G}$. Let us set
		\begin{align*}
			\til{\m{A}}_+(y):=\bigcap_{x\in [y+1,y+L\delta]}\big\{\overline{\Lambda_1}(x) \le 1-q(|x-y-1|)-\overline{\Lambda_2}(x)\big\}.
		\end{align*}
		$\overline{\Lambda_2}$ are measurable w.r.t.~$\mathcal{G}$. Thus the above event can be interpreted as an event which requires $\overline{\Lambda_1}(x)$ to be less than a given barrier $f(x):=1-q(|x-y-1|)-\overline{\Lambda_2}(x)$. Thus
		\begin{align*}
			\Pr_{B|\mathcal{G}}\big(\m{Rise}_{1,+}(y) \mid \til{\m{A}}_+(y)\big) = \Pr_{B|\mathcal{G}}^f\big(\m{Rise}_{1,+}(y)\big)
		\end{align*}
		where $\Pr_{B|\mathcal{G}}^f$ is the law of the Brownian bridge $B$ conditioned to be less than the barrier $f$. Now $\m{Rise}_{1,+}(y)$ event is decreasing as we increase the boundaries. So by stochastic monotonicity, taking $f\to \infty$ pointwise we get
		\begin{align*}
			\Pr_{B|\mathcal{G}}^f\big(\m{Rise}_{1,+}(y)\big) \ge \Pr_{B|\mathcal{G}}\big(\m{Rise}_{1,+}(y)\big).
		\end{align*}
		The latter can be assumed to be larger than $1-\e$ by taking $\delta$ small enough depending only on $\e>0$. Thus,
		\begin{align*}
			& \ind_{\m{E}}{\Ex_B\big[\ind_{\m{Rise}_{1,+}(y) \cap \til{\m{A}}_+(y)} \mid \mathcal{G}\big]}  \ge (1-\e)\ind_{\m{E}}{\Ex_B\big[\ind_{\til{\m{A}}_+(y)} \mid \mathcal{G}\big]}.
		\end{align*}
		where $\m{E}:=\bigcap_{x\in [y+1,y+L\delta]}\{\overline{\Lambda_2}(x) \le 1-q(|x-y-1|)-\overline{\Lambda_3}(x)\}$. Taking expectation w.r.t.~$\Lambda_2, \Lambda_3$ on both sides, we get that
		$$\Pr_B\big(\m{Rise}_{1,+}(y) \cap \m{A}_+(y)\big) \ge (1-\e)\Pr_B\big(\m{A}_+(y)\big)$$
		which is equivalent to $\Pr_B\big(\m{Rise}_{1,+}(y) \mid \m{A}_+(y)\big) \ge 1-\e$. This verifies \eqref{risebd}.
		
		Next we claim that by taking $\delta$ a bit smaller we can ensure
		\begin{align*}
			\Pr(\m{Fall}_{i,+}(y) \mid \m{A}_+(y)) \ge 1-3\e.
		\end{align*}
		We shall again prove this only for $i=1$. In fact we shall prove something stronger:
		\begin{align}\label{fallbd}
			\Pr_B\big(\m{Rise}_{2,+}(y) \cap \m{Rise}_{3,+}(y)\cap \m{Fall}_{1,+}(y) \mid \m{A}_+(y) \big) \ge 1-3\e.
		\end{align}
		holds for all small enough $\delta$. Continuing with the notations of the previous claim, let us consider
		\begin{align*}
			\ind_{ \m{Rise}_{2,+}(y) \cap \m{Rise}_{3,+}(y)} \cdot \Pr_{B|\mathcal{G}}^f\big(\m{Fall}_{1,+}(y)\big).
		\end{align*}
		The $\m{Fall}_{1,+}(y)$ is decreasing as we decrease the boundaries. Note that in presence of $\m{Rise}_{2,+}(y)$ event, we have $f(x) \ge R:=1-L^{1/3}-\sqrt{L}/8$. We decrease the barrier to $R$ and also decrease the boundaries of the Brownian bridge: from $\Lambda_1(y+1)$ to $\Lambda_1(y+1)-\sqrt{L}/4$ and from $\Lambda_1(y+L\delta)$ to $\Lambda_1(y+L\delta)-\sqrt{L}/4$ to get a new law which we denote by $\til{\Pr}_{B|\mathcal{G}}^R$. Let us denote the Brownian bridge under $\til{\Pr}_{B|\mathcal{G}}^R$ as $\hat\Lambda_1(\cdot)$. Note that under $\til{\Pr}_{B|\mathcal{G}}^R$ both $\m{Fall}_{1,+}(y)$ and $\sup_{x\in [y+1,y+L\delta]} \{\hat{\Lambda}_1(x)-\Lambda_1(y+1) \le R \}$ events are likely. Thus we have
		\begin{align*}
			&  \ind_{ \m{Rise}_{2,+}(y) \cap \m{Rise}_{3,+}(y)} \Pr_{B|\mathcal{G}}^f\big(\m{Fall}_{1,+}(y)\big) \\ & \ge \ind_{ \m{Rise}_{2,+}(y) \cap \m{Rise}_{3,+}(y)} \til{\Pr}_{B|\mathcal{G}}^R\big(\m{Fall}_{1,+}(y)\big) \\ & \ge \  \ind_{ \m{Rise}_{2,+}(y) \cap \m{Rise}_{3,+}(y)} \til{\Pr}_{B|\mathcal{G}}\bigg(\m{Fall}_{1,+}(y) \cap \sup_{x\in [y+1,y+L\delta]} \left\{\hat{\Lambda}_1(x)-\Lambda_1(y+1) \le R \right\}\bigg)  \\ & \ge (1-\e)  \ind_{ \m{Rise}_{2,+}(y) \cap \m{Rise}_{3,+}(y)}.
		\end{align*}
		where the last inequality follows by taking $\delta$ small enough. Multiplying by $\ind_{\m{E}}$ and then taking expectation w.r.t.~ $\Lambda_2, \Lambda_3$ we get
		\begin{align*}
			& \Pr_B\big(\m{Rise}_{2,+}(y) \cap \m{Rise}_{3,+}(y)\cap \m{Fall}_{1,+}(y) \cap \m{A}_+(y) \mid  \mathcal{F}(y) \big) \\ &  \ge (1-\e)\Pr_B\big(\m{Rise}_{2,+}(y) \cap \m{Rise}_{3,+}(y) \cap \m{A}_+(y) \mid \mathcal{F}(y) \big) \ge (1-3\e)\Pr_B\big(\m{A}_+(y) \mid \mathcal{F}(y) \big).
		\end{align*}
		where the last inequality follow from \eqref{risebd}. This proves \eqref{fallbd}. Thus by union bound we have
		\begin{align*}
			\Pr_B\big(\bigcap_{i=1}^3 \m{Rise}_{i,+}'(y) \cap \m{Fall}_{i,+}'(y) \mid \m{A}_+(y)\big) \ge 1-12\e.
		\end{align*}
		Taking $\e=\frac1{24}$ fixes our choice of $\delta$ and in view of \eqref{dinineq} completes the proof of \eqref{unisto}.

	\medskip

    \noindent\textbf{Step 4. $u=v=0$ case.} Let us explain the modification needed for $u=v=0$. First note that the following slightly stronger version of \eqref{e.claim1} holds: 
    \begin{align*}
			& \frac{\int\limits_y^{y+1} e^{\Lambda_1(x)+2\Lambda_2(x)+\Lambda_3(x)-\Lambda_1(y)-2\Lambda_2(y)-\Lambda_3(y)}dx}{\int\limits_0^L e^{\Lambda_1(x)+\Lambda_2(x)-\Lambda_1(y)-\Lambda_2(y)}dx\int\limits_0^L e^{\Lambda_2(x)+\Lambda_3(x)-\Lambda_1(y)-\Lambda_2(y)}dx} \\ & \hspace{3cm}\ge \til{c}_{\delta} \cdot \ind_{\cap_{i=1}^3(\m{Loc}_i(y)\cap \m{MLD}_i(y)\cap \m{NMax}_i(y))\cap \m{A}_+(y)\cap \m{A}_-(y)}.
		\end{align*}
	Indeed the events on the right hand side of the above equation are include the events that are used to control the integrals appearing on the left hand side (as noted after \eqref{e.claim1}). Taking expectations, this leads to the following analog of \eqref{lwbd10}: \begin{align}\label{lwbd10a}
			\mathfrak{F}(y) \ge  \til{c}_\delta \Ex\big[\ind_{\cap_{i=1}^3(\m{Loc}_i(y)\cap \m{MLD}_i(y)\cap \m{NMax}_i(y))} \Ex[\ind_{\m{A}_+(y)\cap \m{A}_-(y)}\mid \mathcal{F}(y)\big]\big].
		\end{align} 
	When $u=v=0$, the stationary measure is simply a Brownian motion. Thus, the law of $(\Lambda_i(x))_{x\in [y-L\delta,y]\cup [y+1,y+L\delta]}$ conditioned on $\mathcal{F}(y)$ is just Brownian bridges (with random endpoints). Hence, $$\Ex[\ind_{\m{A}_+(y)\cap \m{A}_-(y)}\mid \mathcal{F}(y)\big]=\Pr_B(\m{A}_+(y)\cap \m{A}_-(y))=\Pr_{B}(\m{A}_+(y))\Pr_B(\m{A}_-(y)).$$
    Using Lemma \ref{lw34bd} we thus obtain
    \begin{align}
        \mathfrak{F}(y) \ge  \til{c}_\delta \cdot c_{\delta}' \cdot L^{-3/2} \cdot \prod_{i=1}^3\Pr\bigg(\m{Loc}_i(y)\cap \m{MLD}_i(y)\cap \m{NMax}_i(y)\bigg).
    \end{align}
	Under Brownian motion law, it is clear that the above probabilities are bounded from below. This completes the proof.

	\subsection{Proof of Results from Section \ref{sec:lwbd1}} \label{sec:lwbd2}
	
	\subsubsection{Proof of Lemma \ref{dout}} Recall all the outer events from \eqref{outerevents}. By Proposition \ref{tailest} (\eqref{131} and \eqref{1310} specifically) we have that $\m{Loc}_i(y)$ and $\m{Loc}_i'(y)$ are high probability events (as $M\to \infty$). Thanks to Lemma \ref{l.difflimits} and Fatou's Lemma, as $L\to \infty$
	
	\begin{align*}
		\liminf_{L\to \infty}  \Pr\big(\m{Sep}_i(y) \cap \m{MLD}_i(y) \cap \m{MLD}_i'(y)\cap \m{NMax}_i(y)\big) \ge \inf_{u\in [1/4,3/4]} f(u) 
	\end{align*}
	where 
	\begin{align*}
		f(u) & :=\Pr\bigg(B(u) \in [1,2] , B'(u) \in [-1,-2], B(u-\delta)-B(u) \in [-2\sqrt{\delta}, -\sqrt{\delta}], \\ & \hspace{2cm} B'(u\pm \delta)-B'(u) \in [-2\sqrt{\delta}, -\sqrt{\delta}], \\ & \hspace{2cm} B(x)-B(u) \le -\sqrt{\delta}/2 \mbox{ for all } x\in [0,u-\delta]\cup [u+\delta,1]\bigg).
	\end{align*}
	Here $(B,B')$ are one of the laws described in Lemma \ref{l.difflimits}. By properties of Brownian motion $\inf_{u\in [1/4,3/4]} f(u) \ge c_{\delta}$ for some $c_{\delta}>0$.

\medskip

	\subsubsection{Proof of Lemma \ref{lw34bd}}\label{sec:4.2.2} We now turn towards proving Lemma \ref{lw34bd}. We continue with the same notations as in Section \ref{sec:lwbd1}. We shall prove the bound for $\Pr_B(\m{A}_-(y))$ only, the other one being analogous. Let us focus on the $[y-L\delta,y]$ interval. Note that under the law $\Pr_B$, on $[y-L\delta,y]$ $\Lambda_i$s are independent Brownian motions from $\Lambda_i(y-L\delta)$ to $\Lambda_i(y)$. Recall the $\m{MLD}_i(y)$ event from \eqref{outerevents}. Under this event, we know that $\Lambda_i(y-L\delta)-\Lambda_i(y)\in [-2\sqrt{\delta L},-\sqrt{\delta L}]$. Hence after translation and scaling, it suffices to prove the following result:

	\begin{proposition}\label{lemma1}
		Let $L,M>0$ and $B_1,B_2,B_3$ be three independent Brownian Bridges in $[0,L]$. Suppose that $B_i(0)=0$ and $B_i(L)=x_i\in [-M^{-1}\sqrt{L},-M\sqrt{L}]$, $i=1,2,3$. Recall the cutoff function from $q_r$ from \eqref{cutoff}. Then there exists a constant $c=c(M)>0$ such that $\Pr(\m{E}_{q_L}) \ge cL^{-3/4}$ where
		$$\m{E}_{q_L}:=\big\{B_1(x)+B_2(x)\leq 1-q_L(x), B_2(x)+B_3(x)\leq 1-q_L(x)\text{ for all }x\in[0,L]\big\}.
		$$
	\end{proposition}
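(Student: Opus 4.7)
The plan is to reduce $\Pr(\m E_{q_L})$ to a two-dimensional cone-survival probability, so that the target $L^{-3/4}$ matches the classical Brownian cone-survival exponent $\pi/(2\alpha)$ for a wedge of opening angle $\alpha=2\pi/3$. First I would introduce $S:=B_1+2B_2+B_3$ and $D:=B_1-B_3$; since the $B_i$ are independent centered Brownian bridges on $[0,L]$ with the same variance function, $S$ and $D$ are independent Brownian bridges (with diffusion coefficients $\sqrt 6$ and $\sqrt 2$) from $0$ to $s^\star:=x_1+2x_2+x_3$ and $d^\star:=x_1-x_3$. The two constraints defining $\m E_{q_L}$ collapse to the single wedge inequality $S(t)+|D(t)|\le 2\bigl(1-q_L(t)\bigr)$ for every $t\in[0,L]$. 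Isotropising by $S\mapsto S/\sqrt 6$, $D\mapsto D/\sqrt 2$ turns this into survival of a planar Brownian bridge in a time-dependent wedge whose two edges meet at angle $2\pi/3$; with starting distance $O(1)$ from the wedge tip, endpoint distance $\asymp\sqrt L$, and horizon $L$, the classical cone estimate predicts the $L^{-3/4}$ rate.

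To turn this heuristic into a quantitative bound I would decouple $S$ and $D$ using independence. By standard Brownian-bridge tail estimates, the event $\m D:=\{|D(t)-d^\star t/L|\le C_0\sqrt{t(L-t)/L}\text{ for all }t\in[0,L]\}$ has probability at least some $c_0>0$ uniformly in $L$ and in $d^\star\in[-(M-M^{-1})\sqrt L,\,(M-M^{-1})\sqrt L]$. On $\m D$, $|D(t)|\le \varphi(t):=|d^\star|t/L+C_0\sqrt{t(L-t)/L}$, so the wedge constraint is implied by $S(t)\le \tilde g(t):=2\bigl(1-q_L(t)\bigr)-\varphi(t)$ for all $t$. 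By independence of $S$ and $D$, it then suffices to show $\Pr(S(t)\le\tilde g(t)\ \forall t)\ge cL^{-3/4}$ for the one-dimensional bridge $S$ from $0$ to $s^\star\le -4M^{-1}\sqrt L$. Writing $S(t)=\widetilde S(t)+s^\star t/L$ centers the problem as $\widetilde S(t)\le h(t):=\tilde g(t)-s^\star t/L$, with $\widetilde S$ a standard bridge from $0$ to $0$; the positive linear slope $-s^\star t/L\asymp t/\sqrt L$ and the concave $-2q_L(t)\asymp -t^{1/3}$ combine to make $h$ positive near both endpoints of $[0,L]$ and attain a minimum of order $-L^{1/4}$ near the critical scale $t^\star\asymp L^{3/4}$, comparable to the local standard deviation $\sqrt{6 t^\star(L-t^\star)/L}\asymp L^{3/8}$ of $\widetilde S$.

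The last step is the quantitative one-dimensional estimate on $\Pr(\widetilde S(t)\le h(t)\ \forall t)$. I would choose a finite mesh $0=t_0<t_1<\cdots<t_N=L$ concentrated near $t^\star$, condition on each $\widetilde S(t_i)$ lying in a Gaussian window of size comparable to the local standard deviation just below $h(t_i)$, and between two consecutive mesh points use the closed-form identity $\Pr(\text{bridge stays below line})=1-\exp\bigl(-2\,\mathrm{gap}_L\,\mathrm{gap}_R/\Delta t\bigr)$ to keep every inter-mesh factor bounded below by a universal constant. The product of Gaussian densities at the pinning points then telescopes to a constant multiple of $L^{-3/4}$. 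The main obstacle is the calibration of the mesh, so that this telescoping product reproduces exactly the cone exponent $3/4$ and not a strictly smaller power: the construction is a discrete version of the classical cone estimate adapted to the curved barrier $q_L$, and the specific choice $q_L'(s)\propto s^{-2/3}$ was engineered precisely to make $t^\star\asymp L^{3/4}$ and balance the Gaussian costs. Uniformity in the endpoints $x_i\in[-M\sqrt L,-M^{-1}\sqrt L]$ is then handled by stochastic monotonicity of one-sided barrier probabilities in the boundary data of the bridge.
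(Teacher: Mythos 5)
Your reduction of $\m E_{q_L}$ to a two--dimensional cone event via $S=B_1+2B_2+B_3$, $D=B_1-B_3$ correctly identifies the wedge of opening $2\pi/3$ and the target exponent $\pi/(2\alpha)=3/4$; this matches the paper's reduction to the $2$D Brownian bridge $V$ with $\omega(V)\le 1-q_L$. But the decoupling step is where the argument breaks. First, the event $\m D=\{|D(t)-d^\star t/L|\le C_0\sqrt{t(L-t)/L}\ \forall t\}$ has probability \emph{zero}, not a uniform $c_0>0$: the envelope vanishes at the endpoints like $\sqrt t$, while by the law of the iterated logarithm the centred bridge exceeds $C\sqrt{t}$ infinitely often as $t\downarrow 0$. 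Adding a constant to the envelope makes the event nontrivial, but then a Lamperti/Ornstein--Uhlenbeck computation shows $\Pr(\m D)\asymp L^{-\nu(C_0)}$ for some $\nu(C_0)>0$, so it is \emph{not} bounded below uniformly in $L$. Second, your evaluation of the resulting barrier $h$ drops the term $C_0\sqrt{t(L-t)/L}$ coming from $\varphi$; this term is of order $\sqrt L$ in the bulk and dominates both $q_L(t)\asymp t^{1/3}$ and the drift $-s^\star t/L$, so $h$ is not $\asymp -L^{1/4}$ near $t^\star\asymp L^{3/4}$ as claimed.

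Most importantly, even after these technical fixes the strategy cannot reach the exponent $3/4$. After the isotropic rescaling $(\hat S,\hat D)=(S/\sqrt6,\,D/\sqrt2)$ and a Lamperti transform, staying in the wedge becomes persistence of a planar OU process in a sector of angle $2\pi/3$, whose first Dirichlet eigenvalue equals $3/2$ (giving $L^{-3/4}$). Your decomposition bounds this from below by the probability of a \emph{product} event $\{Y_{\hat S}\le -a\}\times\{|Y_{\hat D}|\le b\}$; by domain monotonicity the first eigenvalue $\lambda(a)+\mu(b)$ of the product region is $\ge 3/2$, with \emph{strict} inequality unless the sector ground state factorises --- which happens only for the right angle $\alpha=\pi/2$ (where $r^2\sin 2\theta\, e^{-r^2/4}$ factors as $xe^{-x^2/4}\cdot ye^{-y^2/4}$), not for $\alpha=2\pi/3$ (where $r^{3/2}\sin(3\theta/2)e^{-r^2/4}$ does not). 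Consequently the product lower bound yields $L^{-\gamma}$ with some $\gamma>3/4$, strictly short of the claim, and no choice of envelope width closes the gap. The paper sidesteps this by working directly with the $2$D killed heat kernel $p_L^{N_1}$, for which the explicit Bessel-function series from \cite{carslow1986conduction} captures the exact angular eigenvalue $3/2$ and hence the exact exponent $3/4$; the cutoff $q_L$ is then absorbed by a Girsanov tilt plus Markov's inequality, not by a one-dimensional mesh argument.
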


     \begin{figure}[h!]
			\centering
			\begin{overpic}[width=7cm]{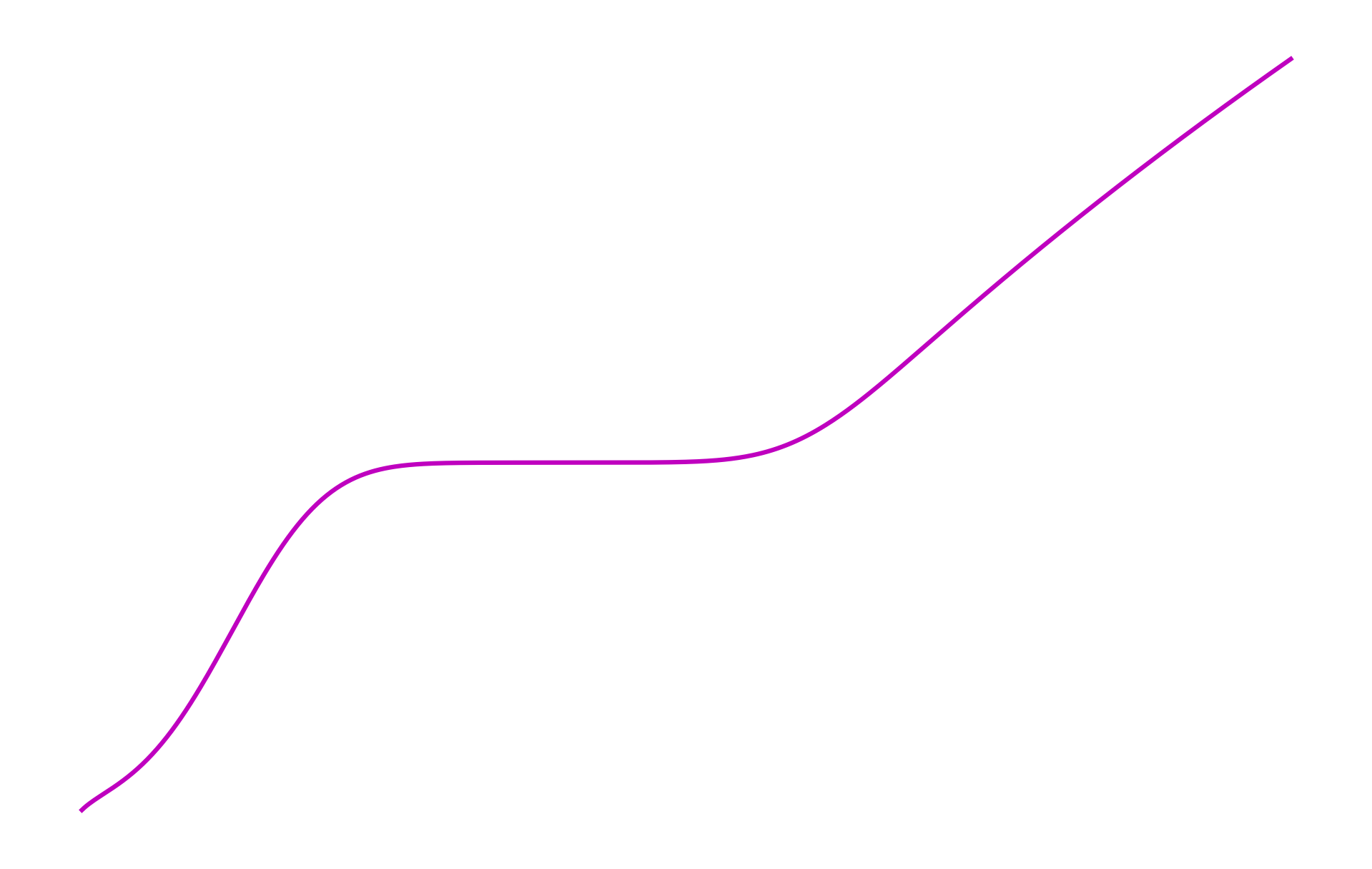}
\put(-2,0){\begin{tikzpicture}
    \draw [line width=1pt,->] (0,0)-- (0,4);
\end{tikzpicture}}
\put(-1,2.3){\begin{tikzpicture}
    \draw [line width=1pt,->] (0,0)-- (7,0);
\end{tikzpicture}}
\end{overpic}
			\caption{Graph of $q_r|_{y\ge 1}$, when $r$ is big enough. Note that $q_r|_{y\le 1}=0$.}
			\label{fig:enter-label2}
		\end{figure}

	The proof of the above proposition follows similar lines along Section 4 in \cite{yu2} and involves precise Brownian motion computations. Set $U_k=B_k+B_{k+1}$,
    \begin{equation}\label{vkdef}
    v_k=-\frac{\sqrt{2}}{2}(\sqrt{3},(-1)^k)
    \end{equation}
    for $k=1,2$. Determine $u_1,u_2$ from the equations
	$$v_k \cdot (u_1,u_2) =x_k+x_{k+1}, \mbox{ for }k=1,2.$$
	The key idea is that  after a change of variables we can  find a 2-dimensional Brownian Bridge $V$ on $[0,L]$ from $(0,0)$ to $(u_1,u_2)$ such that 
	\begin{align}\label{xref}
		v_k \cdot V(x) = B_k(x)+B_{k+1}(x), \mbox{ for } k=1,2,
	\end{align}
	We consider the function $\omega :\R^2\rightarrow \R$ such that $\omega (v)=\max_{k=1,2}\{ v\cdot v_k\}$. Then
	\begin{align*}
		\m{E}_{q_L}=\big\{ \omega(V(x))\leq 1-q_L(x)\text{ for all }x\in [0,L]\big\}.
	\end{align*}
	We shall first prove the above proposition when $q_L \equiv 0$:
	\begin{lemma}\label{lemma12}
		Let $V$ and $\omega$ be as above. Then there exists a constant $c=c(M)>0$ such that $$\Pr(\omega(V(x))\leq 1\text{ for all }x\in [0,L]) \ge cL^{-3/4}.$$
	\end{lemma}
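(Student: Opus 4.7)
The event $\{\omega(V(x))\le 1\text{ for all }x\in[0,L]\}$ is precisely the event that the $2$D Brownian bridge $V$ stays inside the convex cone $\mathcal{C}:=\{v\in\R^2:v\cdot v_k\le 1,\ k=1,2\}$. From \eqref{vkdef} one computes $|v_1|=|v_2|=\sqrt{2}$ and $v_1\cdot v_2=1$, so $v_1,v_2$ make an angle $\pi/3$; hence $\mathcal{C}$ is a planar cone of opening angle $2\pi/3$ with tip $v_*=(-\sqrt{6}/3,0)$. The origin $(0,0)$ lies on the axis of symmetry of $\mathcal{C}$ at distance $\sqrt{6}/3$ from $v_*$. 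Writing $\vec u=(u_1,u_2)$ in the $(v_1,v_2)$ basis via $\vec u=\tfrac{1}{3}(2x_1+x_2-x_3)\,v_1+\tfrac{1}{3}(-x_1+x_2+2x_3)\,v_2$ and using the two-sided bounds on the $x_i$, one obtains $|\vec u-v_*|=\Theta(\sqrt{L})$ and that the polar angle of $\vec u-v_*$ lies in a compact sub-interval of $(0,2\pi/3)$ depending only on $M$.

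With this geometric setup, the standard bridge-to-heat-kernel identity reads
\begin{equation*}
\Pr\!\big(V(x)\in\mathcal{C}\text{ for all }x\in[0,L]\big) = \frac{p^{\mathcal{C}}_L(v_0,\vec u)}{p_L(v_0,\vec u)},
\end{equation*}
where $v_0=(0,0)$, $p_L(v_0,\vec u)=(2\pi L)^{-1}e^{-|\vec u|^2/(2L)}$ is the free $2$D Gaussian density, and $p^{\mathcal{C}}_L$ is the Dirichlet heat kernel of $\mathcal{C}$. Since $|\vec u|=\Theta(\sqrt L)$, the denominator is $\Theta(L^{-1})$. For the numerator the plan is to invoke the classical Bessel-series representation in polar coordinates $(r,\phi)$ centered at $v_*$ with $\phi\in(0,2\pi/3)$,
\begin{equation*}
p^{\mathcal{C}}_L(r_0,\phi_0;r,\phi) = \frac{3}{\pi L}\,e^{-(r^2+r_0^2)/(2L)}\sum_{n=1}^\infty I_{3n/2}\!\left(\tfrac{rr_0}{L}\right)\sin\!\left(\tfrac{3n\phi_0}{2}\right)\sin\!\left(\tfrac{3n\phi}{2}\right),
\end{equation*}
with $(r_0,\phi_0)=(\sqrt{6}/3,\pi/3)$ and $(r,\phi)$ representing $\vec u-v_*$.

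Since $rr_0/L=\Theta(L^{-1/2})\to 0$, the small-argument asymptotic $I_\nu(x)\sim(x/2)^\nu/\Gamma(\nu+1)$ reduces the $n=1$ term to size $\Theta(L^{-3/4})$ (using $r_0=\Theta(1)$ and $r=\Theta(\sqrt L)$), while the $n\ge 2$ terms are of strictly lower order and can be absorbed by the $n=1$ term after a geometric summation. The two sine factors are bounded below by a positive constant depending only on $M$: $\sin(3\phi_0/2)=1$, and the angular control on $\vec u-v_*$ gives $\sin(3\phi/2)\ge c(M)>0$; the Gaussian factor is in turn bounded below by $e^{-C(M)}$ since $r^2+r_0^2\le C(M)L$. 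Combining yields $p^{\mathcal{C}}_L(v_0,\vec u)\gtrsim L^{-7/4}$, and dividing by $p_L(v_0,\vec u)=\Theta(L^{-1})$ gives the desired bound $\Pr(V\in\mathcal{C})\gtrsim L^{-3/4}$. The main technical obstacle is the uniform angular control: one must verify that the polar angle of $\vec u-v_*$ stays bounded away from both boundary rays of $\mathcal{C}$ uniformly in the $x_i$ so that $\sin(3\phi/2)$ is uniformly positive. This reduces to a finite-dimensional geometry check from the explicit expression for $\vec u$ together with both the upper and lower bounds on the $x_i$; crucially, both sides of the hypothesis $x_i\in[-M\sqrt L,-M^{-1}\sqrt L]$ are used here (the upper bound to prevent $\vec u-v_*$ from collapsing toward $v_*$, the lower bound to prevent its polar angle from approaching the cone boundary).
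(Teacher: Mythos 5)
Your proposal is correct and follows essentially the same route as the paper: express the constrained bridge probability as a ratio of the Dirichlet heat kernel $p_L^{N_1}$ of the wedge to the free Gaussian density, invoke the Carslaw--Jaeger Bessel-series expansion in polar coordinates centered at the wedge tip, use the small-argument behavior of $I_{3/2}$ together with $r=\Theta(\sqrt L)$ and $|\theta|\le\pi/3-c_2(M)$ to get the numerator of order $L^{-7/4}$, and divide by the denominator of order $L^{-1}$. The only quibble is with your closing attribution of which side of the hypothesis does what: in fact the lower bound $|x_i|\ge M^{-1}\sqrt L$ is what keeps $r=|\vec u-v_*|\gtrsim\sqrt L$ (preventing collapse toward the tip), while the angular separation $|\theta|\le\pi/3-c_2$ requires both bounds simultaneously ($\omega(\vec u)\le -2M^{-1}\sqrt L$ from the lower bound, $|\vec u|\le C(M)\sqrt L$ from the upper bound); but this does not affect the argument itself, which is sound.
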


	\begin{proof}[Proof of Lemma \ref{lemma12}.]  
		Note that 
		\begin{align}
			\mathbb{P}(\omega(V(x))\leq 1\text{ for all }x\in [0,L]) &= \frac{p_L^{N_1}(\mathbf{0},\mathbf{u})}{p_L(\mathbf{u})},\label{cond1}
		\end{align}
		where $p_L(\mathbf{u})=(2\pi L)^{-1}e^{-(u_1^2+u_2^2)/2L}$ is the standard 2-dimensional heat kernel and $p_L^{N_a}$ is the transition kernel of standard 2-dimensional Brownian motion killed on the boundary of the wedge $N_a:= \omega^{-1}((-\infty,a])$.  $p_L^{N_a}$ admits a precise expression. Indeed, by \cite[p. 379]{carslow1986conduction} we have
		\begin{align}
			p_L^{N_1}(\mathbf{0},\mathbf{u}) = \frac1L\sum_{j=1}^{\infty} I_{\frac{3}{2}j}\left( \frac{\sqrt{6}r}{3L}\right) e^{-1/3L}\sin\left(\frac{\pi}{2}j\right)e^{-r^2/(2L)}\sin\left( \frac{3}{2}j(\theta+\frac{\pi}{3})\right) \label{ineq100}
		\end{align}
		where $I_k$ is the modified Bessel function of the first kind, where $r,\theta$ depends on $\mathbf{u}$ and are chosen so that $re^{i\theta}$ represents the complex number corresponding to $\mathbf{u}+(\sqrt{6}/3,0)$. It is not hard to check that for all $L$ large enough, $c_1\sqrt{L} \le r \le c_1^{-1}\sqrt{L}$ and $|\theta| \le \frac{\pi}{3}-c_2$ for some constants $c_1,c_2>0$ depending only on $M$.  Using the fact that (see \cite[(9.8.18) p. 376]{abramowitz1968handbook})
        $$\frac{(z/2)^a e^{-z}}{\Gamma(a+\frac{1}{2})}\leq I_a(z)\leq \frac{(z/2)^a e^{z}}{\Gamma(a+\frac{1}{2})},$$
		for all $a,x\ge 0$ we obtain that $p_L^{N_1}(\mathbf{0},\mathbf{u}) \ge c\cdot L^{-7/4}$ for some constant $c$ depending only on $M$. On the other hand, note that $p_L(\mathbf{u}) \le C\cdot L^{-1}$ for some constant $C>0$ depending only on $M$. Combining these two bounds, we arrive at the desired result.
	\end{proof}

	\begin{lemma}\label{prop1}
		Adopt the assumption of Proposition \ref{lemma1}. Then, there exists a constant $c(M)>0$, such that $\mathbb{P}\big(\m{E}_{q_L}\mid \m{E}_0\big)\geq c$.
	\end{lemma}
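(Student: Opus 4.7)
Following the representation from Lemma~\ref{lemma12}, the event $\m{E}_0$ is that the $2$-D bridge $V$ (from $\mathbf{0}$ to $\mathbf{u}$, with $\omega(\mathbf{u}) \asymp -\sqrt{L}$) stays in the wedge $N_1$. We must show that, conditional on this, $V$ additionally stays below the profile $\{\omega(\mathbf{v})\le 1-q_L(x)\}$ with probability bounded below by a constant $c(M)>0$ uniform in $L$. Recall that $q_L$ vanishes on $[0,1]$, grows like $\tfrac13 x^{1/3}$ on $[2, L/2-2]$, and is flat and equal to some $c_\star L^{1/3}$ on $[L/2-1,L]$. Accordingly I split $[0,L]$ into the three regions $[0,1]$, $[1, L/2-1]$, $[L/2-1,L]$, which contribute the trivial, middle, and endpoint estimates respectively, and combine them by conditioning on the values $V(1)$ and $V(L/2-1)$ together with the Markov property.

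\textbf{Key steps.} On $[0,1]$ the event is automatic since $q_L\equiv 0$. For the right-endpoint region $[L/2-1,L]$ the bridge is pulled toward $\mathbf{u}$, and a standard Gaussian-bridge tail bound (applied to each of $U_1,U_2$) gives that $\omega(V(x))\le-\tfrac12\sqrt{L}/M$ on this region with conditional probability bounded below, provided $V(L/2-1)$ lies in a bulk set $\{|\mathbf{v}|\le C_0\}$ that has positive conditional probability under $\m{E}_0$ (the latter follows from the Bessel-type transition density \eqref{ineq100} at time $L/2-1$, exactly as in the proof of Lemma~\ref{lemma12}). The main work is the middle region $[1,L/2-1]$: by the strong Markov property at $x=1$, and again using \eqref{ineq100} to show that $V(1)$ lies at $O(1)$ distance from the origin with positive conditional probability, it suffices to bound from below the probability that a bridge from such a bulk point to $\mathbf{u}$, conditioned on staying in $N_1$, stays below the profile $1-q_L$. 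I would dyadically decompose $[1,L/2-1]=\bigcup_k I_k$ with $I_k=[2^k,2^{k+1}]$ and, using the Doob $h$-transform based on the first Dirichlet eigenfunction of the wedge (so that $V$ becomes a process whose distance to the wedge boundary is typically of order $\sqrt{x}$), bound the conditional probability of violating the barrier on $I_k$ by a quantity of order $\exp(-c\,2^{k/3})$ from \eqref{ineq100}. Summing over $k$ and taking a union bound shows the middle-region event holds with probability at least $1/2$, say. Multiplying the three independent-under-conditioning estimates then gives $\Pr(\m{E}_{q_L}\mid\m{E}_0)\ge c(M)$.

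\textbf{Main obstacle.} The essential difficulty is the uniform $x^{1/3}$ barrier estimate in the middle region, which sits at exactly the critical Airy scaling: steeper exponents give vanishing probability and shallower ones give probability tending to $1$. The argument is the direct analog of the treatment in \cite{yu2}, but must be carried out in our wedge setting rather than their half-plane setting, so the explicit Bessel expansion \eqref{ineq100} (and its derivative estimates) replaces the reflection-principle computations used there. A delicate point is decoupling the conditioning $\m{E}_0$ across dyadic scales: this is handled by replacing the conditional bridge on each $I_k$ by its $h$-transformed Markovian version via the strong Markov property at $2^k$, and then using that the $h$-transform density is bounded above and below on bulk sets by explicit ratios of the Bessel series in \eqref{ineq100}.
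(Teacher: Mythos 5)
Your proposal takes a genuinely different route from the paper's, which avoids dyadic decomposition and union bounds entirely. The paper instead uses a Girsanov change of measure: shifting $V$ by $q_L(\cdot)h$ (with $h$ the direction satisfying $v_k\cdot h=1$, so $\omega$ is additive along $h$) identifies $\m{E}_{q_L}$ with $\m{E}_0$ under the shifted law, at the cost of a Radon--Nikodym weight whose exponent, after integration by parts (using $q_L'(0)=q_L'(L)=0$), involves only $g(V_1)=-\int_0^L q_L''(x)V_1(x)\,dx$ and the deterministic $\int_0^L|q_L'|^2\,dx$. The latter is bounded because $|q_L'|\lesssim x^{-2/3}$, and $g(V_1)$ is controlled in one stroke by the moment estimate $\mathbb{E}[|V(x)|\mid\m{E}_0]\le C\sqrt{x}$ together with $|q_L''|\lesssim x^{-5/3}$, since $\int x^{-5/3}\sqrt{x}\,dx<\infty$; Markov's inequality then finishes. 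This is shorter, and it converts the whole barrier problem into a single tame integral.

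Beyond being a different route, your sketch also contains a concrete error. You claim that the conditional probability of violating the barrier on $I_k=[2^k,2^{k+1}]$ is of order $\exp(-c\,2^{k/3})$; it is not -- the correct order is only polynomial. Under $\m{E}_0$ the conditioned process sits at distance $\sim\sqrt{x}$ from $\partial N_1$ in the bulk, and it is an $h$-transform whose leading Dirichlet eigenfunction vanishes linearly at the boundary (in \eqref{ineq100}, the $j=1$ term carries a factor $\sin\tfrac{3}{2}(\theta+\tfrac{\pi}{3})$ which vanishes to first order at $\theta=\pm\tfrac{\pi}{3}$). So the transverse coordinate behaves like a Bessel-type process, and the probability of approaching the boundary to within $\approx 2^{k/3}$ from a typical distance $\approx 2^{k/2}$ on $I_k$ is a fixed positive power of $2^{k/3}/2^{k/2}=2^{-k/6}$ -- not an exponential. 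The dyadic sum of such polynomials converges but is not automatically $<1$, so the blunt union bound ``sum over $k$, get probability $\ge 1/2$'' does not close as stated. To repair your approach you would need either to start the union bound at some large $K_0$ and treat the finitely many scales in $[1,2^{K_0}]$ by a separate compactness argument, or to shrink the prefactor in the barrier until the dyadic sum is uniformly small -- and the latter is not available, since the constant in the definition \eqref{cutoff} of $q_L$ is fixed. The Girsanov argument sidesteps all of this scale-by-scale bookkeeping.
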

	
	\begin{proof}[Proof of Proposition \ref{lemma1}] The proof is immediate from  Lemmas \ref{lemma1} and \ref{prop1}.
	\end{proof}  
	
	\begin{proof}[Proof of Lemma \ref{prop1}] 
		Let $g$ be the functional such that 
		$$g(Y)=-\int_0^{L} q_L''(x)Y(x)dx,$$
		where $Y:\R\rightarrow \R$ is a continuous function. By It\^o's formula, we have
		\begin{equation*} 
			\int_0^Lq_L'(x)dV_1(x)=q_L'(L)V_1(L)-\int_0^Lq_L''(x)V_1(x)dx=-\int_0^Lq_L''(x)V_1(x)dx=g(V_1).\end{equation*}
		where the last two equalities are due to the fact that $q_L'(x)=q_L''(x)=0$ for $x\ge L/2$. By Girsanov's theorem, we know that the density of the law of $(V(x)+q_L(x)h)_{x\in [0,L]}$ with respect to the law of the Brownian motion is $\text{exp}\bigg( h_1 \int_0^Lq_L'(x)dV_1(x)-\frac{h_1^2}{2}\int_0^L|q_L'(x)|^2dx\bigg)$. Thus for $R>0$ we have
		\begin{align*}
			\mathbb{P}(\m{E}_{q_L}) & \geq \mathbb{P}\left( (V(x)+q_L(x)h)\in N_1 \text{, for all }x\in [0,L] \text{, and } g(V_1)\leq R \right) \\
			& = e^{-\int_0^L|q_L'(x)|^2dx}\cdot \mathbb{E}\left[ e^{-h_1g(V_1)}\ind_{\m{E}_0\cap \{ g(V_1)\leq R\}} \right]
			\ge c\cdot \mathbb{P}\big(\m{E}_0 \cap \{ g(V_1)\leq R\}\big).
		\end{align*}
		for some $c>0$ depending only on $R$. The last inequality follows by noting that $|q_L'(x)|\le |x|^{1/3}$. 
		Thus, since $\m{E}_{q_L}\subset \m{E}_0$,
		\begin{equation}\label{eq11}
			\mathbb{P}[\m{E}_{q_L}|\m{E}_0]\geq c\cdot\mathbb{P}( g(V_1)\leq R |\m{E}_0) \ge c\cdot \left(1-\frac1R\Ex\left[\int_0^L |q_L''(x)||V_1(x)|dx \mid \m{E}_0\right]\right)
		\end{equation}
		where the last inequality follows from Markov's inequality. We claim that  for all $x\in [0,L/2]$
		\begin{align}\label{vineq}
			\mathbb{E}\left[ |V(x)| \bigg| \m{E}_0\right]\leq Cx^{1/2}.
		\end{align}
		for some constant $C>0$ depending only on $M$. Using \eqref{vineq} and the fact that $|q_L''(x)|\le x^{-5/3}$ on $[0,L/2]$ and $q_L''(x)=0$ for $x\ge L/2$, it follows that the conditional expectation on r.h.s.~of \eqref{eq11} is upper bounded by some $C_1$ depending on $M$. Thus, we may choose $R$ large enough so that r.h.s.~of \eqref{eq1} $\ge c/2$. This completes the proof modulo \eqref{vineq}.

		\medskip
		
		Let us now establish \eqref{vineq}. Recall the density $p_L^{N_a}$ introduced in the proof of Lemma \ref{lemma12}. We will use some properties of $p_x^{N_a}$ that can be found in \cite[Section 4]{yu2}. Using $p_x^{N_a}(v_1,v_2)=p_x^{N_0}(v_1-ah,v_2-ah)$, we get 
        \begin{align}
			\mathbb{E}\left[ |V(x)| \bigg| \m{E}_0\right]&= \frac{\int_{N_1}|v|p_x^{N_1}(0,v)p_{L-x}^{N_1}(v,\mathbf{u})du}{p_L^{N_1}(0,\mathbf{u})}=\frac{\int_{N_0}|v+h|p_x^{N_0}(-h,v)p^{N_0}_{L-x}(v,\mathbf{u}-h)dv}{p_L^{N_0}(-h,\mathbf{u}-h)}\nonumber\\
			&\leq |h|+ \frac{\int_{N_0}|v|p_x^{N_0}(-h,v)p^{N_0}_{L-x}(v,\mathbf{u}-h)dv}{p_L^{N_0}(-h,\mathbf{u}-h)},\label{ineq101}
		\end{align}
        where in the last step we used the triangle inequality. Let us now bound the numerator and denominator of the above expression. From Lemma 4.4 in \cite{yu2}, there exists a constant $C_B$ such that for every $r_1,r_2>0$ and $\theta_1,\theta_2\in [0,2\pi)$
		\begin{equation}\label{property}
			p_x^{N_0}(r_1e^{i\theta_1},r_2e^{i\theta_2})\leq \frac{C_B (r_1r_2)^{3/2}}{x^{5/2}}e^{\frac{\sqrt{2}r_1r_2 -r_1^2-r_2^2}{2x}}\prod_{k=1}^2 \left(\frac{\pi}{3}- |\theta_k|\right).
		\end{equation}
	 As in Lemma \ref{lemma12}, the vector $\mathbf{u}-h=r_0e^{i\theta_0}$ is such that $c_1^{-1}\sqrt{L}\leq r_0\leq c_2\sqrt{L}$ with $|\theta_0|\leq \frac{\pi}{3}-c_2$ for some constants $c_1,c_2>0$. By \eqref{property}, we have
		\begin{align*}
			& \int_{N_0}|v|p_x^{N_0}(-h,v)p^{N_0}_{L-x}(v,\mathbf{u}-h)dv\\
			&\leq C\left(\frac{\pi}{3}-|\theta_0|\right)\frac{r_0^{3/2}}{(x(L-x))^{5/2}}e^{-\frac{1}{3}x^{-1}-\frac{r_0^2}{2(L-x)}}\int_0^{\infty}r^5e^{-\frac{r^2}{2}(x^{-1}+(L-x)^{-1})+\sqrt{2}r(\frac{\sqrt{6}}{3}x^{-1}+r_0(L-x)^{-1})}dr\\
			&\stackrel{r\mapsto x^{1/2}r}{\leq} C'\left(\frac{\pi}{3}-|\theta_0|\right)\frac{r_0^{3/2}x^{1/2}}{(L-x)^{5/2}}e^{-\frac{1}{3}x^{-1}-\frac{r_0^2}{2(L-x)}}\int_0^{\infty}r^5e^{-\frac{r^2}{2}(1+x(L-x)^{-1})+cr(x^{-1/2}+r_0x^{-1/2}(L-x)^{-1})}dr,
		\end{align*}
		where $C,C',c$ are positive constants. However, since $x\in [1,L/2]$, $L\geq 4$ and $r_0\leq c_1\sqrt{L}$, we know that $x^{-1/2}+r_0x^{-1/2}(L-x)^{-1}\leq 1+\frac{2c}{\sqrt{L}}\leq c+1$, hence we can bound the last integral from above to get
		\begin{align*}
			\int_{N_0}|v|p_x^{N_0}(-h,v)p^{N_0}_{L-x}(v,\mathbf{u}-h)dv
			&\leq C'\left(\frac{\pi}{3}-|\theta_0|\right)\frac{r_0^{3/2}x^{1/2}}{(L-x)^{5/2}}e^{-\frac{1}{3}x^{-1}-\frac{r_0^2}{2(L-x)}}\int_0^{+\infty}r^5e^{-\frac{r^2}{2}+r(c+1)}dr\\
			&\leq C''x^{1/2}r_0^{3/2}L^{-5/2}\int_0^{+\infty}r^5e^{-c'r^2}dr\leq Cx^{1/2}r_0^{3/2}L^{-5/2}.
		\end{align*}
		On the other hand, due to \eqref{ineq100} and the estimates on $r_0,\theta_0$
		\begin{align*}
			p_L^{N_0}(-h,\mathbf{u}-h)&\geq \frac{1}{L}I_{\frac{3}{2}}\left( \frac{\sqrt{6}r_0}{3L}\right)e^{-\frac{1}{3L}-\frac{r_0^2}{2L}}\sin\left( \frac{3}{2}(\theta_0+\frac{\pi}{3})\right) \\ & \geq \frac{e^{-\frac{1}{3L}-\frac{r_0^2}{2L}}}{L}\sin\left( \frac{3}{2}(\theta_0+\frac{\pi}{3})\right)\left(\frac{\sqrt{6}r_0}{6L}\right)^{3/2}e^{-\frac{\sqrt{6}r_0}{3L}} \geq Cr_0^{3/2}L^{-5/2}.
		\end{align*}
		Plugging the above two bounds in \eqref{ineq101} verifies \eqref{vineq}.
			\end{proof}

	\section{Upper bound for the asymptotic variance} \label{sec:5}

	In this section, we prove the upper bound—Proposition \ref{p.upbd}. In Section \ref{sec:5.1}, we establish this proposition assuming a technical estimate on Brownian bridges (Proposition \ref{coretech}), which forms the core component of the upper bound. Sections \ref{sec:5.2}–\ref{sec:5.4} are devoted to proving this technical proposition.

    \subsection{Proof of Proposition \ref{p.upbd}} \label{sec:5.1} Assume $u,v\ge 0$ with $u+v>0$.
    Let us recall the expression $\Pf(y)$ from \eqref{def:pfy}. 
    To prove \eqref{part11} and \eqref{part12}, we require a few notations to begin with. Let us set
    \begin{equation}
		\label{newdef1}
		\begin{aligned}
           & U_i^{(1)} := \max\left\{\frac{\Lambda_i(y)-\Lambda_i(0)}{\sqrt{y+1}},1\right\}\mbox{ and }U_i^{(2)}:= \max\left\{\frac{\Lambda_i(y+1)-\Lambda_i(L)}{\sqrt{L-y}},1\right\}, \\
           & A(y) :=\int\limits_y^{y+1} e^{\Lambda_1(x)+2\Lambda_2(x)+\Lambda_3(x)-\Lambda_1(y)-2\Lambda_2(y)-\Lambda_3(y)}dx \\ & R(y):=\min\left\{1, e^{\Lambda_1(y+1)+2\Lambda_2(y+1)+\Lambda_3(y+1)-\Lambda_1(y)-2\Lambda_2(y)-\Lambda_3(y)}\right\},
		\end{aligned}
	\end{equation}
	\begin{equation}
		\label{newdef}
		\begin{aligned}
			 \\
			& D_{11}(y):=\int\limits_0^y e^{\Lambda_1(x)+\Lambda_2(x)-\Lambda_1(y)-\Lambda_2(y)}dx, \; D_{12}(y):=\int\limits_0^y e^{\Lambda_2(x)+\Lambda_3(x)-\Lambda_1(y)-\Lambda_2(y)}dx, \\
			& D_{21}(y):= \int\limits_{y+1}^L e^{\Lambda_1(x)+\Lambda_2(x)-\Lambda_1(y+1)-\Lambda_2(y+1)}dx, \\ & D_{22}(y):=\int\limits_{y+1}^L e^{\Lambda_2(x)+\Lambda_3(x)-\Lambda_1(y+1)-\Lambda_2(y+1)}dx,
		\end{aligned}
	\end{equation}
	and consider the $\sigma$-field
	$$\mathcal{G}(y)=\sigma\big\{ (\Lambda_i(x), \Lambda_i'(x)), \mbox{ for }x\in \{0\}\cup [y,y+1]\cup \{L\}, i\in \{1,2,3\}\big\}.$$
Note that $A(y),R(y), U_i^{(j)}$ are $\mathcal{G}(y)$-measurable. We have the following estimates on these quantities.
	
	\begin{lemma}\label{techle} There exists $c>0$ such that for every $y\in [0,L-1]$, $M\ge 1$, $i\in \{1,2,3\}$, and $j\in \{1,2\}$ we have
		\begin{align*}
			\Ex[A(y)^3] \le c^{-1}, \quad \Ex[R(y)^{-6}] \le c^{-1}, \quad   \Pr\big(U_i^{(j)} \ge M\big) \le e^{-cM^2}.
		\end{align*}
	\end{lemma}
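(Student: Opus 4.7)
The plan is to deduce all three bounds as direct consequences of Proposition \ref{tailest}, together with the observation that under $\Pr_{u,v}^L$ we have $\Lambda_i(0)=0$ almost surely (from the construction of $\Pr_{\m{free}}$ in Definition \ref{openkpzgibbs}) and the three curves $\Lambda_1,\Lambda_2,\Lambda_3$ are independent. All three quantities are controlled by sub-Gaussian fluctuation estimates that are already in hand.

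For the tail on $U_i^{(j)}$, I would note that since $M\ge 1$ and $\Lambda_i(0)=0$, the event $\{U_i^{(1)}\ge M\}$ forces $\Lambda_i(y)\ge M\sqrt{y+1}$, which is contained in the event appearing in \eqref{121}; this yields $\Pr(U_i^{(1)}\ge M)\le Ce^{-M^2/C}$. The bound on $U_i^{(2)}$ is symmetric: $\{U_i^{(2)}\ge M\}$ implies $\Lambda_i(y+1)-\Lambda_i(L)\ge M\sqrt{L-y}$, which falls under \eqref{111}. Minor bookkeeping is required at the extremes $y=0$ or $y=L-1$, but the factor $\sqrt{y+1}$ (resp.~$\sqrt{L-y}$) only helps there.

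For $\Ex[A(y)^3]$, the plan is to apply Jensen's inequality on the unit-length interval $[y,y+1]$ to move the cube inside:
\begin{equation*}
A(y)^3 \le \int_y^{y+1} e^{3(\Lambda_1(x)-\Lambda_1(y))+6(\Lambda_2(x)-\Lambda_2(y))+3(\Lambda_3(x)-\Lambda_3(y))}\,dx.
\end{equation*}
Taking expectation and using the independence of $\Lambda_1,\Lambda_2,\Lambda_3$, the integrand factorizes into three exponential moments of the form $\Ex[e^{c(\Lambda_i(x)-\Lambda_i(y))}]$. Each of these is bounded uniformly in $x\in[y,y+1]$ and in $y$ because the random variable $\sup_{x\in[y,y+1]}|\Lambda_i(x)-\Lambda_i(y)|$ has the Gaussian tail \eqref{131}, and a Gaussian tail automatically gives finite exponential moments of every fixed order (by the identity $\Ex[e^{cX}]=1+c\int_0^\infty e^{cM}\Pr(X\ge M)\,dM$). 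Integrating in $x$ then yields the desired constant bound on $\Ex[A(y)^3]$.

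For $\Ex[R(y)^{-6}]$, the plan is essentially the same: from the definition of $R(y)$ as $\min\{1,e^Z\}$ with $Z:=\sum_i c_i(\Lambda_i(y+1)-\Lambda_i(y))$ and $(c_1,c_2,c_3)=(1,2,1)$, we get the pointwise bound
\begin{equation*}
R(y)^{-6}\le 1 + e^{6|\Lambda_1(y+1)-\Lambda_1(y)|+12|\Lambda_2(y+1)-\Lambda_2(y)|+6|\Lambda_3(y+1)-\Lambda_3(y)|},
\end{equation*}
and each absolute increment is dominated by $\sup_{x\in[y,y+1]}|\Lambda_i(x)-\Lambda_i(y)|$, to which \eqref{131} applies. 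Independence then factorizes the expectation into three bounded exponential moments. Since every step reduces to a Gaussian-tail exponential moment already supplied by Proposition \ref{tailest}, there is no genuine obstacle; the only care needed is bookkeeping the constants and checking the trivial boundary cases.
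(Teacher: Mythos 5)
Your proof is correct and takes essentially the same route as the paper's own (very terse) argument: the paper simply cites the Gaussian tail estimates of Proposition \ref{tailest} and notes that they imply uniform bounds on the exponential moments $\Ex[\exp(C\sup_{x\in[y,y+1]}|\Lambda_i(x)-\Lambda_i(y)|)]$, from which the three claims follow. Your filled-in details — using $\Lambda_i(0)=0$ a.s.\ together with \eqref{121}/\eqref{111} for the $U_i^{(j)}$ tails, Jensen on the unit interval for $A(y)^3$, and the pointwise bound $R(y)^{-6}\le 1+e^{6Z_1+12Z_2+6Z_3}$ with independence factorizing the exponential moments — are all sound and are exactly what the paper leaves implicit.
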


\begin{proof} Thanks to the Gaussian tail estimates established in Lemma \ref{tailest}, the tail estimate for $U_i^{(j)}$ is immediate. The Gaussian tail estimates imply that  the exponential moments: $$\Ex\left[\exp\left(C\sup_{x\in [y,y+1]} \big|\Lambda_i(x)-\Lambda_i(y)\big|\right)\right],$$ are all finite and uniformly bounded in $y$. Hence the finiteness of moments of $(A(y))^3$ and $(R(y))^6$ follows. 
\end{proof}

Given the definitions in \eqref{newdef1}, \eqref{newdef}, and \eqref{def:pfy}, we have
\begin{align*}
    \Pf(y) = \Ex\left[\frac{A(y)}{(D_{11}(y)+D_{21}(y-1))(D_{12}(y)+D_{22}(y-1))}\right].
\end{align*}
Following the definitions, it is not hard to check that $D_{1j}(y) \ge R(y)D_{1j}(y)$ and $D_{2j}(y-1) \ge R(y)D_{2j}(y)$ almost surely. Thus,
\begin{align} \label{rtwe}
		\Pf(y) & \le \Ex\left[A(y)R(y)^{-2}\prod_{j=1}^2(D_{1j}(y)+D_{2j}(y))^{-1}\right] \\ & \le \Ex\left[A(y)R(y)^{-2} \Ex\left[\prod_{j=1}^2(D_{1j}(y)+D_{2j}(y))^{-1}\mid \mathcal{G}(y)\right]\right]. \label{rtwe2}
	\end{align}
	Note that by stochastic monotonicity, $(\Lambda_i(x)-\Lambda_i(y))_{x\in [0,y]}$ conditioned on $\mathcal{G}(y)$ is stochastically larger than a Brownian bridge $(\mathfrak{B}_{i}^{(1)}(x))_{x\in [0,y]}$ from $-\sqrt{y+1} \cdot U_i^{(1)}$ to $0$. And similarly, $(\Lambda_i(x)-\Lambda_i(y+1))_{x\in [y+1,L]}$ conditioned on $\mathcal{G}(y)$ is stochastically larger than a Brownian bridge $(\mathfrak{B}_{i}^{(2)}(x))_{x\in [y+1,L]}$ from $0$ to $-\sqrt{L-y}\cdot U_i^{(2)}$. Thus, the inner expectation in \eqref{rtwe2} is upper bounded by Brownian bridge analog of the same expression. The core technical result that completes the upper bound argument is the following estimate on this Brownian bridge analog.
	
	\begin{proposition}\label{coretech} Fix $L\ge 1$, $L_1,L_2 \in [L^{1/4},L/2]$. and $b\ge 1$. Suppose $(\mathfrak{B}_i^{1})_{i=1,2,3,j=1,2}$ are independent Brownian bridges on $[0,L_j]$ with $\mathfrak{B}_i^{j}(0)=0$ and $\mathfrak{B}_i^j(L_j)=-b\sqrt{L_j}$. There exists an absolute constant $C>0$ such that
		\begin{align}\label{oneljbd}
			\Ex\left[\prod_{k=1}^2\left(\int_0^{L_j} e^{\mathfrak{B}_{k}^j(x)+\mathfrak{B}_{k+1}^j(x)}dx\right)^{-1}\right] \le C \cdot e^{Cb^{3/2}} L_j^{-3/4},
		\end{align}
		\begin{align}\label{twol1l2bd}
			\Ex\left[\prod_{k=1}^2\left(\int_0^{L_1} e^{\mathfrak{B}_{k}^1(x)+\mathfrak{B}_{k+1}^1(x)}dx+\int_0^{L_2} e^{\mathfrak{B}_{k}^2(x)+\mathfrak{B}_{k+1}^2(x)}dx\right)^{-1}\right] \le C \cdot e^{Cb^{3/2}} L_1^{-3/4}L_2^{-3/4}.
		\end{align}
	\end{proposition}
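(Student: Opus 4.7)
My proof plan is as follows. As a first step, I would apply the same linear change of variables used in the proof of Lemma \ref{lemma1} (Section \ref{sec:4.2.2}) separately to each of the two setups. For $j = 1, 2$, set $U_k^j := \mathfrak{B}_k^j + \mathfrak{B}_{k+1}^j$ for $k = 1, 2$, and construct a 2D Brownian bridge $V^j$ on $[0, L_j]$ from $\mathbf{0}$ to a deterministic endpoint $u^j$ lying on the symmetry axis of the wedge $N_a := \omega^{-1}((-\infty, a])$ (with $\omega(v) := \max_k v_k \cdot v$ and $v_k$ as in \eqref{vkdef}) at distance $\tfrac{2\sqrt{6}}{3} b \sqrt{L_j}$ from the apex, such that $v_k \cdot V^j(x) = U_k^j(x)$. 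Denote $I_k^j := \int_0^{L_j} e^{v_k \cdot V^j(x)} dx$, $M_k^j := \max_{x} v_k \cdot V^j(x)$, and $A^j := \sup_x \omega(V^j(x)) = \max(M_1^j, M_2^j)$. Note that $V^1$ and $V^2$ are independent by construction.

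For part (a), the key analytical step is a Laplace-type lower bound: by localizing around the argmax of each projection $v_k \cdot V^j$, one shows $I_k^j \geq c \cdot e^{M_k^j}$ up to a multiplicative random factor controlled via Brownian-bridge moment estimates (treated carefully e.g.\ by splitting according to whether the excursion of $v_k \cdot V^j$ above $M_k^j - 1$ has length at least $1$). This yields effectively
\begin{align*}
\Ex\left[\prod_{k=1}^2 (I_k^j)^{-1}\right] \lesssim \Ex\left[e^{-M_1^j - M_2^j}\right].
\end{align*}
The right-hand side is then bounded by integrating the tail $\Pr(A^j \le m)$ against $e^{-m}$. The crucial probability estimate
\begin{align*}
\Pr(A^j \le m) \le C (m+1)^{\alpha} \, e^{C b^{3/2}} \, L_j^{-3/4}
\end{align*}
is obtained via the Bessel-series representation of the wedge-killed transition kernel $p_{L_j}^{N_m}(\mathbf{0}, u^j)$ used in the proof of Lemma \ref{lemma12}, now evaluated at the distant endpoint $u^j$ instead of at the origin.

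Part (b) follows by combining part (a) with independence. Using $(I_k^1 + I_k^2)^{-1} \le \min\bigl((I_k^1)^{-1}, (I_k^2)^{-1}\bigr)$ together with $I_k^j \geq c e^{M_k^j}$ for both $j = 1, 2$, one obtains pointwise
\begin{align*}
\prod_{k=1}^2 (I_k^1 + I_k^2)^{-1} \lesssim e^{-\max(A^1, A^2)} \le e^{-A^1/2} \cdot e^{-A^2/2},
\end{align*}
where the last inequality uses AM-GM (since $\max(a,b) \ge (a+b)/2$). Taking expectation and using the independence of $A^1, A^2$ factorizes the bound as
\begin{align*}
\Ex\left[\prod_{k=1}^2 (I_k^1 + I_k^2)^{-1}\right] \lesssim \Ex[e^{-A^1/2}] \cdot \Ex[e^{-A^2/2}] \le C e^{C b^{3/2}} L_1^{-3/4} L_2^{-3/4},
\end{align*}
where each factor is controlled by the same Bessel-kernel analysis as in part (a).

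The main obstacle is obtaining the sharp bound on $\Pr(A^j \leq m)$ for the 2D Brownian bridge with distant endpoint $u^j$, with explicit tight control of the $b$-dependence. In \cite{yu2}, the corresponding estimates for origin-to-origin 2D bridges rely on precise tail bounds for stopping times, derived via a conformal transformation; these tools do not directly apply to our bridge from $\mathbf{0}$ to $u^j$. Instead, one must work directly with the Bessel series expansion (cf.\ \eqref{ineq100}) and carefully estimate the modified Bessel functions $I_{3j/2}\bigl(\sqrt{6} r/(3 L_j)\bigr)$ at $r \sim b \sqrt{L_j}$, tracking both the leading $L_j^{-3/4}$ scaling from the wedge-killed kernel and the sub-leading $e^{C b^{3/2}}$ correction representing the entropic cost of driving the bridge to $u^j$ while remaining confined in the wedge.
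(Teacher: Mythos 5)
The high-level geometry of your plan — rewriting the functional in terms of planar Brownian bridges $V^j$ confined to the wedge $N_a=\omega^{-1}((-\infty,a])$ and then estimating the wedge-confinement probability via the Bessel-series kernel $p_J^{N_a}$ — is exactly the route the paper takes: your claimed bound $\Pr(A^j\le m)\lesssim (m+1)^{\alpha}e^{Cb^{3/2}}L_j^{-3/4}$ is precisely what is established in Lemma~\ref{l4.11equiv} (which extends Lemma~4.11 of \cite{yu2} to bridges with general endpoints), and this closes the ``main obstacle'' you flag at the end. The AM--GM step to split $e^{-\max(A^1,A^2)}\le e^{-A^1/2}e^{-A^2/2}$ is also sound because the probability estimate integrates to the same order against $e^{-m/2}$ as against $e^{-m}$.

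However, there is a genuine gap in the step you label the ``key analytical step'': the reduction
\begin{align*}
\Ex\Big[\prod_{k=1}^2 (I_k^j)^{-1}\Big] \lesssim \Ex\big[e^{-M_1^j-M_2^j}\big].
\end{align*}
Writing $I_k^j=e^{M_k^j}\,\xi_k^j$ with $\xi_k^j:=\int_0^{L_j} e^{v_k\cdot V^j(x)-M_k^j}\,dx$, the ``random multiplicative factor'' $\xi_k^j$ is not independent of the maxima and cannot be decoupled by H\"older without destroying the $L_j^{-3/4}$ rate: $\Ex[(\xi_1^j\xi_2^j)^{-p}]^{1/p}\Ex[e^{-q(M_1^j+M_2^j)}]^{1/q}$ with $q>1$ only yields $L_j^{-3/(4q)}$. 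Your proposed remedy --- conditioning on whether the excursion of $v_k\cdot V^j$ above $M_k^j-1$ has length $\ge 1$ --- does not resolve this either: for Brownian paths the excursion length $\ell$ satisfies $\Pr(\ell<\varepsilon)\asymp\sqrt{\varepsilon}$, so $\Ex[\ell^{-1}]=\infty$, and the short-excursion event is neither small nor easy to bound on its own. A uniform conditional bound $\Ex[(\xi_1^j\xi_2^j)^{-1}\mid A^j]\lesssim 1$ would suffice, but establishing it requires analyzing a wedge-conditioned Brownian bridge, which is essentially as hard as the original problem.

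The paper's proof (Sections~\ref{sec:5.2}--\ref{sec:5.4}) circumvents this obstacle with two ingredients your plan is missing. First, instead of the argmax it discretizes the maximum over levels $q_m$ and uses the \emph{first exit time} $\tau_m^j$ of $V^j$ from $N_{q_{m-1}}$; at this stopping time the identity $v_{k^*}\cdot V^j(\tau_m^j)=q_{m-1}$ holds \emph{exactly}, so the integral $\int_{\tau_m^j}^{\tau_m^j+1}e^{v_{k^*}\cdot V^j}dx\ge e^{q_{m-1}}\int_0^1 e^{v_{k^*}\cdot(V^j(\tau_m^j+s)-V^j(\tau_m^j))}ds$ isolates a factor with uniformly bounded negative moments by the strong Markov property --- no random prefactor entangled with the max. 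Second, the decomposition $Z_1Z_2=(Z_1\vee Z_2)(Z_1\wedge Z_2)$ followed by Cauchy--Schwarz (Lemma~\ref{GmQqmcomp}) splits the square into a factor controlled by the integral over $[0,1]$ (never far from the start) and a factor controlled by the integral near $\tau_m^j$, which is what makes the $L_j^{-3/4}\cdot L_j^{-3/4}$ count come out correctly. Finally, to sum over $m$ the paper needs Lemma~\ref{vtau}, a tail bound on $|V^j(\tau_m^j)|$ proved via a conformal map to a half-plane; this additional input does not appear in your plan and is not subsumed by the Bessel-kernel estimate. In short, the wedge-confinement Bessel computation you identify is correct and matches the paper, but the argmax-based Laplace reduction needs to be replaced by the stopping-time/level-discretization structure to close the argument.
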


	Let us assume Proposition \ref{coretech} for a moment.  Set $U:=\max_{i,j} U_i^{(j)}$.
	Thanks to \eqref{twol1l2bd}, and the stochastic monotonicity argument explained earlier, for $y\in [L^{1/4},L-L^{1/4}]$ we have
	\begin{align*}
		\mbox{r.h.s.~of \eqref{rtwe}} & \le C y^{-3/4}(L-y)^{-3/4} \Ex\bigg[ A(y)R(y)^{-2}\exp(C U^{3/2})\bigg] \\ & \le C y^{-3/4}(L-y)^{-3/4} \Ex\big[ A(y)^{-3}\big] \Ex\big[R(y)^{-6}\big] \Ex\bigg[\exp(3CU^{3/2})\bigg],
	\end{align*}
	where the last inequality follows via H\"older inequality. By Lemma \ref{techle}, the above three expectation are uniformly bounded. Thus we have \eqref{part11}. For $y\in [0,L^{1/4}]\cup [L-L^{1/4},L-1]$, we rely on \eqref{oneljbd} instead to derive \eqref{part12}. This completes the proof of the upper bound when $u,v\ge 0$ with $u+v>0$. 
    
    \smallskip
    
    When $u = v = 0$, the stationary measure is simply a Brownian motion. In this case, Lemma \ref{techle} still holds by direct estimates for Brownian motion. The stochastic monotonicity argument used earlier is not required here, as the underlying laws are already those of Brownian bridges. One can then directly apply Proposition \ref{coretech} to the inner expectation in \eqref{rtwe2} to obtain the desired upper bound.
	
	\subsection{Framework for the proof of Proposition \ref{coretech}} \label{sec:5.2} We will prove \eqref{twol1l2bd} only, as the proof of \eqref{oneljbd} is similar and simpler. The case $b \ge \log L$ is relatively straightforward. Indeed, by Jensen inequality and Brownian motion moment computation we see that
		\begin{align*}
			\Ex\left[\prod_{k=1}^2\left(\int_0^{L_j} e^{\mathfrak{B}_{k}^j(x)+\mathfrak{B}_{k+1}^j(x)}dx\right)^{-1}\right] & \le \prod_{k=1}^2 \Ex\left[\left(\int_0^1 e^{\mathfrak{B}_{k}^k(x)+\mathfrak{B}_{k+1}^k(x)} dx\right)^{-1}\right] \\ & \le \prod_{k=1}^2 \left(\int_0^1 \Ex[e^{\mathfrak{B}_{k}^j(x)+\mathfrak{B}_{k+1}^j(x)}] dx\right)^{-1} \le C e^{Cb/\sqrt{L_1}+Cb/\sqrt{L_2}}.
		\end{align*}
        Note that if $b \ge \log L$, we have 
        \begin{align*}
Ce^{Cb/\sqrt{L_1}+Cb/\sqrt{L_2}} \le C' \cdot e^{\frac12C'b^{3/2}}  \le C' \cdot e^{C'b^{3/2}} L_1^{-3/4}L_2^{-3/4} 
        \end{align*}
        for some $C'>0$. This verifies \eqref{twol1l2bd}. So, we may assume $b \le \log L$ for all $i,j$ for the rest of the proof.



 		

As in Section \ref{sec:4.2.2}, we begin by rewriting the expression in \eqref{twol1l2bd} in terms of two-dimensional Brownian bridges. 
Set $\mathbf{w}_j:=\bigg(b\sqrt{\frac23L_j},0\bigg)$. Let $V^i$ be independent planar Brownian bridges of length $L_i$ from the origin to $\mathbf{w}_i$. We have
\begin{align}\label{rbf}
    \mbox{r.h.s.~of~\eqref{twol1l2bd}} = \Ex\left[\prod_{k=1}^2 \left(\int_0^{L_1} e^{v_k\cdot V^1(x)}dx+ \int_0^{L_2} e^{v_k\cdot V^2(x)}dx\right)^{-1}\right].
\end{align}

We remark that in \cite{yu2}, the authors analyze expressions of the form
\[
\Ex\left[\prod_{k=1}^2 \left(\int_0^{L} e^{v_k \cdot V(x)}\,dx\right)^{-1}\right],
\]
where $V$ is a two-dimensional Brownian bridge of length $L$ from the origin to origin. They show that this quantity is of order $L^{-3/2}$. The main contribution to the expectation arises when the path $V$ remains confined to the wedge $N_a=\omega^{-1}((-\infty,a])$. Since the Brownian bridge starts and ends at the origin—which lies close to the boundary of the wedge—maintaining this constraint incurs a probability of $L^{-3/4}$ for each endpoint, leading to an overall contribution of $L^{-3/4} \cdot L^{-3/4} = L^{-3/2}$.  In our case, the Brownian bridges $V^j$ start at the origin but terminate at points in the bulk, specifically at $\mathbf{w}_j = \left(b\sqrt{2L_j/3}, 0\right)$. As a result, the probability that $V^j$ remains confined to the wedge $N_a$ is of order $L_j^{-3/4}$.

\medskip

To show this rigorously, we define few constants, events, and stopping times.
		\[
		\delta=10^{-10}, \quad \beta=10^{-5}, \quad M=\lfloor \delta \log L\rfloor + 1, \quad q_M=\infty, \quad q_m=100^m-1 \mbox{ for }1\le m\le M-1.
		\]
		For $1\leq m\leq M,$ and $i=1,2$ let $\tau_m^i$ (resp.~$\til{\tau}_m^i$) be the first time (resp.~last) in $[0,L_i]$ such that $\omega(V^i(x))=q_{m-1}$.
		These stopping times exist almost surely. We refer to Figure \ref{lowerbdpf} and its caption to illustrate how the stopping times are used to prove the $L^{-3/4}$ bound. 
        \begin{figure}[h!]
    \centering
    \includegraphics[width=0.5\linewidth]{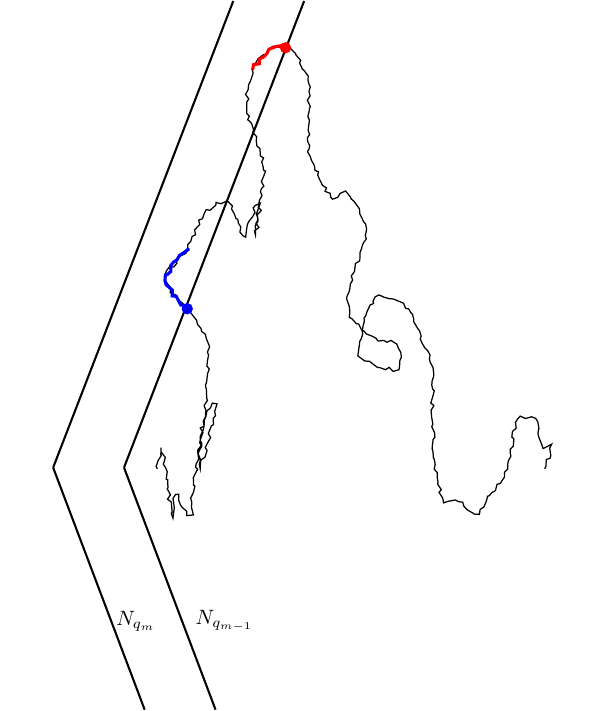}
    \caption{In above $V$ is a planar Brownian bridge on $[0,L]$ from the origin to $(\sqrt{L},0)$. Suppose we are on the event $\m{E}_{q_{m},0,L} \cap \m{E}_{q_{m-1},0,L}^c$. The blue point is the first time $\tau_m$ when it exits $N_{q_{m-1}}$. If the time is less than $L^{1-\beta}$, we consider the region from $[\tau_{m},\tau_{m}+1]$ (the blue highlighted path). The integral is of the order $e^{-q_{m-1}}$ and the cost of staying in wedge $N_{q_m}$ on $[\tau_m+1,L]$ is of the order $L^{-3/2}|w|^{1/2}(q_m-\omega(w))=L^{-3/4}$. If $\tau_m \ge L^{1-\beta}$, we consider the red point which is the last time $\til{\tau}_m$ and consider the region from $[\til\tau_{m}-1,\til\tau_{m}]$ (the red highlighted path). The integral is of the order $e^{-q_{m-1}}$ and the cost of staying in wedge $N_{q_m}$ on $[0, \til\tau_{m}-1]$ is of the order $L^{-3(1-\beta)/2}|V(\til\tau_m)|^{1/2}(q_m-\omega(V(\til\tau_m))) = L^{-3(1-\beta)/2+1/2} \ll L^{-3/4}$ for $\beta$ small enough. }
    \label{lowerbdpf}
\end{figure}

		For $q>0,x_1<x_2$ define the event:
        \begin{align*}
            \m{E}_{q,x_1,x_2}^i & := \{V^i(x) \in N_q \mbox{ for all }x\in [x_1,x_2]\}.
        \end{align*}
        We consider the events:
        \begin{equation*}
            \begin{aligned}
		   \ \m{A}_m^i = \m{E}_{q_m,0,L_i}^i\setminus \m{E}_{q_{m-1},0,L_i}^i, \quad  \m{B}_{m,1}^i = \{\tau_{m}^i \in [0,L_i^{1-\beta}]\},  \quad \m{B}_{m,2}^i = \{\tau_{m}^i \in (L_i^{1-\beta},L_i]\}.
		\end{aligned}
        \end{equation*}
		Set
		\begin{equation*}
			G_{m_1,m_2,i_1,i_2} := \mathbb{E}\left(\prod_{k=1}^2 \left(\int_0^{L_1} e^{v_k\cdot V^1(x)}dx+ \int_0^{L_2} e^{v_k\cdot V^2(x)}dx\right)^{-1}\ind_{\m{A}_{m_1}^1\cap \m{A}_{m_2}^2\cap \m{B}_{m_1,i_1}^1 \cap \m{B}_{m_2,i_2}^{2}}\right).
		\end{equation*}
		Note that
		\begin{align}
		    \label{bound3}
            \mbox{r.h.s.~of \eqref{rbf}} \le \sum_{i_1=1}^2\sum_{i_2=1}^2\sum_{m_1=1}^M\sum_{m_2=1}^M G_{m_1,m_2,i_1,i_2}.
		\end{align}

        In the following lemma we collect several estimates for $G_{m_1,m_2,i_1,i_2}$.
		\begin{lemma}\label{gbond} 
			When  $m_1=M$ or $m_2=M$ we have
	\begin{align}\label{gammaine0}
				G_{m_1,m_2,i_1,i_2} \le Ce^{-L^\delta/C}.
			\end{align}
             When $i_1=2$ or $i_2=2$ and $m_1,m_2 \le M-1$ we get \begin{align}\label{gammaine2}
				G_{m_1,m_2,i_1,i_2} \le C L^{-\delta}L_1^{-3/4}L_2^{-3/4}.
			\end{align}
  When $i_1=i_2=1$ and $m_1,m_2 \le M-1$ we have 
    \begin{align}\label{gammaine3}
				G_{m_1,m_2,i_1,i_2} \le C |b|^{10} q_{m_1}^{10}q_{m_2}^{10} e^{-q_{m_1-1}\vee q_{m_2-1}} L_1^{-3/4}L_2^{-3/4}.
			\end{align}
		\end{lemma}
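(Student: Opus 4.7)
All three bounds rely on two ingredients. The first is a wedge-exit lower bound for the denominator
\[
S_k \;:=\; \int_0^{L_1} e^{v_k\cdot V^1(x)}\,dx + \int_0^{L_2} e^{v_k\cdot V^2(x)}\,dx.
\]
On $\m{A}_{m_j}^j$, continuity forces $\omega(V^j(\tau_{m_j}^j)) = q_{m_j-1}$, so there is a random $k_j^\ast \in \{1,2\}$ with $v_{k_j^\ast}\cdot V^j(\tau_{m_j}^j) = q_{m_j-1}$; a short-time Brownian fluctuation estimate on a window of length $q_{m_j}^{-2}$ around $\tau_{m_j}^j$ yields, with controllable conditional probability, $S_{k_j^\ast} \geq c\, q_{m_j}^{-2}\, e^{q_{m_j-1}/2}$. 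The second ingredient is the killed-kernel two-sided estimate \eqref{property} (together with its matching lower bound from the proof of Lemma \ref{lemma12}), which quantifies the wedge-confinement cost for the 2D Brownian bridge and underlies the $L_j^{-3/4}$ boundary-layer factor.

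\textbf{Proof of \eqref{gammaine0}.} Assume WLOG $m_1 = M$. Since $M = \lfloor \delta \log L \rfloor + 1$ and $q_m = 100^m - 1$, we have $q_{M-1} \geq L^{\delta \log 100} \geq L^{\delta}$. Splitting $\m{A}_M^1$ according to the value of $k_1^\ast$, the exit lower bound gives $S_{k_1^\ast}^{-1} \leq C q_M^2 e^{-q_{M-1}/2}$. For the complementary factor $S_{3-k_1^\ast}^{-1}$ we use the high-probability bound $S_k \geq L^{-O(1)}$ coming from the Gaussian estimate $\min_{x\in[0,1]} v_k \cdot V^j(x) \geq -C\log L$ valid with super-polynomial probability. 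Since $e^{-q_{M-1}/2} \leq e^{-L^\delta/2}$ dominates any polynomial loss, this produces the bound $C e^{-L^\delta/C}$.

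\textbf{Proofs of \eqref{gammaine2} and \eqref{gammaine3}.} For \eqref{gammaine3} (the core case $i_1 = i_2 = 1$) we may assume WLOG $q_{m_1-1} \geq q_{m_2-1}$, so it suffices to extract $e^{-q_{m_1-1}/2}$ from a single exit. Condition on the exit data $(\tau_{m_j}^j, V^j(\tau_{m_j}^j))_{j=1,2}$; by the strong Markov property each bridge $V^j$ decomposes into an inside piece on $[0,\tau_{m_j}^j]$ confined to $N_{q_{m_j-1}}$ and an outside piece on $[\tau_{m_j}^j, L_j]$ confined to $N_{q_{m_j}}$ with endpoint $\mathbf{w}_j$. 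Applying \eqref{property} to both pieces, integrating the exit position over the ray $\partial N_{q_{m_j-1}}$, and normalizing by the free heat kernel $p_{L_j}(\mathbf{0},\mathbf{w}_j)$ produces the wedge-confinement factor $L_j^{-3/4}$ from the outside piece, polynomial prefactors bounded by $q_{m_j}^{10}|b|^{10}$ from the ray integration and the normalization, and the exit-time exponential $e^{-q_{m_1-1}/2}$. For \eqref{gammaine2}, WLOG $i_2 = 2$, so $V^2$ remains in $N_{q_{m_2-1}}$ throughout $[0,L_2^{1-\beta}]$; the Markov property at $L_2^{1-\beta}$ combined with the wedge-confinement scaling $\Pr(\text{BM from origin stays in }N_q\text{ for time }t) \lesssim q^{3/2} t^{-3/4}$ introduces an additional factor $L_2^{-3\beta/4} \leq L^{-\delta}$ on top of the decomposition used for \eqref{gammaine3}.

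\textbf{Main obstacle.} The principal difficulty is the detailed bookkeeping in \eqref{gammaine3}: tracking polynomial dependence on $b$ and $q_{m_j}$ while combining two-sided killed-kernel estimates in the origin-to-bulk regime where one bridge endpoint lies on the wedge boundary and the other at $\mathbf{w}_j \sim b\sqrt{L_j}$. This extends the origin-to-origin analysis of \cite[\S 4]{yu2}, and in particular requires refined Bessel function asymptotics for \eqref{ineq100} when the radial coordinate is of order $\sqrt{L}$ and the angular coordinate is bounded away from the wedge boundary.
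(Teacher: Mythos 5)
Your high-level picture — extract the exit-time exponential $e^{-q_{m_j-1}}$ from the behavior near $\tau_{m_j}^j$, and extract the $L_j^{-3/4}$ from wedge confinement via the killed-kernel bound \eqref{property} — matches the paper's intuition, but two essential structural steps are missing, and without them the argument does not close.

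First, you never explain how to decouple the two constraints $S_1^{-1}$ and $S_2^{-1}$. Each $S_k$ depends on the full paths $V^1$ and $V^2$, so bounding the product by "one factor from the exit, one from confinement" is not a legitimate move as stated; there is a genuine coupling between $S_1$ and $S_2$. The paper resolves this by a Cauchy–Schwarz decomposition (Lemma \ref{GmQqmcomp}): write $Z_1Z_2 = (Z_1\vee Z_2)(Z_1\wedge Z_2)$, bound the max by the restriction to $[0,1]$ and the min by the restriction to the unit window near the exit time, then apply Cauchy–Schwarz to obtain $G^2 \le 2\,P\cdot\mathbb{E}[Q\,\ind]$, where $P$ carries the confinement cost over the entire interval and $Q$ carries the exit-time exponential plus a confinement cost over $[\tau,L]$ (or $[0,\tilde\tau]$). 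This is the mechanism that lets one factor see each constraint once; your "condition on exit data, decompose each bridge, integrate the exit position, multiply" outline lacks this decoupling.

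Second, and more seriously, when you "integrate the exit position over the ray $\partial N_{q_{m_j-1}}$" you are assuming that the polynomial factor $\sqrt{|V^j(\tau_{m_j}^j)|}$ coming from the killed-kernel bound integrates to something of order $q_{m_j}^{O(1)}|b|^{O(1)}$. A priori $|V^j(\tau_{m_j}^j)|$ can be of order $\sqrt{L}$, and without a quantitative tail bound on the exit location this integral picks up a positive power of $L$ that destroys the $L_j^{-3/4}$. Controlling this tail is precisely the content of Lemma \ref{vtau}, which the paper flags in Section~\ref{sec:1.3} as the main place where the present setting diverges from \cite{yu2}: because the second bridge endpoint lies in the bulk at $\mathbf{w}_j\sim b\sqrt{L_j}$ rather than at the origin, the conformal-map argument of \cite{yu2} for the exit-time tail does not apply directly, and a separate stopping-time construction ($\varsigma, \varsigma'$ and the time-changed planar Brownian motion $Y$) is needed. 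Your proposal treats this integration as a routine "ray integration" producing polynomial prefactors, but in fact it is the core technical obstacle, and the proof of \eqref{gammaine3} cannot be completed without an estimate of that type.

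Finally, for \eqref{gammaine0}, the statement that $S_{k_j^*} \ge c\,q_{m_j}^{-2}e^{q_{m_j-1}/2}$ holds "with controllable conditional probability" does not by itself bound the expectation: you would need to handle the complementary event as well, and the factor $S_{3-k_1^*}^{-1}$ is only controlled "with super-polynomial probability," again leaving a remainder. The paper's treatment of $m=M$ via the uniform bounds \eqref{bdf4}--\eqref{bdf5} (no kernel estimate needed, just the exponential and the boundedness of $|y_{*j}|/R_j$) is both rigorous and simpler. If you want a correct proof along your lines, you would have to replace the probabilistic statements by inequalities valid on the relevant event and integrate the remainders; as written the argument is incomplete.
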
		

        Plugging all the above estimates in \eqref{bound3} and noting that $M\le C\log L$,  we arrive at \eqref{twol1l2bd}.

	\subsection{Proof of Lemma \ref{gbond}}	\label{sec:5.3}
	The proof of the estimates in Lemma \ref{gbond} rest on the following three technical lemmas.

	\begin{lemma}\label{GmQqmcomp} There exists a constant $C>0$ such that for all $L_1,L_2\geq 100$, $i_1,i_2 \in \{1,2\}$, and $1\leq \ell,m \leq M$ we have
		\begin{align} \label{genine}
			G_{m_1,m_2,i_1,i_2}^2 \le 2 \cdot P_{q_{m_1},q_{m_2}}( \vec{w};\vec{L})\cdot \Ex\left[Q_{{m_1},{m_2},i_1,i_2}({x}_*\to {y}_*;\mathbf{R})\ind_{\m{B}_{m_1,i_1}^1 \cap \m{B}_{m_2,i_2}^{2}}\right].
		\end{align}
where ${x}_*= (\vec{x}_{*1}, \vec{x}_{*2})$, $y_*=(\vec{y}_{*1}, \vec{y}_{*2})$, $\mathbf{R}=(R_1,R_2)$ with
\begin{align}\label{xyrdef}
\big(\vec{x}_{*,j}, \vec{y}_{*j}, R_j\big)=\begin{cases}
    \big(V^j(\tau_{m_j}^j), \mathbf{w}_j, L_j-\tau_{m_j}^j\big)  & \mbox{ if } i_j=1 
    \\ \big(V^j(\til{\tau}_{m_j}^j), 0, \til{\tau}_{m_j}^j\big)  & \mbox{ if } i_j=2, 
\end{cases}
\end{align}
and
		\begin{align}\label{pdef}
			P_{q_{m_1},q_{m_2}}(\vec{w};\vec{L}):=
			\mathbb{E}_{\vec{0}\to \vec{w}}^{L_1,L_2} \left[\int_0^{1} e^{2\sqrt{2}\vert V^1(x)\vert}dx\wedge \int_0^{1} e^{2\sqrt{2}\vert V^2(x)\vert}dx.;\m{E}_{q_{m_1},0,L_1}^1\m{E}_{q_{m_2},0,L_2}^2\right],
		\end{align}
		\begin{align}
		    \label{qmdef}
            Q_{{m_1},{m_2},i_1,i_2}({x}_*\to {y}_*;\mathbf{R}) := \mathbb{E}_{{x}_*\to {y}_*}^{R_1,R_2} \left[F_{i_1}^1\wedge F_{i_2}^2;\m{E}_{q_{m_1},0,R_1}^1\m{E}_{q_{m_2},0,R_2}^2\right], 
		\end{align}
		and
        \begin{align*}
            F_{i_j}^j:=\exp(-q_{m_j-1})\int_0^{1} e^{2\sqrt{2}\vert V^j(x)-\vec{x}_{*j}\vert}dx.
        \end{align*}
	\end{lemma}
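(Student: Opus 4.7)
The proof combines a deterministic pointwise Cauchy--Schwarz inequality for $\prod_{k=1}^{2}(a_k+b_k)^{-1}$, where $a_k:=\int_0^{L_1}e^{v_k\cdot V^{1}(x)}dx$ and $b_k:=\int_0^{L_2}e^{v_k\cdot V^{2}(x)}dx$, with the strong Markov property of Brownian bridges at the stopping times $\tau_{m_j}^{j}$ (or $\widetilde{\tau}_{m_j}^{j}$ when $i_j=2$). The key deterministic ingredient will be the pointwise inequality
\begin{equation*}
\prod_{k=1}^{2}\left(\int_0^1 e^{v_k\cdot W(s)}\,ds\right)^{-1}\le \int_0^1 e^{2\sqrt{2}|W(s)|}\,ds,
\end{equation*}
valid for any continuous $W:[0,1]\to \mathbb{R}^{2}$. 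This follows from two applications of Cauchy--Schwarz: since $|v_k|=\sqrt{2}$, first $(\int_0^1 e^{v_k\cdot W})^{-1}\le \int_0^1 e^{-v_k\cdot W}\le \int_0^1 e^{\sqrt{2}|W|}$, and then $(\int_0^1 e^{\sqrt{2}|W|})^{2}\le \int_0^1 e^{2\sqrt{2}|W|}$.

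Applying this inequality with two different lower bounds on $a_k+b_k$ will yield two pointwise upper bounds on $f:=\prod_{k=1}^{2}(a_k+b_k)^{-1}$. The \emph{initial-segment bound}, which uses $a_k+b_k\ge \int_0^1 e^{v_k\cdot V^{j}(x)}\,dx$ for each $j\in\{1,2\}$, produces
\begin{equation*}
 f\le \int_0^1 e^{2\sqrt{2}|V^{1}(s)|}\,ds\;\wedge\;\int_0^1 e^{2\sqrt{2}|V^{2}(s)|}\,ds,
\end{equation*}
which is exactly the integrand of $P_{q_{m_1},q_{m_2}}(\vec w;\vec L)$. The \emph{stopping-time-segment bound}, which uses $a_k+b_k\ge \int_{\tau_{m_j}^{j}}^{\tau_{m_j}^{j}+1}e^{v_k\cdot V^{j}(x)}\,dx$ for $i_j=1$ (and an analogous integral over $[\widetilde{\tau}_{m_j}^{j}-1,\widetilde{\tau}_{m_j}^{j}]$ for $i_j=2$), after the change of variables $s=x-\tau_{m_j}^{j}$ and using the boundary relation $\omega(\vec x_{*j})=q_{m_j-1}$ to absorb the exponential prefactor, produces a bound matching the integrand $F_{i_1}^{1}\wedge F_{i_2}^{2}$ of $Q_{m_1,m_2,i_1,i_2}$.

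Taking the geometric mean of the two pointwise bounds (via $\min(x,y)\le \sqrt{xy}$) and applying Cauchy--Schwarz to the outer expectation then gives
\begin{equation*}
G_{m_1,m_2,i_1,i_2}^{2}\le 2\cdot \mathbb{E}\big[\Phi_P\,\ind_{\mathcal{E}_P}\big]\cdot \mathbb{E}\big[\Phi_Q\,\ind_{\mathcal{E}_Q}\big],
\end{equation*}
where $\Phi_P,\Phi_Q$ denote the integrands in $P$ and $Q$ and $\mathcal{E}_P=\m{E}_{q_{m_1},0,L_1}^{1}\cap\m{E}_{q_{m_2},0,L_2}^{2}\supset \m{A}_{m_1}^{1}\cap\m{A}_{m_2}^{2}$. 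The first factor equals $P_{q_{m_1},q_{m_2}}(\vec w;\vec L)$ by construction. For the second, I will apply the strong Markov property at each $\tau_{m_j}^{j}$ (using time-reversibility of Brownian bridges when $i_j=2$): conditional on $(\tau_{m_j}^{j},\vec x_{*j})$, the piece of $V^{j}$ after the stopping time is an independent Brownian bridge from $\vec x_{*j}$ to $\mathbf{w}_j$ of length $R_j$, and integrating out this piece identifies the second factor with $\mathbb{E}\big[Q_{m_1,m_2,i_1,i_2}(x_*\to y_*;\mathbf{R})\,\ind_{\m{B}_{m_1,i_1}^{1}\cap \m{B}_{m_2,i_2}^{2}}\big]$.

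The main technical obstacle will be extracting the prefactor $e^{-q_{m_j-1}}$ in the stopping-time bound. A naive computation after applying the key inequality yields the prefactor $e^{-(v_1+v_2)\cdot\vec x_{*j}}$, which agrees with $e^{-q_{m_j-1}}$ only at a specific point on $\partial N_{q_{m_j-1}}$; to handle the remaining degree of freedom uniformly, one must choose the lower-bound direction adaptively using only the \emph{active} constraint $v_{k^{*}_j}\cdot\vec x_{*j}=q_{m_j-1}$, and then combine bounds from both bridges $V^{1}$ and $V^{2}$ to cover both $k\in\{1,2\}$. The case analysis over $k^{*}_j\in\{1,2\}$ is presumably responsible for the factor of $2$ in the stated inequality.
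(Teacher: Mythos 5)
Your overall skeleton (Jensen/Cauchy--Schwarz plus strong Markov property) is on the right track, and your treatment of the $P$-factor is fine, but the stopping-time bound has a gap you identify but do not resolve, and the fix you sketch does not close it.

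The ``key inequality'' $\prod_{k=1}^2\bigl(\int_0^1 e^{v_k\cdot W}\bigr)^{-1}\le \int_0^1 e^{2\sqrt{2}|W|}$ multiplies \emph{both} directions $v_1,v_2$ together. Centering $W=V^j-\vec x_{*j}$ on the stopping-time interval $I_j$, this produces the prefactor $e^{-(v_1+v_2)\cdot\vec x_{*j}}$. At $\vec x_{*j}=V^j(\tau_{m_j}^j)\in\partial N_{q_{m_j-1}}$ the only information you have is $v_{k_j^*}\cdot\vec x_{*j}=q_{m_j-1}$ for the active direction $k_j^*$ and $v_{k'}\cdot\vec x_{*j}\le q_{m_j-1}$ for the other direction $k'$; the latter inner product can be arbitrarily negative, since the wedge boundary is unbounded. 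Hence $e^{-(v_1+v_2)\cdot\vec x_{*j}}=e^{-q_{m_j-1}}\cdot e^{-v_{k'}\cdot\vec x_{*j}}$ can be arbitrarily large, and the claimed pointwise bound $f\le F_{i_1}^1\wedge F_{i_2}^2$ fails. Your proposed repair---choosing a segment $I_{j(k)}$ for each $k$ so that $k=k_{j(k)}^*$---requires the map $j\mapsto k_j^*$ to be a bijection. When $k_1^*=k_2^*$ (which happens on a set of positive probability), one of the two directions is active for neither bridge, and no choice of segment gives a bounded prefactor for that factor of $(a_k+b_k)^{-1}$. Your suggestion of ``combining bounds from both bridges'' does not supply a lower bound for the non-active direction that has the required exponential prefactor, so the pointwise inequality needed for the geometric-mean step is not established.

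What makes the paper's proof work is the factorization $Z_1Z_2=(Z_1\vee Z_2)(Z_1\wedge Z_2)$ \emph{before} Cauchy--Schwarz, giving $G^2\le \Ex[(Z_1'\vee Z_2')^2\ind_{\m S}]\cdot\Ex[(Z_1''\wedge Z_2'')^2\ind_{\m S}]$. The decisive advantage is that the second factor involves the \emph{minimum} over $k$: since $Z_1''\wedge Z_2''\le Z_{k_j^*}''\le\bigl(\int_{I_j}e^{v_{k_j^*}\cdot V^j}\bigr)^{-1}$ for each $j$, the bound only ever uses the single active direction per bridge, for which the boundary identity $v_{k_j^*}\cdot V^j(\tau_{m_j}^j)=q_{m_j-1}$ gives the prefactor exactly. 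No product over both $v_k$'s is taken on the stopping-time segment, so there is no uncontrolled $v_{k'}\cdot\vec x_{*j}$ term. Separately, the factor $2$ in \eqref{genine} comes from bounding the squared maximum by the sum, $(Z_1'\vee Z_2')^2\le Z_1'^2+Z_2'^2$, in the $P$-factor, not from the case analysis over $k_j^*$ as you conjecture.
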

	
	\begin{lemma}\label{Qnbound1} There is a constant $C>0$ such that for all  $i_1,i_2 \le 2$ and $m_1,m_2 \le M-1$ we have
		\begin{equation}
		    \label{bdf2}
            \begin{aligned}
			Q_{m_1,m_2,i_1,i_2}({x}_*\to {y}_*;\mathbf{R}) & \le C b^5 \cdot e^{-{q_{m_1-1}\vee q_{m_2-1}}} \cdot S_{1}S_2
		\end{aligned}
		\end{equation}   
        where
        \begin{align}\label{sjdef}
            S_i:=& \frac{(y_{*i}+q_{m_i})^{3/2}}{R_i^{3/2}}\bigg[\sqrt{|x_{*i}|}+\sqrt{q_{m_i}}\bigg]e^{C\frac{(q_{m_i}+|y_{*i}|)(q_{m_i}+|x_{*i}|)}{R_i}},
        \end{align}
        and
		\begin{align}\label{bdf3}
			P_{q_{m_1},q_{m_2}}( \vec{w};\vec{L}) \le C b^5\cdot \prod_{i=1}^{2} \frac{|\mathbf{w}_i|^{3/2}q_{m_i}^{1/2}}{L_i^{3/2}}e^{C\frac{q_{m_i}(q_{m_i}+|\mathbf{w}_{i}|)}{L_i}} \le Cb^{10}q_{m_1}q_{m_2}L_1^{-3/4}L_2^{-3/4}.
		\end{align}
    For all $i_1,i_2 \le 2$ and $1\le m_1,m_2 \le M$ we have
    \begin{align}\label{bdf4}
			Q_{m_1,m_2,i_1,i_2}(x_*\to y_*;\mathbf{R}) & \le C e^{-{q_{m_1-1}\vee q_{m_2-1}}} \prod_{j=1}^2 e^{C|\vec{x}_{*j}|/R_j+C|\vec{y}_{*j}|/R_j},
		\end{align}
        and
		\begin{align}\label{bdf5}
			P_{q_{m_1},q_{m_2}}( \vec{w};\vec{L}) \le C.
		\end{align}
	\end{lemma}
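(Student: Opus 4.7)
The plan is to reduce each bound to a product of single-bridge estimates via the independence of $V^1$ and $V^2$, and then invoke the wedge heat-kernel estimate \eqref{property} (Lemma 4.4 of \cite{yu2}) after the translation $p_x^{N_a}(\cdot,\cdot) = p_x^{N_0}(\cdot - ah,\cdot - ah)$ used in Section \ref{sec:4.2.2}. For the bound on $Q$ I would use $F^1 \wedge F^2 \le F^j$, where $j$ is chosen so that $q_{m_j-1}$ is the \emph{larger} of the two killing thresholds; by independence this gives
\[
Q_{m_1,m_2,i_1,i_2} \le \Ex\big[F^j \ind_{\m{E}^j}\big]\cdot \Pr(\m{E}^{3-j}),
\]
and extracting the constant $e^{-q_{m_j-1}}$ from $F^j$ yields the $e^{-(q_{m_1-1} \vee q_{m_2-1})}$ factor in \eqref{bdf2}. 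The trivial bounds \eqref{bdf4}--\eqref{bdf5} then follow by dropping $\ind_{\m{E}^j}$ and estimating unconditional Brownian-bridge moments: on $x\in[0,1]$, $V^j(x)-\vec{x}_{*j}$ has mean $(x/R_j)(\vec{y}_{*j}-\vec{x}_{*j})$ and sub-Gaussian tails, so $\Ex[e^{2\sqrt 2|V^j(x)-\vec x_{*j}|}] \le C e^{C(|\vec x_{*j}|+|\vec y_{*j}|)/R_j}$, and an analogous estimate holds for $f_j$.

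For the sharp bounds \eqref{bdf2}--\eqref{bdf3}, I would rewrite each constrained expectation via the killed kernel $p_{R_j}^{N_{q_{m_j}}}$ (resp.\ $p_{L_j}^{N_{q_{m_j}}}$), translate it to $N_0$, and apply \eqref{property}. The critical gains come from (i) the angular factor $\prod_k (\pi/3 - |\theta_k|)$ and (ii) careful cancellation between the quadratic $\sqrt{2}r_1r_2 - r_1^2 - r_2^2$ in the wedge kernel and the term $|\mathbf w_j|^2/(2L_j)$ from the normalizing kernel $p_{L_j}(\mathbf w_j)$. For $P$, both shifted endpoints lie on the positive $x$-axis, so the angular contribution is merely a constant, but an explicit computation shows the net Gaussian exponent is strictly negative of the form $-c(q_{m_j}|\mathbf w_j|+q_{m_j}^2)/L_j$; trading one power of $q_{m_j}$ for the factor $L_j/|\mathbf w_j|$ via $xe^{-cx}\le 1/(ce)$ then produces the $q_{m_j}^{1/2}|\mathbf w_j|^{3/2}/L_j^{3/2}$ scaling. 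For $Q$, the shifted starting point lies on the boundary of the smaller wedge $N_{q_{m_j-1}-q_{m_j}}\subset N_0$, whose apex sits on the positive $x$-axis at distance $(q_{m_j}-q_{m_j-1})\sqrt{2/3}$ from the origin; a polar-coordinate computation gives $(\pi/3 - |\theta_1|) \le C q_{m_j}/r_1$, which combined with the factor $r_1^{3/2}$ in \eqref{property} produces the $\sqrt{|\vec x_{*j}|}+\sqrt{q_{m_j}}$ factor of $S_j$ (up to an acceptable polynomial prefactor in $q_{m_j}$).

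To finish \eqref{bdf2}, I would write $\Ex[F^j \ind_{\m{E}^j}] = e^{-q_{m_j-1}}\int_0^1 \Ex[e^{2\sqrt 2|V^j(x)-\vec x_{*j}|}\ind_{\m{E}^j}]\,dx$, express each time-$x$ integrand as a ratio involving $p_x^{N_{q_{m_j}}}(\vec x_{*j},v)\cdot p_{R_j-x}^{N_{q_{m_j}}}(v,\vec y_{*j})/p_{R_j}(\vec y_{*j}-\vec x_{*j})$, and bound it by $C \Pr(\m{E}^j)$ using that $|v-\vec x_{*j}|=O(1)$ with Gaussian tails for $x\in[0,1]$ even under the wedge conditioning. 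Multiplying by the bound on $\Pr(\m{E}^{3-j})$ then delivers \eqref{bdf2}; the analogous argument for $P$, replacing $F^j$ by $\sqrt{f_j}$ and invoking Cauchy--Schwarz to get $\Ex[\sqrt{f_j}\ind_{\m{E}^j}] \le \sqrt{\Ex[f_j \ind_{\m{E}^j}]\cdot \Pr(\m{E}^j)}$, gives \eqref{bdf3}.

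The main technical obstacle is the careful bookkeeping in the last two stages: one must simultaneously use the Bessel-function asymptotics underlying \eqref{property}, the angular vanishing $(\pi/3-|\theta_k|)$ near the wedge boundary, and the Gaussian cancellation from the normalizing free kernel, all while ensuring the positive exponential $e^{Cq_{m_j}(q_{m_j}+|\mathbf w_j|)/L_j}$ in the target bound absorbs any positive Gaussian contributions and remains a bounded multiplicative constant in the relevant regime ($q_{m_j}\le L^{100\delta}$, $b\le \log L$, $L_j\ge L^{1/4}$). The degenerate sub-cases $\vec y_{*j}=0$ (when $i_j=2$) and $\vec x_{*j}$ near the apex of the shifted wedge require minor modifications of the polar-coordinate estimate but present no new essential difficulty.
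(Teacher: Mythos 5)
Your overall strategy matches the paper's: bound $F_{i_1}^1\wedge F_{i_2}^2$ by $F_{i_j}^j$ with $j$ chosen so that $q_{m_j-1}$ is maximal (producing $e^{-q_{m_1-1}\vee q_{m_2-1}}$), isolate wedge-staying probabilities by the Markov property, and invoke the translated wedge heat-kernel estimate (Lemma 4.4 of \cite{yu2}, equivalently Lemma~\ref{l4.11equiv}) together with the angular bound $(\pi/3-|\theta_k|)\lesssim |q-\omega(\mathbf{w}_k)|/|\mathbf{w}_k-qh|$. The principal structural difference is the order of decomposition: you factor $Q\le \Ex[F^j\ind_{\m{E}^j}]\cdot\Pr(\m{E}^{3-j})$ by independence and then estimate each factor, whereas the paper conditions on $V^j(1)$ for both $j$ simultaneously and handles the resulting double integral over $(z_1,z_2)$ via~\eqref{qbg}. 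The two are equivalent, and your Cauchy--Schwarz reduction $\Ex[\sqrt{f_j}\ind_{\m{E}^j}]\le\sqrt{\Ex[f_j\ind_{\m{E}^j}]\,\Pr(\m{E}^j)}$ for $P$ is a legitimate alternative to the paper's ``similar and easier'' treatment. The extra polynomial factors in $q_{m_j}$ your angular/polar computation produces (beyond the exponents written in $S_j$ and in~\eqref{bdf3}) are harmless since they are absorbed by the prefactor $e^{-q_{m_j-1}}$ in the downstream summation over $m$, exactly as in \eqref{gammaine3}.

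One point where your outline is genuinely imprecise: you assert $\Ex\!\left[e^{2\sqrt 2|V^j(x)-\vec x_{*j}|}\ind_{\m{E}^j}\right]\le C\,\Pr(\m{E}^j)$ ``using that $|v-\vec x_{*j}|=O(1)$ with Gaussian tails for $x\in[0,1]$ even under the wedge conditioning.'' As written, this requires a uniform-in-$x$ moment bound for the time-$x$ marginal of a bridge conditioned to stay in $N_{q_{m_j}}$ on the \emph{entire} interval $[0,R_j]$. That is plausible but not automatic, and naive dominations such as $p_x^{N}\le p_x$ destroy the wedge decay you need from $p_{R_j-x}^{N}$. The paper sidesteps this entirely by first relaxing $\ind_{\m{E}_{q_{m_j},0,R_j}^j}\le\ind_{\m{E}_{q_{m_j},1,R_j}^j}$ (dropping the wedge constraint on $[0,1]$) before conditioning on $V^j(1)$, so that the $[0,1]$ contribution becomes an \emph{unconditioned} Brownian-bridge exponential moment and the wedge estimate is applied only on $[1,R_j]$. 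You should adopt that relaxation; with it, your plan goes through and is essentially the paper's proof.
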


	\begin{lemma}\label{vtau} Fix $m\le M-1$. Fix any $\delta>0$. For all $\xi>0$ with $\xi \le L^{1/4-\delta/2}$ we have
		\begin{align*}
			\Pr\Big( |V^j(\tau_{m}^j)| \ge \xi^2+b, \tau_{m}^j \le L_j^{1-\beta}\Big) \le e^{-\xi^{2-4\delta}/C}+ C\frac{(q_{m}+b)^{3/2}}{\xi^{1+2\delta}}.
		\end{align*}
	\end{lemma}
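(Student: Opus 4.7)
The plan is to first reduce the bridge probability to a 2D Brownian motion computation and then combine a polynomial bound (from a conformal-map exit distribution) with a Gaussian supremum estimate. Since $L_j^{1-\beta} \le L_j/2$ for $L$ sufficiently large, the Cameron--Martin theorem yields an absolute-continuity relation between the law of $V^j$ (bridge from $0$ to $\mathbf{w}_j$) and that of a 2D Brownian motion with constant drift $\mu := \mathbf{w}_j/L_j$, with Radon--Nikodym derivative bounded by $2$. A further Girsanov change of measure then gives
\[
\Pr_{BB}(\text{event}) \;\le\; 2\,\Ex_{BM}\!\left[\exp\!\bigl(\mu\cdot V^j(\tau_m^j) - \tfrac12|\mu|^2\tau_m^j\bigr)\,\ind_{\text{event}}\right].
\]
Because $|\mu|^2 L_j^{1-\beta} = O(b^2 L_j^{-\beta})$ is negligible for $b\le\log L$, and because on the boundary of $N_{q_{m-1}}$ the $x$-coordinate is $\approx|V^j(\tau_m^j)|/2$, the key weight reduces to $e^{|\mu||V^j(\tau_m^j)|/2+O(|\mu|q_{m-1})}$.

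Next, introduce a truncation radius $R_\star$ and split the event into $\{\xi^2+b\le |V^j(\tau_m^j)|\le R_\star\}$ and $\{|V^j(\tau_m^j)|>R_\star\}$. On the inner range, the conformal map $z\mapsto z^{3/2}$ sends the wedge $N_{q_{m-1}}$ (opening angle $2\pi/3$) to a right half-plane, so that for Brownian motion started at the origin (which lies at distance $O(q_{m-1})$ from the wedge's vertex), the exit-location distribution is a pulled-back Cauchy law, giving the tail $\Pr_{BM}(|V^j(\tau_m^j)|\ge r)\le C q_{m-1}^{3/2}/r^{3/2}$, as in the analysis of Lemma~\ref{lemma12} and \cite{yu2}. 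Combined with the Girsanov factor, this contributes the polynomial term of order $(q_m+b)^{3/2}/\xi^{1+2\delta}$, the weakening from $\xi^{3}$ to $\xi^{1+2\delta}$ being the price of absorbing the Girsanov exponential. For the outer range $|V^j(\tau_m^j)|>R_\star$, the constraint $\tau_m^j\le L_j^{1-\beta}$ forces $\sup_{t\le L_j^{1-\beta}}|V^j(t)|>R_\star$ under $\Pr_{BMD}$; writing $V^j(t)=B(t)+\mu t$ and using $\Pr_{BM}(\sup_{t\le T}|B(t)|\ge s)\le Ce^{-s^2/(8T)}$ for 2D BM gives the Gaussian decay, matching $e^{-\xi^{2-4\delta}/C}$ upon choosing $R_\star\sim\xi^{1-2\delta}\sqrt{L_j^{1-\beta}}$.

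The main obstacle is the calibration: with $R_\star\sim\xi^{1-2\delta}\sqrt{L_j^{1-\beta}}$, the Girsanov exponential $e^{|\mu|R_\star}$ is not uniformly bounded, which threatens the polynomial bound. This is handled by subdividing the inner radial range into geometric scales $r_k:=2^k(\xi^2+b)$: on each scale the Girsanov factor $e^{|\mu|r_k/2}$ pairs with $\Pr_{BM}(|V^j(\tau_m^j)|\ge r_k)\le Cq_{m-1}^{3/2}/r_k^{3/2}$, and the resulting geometric series sum is controlled. The drift direction $\mu$ being aligned with the wedge's axis of symmetry is used here to halve the Girsanov exponent, which together with careful bookkeeping of the $b$-shift (which ultimately produces the $(q_m+b)^{3/2}$ rather than $q_{m-1}^{3/2}$ prefactor) is the delicate combinatorial step of the argument.
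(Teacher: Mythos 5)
Your first step is correct: a direct computation shows that the Radon--Nikodym derivative of the bridge restricted to $[0,T]$ with respect to the drifted Brownian motion $B+\mu t$, $\mu=\mathbf{w}_j/L_j$, equals $\frac{L_j}{L_j-T}\exp(-\mathrm{(nonneg.)}/(2(L_j-T)))\le 2$ for $T\le L_j/2$. However, the ``geometric series sum is controlled'' claim is where the argument breaks. After Girsanov the $k$-th shell contributes roughly
\[
\exp\!\Big(\tfrac12|\mu|\,r_{k+1}\Big)\cdot \frac{q_{m}^{3/2}}{r_k^{3/2}},
\]
so the ratio of consecutive terms is $2^{-3/2}e^{|\mu| r_k/2}$, which exceeds $1$ as soon as $r_k\gtrsim \sqrt{L_j}/b$. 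But the outer Gaussian bound forces the cutoff $R_\star\sim \xi^{1-2\delta}\sqrt{L_j^{1-\beta}}$, and the two constraints are incompatible precisely when $\xi$ is near its maximal value: at $\xi\sim L^{1/4-\delta/2}$, $b\sim\log L$, $L_j\sim L^{1/4}$ one has
\[
|\mu| R_\star/2 \sim b\,\xi^{1-2\delta} L_j^{-\beta/2} \sim (\log L)\, L^{\frac14-\frac{\beta}{8}+O(\delta)},
\]
so the last term of your ``controlled'' series carries a factor $e^{|\mu|R_\star/2}$ of super-polynomial size that the prefactor $q_m^{3/2}/R_\star^{3/2}\sim L^{-\Theta(1)}$ cannot compensate. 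Shrinking $R_\star$ to $\sqrt{L_j}/b$ to keep the series summable then makes the outer Gaussian term $\Pr(\sup_{t\le T}|B(t)|\ge R_\star)\sim e^{-L_j^{\beta}/(Cb^2)}$ far too weak against the same exponential. In short, the Girsanov weight $e^{|\mu|\cdot|V(\tau)|/2}$ is incompatible with the polynomial ($r^{-3/2}$) exit tail once $|V(\tau)|\gg \sqrt{L_j}/b$, and this regime is unavoidable for the $\xi$-range of the lemma.

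The paper's proof avoids this obstruction by not introducing any drift. It replaces $\tau(q/2)$ (on the event $\tau(q/2)\le L^{1-\beta}$) by the modified stopping times $\varsigma(q)$ and $\varsigma'(q)$, defined through $\frac{V(x)}{1-x/L}$ and $\frac{V(x)}{1-x/L}-R(x)$, and observes that $\frac{\hat V(x)}{1-x/L}$ is exactly a time change of the auxiliary \emph{driftless} planar Brownian motion $Y(s)=\frac{L+s}{L}\hat V\big(\tfrac{Ls}{L+s}\big)$, where $\hat V(x)=V(x)-\tfrac{x}{L}\mathbf{w}$ is the centered bridge. The conformal map $z\mapsto z^{3/2}$ then applies directly to $Y$ with no Girsanov correction, yielding the clean bound $\Pr(\kappa\ge r)\le C((q+3b)|h|)^{3/2}/\sqrt r$; the two pieces of the decomposition ($|R(\varsigma')|\ge b$ vs.\ $\sup_{x\le\varsigma'}|V(x)|\ge\xi^2+b$) then give, respectively, the polynomial and Gaussian terms. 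If you wish to repair your argument, the essential missing idea is this time-inversion/centering trick that linearizes the bridge into an honest Brownian motion rather than into a drifted one.
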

	
	Lemma \ref{GmQqmcomp} uses basic inequalities and Markov property of the Brownian motion -- we shall prove it in a moment. Lemmas \ref{Qnbound1} and \ref{vtau} relies on some explicit probability computation related to Brownian bridges. We defer the proof of these two lemmas in Sections \ref{sec:5.3} and \ref{sec:5.4} respectively. Let us assume these three lemmas and prove the bounds in Lemma \ref{gbond}. 
	\begin{proof}[Proof of Lemma \ref{gbond}] For clarity we split the proof into two steps.
    
    \medskip

    \noindent\textbf{Step 1.} In this step we prove \eqref{gammaine0}.
    Recall $x_*, y_*, \mathbf{R}$ from \eqref{xyrdef}. Note that
    \begin{align*}
        |\vec{x}_{*j}| \le \sup_{x\in [0,L_j]} |V^j(x)|.
    \end{align*}
    Note that if $i_j=1$, on $\m{B}_{m_j,1}^j$ we have $\tau_{m_j}^j \le L_j^{1-\beta}$. So, $R_j = L_j-\tau_{m_j}^j \ge L_j/2$. If $i_j=2$, on $\m{B}_{m_j,2}^j$ we have $\til{\tau}_{m_j}^j \ge \tau_{m_j}^j \ge L_j^{1-\beta}$. So, $R_j = \til\tau_{m_j}^j \ge L_j^{1-\beta}$. Thus, on $\m{B}_{m_j,i_j}^j$ we have $R_j \ge L_j^{1-\beta}$. Since $|\vec{y}_{*j}| \le Cb\sqrt{L_j}$, we thus see that $|\vec{y}_{*j}|/R_j \le 1$ for large enough $L_j$ (i.e., large enough $L$). Using these observations, we deduce from \eqref{bdf4} that when $m_1=M$ or $m_2=M$ we have
    \begin{align*}
        Q_{m_1,m_2,i_1,i_2}(x_*\to y_*;\mathbf{R})\ind_{\m{B}_{m_1,i_1}^1\cap \m{B}_{m_2,i_2}^2} \le C e^{-q_{M-1}}\exp\bigg(C\sum_{j=1}^2 \sup_{x\in [0,L_j]} |V^j(x)|/L_j^{1-\beta}\bigg)
    \end{align*}
   By Gaussian tail estimates, we have $\Ex\bigg[\exp\bigg(C\sum_{i=1}^2 \sup_{x\in [0,L_i]} |V^i(x)|/L_i^{1-\beta}\bigg)\bigg] < C$. As $q_{M-1} \ge L^{\delta}/C$, we get that
   \begin{align*}
       \Ex\left[Q_{m_1,m_2,i_1,i_2}(x_*\to y_*;\mathbf{R})\ind_{\m{B}_{m_1,i_1}^1\cap \m{B}_{m_2,i_2}^2}\right] \le Ce^{-L^{\delta}/C}.
   \end{align*}
   Combining this with the uniform bound from \eqref{bdf5}, in view of \eqref{genine}, we arrive at the desired estimate in \eqref{gammaine0}.
   
    \medskip

    \noindent\textbf{Step 2.} In this step we prove \eqref{gammaine2} and \eqref{gammaine3}. Assume $m_1, m_2 \le M-1$. Recall $S_j$ from \eqref{sjdef}.  We claim that
\begin{align}\label{sjij1}
    & \Ex\left[S_j\ind_{\m{B}_{m_j,1}^j}\right] \le Cq_{m_j}^4b^3 L_j^{-3/4} \mbox{ when } i_j=1, \\ \label{sjij2}
   & \Ex\left[S_j\ind_{\m{B}_{m_j,2}^j}\right] \le Cq_{m_j}^{3/2}b L_j^{-3/4}\cdot L_j^{-1/2+3\beta/2} \mbox{ when } i_j=2.
\end{align}
Assuming the above claim,
In view of the bound in \eqref{bdf2}, we arrive at the following estimates:
\begin{align*}
 &    \Ex\left[Q_{{m_1},{m_2},1,1}({x}_*\to {y}_*;\mathbf{R})\ind_{\m{B}_{m_1,1}^1 \cap \m{B}_{m_2,1}^{2}}\right]\le Cb^{10} e^{-q_{m_1-1}\vee q_{m_2-1}}q_{m_1}^4q_{m_2}^4L_1^{-3/4}L_2^{-3/4}.
     \\ & \Ex\left[Q_{{m_1},{m_2},1,2}({x}_*\to {y}_*;\mathbf{R})\ind_{\m{B}_{m_1,1}^1 \cap \m{B}_{m_2,2}^{2}}\right] \le C L^{-\beta}\cdot L_1^{-3/4}L_2^{-3/4}, \\ & \Ex\left[Q_{{m_1},{m_2},2,1}({x}_*\to {y}_*;\mathbf{R})\ind_{\m{B}_{m_1,2}^1 \cap \m{B}_{m_2,1}^{2}}\right] \le C L^{-\beta}\cdot L_1^{-3/4}L_2^{-3/4}, \\ & \Ex\left[Q_{{m_1},{m_2},2,2}({x}_*\to {y}_*;\mathbf{R})\ind_{\m{B}_{m_1,2}^1 \cap \m{B}_{m_2,2}^{2}}\right] \le C L^{-\beta}\cdot L_1^{-3/4}L_2^{-3/4}.
\end{align*}
Combining this with the estimates from \eqref{bdf3}, in view of the bound \eqref{genine}, verifies \eqref{gammaine2} and \eqref{gammaine3}. We are thus left to show \eqref{sjij1} and \eqref{sjij2}. Recall $S_j$ from \eqref{sjdef} again. Let us suppose $i_j=1$. we have $\vec{y}_{*j}=\mathbf{w}_j=(b\sqrt{2L_j/3},0)$ and on $\m{B}_{m_j,1}^j$ we have $R_j \ge L_j/2$. Thus when $i_j=1$, we have 
 \begin{equation}\label{sj1bd0}
			S_j\ind_{\m{B}_{m_j,1}^j}  \le    Cb^{3/2}L_j^{-3/4}\bigg[\sqrt{|V^j(\tau_{m_j}^j)|}+\sqrt{q_{m_i}}\bigg]e^{Cb\frac{(q_{m_j}+|V^j(\tau_{m_j}^j)|)}{\sqrt{L_j}}}\ind_{\m{B}_{m_j,1}^j}.
		\end{equation} 
Observe that
\begin{equation}\label{sj1bd1}
    \begin{aligned}
& \Ex\left[\bigg[\sqrt{|V^j(\tau_{m_j}^j)|}+\sqrt{q_{m_i}}\bigg]e^{Cb\frac{(q_{m_j}+|V^j(\tau_{m_j}^j)|)}{\sqrt{L_j}}} \ind_{|V^j(\tau_{m_j}^j)|\le L_j^{1/2-2\delta}}\ind_{\m{B}_{m_j,1}^j}\right] \\ & \le C\sqrt{q_{m_j}}+C\cdot \Ex\left[\sqrt{|V^j(\tau_{m_j}^j)|}\ind_{|V^j(\tau_{m_j}^j)|\le L_j^{1/2-2\delta}}\ind_{\m{B}_{m_j,1}^j}\right] \\ & \le C\sqrt{q_{m_j}} +Cb+\int_0^{L_j^{\frac14-\delta}} \Pr\Big(|V^j(\tau_{m_j}^j)| \ge \xi^2+b, \tau_{m_j}^j \le L_j^{1-\beta}\Big)d\xi \le Cq_{m_j}^4b,
\end{aligned}
\end{equation}
where in the last line we use Lemma \ref{vtau}.
Now On $\ind_{\m{B}_{m_j,1}^j}$ $|x_{*j}| \le \sup_{x\in [0,L_j^{1-\beta}]} |V^j(x)|:=L_j^{(1-\beta)/2}|b|X_\beta^j$ where $X_\beta^j$ has Gaussian tails. Thus,

\begin{equation}\label{sj1bd2}
    \begin{aligned}
    & \Ex\left[\bigg[\sqrt{|V^j(\tau_{m_j}^j)|}+\sqrt{q_{m_i}}\bigg]e^{Cb\frac{(q_{m_j}+|V^j(\tau_{m_j}^j)|)}{\sqrt{L_j}}} \ind_{|V^j(\tau_{m_j}^j)|\ge L_j^{1/2-2\delta}}\ind_{\m{B}_{m_j,1}^j}\right] \\ & \le L_j^{(1-\beta)/4}\Ex\left[\bigg(\sqrt{X_\beta^j}+1\bigg)e^{CbL^{-\beta/2}X_{\beta}^j}\ind_{X_\beta^j \ge L^{\beta/2-2\delta}}\right] \\ & \le L_j^{(1-\beta)/4}\Ex\left[\bigg(\sqrt{X_\beta^j}+1\bigg)^2e^{2CbL^{-\beta/2}X_{\beta}^j}\right]\Pr\bigg(X_\beta^j \ge L^{\beta/2-2\delta}\bigg) \\ & \le CL_j^{(1-\beta)/4} e^{-L_j^{\beta-4\delta}/C} \le Cq_{m_j}^4b.
\end{aligned}
\end{equation}
Combining \eqref{sj1bd1} and \eqref{sj1bd2}, in view of \eqref{sj1bd0} we arrive at \eqref{sjij1}.

\smallskip
    
Let us now suppose $i_j=2$. In this case, $\vec{y}_{*j} = (0,0)$. On $\m{B}_{m_j,2}^j$ we have $R_j \ge L_j^{1-\beta}$.  On $\m{B}_{m_j,2}^j$  we have 
    \begin{equation*}
			S_j \ind_{\m{B}_{m_j,2}^j} \le  {q_{m_j}^{3/2}}L_j^{-3/2(1-\beta)}\bigg[\sqrt{|x_{*j}|}+\sqrt{q_{m_j}}\bigg]e^{C\frac{q_{m_j}(q_{m_j}+|x_{*j}|)}{R_j}}. 
		\end{equation*}    
We use $|\vec{x}_{*j}| \le \sup_{x\in [0,L_j]} |V^j(x)|:=L_j^{1/2}X_0^j$ to get
\begin{equation*}
			S_j  \le  {q_{m_j}^{3/2}}L_j^{-3/2(1-\beta)}L_j^{1/4}\bigg[\sqrt{|X_0^j|}+1\bigg]e^{Cq_{m_j}X_{0}^jL_j^{-1/2+\beta}}. 
		\end{equation*}    
It is clear that $\Ex\big[(\sqrt{|X_0^j|}+1)e^{Cq_{m_j}X_{0}^jL_j^{-1/2+\beta}}\big] \le Cb$. Thus we arrive at \eqref{sjij2}. This completes the proof. 
\end{proof}

	We now turn towards the proof of Lemma \ref{GmQqmcomp}.

	\begin{proof}[Proof of Lemma \ref{GmQqmcomp}]
For simplicity set $\m{S}:=\m{A}_{m_1}^1\cap \m{A}_{m_2}^2 \cap \m{B}_{m_1,i_1}^1 \cap \m{B}_{m_2,i_2}^{2}$, and
		\begin{align*}
			Z_k := \left(\int_0^{L_1} e^{v_k \cdot V^1(x)} dx+\int_0^{L_2} e^{v_k \cdot V^2(x)} dx\right)^{-1},
		\end{align*}
        where we recall the definition of $v_k$ in \eqref{vkdef}.  Set \begin{align*}
    I_j:= \begin{cases}
        [\tau_{m_j}^j,\tau_{m_j}^j+1] & i_j=1 \\ [\til{\tau}_{m_j}^j-1,\til{\tau}_{m_j}^j] & i_j=2.
    \end{cases}
    \end{align*}
		Observe that \begin{align*}
			Z_k \le Z_k':= \left(\int_0^{1} e^{v_k \cdot V^1(x)} dx+\int_0^{1} e^{v_k \cdot V^2(x)} dx\right)^{-1}, \ \ Z_k \le Z_k'':= \left(\int_{I_1} e^{v_k \cdot V^1(x)} dx+\int_{I_2} e^{v_k \cdot V^2(x)} dx\right)^{-1}.
		\end{align*}
Write $Z_1Z_2 = (Z_1 \vee Z_2)(Z_1 \wedge Z_2)$. 
and use the above bound, followed by an application of Cauchy-Schwarz inequality, to get
		\begin{align*}
			G_{m_1,m_2,i_1,i_2}^2 = (\Ex[ Z_1Z_2\ind_{\m{S}}])^2 \le (\Ex\left[(Z_1' \vee Z_2')(Z_1'' \wedge Z_2'')\ind_{\m{S}}\right])^2 \le \Ex\left[(Z_1' \vee Z_2')^2\ind_{\m{S}}\right] \Ex\left[(Z_1'' \wedge Z_2'')^2\ind_{\m{S}}\right].
		\end{align*}
		We shall bound the last two expectations separately. For the first expectation note that
		\begin{align*}
			(Z_1'\vee Z_2')^2 \le Z_1'^2+Z_2'^2  & \le \sum_{k=1}^2 \bigwedge_{j=1}^2 \left(\int_0^1 e^{v_k\cdot V^j(x)}dx\right)^{-2} \\ & \le \sum_{k=1}^2 \bigwedge_{j=1}^2\left(\int_0^1 e^{-2v_k\cdot V^j(x)}dx\right)  \le 2\bigwedge_{j=1}^2\left(\int_0^1 e^{2\sqrt{2}|V^j(x)|}dx\right).
		\end{align*}
		Multiplying by $\ind_{\m{S}}$ and taking expectations in all sides of the above chain of inequalities we arrive at $\Ex\left[(Z_1' \vee Z_2')^2\ind_{\m{S}}\right] \le 2P_{m_1,m_2}(\vec{w};\vec{L})$
		where $P_{m_1,m_2}(\vec{v};\vec{L})$ is defined in \eqref{pdef}. We next claim that 
		\begin{align}\label{ttau}
			\Ex\left[(Z_1'' \vee Z_2'')^2\ind_{\m{S}}\right] \le \Ex\left[Q_{{m_1},{m_2},i_1,i_2}({x}_*\to {y}_*;\mathbf{R}) \ind_{\m{B}_{m_1,i_1}^1 \cap \m{B}_{m_2,i_2}^{2}}\right].
		\end{align}
This will verify \eqref{genine}. Let $k_m^i$ (resp.~$\til{k}_m^i$)  be s.t. $\omega(V^i(\tau_m^i)) = v_{{k}_m^i} \cdot V^i(\tau_m^i)$ (resp.~$\omega(V^i(\til\tau_m^i)) = v_{\til{k}_m^i} \cdot V^i(\til\tau_m^i)$). For simplicity assume $i_1=i_2=1$ (the other cases are analogous).
		For the second term note that
		\begin{align*}
			(Z_1'' \wedge Z_2'')^2 & \le \bigwedge_{j=1}^2\left(
			\int_{\tau_{m_j}^j}^{\tau_{m_j}^j+1} e^{v_{k_{m_j}^j}\cdot V^j(x)} dx\right)^{-2}  \\ & \le \bigwedge_{j=1}^2 
			\int_{\tau_{m_j}^j}^{\tau_{m_j}^j+1} e^{-2v_{k_{m_j}^j }\cdot V^j(x)} dx  \\ & = \bigwedge_{j=1}^2\left(
			e^{-2v_{k_{m_j}^j }\cdot V^j(\tau_{m_j}^j)}\int_{\tau_{m_j}^j}^{\tau_{m_j}^j+1} e^{-2v_{k_{m_j}^j }(V^j(x)-V^j(\tau_{m_j}^j))} dx\right).
		\end{align*}
		On the event $\m{B}_{m_j,1}^j$, we have $\tau_{m_j}^j \le L_j^{1-\beta}$. Thus $v_{k_{m_j}}^j\cdot V^j(\tau_{m_j}^j) = \omega(V^j(\tau_{m_j}^j))=
			    q_{m_j-1},$
		for $j=1,2$. Note that $\m{E}_{q_{m_j},0,L_j}^j\cap \m{B}_{m_j,i_j}^j=\m{E}_{q_{m_j},\tau_{m_j}^j,L_j}^j\cap \m{B}_{m_j,i_j}^j$ for $j=1,2$.
		Thus following the definition of $Q$ in \eqref{qmdef}, by Markov property we arrive at \eqref{ttau}. This completes the proof.
	\end{proof}

	\subsection{Proof of results from Section \ref{sec:5.3}} \label{sec:5.4}
\subsubsection{Proof of Lemma \ref{Qnbound1}}
To prove Lemma \ref{Qnbound1}, we first study the probability that a planar Brownian bridge stays in the wedge $N_a:=\omega^{-1}((-\infty,a])$ where $\omega(\cdot)$ is defined around \eqref{xref}. The following lemma is an extension of Lemma 4.11 in \cite{yu2}.
 
	\begin{lemma}\label{l4.11equiv}
		 Fix any $q>0$. Let $\mathbf{w}_1,\mathbf{w}_2\in N_q$. Let $V$ be a planar Brownian bridge on $[0,J]$ from $\mathbf{w}_1$ to $\mathbf{w}_2$. There exists an absolute constant $C>0$ such that
		\begin{equation*}
			\mathbb{P}(V(x)\in N_q \mbox{ for all }x\in [0,J]) \leq C J^{-\frac{3}{2}} e^{C\frac{\vert \mathbf{w}_1-qh\vert\vert \mathbf{w}_2-qh\vert}{J}}\prod_{k=1}^2 |\mathbf{w}_k-qh|^{\frac12}|q-\omega(\mathbf{w}_k)|.
		\end{equation*}
	\end{lemma}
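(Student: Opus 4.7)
The plan is to reduce everything to the $q=0$ case by a translation, express the probability as a ratio of the heat kernel killed on $\partial N_0$ to the free Gaussian kernel, then invoke the bound \eqref{property} already used in the paper, and finally convert angular distances in $N_0$ into the quantities $q-\omega(\mathbf{w}_k)$ appearing in the target estimate. This is the natural extension of Lemma 4.11 in \cite{yu2}, the only novelty being that the endpoints are no longer the origin.

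Let $h=(-\sqrt{6}/3,0)$, which is the unique vector with $v_k\cdot h=1$ for $k=1,2$; therefore $N_q = N_0 + qh$. Setting $\widetilde V(x):= V(x)-qh$ gives a planar Brownian bridge of length $J$ from $\widetilde{\mathbf{w}}_1:=\mathbf{w}_1-qh$ to $\widetilde{\mathbf{w}}_2:=\mathbf{w}_2-qh$, both lying in $N_0$, and
\begin{equation*}
\mathbb{P}(V(x)\in N_q\text{ for all }x\in[0,J]) = \frac{p_J^{N_0}(\widetilde{\mathbf{w}}_1,\widetilde{\mathbf{w}}_2)}{p_J(\mathbf{w}_2-\mathbf{w}_1)},
\end{equation*}
where the denominator is the free heat kernel $(2\pi J)^{-1}\exp(-|\mathbf{w}_2-\mathbf{w}_1|^2/(2J))$. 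Writing $\widetilde{\mathbf{w}}_k=r_k e^{i\theta_k}$ with $r_k=|\mathbf{w}_k-qh|$ and $\theta_k\in[-\pi/3,\pi/3]$, the bound \eqref{property} applied to the numerator yields
\begin{equation*}
p_J^{N_0}(\widetilde{\mathbf{w}}_1,\widetilde{\mathbf{w}}_2) \le \frac{C(r_1r_2)^{3/2}}{J^{5/2}} e^{(\sqrt{2}r_1r_2-r_1^2-r_2^2)/(2J)} \prod_{k=1}^{2}\bigl(\tfrac{\pi}{3}-|\theta_k|\bigr).
\end{equation*}
Dividing by $p_J(\mathbf{w}_2-\mathbf{w}_1)$, the quadratic pieces $r_1^2+r_2^2$ in the exponents cancel (using $|\widetilde{\mathbf{w}}_2-\widetilde{\mathbf{w}}_1|^2 = r_1^2+r_2^2-2\widetilde{\mathbf{w}}_1\cdot\widetilde{\mathbf{w}}_2$ and $|\widetilde{\mathbf{w}}_1\cdot\widetilde{\mathbf{w}}_2|\le r_1r_2$), leaving a factor $\exp(Cr_1r_2/J)$ along with the prefactor $C(r_1r_2)^{3/2}J^{-3/2}\prod_k(\tfrac\pi3-|\theta_k|)$.

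The last step is to translate $\tfrac\pi3-|\theta_k|$ into $q-\omega(\mathbf{w}_k)$. The vectors $-v_1/\sqrt{2}$ and $-v_2/\sqrt{2}$ are exactly the inward unit normals to the upper and lower rays of $\partial N_0$; a direct computation gives $-v_j\cdot\widetilde{\mathbf{w}}_k = \sqrt{2}\,r_k\sin(\tfrac{\pi}{3}\mp\theta_k)$, and since $\omega(\mathbf{w}_k)=q+\omega(\widetilde{\mathbf{w}}_k)$ we deduce
\begin{equation*}
q-\omega(\mathbf{w}_k) = \min_{j=1,2}\bigl(-v_j\cdot\widetilde{\mathbf{w}}_k\bigr) = \sqrt{2}\,r_k\sin\bigl(\tfrac{\pi}{3}-|\theta_k|\bigr).
\end{equation*}
The elementary inequality $\sin(x)\ge \tfrac{2}{\pi}x$ on $[0,\pi/2]$ then yields $\tfrac{\pi}{3}-|\theta_k|\le \tfrac{\pi}{2\sqrt{2}}\cdot\tfrac{q-\omega(\mathbf{w}_k)}{r_k}$. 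Substituting this into the bound above converts one factor of $(r_1r_2)^{3/2}$ into $(r_1r_2)^{1/2}\prod_k(q-\omega(\mathbf{w}_k))$, producing the desired estimate after absorbing absolute constants. The mildest obstacle is bookkeeping in the geometric conversion, but once $h$ and the opening angle $2\pi/3$ of $N_0$ are identified, the remaining work is a short computation using only the bound \eqref{property} already at hand.
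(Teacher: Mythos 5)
Your proof is correct and follows essentially the same route as the paper: translate by $qh$ to reduce to $N_0$, write the probability as the ratio $p_J^{N_0}(\widetilde{\mathbf{w}}_1,\widetilde{\mathbf{w}}_2)/p_J(\mathbf{w}_2-\mathbf{w}_1)$, invoke the killed-kernel bound \eqref{property}, and convert the angular factors $\tfrac{\pi}{3}-|\theta_k|$ into $|q-\omega(\mathbf{w}_k)|/r_k$ via a trigonometric inequality. The only difference is cosmetic: you identify $q-\omega(\mathbf{w}_k)=\min_j(-v_j\cdot\widetilde{\mathbf{w}}_k)=\sqrt{2}\,r_k\sin(\tfrac{\pi}{3}-|\theta_k|)$ directly, whereas the paper reaches the same conclusion through a WLOG on the sign of the argument and the identity $v_1\cdot\widetilde{\mathbf{w}}=\sqrt{2}\,r\cos(\tfrac{5\pi}{6}-\theta)$.
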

	
	\begin{proof} Our proof is adapted from Lemma 4.11 \cite{yu2} which proves the above result when $\mathbf{w}_2=\mathbf{0}$.
		Recall the  densities $p_J$ and $p_J^{N_a}$ of 2D Brownian motion and 2D killed Brownian motion defined around \eqref{cond1}. We have
		\begin{equation*}
			\mathbb{P}(V(x)\in N_q \mbox{ for all }x\in [0,J]) = \frac{p_J^{N_q}(\mathbf{w}_1,\mathbf{w}_2)}{p_J(\mathbf{w}_1,\mathbf{w}_2)}=\frac{p_J^{N_0}(\mathbf{w}_1-qh,\mathbf{w}_2-qh)}{p_J(\mathbf{w}_1,\mathbf{w}_2)}.
		\end{equation*}
		By Lemma 4.4 in \cite{yu2}, we have
		\begin{equation}\label{w0c}
        \begin{aligned}
			 p_J^N(\mathbf{w}_1-qh,\mathbf{w}_2-qh)
			& \leq  \frac{C}{J^{\frac{5}{2}}} \exp\left(\frac{C\vert \mathbf{w}_1-qh\vert\vert \mathbf{w}_2-qh\vert -\vert \mathbf{w}_1-qh\vert^2 - \vert \mathbf{w}_2-qh\vert^2}{2J}\right)\\
			& \hspace{3cm}\cdot \prod_{k=1}^2 \vert \mathbf{w}_k-qh\vert^{3/2}\left\vert\frac{\pi}{3}-\vert \mathrm{arg}(\mathbf{w}_k-qh)\vert\right\vert,
		\end{aligned}
        \end{equation}
		where we take $\mathrm{arg}$ to be in $[-\frac{\pi}{2},\frac{\pi}{2}].$ Now we claim there exists an absolute constant $C$ such that
        \begin{align}
            \label{wclaim}
            \left\vert \frac{\pi}{3}-\vert \mathrm{arg}(\mathbf{w}-qh)\vert\right\vert \leq C \frac{\vert q-\omega(\mathbf{w})\vert}{\sqrt{2}\vert \mathbf{w}-qh\vert}
        \end{align}
	for all $\mathbf{w}\in N_q$. Assuming \eqref{wclaim}, plugging the above bound in \eqref{w0c}, and using the fact that $p_J(\mathbf{w}_1,\mathbf{w}_2)=\frac1{2\pi J}\exp(-|\mathbf{w}_1-\mathbf{w}_2|^2/2J)$, we obtain	
    \begin{align*}
			& \mathbb{P}(V(x)\in N_q \mbox{ for all }x\in [0,J]) \\ & \hspace{1cm}\le \frac{C}{J^{3/2}} \exp\left(\frac{C\vert \mathbf{w}_1-qh\vert\vert \mathbf{w}_2-qh\vert + \vert \mathbf{w}_1-\mathbf{w}_2\vert^2 -\vert \mathbf{w}_1-qh\vert^2 - \vert \mathbf{w}_2-qh\vert^2}{2J}\right) \\ & \hspace{2cm}\cdot \prod_{k=1}^2 \vert \mathbf{w}_k-qh\vert^{\frac{1}{2}} \vert \omega(\mathbf{w}_k)-q\vert.
		\end{align*}
       As $C\vert \mathbf{w}_1-qh\vert\vert \mathbf{w}_2-qh\vert + \vert \mathbf{w}_1-\mathbf{w}_2\vert^2 -\vert \mathbf{w}_1-qh\vert^2 - \vert \mathbf{w}_2-qh\vert^2 = (C-2)\vert \mathbf{w}_1-qh\vert\vert \mathbf{w}_2-qh\vert$,
		we arrive at the desired bound in Lemma \ref{l4.11equiv} by adjusting the constant $C$. Thus we are left to show \eqref{wclaim}. 
    Without loss of generality,  assume $\mathrm{arg}(\mathbf{w}-qh)\geq 0$. then for some constant $C>0$ we have
		\[
		\left\vert \frac{\pi}{3}- \mathrm{arg}(\mathbf{w}-qh)\right\vert \leq C \sin\left(\frac{\pi}{3}-\arg(\mathbf{w}-qh)\right) = -C\cos\left(\frac{5\pi}{6}-\mathrm{arg}(\mathbf{w}-qh)\right).
		\]
		Now note that
		\[
		v_1 \cdot (\mathbf{w}-qh) = \sqrt{2} \vert \mathbf{w}-qh\vert \cos\left(\frac{5\pi}{6}-\mathrm{arg}(\mathbf{w}-qh)\right)
		\]
		and so
		\begin{eqnarray*}
			\left\vert \frac{\pi}{3}-\vert\mathrm{arg}(\mathbf{w}-qh)\vert\right\vert \leq  \frac{C}{\sqrt{2}\vert \mathbf{w}-qh\vert} v_1 \cdot (qh-\mathbf{w}) = \frac{C}{\sqrt{2}\vert \mathbf{w}-qh\vert} (q-v_1\cdot \mathbf{w}).
		\end{eqnarray*}
		Since $\arg(\mathbf{w}-qh)\geq 0,$ we have $v_1 \cdot \mathbf{w} \geq v_2 \cdot \mathbf{w},$ or equivalently $v_1 \cdot \mathbf{w} = \omega(\mathbf{w}).$ This verifies \eqref{wclaim} completing the proof. \end{proof}

We are now ready to prove Lemma \ref{Qnbound1}.

	\begin{proof}[Proof of Lemma \ref{Qnbound1}] We shall prove \eqref{bdf2} and \eqref{bdf4}; the proof of \eqref{bdf3} and \eqref{bdf5} is similar and easier. We recall the definition of $Q$ from \eqref{qmdef}. Note that 
        \begin{equation}\label{eqn2wopower}
		\begin{aligned}
			& Q_{{m_1},{m_2},i_1,i_2}({x}_*\to {y}_*;\mathbf{R}) \\ & := \mathbb{E}_{{x}_*\to {y}_*}^{R_1,R_2} \left(F_{i_1}^1\wedge F_{i_2}^2\ind_{\m{E}_{q_{m_1},0,R_1}^1\m{E}_{q_{m_2},0,R_2}^2}\right)\\
			& \le \mathbb{E}_{x_*\to y_*}^{R_1,R_2}\bigg[F_{i_1}^1 \wedge F_{i_2}^2 \cdot \mathbb{P}_{1,V_1(1)}^{R_1,\vec{y}_{*1}}(\m{E}_{q_{m_1},1,R_1}) \mathbb{P}_{1,V_2(1)}^{R_2,\vec{y}_{*2}}(\m{E}_{q_{m_2},1,R_2})\bigg]\\
			&= \int_{N_{q_{m_1}}}\int_{N_{q_{m_2}}} \hspace{-0.5cm}\mathbb{E}_{\vec{x}\to (z_1,z_2)}^{1,1}[F_{i_1}^1 \wedge F_{i_2}^2] \prod_{j=1}^2 p_{\frac{R_j-1}{R_j}}\left(z_j-\frac{\vec{y}_{*j}+\vec{x}_{*j}(R_j-1)}{R_j}\right)\cdot\mathbb{P}_{1,z_j}^{R_j,\vec{y}_{*j}}(\m{E}_{q_{m_j},1,R_j}^j)dz_j.
		\end{aligned}
        \end{equation}
Here $\Pr_{x_1,\mathbf{u}_1}^{x_2,\mathbf{u}_2}$ denote the law of a planar Brownian bridge on $[x_1,x_2]$ from $\mathbf{u}_1$ to $\mathbf{u}_2$. A simple Gaussian moment computation shows that 
\begin{align*}
    \mathbb{E}_{\vec{x}\to (z_1,z_2)}^{1,1}[F_{i_1}^1 \wedge F_{i_2}^2] & \le \mathbb{E}_{\vec{x}\to (z_1,z_2)}^{1,1}[F_{i_j}^j] \\ & = e^{-q_{m_j-1}} \cdot \mathbb{E}_{\vec{x}\to (z_1,z_2)}^{1,1}\left[\int_0^1 e^{2\sqrt2 |V^j(x)-\vec{x}_{*j}|}dx\right] \\ & \le C e^{-q_{m_j-1}+C|z_j-\vec{x}_{*j}|} \\ & \le Ce^{-q_{m_j-1}} \cdot e^{C|z_1-\vec{x}_{*1}|+C|z_2-\vec{x}_{*2}|}
\end{align*}
holds for $j=1,2$. Thus,
\begin{align*}
    \mathbb{E}_{\vec{x}\to (z_1,z_2)}^{1,1}[F_{i_1}^1 \wedge F_{i_2}^2] \le Ce^{-q_{m_1-1}\vee q_{m_2-1}} \cdot e^{C|z_1-\vec{x}_{*1}|+C|z_2-\vec{x}_{*2}|}.
\end{align*}
Plugging this inequality back in   \eqref{eqn2wopower} we get
\begin{align} \nonumber
 & Q_{{m_1},{m_2},i_1,i_2}({x}_*\to {y}_*;\mathbf{R}) \\ & \nonumber\le C \cdot e^{-q_{m_1-1}\vee q_{m_2-1}} \\ & \hspace{1cm}\cdot\prod_{j=1}^2\int_{N_{q_{m_j}}}   p_{\frac{R_j-1}{R_j}}\left(z_j-\frac{\vec{y}_{*j}+\vec{x}_{*j}(R_j-1)}{R_j}\right)\cdot\mathbb{P}_{1,z_j}^{R_j,\vec{y}_{*j}}(E_{q_{m_j},1,R_j}^j) \cdot e^{C|z_j-\vec{x}_{*j}|} dz_j. \label{qbg}
\end{align}
Using $\mathbb{P}_{1,z_j}^{R_j,\vec{y}_{*j}}(E_{q_{m_j},1,R_j}^j)\le 1$, and a change of variable leads to
\begin{align*}
    & \int_{N_{q_{m_j}}}   p_{\frac{R_j-1}{R_j}}\left(z_j-\frac{\vec{y}_{*j}+\vec{x}_{*j}(R_j-1)}{R_j}\right)\cdot\mathbb{P}_{1,z_j}^{R_j,\vec{y}_{*j}}(E_{q_{m_j},1,R_j}^j) \cdot e^{C|z_j-\vec{x}_{*j}|} dz_j \\ & \le e^{C|\vec{x}_{*j}|/R_j+C|\vec{y}_{*j}|/R_j}\int_{\R^2}   p_{\frac{R_j-1}{R_j}}\left(z_j\right) e^{C|z_j|} dz_j \le Ce^{C|\vec{x}_{*j}|/R_j+C|\vec{y}_{*j}|/R_j}.
\end{align*}
This verifies \eqref{bdf4}. On the other hand, to show the bound in \eqref{bdf2}, 
using 
 Lemma \ref{l4.11equiv} we notice that
        \begin{align*}
           & \int_{N_{q}}   p_{\frac{R-1}{R}}\left(z_j-\frac{\vec{y}+\vec{x}(R-1)}{R}\right)\cdot\mathbb{P}_{1,z}^{R,\vec{y}}(E_{q,1,R}) \cdot e^{C|z-\vec{x}|} dz \\ & \le CR^{-3/2}(|\vec{y}|+q)^{3/2}\int_{N_{q}}   p_{\frac{R-1}{R}}\left(z-\frac{\vec{y}+\vec{x}(R-1)}{R}\right)|z-qh|^{\frac12} (q-\omega(z))e^{CR^{-1}|z-qh||\vec{y}-qh|+C|z-\vec{x}|} dz_j \\ &  \le CR^{-3/2}(|\vec{y}|+q)^{3/2} \cdot [\sqrt{|\vec{x}|}+\sqrt{q}]e^{C\frac{(q+|\vec{y}|)(q+|\vec{x}|)}{R}}
        \end{align*}
	where the last bound follows from Gaussian computations. Inserting the above bound back in \eqref{qbg}, we arrive at \eqref{bdf2}.
		This completes the proof.
	\end{proof}
	
	\subsubsection{Proof of Lemma \ref{vtau}} \label{sec:5.5}  To lighten the notation, we work with a planar Brownian Bridge $V$ on $[0,L]$ from $(0,0)$ to $\vec{w}:=(b\sqrt{2L/3},0)$. Fix any $q\in (0,L^{1/50}]$ and recall the wedge $N_q$. Let $\tau(q)$ be the first time when $\omega(V(x))=q$.  Fix any $\delta,\beta>0$. We claim that there exists an absolute constant $C>0$ such that for all $\xi>0$ with $\xi \le L^{1/4-\delta/2}$ we have 
        \begin{align}\label{ew0}
            \Pr\Big(|V(\tau(q/2))| \ge \xi^2+b, \tau(q/2) \le L^{1-\beta}\Big) \le Ce^{-\xi^{2-4\delta}/C}+ C\frac{(q+b)^{3/2}}{\xi^{1+2\delta}}.
        \end{align}
    Note that Lemma \ref{vtau} is a direct consequence of the above claim (taking $V=V^j$ and $q=2q_m$ and $L=L^j$). To prove the claim, we work with two new stopping times.  Set $R(x):=\frac{x}{L}\vec{w}(1-x/L)^{-1}$. Let $\varsigma(q)$ and $\varsigma'(q)$ be the first times when
    \begin{align*} 
		\frac{V(x)}{1-\frac{x}{L}} \in \partial N_{q}, \qquad \frac{V(x)}{1-\frac{x}L}-R(x) \in \partial N_{q+3b} 
	\end{align*} 
respectively. Note that 
\begin{itemize}[leftmargin=20pt]
    \item on the event $\tau(q/2) \le L^{1-\beta}$, we have $\tau(q/2) \le \varsigma(q)$ for large enough $L$, and
    \item if $|R(\varsigma')| \le b$, then $V(\varsigma')/(1-\varsigma'/L)$ lies outside $N_{q}$. Thus, $\{\varsigma(q) \le \varsigma'\} \supset \{R(\varsigma')\le b\}$.
\end{itemize}
Thus we deduce
\begin{align}\nonumber
    \Pr\Big(|V(\tau(q/2))| \ge \xi^2+b, \tau(q/2) \le L^{1-\beta}\Big) & \le \Pr \bigg( \sup_{x\in [0, \varsigma(q)]} |V(x)| \ge \xi^2 +b\bigg) \\ & \le \Pr\bigg(\sup_{x\le \varsigma'(q)} |V(x)| \ge \xi^2+b\bigg)+\Pr\big(|R(\varsigma'(q))| \ge b\big). \label{ew1}
\end{align}
Hence it suffices to bound the right hand side of the above equation.   For simplicity we write $\varsigma'=\varsigma'(q)$. The stopping time $\varsigma'$ is much easier to analyze by a series of transformation which we now explain. Note that
		$\hat{V}(x):=V(x)-\frac{x}{L}\vec{w}$ is a standard planar Brownian bridge from $0$ to $0$ and
		\begin{align*}
			Y(x):=\frac{L+x}{L}\hat{V}\bigg(\frac{Lx}{L+x}\bigg)
		\end{align*}
		is a planar Brownian motion. Let $\kappa$ be the first time $Y(\cdot) \in \partial N_{q+3b}$. Then $\varsigma'=\frac{L\kappa}{L+\kappa}$. Define $Z(x):=(Y(x)-(q+3b)h)^{3/2}$. where we raise an element of $\R^2$ to the 3/2 power by thinking of it as an element of $\mathbb{C}$. The map $z \to z^{3/2}$ is a conformal isomorphism between the interior of $N_0$ and
the interior of the right half plane $\{z : \mathrm{Re}(z) \ge 0\}$. By conformal invariance of planar Brownian motion, $Z(\cdot)$ is a time changed planar Brownian motion starting at $Z(0)=((q+3b)|h|)^{3/2}$. $\kappa$ is the first time $Z(\cdot)$ hits $i\mathbb{R}$. So, by Brownian motion calculations
		\begin{align}\label{taup}
			\Pr(\varsigma' \ge Lr/(L+r))=\Pr(\kappa \ge r) \le C((q+3b)|h|)^{3/2}/\sqrt{r}
		\end{align}
		for all $r>0$.
		As $\{\varsigma' \ge Lr/(L+r)\}=\{R(\varsigma')\ge rb\sqrt{2/3L}\}$, we have
		\begin{align*}
			\Pr(|R(\varsigma')| \ge  rb\sqrt{2/3L})  \le C((q+3b)|h|)^{3/2}/\sqrt{r} 
		\end{align*}
		for all $r>0$. Let us change $r$ to $\xi^{2+4\delta}$ in the above equation and note that $\xi^{2+4\delta} \le \sqrt{L}$ (as $\xi \le L^{\frac14-\delta/2})$. Then $\xi^{2+4\delta}b\sqrt{2/3L} \le b$ and thus
		\begin{align} \label{refe}
			\Pr(|R(\varsigma')| \ge b)  \le C((q+3b)|h|)^{3/2}/\xi^{1+2\delta}. 
		\end{align}
This bounds the second term in \eqref{ew1}.	We now bound the first term in \eqref{ew1}. Towards this end, fix any $g,r>0$. Observe that
		\begin{align*}
		    \Pr\bigg(\sup_{x\le \varsigma'} |V(x)| \ge g\sqrt{\frac{Lr}{L+r}}+\frac{br\sqrt{L}}{L+r}\bigg) & \le \Pr\bigg(\sup_{x\le \frac{Lr}{L+r}} |V(x)| \ge g\sqrt{\frac{Lr}{L+r}}+\frac{br\sqrt{L}}{L+r}\bigg)+\Pr\bigg(\varsigma' \ge \frac{Lr}{L+r}\bigg) \\ & \le \Pr\bigg(\sup_{x\le \frac{Lr}{L+r}} |\hat{V}(x)| \ge g\sqrt{\frac{Lr}{L+r}}\bigg)+\Pr\bigg(\varsigma' \ge \frac{Lr}{L+r}\bigg) \\ & \le Ce^{-g^2/C}+ C((q+3b)|h|)^{3/2}/\sqrt{r}	\end{align*}
		for some absolute constant $C>0$. The last inequality follows from \eqref{taup} and Gaussian tail bounds. We change $r$ to $\xi^{2+4\delta}$ and $g$ to $\xi^{1-2\delta}$ in the above equation and use that $\xi^{2+4\delta} \le \sqrt{L}$ to get
		$$\Pr\bigg(\sup_{x\le \varsigma'} |V(x)| \ge \xi^{2}+b\bigg) \le Ce^{-\xi^{2-4\delta}/C}+ C((q+3b)|h|)^{3/2}/\xi^{1+2\delta}.$$
		Plugging this bound and the bound in \eqref{refe} back in \eqref{ew1}, we arrive at \eqref{ew0}. This completes the proof.

\appendix

\section{Moment bounds of Open SHE}\label{appC}
In this section, we collect moment bounds for the solution of the open SHE \eqref{opshe} with Robin boundary conditions \eqref{robin}. Most of the results in this appendix are either proved in the literature or follow by straightforward adaptations of standard arguments.

Set $A=(u-\frac12)$ and $B=-(v-\frac12)$ for simplicity. 
We first discuss properties of the solution of the heat equation with Robin boundary conditions:
\begin{equation}\label{prob}
\begin{aligned}
    & \frac{\p v}{\p t} = \frac{\p^2v}{\p z^2}, \quad z \in [0,L],\; t>0 \\
& \left.\frac{\p v}{\p z}\right\vert_{z=0} = A v(t,0), \; \left.\frac{\p v}{\p z}\right\vert_{z=L} = B v(t,L).
\end{aligned}
\end{equation}

The following lemma provides the description of the fundamental solution to the above equation.

\begin{lemma}\label{robinkernel} For each $n\ge 0$, let $\kappa_n \in ( (n-\frac{1}{2})\pi,(n+\frac{1}{2})\pi )$ be the unique solution to
\begin{equation*}
\tan(L\kappa_n) = \frac{\kappa_n (A-B)}{\kappa_n^2+AB}
\end{equation*}
in the interval $( (n-\frac{1}{2})\pi,(n+\frac{1}{2})\pi )$.
Set  $\psi_n(z)=a_n\cos\left(\kappa_n z \right) + a_n\frac{A}{\kappa_n}\sin\left(\kappa_nz\right)$, where 
    $$a_n^{-2}:=\int_0^L\bigg( \cos(\kappa_n x)+\frac{A}{\kappa_n}\sin (\kappa_n x)\bigg)^2dx.$$
The kernel \begin{equation}\label{Robin}
K_t^{A,B}(x,y)=\sum_{n=0}^{\infty}e^{-t\kappa_n^2}\psi_n(x)\psi_n(y),
\end{equation}
solves \eqref{prob}  with Dirac delta initial data $\delta_0$. 
\end{lemma}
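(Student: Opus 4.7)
The plan is to recognize Lemma \ref{robinkernel} as a standard Sturm--Liouville eigenfunction expansion and to construct the kernel $K_t^{A,B}$ by spectral decomposition of the generator of \eqref{prob}. First I would look for separated solutions $v(t,z) = e^{-\lambda t}\psi(z)$ of \eqref{prob}. Writing $\lambda=\kappa^2$ and using the general solution $\psi(z)=\alpha\cos(\kappa z)+\beta\sin(\kappa z)$ of $-\psi''=\kappa^2\psi$, the boundary condition $\psi'(0)=A\psi(0)$ forces $\beta\kappa = A\alpha$, so (up to a multiplicative constant) $\psi(z)=\cos(\kappa z)+\tfrac{A}{\kappa}\sin(\kappa z)$. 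Plugging this into the boundary condition $\psi'(L)=B\psi(L)$ and simplifying yields exactly the transcendental equation
\begin{equation*}
\tan(L\kappa)=\frac{\kappa(A-B)}{\kappa^2+AB}
\end{equation*}
claimed in the statement, which will identify the admissible eigenvalues $\kappa^2$.

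Next I would establish that this equation has, for each $n\ge 0$, a unique root $\kappa_n$ in the stated interval. The right--hand side $f(\kappa):=\kappa(A-B)/(\kappa^2+AB)$ is bounded and smooth on the relevant half--line, while $\tan(L\kappa)$ sweeps monotonically from $-\infty$ to $+\infty$ across each branch between two consecutive vertical asymptotes. A direct monotonicity/intermediate--value argument on each branch produces exactly one crossing, which is declared to be $\kappa_n$; checking the signs at the endpoints of the branches places this crossing in the interval claimed (with a small separate check at $n=0$, where the boundedness of $f$ near $0$ must be used). The normalization constants $a_n$ are then defined so that $\int_0^L\psi_n^2=1$, which is exactly the integral written in the statement.

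After this, classical Sturm--Liouville theory for the regular self--adjoint operator $-\partial_z^2$ on $[0,L]$ with the Robin boundary conditions gives orthogonality $\int_0^L\psi_m\psi_n=\delta_{mn}$ and completeness of $\{\psi_n\}$ in $L^2[0,L]$. With this in hand, the definition \eqref{Robin} of $K_t^{A,B}(x,y)$ is a standard spectral series. Since $\kappa_n\sim n\pi/L$ for large $n$, the Gaussian factor $e^{-t\kappa_n^2}$ makes the series and all its $t$-- and $z$--derivatives absolutely convergent for $t>0$ and $z\in[0,L]$, so differentiation under the sum is permitted. Verifying the PDE, i.e.\ $\partial_t K_t^{A,B}=\partial_x^2 K_t^{A,B}$, and the Robin boundary conditions at $x=0$ and $x=L$, reduces term--by--term to the fact that each $\psi_n$ is an eigenfunction of $-\partial_z^2$ satisfying the boundary conditions. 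Finally, the initial condition $K_t^{A,B}(x,\cdot)\to\delta_x$ as $t\downarrow 0$ follows from completeness of $\{\psi_n\}$ in the usual weak sense: for any test function $\varphi$, $\int_0^L K_t^{A,B}(x,y)\varphi(y)\,dy = \sum_n e^{-t\kappa_n^2}\psi_n(x)\langle\psi_n,\varphi\rangle\to\sum_n\psi_n(x)\langle\psi_n,\varphi\rangle=\varphi(x)$.

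The main technical obstacle is the root--counting step: one must verify that the transcendental equation has exactly one solution in each of the prescribed intervals, with no missing or extra branches, since the right--hand side $f(\kappa)$ can vanish or change sign (depending on the signs of $A-B$ and $AB$), and the $n=0$ interval requires a little extra care (in particular ruling out, or properly interpreting, purely imaginary $\kappa$ corresponding to a negative eigenvalue when $A+B<0$, which does not arise for us since $u,v\ge 0$ forces $A+B=u+v-1$ to behave well in the regime of interest). All other steps are routine once this root--counting and the standard Sturm--Liouville completeness theorem are in place.
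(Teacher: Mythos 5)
Your proposal takes essentially the same route as the paper: both proofs are an eigenfunction expansion of the Robin Laplacian on $[0,L]$. The paper's version first proves self-adjointness of $\partial_{zz}$ on the Robin domain by integration by parts, invokes compactness of $\Delta^{-1}$ and the spectral theorem to obtain the orthonormal eigenbasis $(\psi_n)$, verifies that the kernel solves the PDE and boundary conditions, and then states that the explicit formulas for $\psi_n$ and $\kappa_n$ ``follow by solving the eigenvalue problem explicitly.'' You reverse the order---start from the separation-of-variables ansatz, derive the transcendental equation, and then cite classical Sturm--Liouville theory for orthogonality and completeness---but the circle of ideas is identical, and your verification of the PDE, boundary conditions, and $\delta$-initial condition is the same as the paper's.

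One small correction in your closing remark about ruling out imaginary $\kappa$: with $A=u-\tfrac12$ and $B=\tfrac12-v$ one has $A+B=u-v$, not $u+v-1$; the quantity $u+v-1$ is actually $A-B$, which is what enters the sign of the Rayleigh quotient at the constant test function ($Q(1)=B-A=-(A-B)$). Whether the bottom of the spectrum corresponds to a real or imaginary $\kappa_0$ is genuinely parameter-dependent and neither you nor the paper treats this carefully (the paper simply says the formulas ``follow by solving explicitly''), so your instinct to flag the $n=0$ root as the delicate point is sound, but the heuristic you offer for dismissing it uses the wrong combination of $A$ and $B$.
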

\begin{remark} \label{rem1}
    It is a straightforward calculation to verify that $\lim_n a_n^{-2}=\frac{L}{2}$, therefore $(a_n)_{n\in\mathbb{N}}$ is a bounded sequence.
\end{remark}
\begin{proof}
We first note that there exists a unique weak solution to \eqref{prob}. Consider the space $\mathcal{A}=\{u\in H^2([0,L]): u_x(0)=Au(0),\; u_x(L)=Bu(L)\}$, where the equalities are taken in the sense of traces. We observe that the second derivative operator (i.e the Laplacian) $u\mapsto u_{xx}$ is selfadjoint in $\mathcal{A}$. Indeed, after integration by parts we have 
\begin{align*}
\int_0^L u_{xx}(x)v(x)dx-\int_0^Lu(x)v_{xx}(x)dx&=\bigg[ u_x(x)v(x)  \bigg]_0^L-\bigg[ u(x)v_x(x) \bigg]_0^L\\
&=Bu(L)v(L)-Au(0)v(0)- (Bu(L)v(L)-Au(0)v(0))=0.
\end{align*}
Since the Laplacian is selfadjoint, we deduce that $\Delta^{-1}$ is also selfadjoint and, by standard theory of elliptic equations, it is also compact. Therefore the spectral theorem can be applied and hence we can consider $(\lambda_n)_{n\in\mathbb{N}}$ the eigenvalues of the problem
\begin{equation}\label{eigenvalue}
\begin{cases}
    (\psi(x))_{xx}=\lambda \psi(x),\;\;x\in [0,L],\\
    \psi_x(0)=A\psi(0),\; \psi_x(L)=B\psi(L).
\end{cases}
\end{equation}
We call $\psi_n(x)$ the eigenvector that corresponds to the eigenvalue $\lambda_n$. Note that $(\psi_n)_{n\in\mathbb{N}}$ can be chosen to be an orthonormal basis of $L^2([0,L])$. Let $K$ be as in \eqref{Robin}. It is, therefore, straightforward to see that $v(t,x):=\int_0^LK_t^{A,B}(x,z)v_0(z)dz$ satisfies 
$$v(0,x)=\sum_{n=0}^{+\infty}\psi_n(x)\int_0^L\psi_n(z)v_0(z)dz=v_0(x)\;\;\;\text{in }L^2,$$
because $(\psi_n)_{n\in\mathbb{N}}$ is an orthonormal basis of $L^2([0,L])$, and that 
$$\p_xv(t,x)=\int_0^L\p_xK_t^{A,B}(x,z)v_0(z)dz=\begin{cases}
    Av(t,0), &\text{ if }x=0,\\
    Bv(t,L),&\text{ if }x=L.
\end{cases}$$
In addition, for $t>0$:
$$\p_tv(t,x)=\int_0^L \p_tK_t^{A,B}(z,x)v_0(x)dx=\int_0^L \p_{xx}K_t^{A,B}(x,z)v_0(z)dz=\p_{xx}v(t,x),$$
thus $v$ is a solution of \eqref{prob}. The formulas for $\psi_n(x)$ and $\lambda_n$ follow by solving \eqref{eigenvalue} explicitly and by choosing $a_n$ such that $(\psi_n)_{n\in\mathbb{N}}$ is an orthonormal basis.
\end{proof}
We next record regularity estimates for $K_t^{A,B}$ from \cite{parekh2019kpz}.

\begin{lemma}[Proposition 3.31 in \cite{parekh2019kpz}] \label{lem:reg} Fix a terminal time $T>0$. For any $b>0$, there exists  some constant $C=C(A,B,L,b,\tau)>0$ such that for all $0<s<t<T$ and $x,y,z\in [0,L]$ we have that
\begin{align} \nonumber
    & K_t^{A,B}(x,y) \le Ct^{-1/2}\exp(-b|x-y|/\sqrt{t}), \\
  \label{eq:hkreg}  & |K_t^{A,B}(x,z)-K_t^{A,B}(y,z)| \le Ct^{-1}|x-y| \\ \nonumber
    & |K_t^{A,B}({x,y})-K_s^{A,B}({x,y})| \le Cs^{-3/2}|t-s|.
\end{align}    
\end{lemma}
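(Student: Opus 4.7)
My plan is to derive (ii) and (iii) directly from the spectral expansion of Lemma \ref{robinkernel}, and to establish (i) via a probabilistic representation of $K_t^{A,B}$ in terms of reflected Brownian motion on $[0,L]$. The spectral series gives $t>0$ smoothness for free through the exponential factors $e^{-t\kappa_n^2}$, while only the probabilistic representation makes the Gaussian decay in $|x-y|$ transparent. The resulting constants depend on $A,B,L,b,T$ as required.

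\textbf{Key steps.} For the Gaussian bound (i), I would use a Feynman-Kac identity. Letting $X_s$ denote reflected Brownian motion on $[0,L]$ with boundary local times $\ell_s^0,\ell_s^L$ at $0$ and $L$, one has
\begin{equation*}
K_t^{A,B}(x,y) \;=\; p_t^{N}(x,y)\,\Ex_{x,y}^{t}\!\big[\exp(A\ell_t^0 + B\ell_t^L)\big],
\end{equation*}
where $p_t^N$ is the Neumann heat kernel and the expectation is under the reflected Brownian bridge from $x$ to $y$ of length $t$. The Neumann kernel admits the method-of-images representation $p_t^N(x,y) = \sum_{n\in\Z}\bigl[G_t(x-y-2nL) + G_t(x+y-2nL)\bigr]$ with $G_t$ the standard Gaussian kernel, and this sum satisfies $p_t^N(x,y)\le C t^{-1/2} e^{-b|x-y|/\sqrt t}$ for any $b>0$ uniformly in $t\in(0,T]$. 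The bridge expectation is uniformly bounded on $[0,L]^2\times(0,T]$ in terms of $A,B,L,T$, delivering (i). For (ii), I would differentiate the spectral expansion termwise in $x$: since $a_n$ is bounded (Remark \ref{rem1}), $|\psi_n|\le C$, and $|\psi_n'|\le C\kappa_n$ with $\kappa_n\sim n\pi/L$, we get
\begin{equation*}
|\p_x K_t^{A,B}(x,z)| \;\le\; C\sum_{n\ge 0}\kappa_n\, e^{-t\kappa_n^2} \;\le\; C'\, t^{-1},
\end{equation*}
and integrating along the segment from $x$ to $y$ produces (ii). For (iii), the same differentiation in $t$ yields $|\p_t K_t^{A,B}| \le C\sum_n \kappa_n^2 e^{-t\kappa_n^2} \le C' t^{-3/2}$; integrating from $s$ to $t$ and using $r\ge s$ on $[s,t]$ gives $|K_t^{A,B}(x,y)-K_s^{A,B}(x,y)| \le C(t-s)s^{-3/2}$.

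\textbf{Main obstacle.} The chief technical point is the uniform boundedness of $\Ex_{x,y}^{t}[\exp(A\ell_t^0 + B\ell_t^L)]$ on $[0,L]^2\times(0,T]$, in particular for $x,y$ near the boundary and for small $t$. This requires exponential moment estimates on the boundary local times of a Brownian bridge, which can be obtained via Tanaka's formula together with the standard bound $\Ex[\ell_t^0] = O(\sqrt t)$ and sub-Gaussian concentration for bridge local times. Once that is secured, (ii) and (iii) follow from the spectral bookkeeping sketched above with no further input. Alternatively, since the statement coincides with Proposition 3.31 of \cite{parekh2019kpz}, citing that result directly is a valid shortcut that the authors in fact take.
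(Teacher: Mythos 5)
The paper does not prove this lemma at all; it simply cites Proposition 3.31 of \cite{parekh2019kpz}, as you correctly observe in your closing remark. Your sketch is nonetheless a reasonable blueprint for how such a proof would go, and it matches the general toolbox used in \cite{parekh2019kpz}: a Feynman--Kac/local-time representation for the near-diagonal Gaussian bound, and termwise differentiation of the eigenfunction expansion for the spatial and temporal increment bounds. For (ii) and (iii) your estimates $\sum_n \kappa_n e^{-t\kappa_n^2}\lesssim t^{-1}$ and $\sum_n \kappa_n^2 e^{-t\kappa_n^2}\lesssim t^{-3/2}$ are correct provided one is a little careful with the lowest mode: for some sign configurations of $A,B$ the Robin Laplacian has a positive ground-state eigenvalue, so that $\kappa_0$ in Lemma~\ref{robinkernel} should be read as potentially imaginary and $e^{-t\kappa_0^2}$ is then an exponentially growing factor in $t$; since the lemma only asserts the bounds for $t\le T$, this is absorbed into the constant, but it is worth saying. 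For (i), the chief technical burden is exactly where you put it, namely uniform exponential moments of boundary local times under the reflected bridge; one should also mind the sign conventions in the Feynman--Kac identity (with the paper's definitions $A=u-\tfrac12$, $B=-(v-\tfrac12)$, the factor is $\exp(A\ell^0_t+B\ell^L_t)$ for the equation $\partial_t v=\partial_{xx}v$ as in Lemma~\ref{robinkernel}, with an extra time rescaling if one normalizes to $\tfrac12\partial_{xx}$ as in the SHE). In short: your proposal is a sound and essentially standard proof strategy for a statement that the paper imports by citation.
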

We now turn towards the moment bounds for the open SHE. We recall the notation introduced in the beginning of Section \ref{sec:3}.

\begin{proposition}[Theorem 2.1 in \cite{parekh2022ergodicity}] \label{p:mombd}
Fix any $T>0$ and $p \ge 1$ There exists a constant $C(T,L,A,B,p)>0$ such that for all $x,y\in [0,L]$ and $t\in (0,T]$, we have
\begin{equation*}
\mathbb{E}[\mathcal{Z}(y,0;x,t)^p]\leq 
    {Ct^{-p/4}K_t^{A,B}(x,y)^{p/2}} \le Ct^{-p/2}e^{-2p|x-y|/\sqrt{t}}.
\end{equation*}
\end{proposition}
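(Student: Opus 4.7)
The second inequality is immediate from Lemma \ref{lem:reg} by choosing $b=4$, so the core work is the first inequality. For $p\in[1,2]$ it follows from the case $p=2$ by Jensen, and for $p>2$ I will reduce to $p=2$ via hypercontractivity; thus the central task is a sharp $L^2$ bound of the stated form.

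My plan is to start from the mild formulation
\begin{equation*}
\mathcal{Z}(y,0;x,t) = K_t^{A,B}(x,y) + \int_0^t\!\!\int_0^L K_{t-s}^{A,B}(x,z)\,\mathcal{Z}(y,0;z,s)\,\xi(z,s)\,dz\,ds,
\end{equation*}
and iterate Picard to produce the Wiener chaos expansion $\mathcal{Z}(y,0;x,t) = \sum_{n\ge 0} I_n(f_n^{t,x,y})$, where on the simplex $\{0<s_1<\cdots<s_n<t\}$ (with $s_0=0,\,s_{n+1}=t,\,z_0=y,\,z_{n+1}=x$),
\begin{equation*}
f_n^{t,x,y}(\vec s,\vec z) = \prod_{i=0}^{n} K_{s_{i+1}-s_{i}}^{A,B}(z_{i+1},z_i).
\end{equation*}
By the Wiener--It\^o isometry, $\mathbb{E}[\mathcal{Z}^2] = \sum_n \|f_n\|_{L^2}^2$.

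Next I would bound each $\|f_n\|_{L^2}^2$. The idea is to square the above expression, pull a factor $(s_{i+1}-s_i)^{-1/2}$ out of one of the two copies of each kernel using $K_s^{A,B}(a,b)\le Cs^{-1/2}e^{-b_0|a-b|/\sqrt{s}}$ from Lemma \ref{lem:reg}, and collapse the remaining chain by the approximate Chapman--Kolmogorov property $\int_0^L K_a^{A,B}(x,z)K_b^{A,B}(z,y)\,dz \le C K_{a+b}^{A,B}(x,y)$ applied repeatedly. This yields
\begin{equation*}
\|f_n\|_{L^2}^2 \le C^{n+1}\,K_t^{A,B}(x,y)\int_{0<s_1<\cdots<s_n<t}\prod_{i=0}^{n}(s_{i+1}-s_i)^{-1/2}\,d\vec s = C^{n+1} K_t^{A,B}(x,y)\,\frac{t^{(n-1)/2}\,\pi^{(n+1)/2}}{\Gamma((n+1)/2)},
\end{equation*}
by the standard Dirichlet-type simplex integral. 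Summing over $n$ gives a series of the form $\sum_n (C\sqrt{t})^n/\Gamma((n+1)/2)$, which is bounded uniformly on $(0,T]$, so
\begin{equation*}
\mathbb{E}[\mathcal{Z}(y,0;x,t)^2] \le C(T)\,t^{-1/2}\,K_t^{A,B}(x,y),
\end{equation*}
which is the $p=2$ case. Finally, hypercontractivity of Wiener chaos gives $\|I_n(g)\|_p \le (p-1)^{n/2}\|I_n(g)\|_2$, and the constant $(p-1)^{n/2}$ is absorbed into $C^n$ in the same summation, yielding $\|\mathcal{Z}(y,0;x,t)\|_p \le C(p,T)\,t^{-1/4}K_t^{A,B}(x,y)^{1/2}$; raising to the $p$-th power and chaining with Lemma \ref{lem:reg} produces both inequalities in the statement.

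The main obstacle, and the only place where the Robin boundary actually enters substantively, is justifying the semigroup-type bound $\int_0^L K_a^{A,B}K_b^{A,B}\le C K_{a+b}^{A,B}$ with a constant uniform in $a,b\in(0,T]$. The Robin kernel differs from the free heat kernel by reflected-image terms (already reflected in the boundary exponential $\sin(\kappa_n x)$ contribution in Lemma \ref{robinkernel}), so one must check that these reflection contributions do not destroy the Gaussian decay that makes the chaos-series bookkeeping close. Once this uniform semigroup bound is in place (it is precisely what is established in \cite{parekh2019kpz,parekh2022ergodicity} under the present boundary conditions), the rest of the argument is the standard chaos-expansion plus hypercontractivity template.
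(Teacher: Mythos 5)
The paper does not prove this proposition but cites it directly as Theorem~2.1 of \cite{parekh2022ergodicity}, so there is no in-paper argument to compare against; your chaos-expansion/Picard argument is the standard template for such bounds, and it is essentially correct. The simplex (Dirichlet) integral, the Minkowski summation, and the use of hypercontractivity to pass from $p=2$ to general $p$ are all sound, as is the observation that the second inequality follows from Lemma~\ref{lem:reg} with $b=4$.

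The one thing you have wrong is your assessment of where the difficulty lies. You flag as ``the main obstacle'' the semigroup-type bound $\int_0^L K_a^{A,B}(x,z)K_b^{A,B}(z,y)\,dz \le C\,K_{a+b}^{A,B}(x,y)$ and suggest it needs a separate verification against the reflected-image structure of the Robin kernel. This is not an inequality to establish: it is an exact identity with $C=1$. From Lemma~\ref{robinkernel},
\begin{equation*}
K_t^{A,B}(x,y)=\sum_{n\ge 0} e^{-t\kappa_n^2}\psi_n(x)\psi_n(y),
\end{equation*}
with $(\psi_n)$ an orthonormal basis of $L^2([0,L])$, so
\begin{equation*}
\int_0^L K_a^{A,B}(x,z)K_b^{A,B}(z,y)\,dz
=\sum_{n,m} e^{-a\kappa_n^2-b\kappa_m^2}\psi_n(x)\psi_m(y)\int_0^L\psi_n(z)\psi_m(z)\,dz
= K_{a+b}^{A,B}(x,y),
\end{equation*}
by orthonormality. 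The Chapman--Kolmogorov relation therefore holds exactly — $K_t^{A,B}$ is a genuine semigroup, whatever the boundary condition. The only substantive kernel inputs your argument uses are the on-diagonal bound $K_t^{A,B}\le Ct^{-1/2}$ and the Gaussian off-diagonal decay, both already provided by Lemma~\ref{lem:reg}. With those and exact Chapman--Kolmogorov, the chaos bookkeeping closes with no further boundary analysis. So the proof is simpler than you suggest; there is no hidden obstacle. (An alternative, equivalent bookkeeping is to run BDG directly on each Picard iterate to control the $L^p$ norm at every level, which avoids hypercontractivity entirely; both routes give the stated estimate.)
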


 The following lemma states that when the initial data is constant, the negative moments of the open SHE are uniformly bounded. Its proof can be adapted from the arguments in \cite{hu2018asymptotics}, and we omit the details here.

\begin{proposition}\label{p.unbd} Fix any $p \ge 1$. There exists constant $C(T,L,A,B,p)>0$ such that for all $t\le T$ we have $\Ex\left[\mathcal{Z}(\mathbf{1},0;x,t)^{-p}\right] \le C.$
\end{proposition}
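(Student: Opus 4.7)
The plan is to adapt the argument of \cite{hu2018asymptotics}, which established finite negative moments of all orders for the full-space SHE started from constant initial data. The strategy is to prove a Gaussian-type lower-tail estimate
$$\Pr\big(\log \mathcal{Z}(\mathbf{1}, 0; x, t) \le -r\big) \le C e^{-r^2/C}, \qquad r \ge 0,$$
uniformly in $x \in [0, L]$ and $t \in (0, T]$, from which finiteness of $\Ex[\mathcal{Z}^{-p}]$ for every $p\ge 1$ follows by integrating the tail against $p\lambda^{p-1}\,d\lambda$.

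I would first pass to mollified noise $\xi_\e := R_\e \star \xi$, for which the open SHE admits a classical Feynman--Kac representation analogous to the one discussed in Section~\ref{sec:2.1} and in \cite{parekh2022ergodicity}, namely
$$\mathcal{Z}_\e(\mathbf{1}, 0; x, t) = \Ex_W^x\left[e^{-A L_0^W(t) - B L_L^W(t)}\exp\left(\int_0^t \xi_\e(t-s, W_s)\, ds - \tfrac{t}{2} R_\e\star R_\e(0)\right)\right],$$
where $W$ is a reflected Brownian motion on $[0, L]$ and $L_0^W, L_L^W$ are its boundary local times. The boundary-local-time prefactor is independent of the noise and contributes at most a deterministic bounded distortion (uniform in $\e$) when averaged under the Brownian law; it can be handled by a Cauchy--Schwarz or Girsanov argument and plays no essential role in the negative-moment estimate.

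The main step is the Gaussian lower-tail estimate on $\log \mathcal{Z}_\e$. A single-scale application of Jensen's inequality to the Brownian expectation yields a lower bound of the form $G_\e - \tfrac{t}{2} R_\e\star R_\e(0) + O(1)$, where $G_\e$ is a centered Gaussian in $\xi$ of bounded variance but the Ito corrector diverges as $\e \to 0$. Following \cite{hu2018asymptotics}, one instead partitions $[0, t]$ into short subintervals, iterates Jensen's inequality on each scale, and re-sums using the independence of the noise on disjoint time strips. The key observation is that the divergent Ito corrector is exactly absorbed in the re-summation, leaving a Gaussian lower bound for $\log \mathcal{Z}_\e$ with variance $O(1)$ uniformly in $\e$. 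Passing $\e \to 0$ via $L^p$-convergence of $\mathcal{Z}_\e \to \mathcal{Z}$ together with Fatou completes the argument.

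The principal obstacle is precisely the handling of the divergent Ito corrector in the single-scale Jensen bound; the multi-scale decomposition sketched above is the technical heart of \cite{hu2018asymptotics}. Its adaptation to the open-boundary setting is essentially routine, requiring only the heat-kernel regularity from Lemma~\ref{lem:reg} and the positive-moment bounds from Proposition~\ref{p:mombd}, both of which are already available.
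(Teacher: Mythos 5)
The paper does not actually supply a proof of Proposition~\ref{p.unbd}: it states that the argument ``can be adapted from the arguments in \cite{hu2018asymptotics}'' and omits the details. You have taken the same route --- citing the same reference and proposing to adapt its Feynman--Kac/lower-tail strategy --- so in that sense your proposal is aligned with what the paper intends. The reduction from a lower-tail bound on $\log\mathcal{Z}$ to finiteness of negative moments by integrating the tail is standard and correct, and invoking the Feynman--Kac representation from \cite{parekh2022ergodicity} with the boundary local-time prefactor handled separately is the right framework for the open setting.

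However, the pivotal step in your sketch is not a valid argument as stated. You claim that partitioning $[0,t]$ into short subintervals, iterating Jensen on each strip, and ``re-summing'' makes the divergent It\^o corrector $\tfrac{t}{2}(R_\e\star R_\e)(0)$ ``exactly absorbed.'' The It\^o correction is additive across time strips: if you split $[0,t]$ into $N$ blocks of length $\tau$, each block carries a corrector $\tfrac{\tau}{2}(R_\e\star R_\e)(0)$, and summing over blocks reproduces the full $\tfrac{t}{2}(R_\e\star R_\e)(0)$. Iterating Jensen's inequality on each scale does not diminish this term, and the quantity $\mathrm{Var}(G_\e)$ you obtain from the Brownian-averaged Gaussian remains $O(\sqrt{t})$ regardless of how finely you chop time. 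So the naive multi-scale Jensen bound still gives $\log\mathcal{Z}_\e \ge G_\e - \tfrac{t}{2}(R_\e\star R_\e)(0) \to -\infty$. The actual mechanism in the cited reference is more delicate --- it involves localizing the Brownian path measure to a carefully chosen tube whose radius is tied to the mollification scale, so that the conditional variance of the Gaussian functional grows in tandem with the corrector, together with a re-expansion via the Markov property of the polymer (not via Jensen) across time blocks. You should either reproduce that localization argument faithfully, or replace this step with a self-contained one (e.g.\ a Cameron--Martin/concentration argument, or a Markov-property iteration of short-time lower bounds), because the ``re-summation cancels the corrector'' claim, as written, does not hold.
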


If the time parameter is bounded away from zero, we have the following moments of the spatial supremum of the SHE are bounded uniformly.

\begin{proposition}\label{p:negmombd}
Fix any $T, p \ge 1$. There exists a constant $C(T,L,A,B,p) >0$ for all $t\in [T^{-1},T]$ we have
\begin{equation}\label{eq1}
\mathbb{E}\left[\sup_{x,y\in [0,L]}\mathcal{Z}(y,0;x,t)^{-p}\right]+\mathbb{E}\left[\sup_{x,y\in [0,L]}\mathcal{Z}(y,0;x,t)^{p}\right] \leq C.
\end{equation}
\end{proposition}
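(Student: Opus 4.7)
The bound splits naturally into a positive moment estimate for $\sup\mathcal{Z}$ and a negative moment estimate for $\inf\mathcal{Z}$, with the latter the more substantive half.

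\textbf{Positive moments via Kolmogorov continuity.} View $(x,y)\mapsto \mathcal{Z}(y,0;x,t)$ as a random field on $[0,L]^2$. Starting from the mild formulation of the Robin SHE, I would derive joint H\"older moment estimates for the increments by applying Burkholder--Davis--Gundy and iterating exactly as in the proof of Proposition~\ref{p:mombd}, now using the heat-kernel regularity estimates collected in \eqref{eq:hkreg} of Lemma~\ref{lem:reg} in both the $x$ and $y$ variables. Note that the increment in the $y$ variable is handled by observing that $\mathcal{Z}(y_1,0;\cdot,t) - \mathcal{Z}(y_2,0;\cdot,t)$ solves the same linear SHE with initial data $\delta_{y_1}-\delta_{y_2}$, whose deterministic part is $K_t^{A,B}(\cdot,y_1)-K_t^{A,B}(\cdot,y_2)$. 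This yields, for every $p\geq 1$ and some $\beta\in(0,1/2)$,
\[
\mathbb{E}\bigl[\lvert\mathcal{Z}(y_1,0;x_1,t)-\mathcal{Z}(y_2,0;x_2,t)\rvert^p\bigr]^{1/p}\leq C_p\bigl(\lvert x_1-x_2\rvert^\beta+\lvert y_1-y_2\rvert^\beta\bigr),
\]
uniformly in $t\in[T^{-1},T]$. Combined with the pointwise bound of Proposition~\ref{p:mombd}, Kolmogorov's continuity theorem then delivers $\mathbb{E}[\sup_{x,y}\mathcal{Z}(y,0;x,t)^p]\leq C$.

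\textbf{Negative moments via a multiplicative comparison.} Write $m:=\inf_{x,y}\mathcal{Z}(y,0;x,t)$, $M:=\sup_{x,y}\mathcal{Z}(y,0;x,t)$, and let $\bar{\mathcal{Z}}:=L^{-2}\mathcal{Z}(\mathbf{1},0;\mathbf{1},t)$ denote the $L^2$-average. Since $\mathcal{Z}>0$ and $m\leq\bar{\mathcal{Z}}\leq M$, the elementary inequality
\[
m \ \geq\ \bar{\mathcal{Z}}\cdot\tfrac{m}{M}\ \geq\ \bar{\mathcal{Z}}\cdot\exp\!\bigl(-\operatorname{osc}(\log\mathcal{Z})\bigr),
\]
where $\operatorname{osc}(\log\mathcal{Z}):=\log M-\log m$, combined with Cauchy--Schwarz, reduces the task to two independent estimates: (i) $\mathbb{E}[\bar{\mathcal{Z}}^{-2p}]\leq C$ and (ii) $\mathbb{E}\bigl[e^{2p\operatorname{osc}(\log\mathcal{Z})}\bigr]\leq C$. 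For (i), the Cauchy--Schwarz inequality $L^2\leq\int_0^L\mathcal{Z}(\mathbf{1},0;x,t)\,dx\cdot\int_0^L\mathcal{Z}(\mathbf{1},0;x,t)^{-1}\,dx$ gives $\bar{\mathcal{Z}}^{-1}\leq\int_0^L\mathcal{Z}(\mathbf{1},0;x,t)^{-1}\,dx$, and Proposition~\ref{p.unbd} applied pointwise (followed by Jensen in the spatial integral) immediately yields the claim.

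\textbf{The main obstacle.} Estimate (ii) is the hard step. To control exponential moments of the oscillation of $\log\mathcal{Z}$ on $[0,L]^2$, I would establish H\"older-type moment bounds for the log-field by combining the positive-moment increment estimates of Step~1 with the pointwise negative moments of Proposition~\ref{p.unbd}, thereby obtaining polynomial moments of arbitrarily high order of the H\"older seminorm of $\log\mathcal{Z}$. A Garsia--Rodemich--Rumsey chaining argument on the compact domain $[0,L]^2$ then promotes these polynomial bounds into the exponential moments of the global oscillation required in (ii). The principal technical challenge is uniformity up to the boundary of $[0,L]$, where $K_t^{A,B}$ is larger and its regularity is more delicate; controlling the log-field near the Robin boundary is what requires the full strength of \eqref{eq:hkreg} in Lemma~\ref{lem:reg}.
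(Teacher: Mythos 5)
Your positive-moment half essentially matches the paper's: derive increment bounds for $\mathcal{Z}$ via the mild formulation, BDG, and Robin heat-kernel regularity, then apply Kolmogorov continuity. That is precisely what the paper does—its technical content is the increment bound $\mathbb{E}[|\mathcal{Z}(x,t)-\mathcal{Z}(y,t)|^p]\le C|x-y|^{p/2}$, proved by a Parseval expansion of $K_t^{A,B}$—after which it appeals to a "standard chaining argument" for \eqref{eq1}. For positive moments this is routine.

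Your negative-moment route is genuinely different, but it has a real gap at the decisive step. Having reduced to bounding $\mathbb{E}\bigl[e^{2p\,\operatorname{osc}(\log\mathcal{Z})}\bigr]$, you propose to obtain \emph{polynomial} moments of arbitrary order of a H\"older seminorm of $\log\mathcal{Z}$ and then to let Garsia--Rodemich--Rumsey "promote these polynomial bounds into the exponential moments." That implication is false: finiteness of all polynomial moments does not imply finiteness of any exponential moment (a log-normal random variable is the standard counterexample), and GRR does not perform such a promotion. To extract exponential moments of the oscillation from GRR one must use an exponential modulus function $\psi$, which in turn requires exponential integrability of the increments of $\log\mathcal{Z}$ as input—precisely the control you do not have. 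There is also a secondary input you quietly assume: to get even polynomial H\"older bounds on $\log\mathcal{Z}$ via $|\log a-\log b|\le |a-b|/\min(a,b)$ and Cauchy--Schwarz, you need pointwise negative moments $\mathbb{E}[\mathcal{Z}(y,0;x,t)^{-q}]$ for Dirac initial data; Proposition~\ref{p.unbd} only gives negative moments for the $\mathbf{1}$-initial-data solution $\mathcal{Z}(\mathbf{1},0;x,t)$ and does not directly supply this. The paper's "standard chaining" deferral implicitly points at the usual SHE small-ball argument (union bound over a grid plus pointwise negative moments plus modulus-of-continuity control), which sidesteps the need for exponential moments of $\log\mathcal{Z}$; your argument as written cannot close at step~(ii).
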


\begin{proof} Let us write $\mathcal{Z}(x,t)=\mathcal{Z}(0,0;x,t)$ for simplicity. We claim that there exists a constant $C' = C'(T,L,A,B,p)>0$ such that
\begin{equation}\label{momdiff}
\sup_{t \in [T^{-1},T],\;x\neq y \in [0,L]} \frac{\mathbb{E} \left[\vert \mathcal{Z}(x,t)-\mathcal{Z}(y,t)\vert^p\right]}{\vert x-y \vert^{p/2}} \leq C'.
\end{equation}
Given the pointwise moment bounds and regularity bounds, a standard chaining argument leads to the uniform bound in \eqref{eq1}. We thus focus on proving \eqref{momdiff}. Towards this end, we make use of the formula
\begin{equation*}
\mathcal{Z}(x,t)= K_t(x,0)+\int_0^t\int_0^LK_{t-s}(x,y)\mathcal{Z}(y,s)\xi(s,y)dyds,
\end{equation*}
Using the inequality $|a+b|^p \le C |a|^p+|b|^p$ we have 
\begin{align}\nonumber
    \mathbb{E}\bigg[|\mathcal{Z}(x,t)-\mathcal{Z}(y,t)|^p\bigg] & \leq C \vert K_t(x,0) - K_t(y,0) \vert^p \\ &   \hspace{-1cm}+ C\Ex \bigg[\bigg|\int_0^t\int_0^L (K_{t-s}(x,w)-K_{t-s}(y,w))\mathcal{Z}(w,s)\xi(s,w)dwds\bigg|^p\bigg]. \label{2term}
\end{align}
The first term can be controlled by regularity estimate of the heat kernel from Lemma \ref{lem:reg} (\eqref{eq:hkreg} specifically). For the second term, by Burkholder-Davis-Gundy inequality we have
\begin{align}
  \eqref{2term}  & \le   C\bigg(\int_0^t \int_0^L (K_{t-s}(x,w)-K_{t-s}(y,w))^2 \mathbb{E}\left[|\mathcal{Z}(w,s)|^2\right] dwds\bigg)^{\frac{p}2}. \label{3term}
\end{align}
 It is thus suffices to bound the term in \eqref{3term}. Towards this end, we split the $s$ integral into two parts: $[0,T^{-1}/2]$ and $[T^{-1}/2,T]$.
In the range $s\in [0,T^{-1}/2]$, we use the bounds  from Proposition \ref{p:mombd} and the bounds from Lemma \ref{lem:reg} to get
\begin{align*}
   \mathbb{E}\left[|\mathcal{Z}(w,s)|^2\right]  \le Ct^{-1}e^{-b|w|/\sqrt{t}}.
\end{align*}
 Using this bound we get
\begin{align*}
     & \int_0^{T^{-1}/2} \int_0^L (K_{t-s}(x,w)-K_{t-s}(y,w))^2 \mathbb{E}\left[|\mathcal{Z}(w,s)|^2\right] dwds \\ & \le \int_0^{T^{-1}/2} \int_0^L (K_{t-s}(x,w)-K_{t-s}(y,w))^2 s^{-1/2}K_s^{A,B}(w,0) dwds \\ & \le \int_0^{T^{-1}/2} \int_0^L C(t-s)^{-2}(x-y)^2 s^{-1}e^{-b|w|/\sqrt{s}} dwds \\ & \le C(x-y)^2\int_0^{T^{-1}/2}\int_0^k s^{-1}e^{-b|w|/\sqrt{s}} dwds \le C(x-y)^2.
\end{align*}
In the range $s\in [T^{-1}/2,T]$ we note that $ \mathbb{E}\left[|\mathcal{Z}(w,s)|^2\right]  \le C$ for smooth and white noise and hence
\begin{align*}
   & \int_{T^{-1}/2}^t \int_0^L (K_{t-s}(x,w)-K_{t-s}(y,w))^2 \mathbb{E}\left[|\mathcal{Z}(w,s)|^2\right] dwds \\ & \hspace{5cm} \le  \int_{T^{-1}/2}^t \int_0^L (K_{t-s}(x,w)-K_{t-s}(y,w))^2 dw ds.
\end{align*}
We claim that
\begin{align}
    \label{eq:toshow}
    \int_0^t \int_0^L (K_{t-s}(x,z)-K_{t-s}(y,z))^2 dzds \le C \vert x-y\vert.
\end{align}
Plugging this bound above we get the desired estimate. We thus focus on proving \eqref{eq:toshow}. To this end we recall the expression \eqref{Robin} for the heat kernel.
We have by Parseval identity that
\[
\int_0^L \left(K_{t-s}(x,z) - K_{t-s}(y,z)\right)^2 dz = \sum_{n=1}^\infty (\psi_n(x)-\psi_n(y))^2 e^{2\lambda_n(t-s)}.
\]
From the explicit expressions of $\psi_n$ we obtain
\begin{align*}
\int_0^L (K_{t-s}(x,z)-K_{t-s}(y,z))^2 dz & = \sum_{n=1}^\infty a_n^2 \left(1+\frac{A}{\kappa_n}\right)^2 \min\bigg\{\kappa_n^2 \vert x-y\vert^2,4\bigg\} e^{2 \lambda_n (t-s)}\\
& \le C \sum_{n=1}^\infty \min\bigg\{\kappa_n^2 \vert x-y\vert^2,4\bigg\} e^{2 \lambda_n (t-s)}.
\end{align*}
Thus
\begin{align*}
\int_0^t \int_0^L \left(K_{t-s}(x,z) - K_{t-s}(y,z)\right)^2 dz ds &\le C \sum_{n=1}^\infty \min\bigg\{\kappa_n^2 \vert x-y\vert^2,4\bigg\} \int_0^t e^{2 \lambda_n (t-s)} ds\nonumber\\
&\le C  \sum_{n=1}^\infty \min\bigg\{\kappa_n^2 \vert x-y\vert^2,4\bigg\} \frac{1}{\vert \lambda_n \vert}\nonumber\\
&= C\sum_{n=1}^\infty \min \bigg\{ \vert x-y\vert^2,\frac{4}{\kappa_n^2}\bigg\}.
\end{align*}
Splitting this sum we obtain
\begin{align*}
\sum_{n=1}^\infty \min \bigg\{ \vert x-y\vert^2,\frac{4}{\kappa_n^2}\bigg\} &= \bigg(\sum_{n : \kappa_n > \frac{2}{\vert x-y\vert}} \frac{4}{\kappa_n^2}\bigg ) + \vert x-y\vert^2 \#\bigg\{n\in \mathbb{N}^* : \kappa_n \leq \frac{2}{\vert x-y\vert}\bigg\}\\
&\le C \sum_{n : \pi n - \frac{\pi}{2} > \frac{2}{\vert x-y\vert}} \frac{4}{\pi^2}\frac{1}{\left(n-1\right)^2} + \vert x-y\vert ^2 \# \bigg\{ n\in \mathbb{N}^*: \pi(n-1) \leq \frac{2}{\vert x-y\vert}\bigg\}\\
& \le C \vert x-y\vert.
\end{align*}
This proves \eqref{eq:toshow}.
\end{proof}

    \bibliographystyle{alpha}		
	\bibliography{openkpz}

\begin{thebibliography}{DNKLDT20}

\bibitem[AS68]{abramowitz1968handbook}
Milton Abramowitz and Irene~A Stegun.
\newblock {\em Handbook of mathematical functions with formulas, graphs, and
  mathematical tables}, volume~55.
\newblock US Government printing office, 1968.

\bibitem[BBC16]{bbc}
Alexei Borodin, Alexey Bufetov, and Ivan Corwin.
\newblock Directed random polymers via nested contour integrals.
\newblock {\em Ann. Phys.}, 368:191--247, 2016.

\bibitem[BBC20]{bbc20}
Guillaume Barraquand, Alexei Borodin, and Ivan Corwin.
\newblock Half-space {M}acdonald processes.
\newblock {\em Forum Math. Pi}, 8, 2020.

\bibitem[BBCS18a]{bbcs0}
Jinho Baik, Guillaume Barraquand, Ivan Corwin, and Toufic Suidan.
\newblock Facilitated exclusion process.
\newblock In {\em The Abel Symposium}, pages 1--35. Springer, 2018.

\bibitem[BBCS18b]{bbcs}
Jinho Baik, Guillaume Barraquand, Ivan Corwin, and Toufic Suidan.
\newblock {P}faffian {S}chur processes and last passage percolation in a
  half-quadrant.
\newblock {\em Ann. Probab.}, 46(6):3015--3089, 2018.

\bibitem[BCD24]{bcd}
Guillaume Barraquand, Ivan Corwin, and Sayan Das.
\newblock {KPZ exponents for the half-space log-gamma polymer}.
\newblock {\em Probab. Theory Relat. Fields}, pages 1--131, 2024.

\bibitem[BCY24]{zong}
Guillaume Barraquand, Ivan Corwin, and Zongrui Yang.
\newblock Stationary measures for integrable polymers on a strip.
\newblock {\em Invent. Math.}, 237(3):1567--1641, 2024.

\bibitem[BKLD20]{bkld2}
Guillaume Barraquand, Alexandre Krajenbrink, and Pierre Le~Doussal.
\newblock Half-space stationary {K}ardar--{P}arisi--{Z}hang equation.
\newblock {\em J. Stat. Phys.}, 181(4):1149--1203, 2020.

\bibitem[BKLD22]{bkld}
Guillaume Barraquand, Alexandre Krajenbrink, and Pierre Le~Doussal.
\newblock Half-space stationary {K}ardar--{P}arisi--{Z}hang equation beyond the
  {B}rownian case.
\newblock {\em J. Phys. A: Math. and Theor.}, 55(27), 2022.

\bibitem[BKWW23]{bkww}
W{\l}odek Bryc, Alexey Kuznetsov, Yizao Wang, and Jacek Weso{\l}owski.
\newblock {Markov processes related to the stationary measure for the open KPZ
  equation}.
\newblock {\em Probab. Theory Relat. Fields}, 185(1):353--389, 2023.

\bibitem[BL16]{tasep1}
Jinho Baik and Zhipeng Liu.
\newblock {TASEP on a ring in sub-relaxation time scale}.
\newblock {\em Journal of Statistical Physics}, 165:1051--1085, 2016.

\bibitem[BL18]{tasep2}
Jinho Baik and Zhipeng Liu.
\newblock {Fluctuations of TASEP on a ring in relaxation time scale}.
\newblock {\em Communications on Pure and Applied Mathematics}, 71(4):747--813,
  2018.

\bibitem[BL19]{tasep3}
Jinho Baik and Zhipeng Liu.
\newblock {Multipoint distribution of periodic TASEP}.
\newblock {\em Journal of the American Mathematical Society}, 32(3):609--674,
  2019.

\bibitem[BL21]{tasep4}
Jinho Baik and Zhipeng Liu.
\newblock {Periodic TASEP with general initial conditions}.
\newblock {\em Probability Theory and Related Fields}, 179:1047--1144, 2021.

\bibitem[BLD21]{bld1}
Guillaume Barraquand and Pierre Le~Doussal.
\newblock {K}ardar--{P}arisi--{Z}hang equation in a half space with flat
  initial condition and the unbinding of a directed polymer from an attractive
  wall.
\newblock {\em Phys. Rev. E}, 104(2):024502, 2021.

\bibitem[BLD22]{bld22}
Guillaume Barraquand and Pierre Le~Doussal.
\newblock {Steady state of the KPZ equation on an interval and Liouville
  quantum mechanics}.
\newblock {\em Europhysics Letters}, 137(6):61003, 2022.

\bibitem[BLD23]{bld01}
Guillaume Barraquand and Pierre Le~Doussal.
\newblock {Stationary measures of the KPZ equation on an interval from
  Enaud--Derrida’s matrix product ansatz representation}.
\newblock {\em J. Phys. A: Math. Theor.}, 56(14):144003, 2023.

\bibitem[BLD25]{barraquand2025large}
Guillaume Barraquand and Pierre Le~Doussal.
\newblock {Large time cumulants of the KPZ equation on an interval}.
\newblock {\em arXiv preprint arXiv:2504.18292}, 2025.

\bibitem[BLS22]{tasep5}
Jinho Baik, Zhipeng Liu, and Guilherme~LF Silva.
\newblock Limiting one-point distribution of periodic tasep.
\newblock In {\em Annales de l'Institut Henri Poincare (B) Probabilites et
  statistiques}, volume~58, pages 248--302. Institut Henri Poincar{\'e}, 2022.

\bibitem[BR01a]{br1}
Jinho Baik and Eric~M Rains.
\newblock Algebraic aspects of increasing subsequences.
\newblock {\em Duke Math. J.}, 109(1):1--65, 2001.

\bibitem[BR01b]{br01}
Jinho Baik and Eric~M Rains.
\newblock The asymptotics of monotone subsequences of involutions.
\newblock {\em Duke Mathematical Journal}, 109(2):205--281, 2001.

\bibitem[BR01c]{br3}
Jinho Baik and Eric~M Rains.
\newblock Symmetrized random permutations.
\newblock In {\em Random matrix models and their applications}, volume~40 of
  {\em Math. Sci. Res. Inst. Publ.}, pages 1--19, 2001.

\bibitem[BW22]{bw22}
Guillaume Barraquand and Shouda Wang.
\newblock An identity in distribution between full-space and half-space
  log-gamma polymers.
\newblock {\em Int. Math. Res. Not.}, rnac132, 2022.

\bibitem[CD13]{chatterjee2013central}
Sourav Chatterjee and Partha~S Dey.
\newblock Central limit theorem for first-passage percolation time across thin
  cylinders.
\newblock {\em Probability Theory and Related Fields}, 156(3):613--663, 2013.

\bibitem[CGH21]{cgh}
Ivan Corwin, Promit Ghosal, and Alan Hammond.
\newblock {KPZ equation correlations in time}.
\newblock {\em Ann. Probab.}, 49(2):832 -- 876, 2021.

\bibitem[CH16]{kpzle}
Ivan Corwin and Alan Hammond.
\newblock {KPZ} line ensemble.
\newblock {\em Probability Theory and Related Fields}, 166:67--185, 2016.

\bibitem[CH19]{Ch19}
Le~Chen and Jingyu Huang.
\newblock Comparison principle for stochastic heat equation on $\mathbb{R}^d$.
\newblock {\em The Annals of Probability}, 47(2):989--1035, 2019.

\bibitem[CJM86]{carslow1986conduction}
HS~Carslow, John~Conrad Jaeger, and JE~Morral.
\newblock Conduction of heat in solids.
\newblock {\em Journal of Engineering Materials and Technology},
  108(4):378--378, 1986.

\bibitem[CK19]{ck19}
Le~Chen and Kunwoo Kim.
\newblock Nonlinear stochastic heat equation driven by spatially colored noise:
  moments and intermittency.
\newblock {\em Acta Mathematica Scientia}, 39(3):645--668, 2019.

\bibitem[CK24]{ck}
Ivan Corwin and Alisa Knizel.
\newblock {Stationary measure for the open KPZ equation}.
\newblock {\em Commun. Pure Appl. Math.}, 77(4):2183--2267, 2024.

\bibitem[CKNP21]{spa}
Le~Chen, Davar Khoshnevisan, David Nualart, and Fei Pu.
\newblock {Spatial ergodicity for SPDEs via Poincar{\'e}-type inequalities}.
\newblock {\em Electronic Journal of Probability}, 26:1--37, 2021.

\bibitem[CKNP23]{mal}
Le~Chen, Davar Khoshnevisan, David Nualart, and Fei Pu.
\newblock {Central limit theorems for spatial averages of the stochastic heat
  equation via Malliavin--Stein’s method}.
\newblock {\em Stochastics and Partial Differential Equations: Analysis and
  Computations}, 11(1):122--176, 2023.

\bibitem[Cor12]{corwin2012kardar}
Ivan Corwin.
\newblock The {K}ardar{-P}arisi{-Z}hang equation and universality class.
\newblock {\em Random Matrices: Theory Appl.}, 1(01):1130001, 2012.

\bibitem[Cor22]{cor22}
Ivan Corwin.
\newblock {Some recent progress on the stationary measure for the open KPZ
  equation}.
\newblock {\em Toeplitz Operators and Random Matrices: In Memory of Harold
  Widom}, pages 321--360, 2022.

\bibitem[CS18]{cs18}
Ivan Corwin and Hao Shen.
\newblock Open {ASEP} in the weakly asymmetric regime.
\newblock {\em Commun. Pure Appl. Math.}, 71(10):2065--2128, 2018.

\bibitem[CSZ23]{CSZ17}
Francesco Caravenna, Rongfeng Sun, and Nikos Zygouras.
\newblock The critical 2d stochastic heat flow.
\newblock {\em Inventiones mathematicae}, 233(1):325--460, 2023.

\bibitem[DEM93]{derrida1993exact}
Bernard Derrida, MR~Evans, and David Mukamel.
\newblock Exact diffusion constant for one-dimensional asymmetric exclusion
  models.
\newblock {\em Journal of Physics A: Mathematical and General}, 26(19):4911,
  1993.

\bibitem[DG22]{DG22}
Alexander Dunlap and Yu~Gu.
\newblock {A forward-backward SDE from the 2D nonlinear stochastic heat
  equation}.
\newblock {\em The Annals of Probability}, 50(3):1204--1253, 2022.

\bibitem[DG23]{dg21}
Sayan Das and Promit Ghosal.
\newblock Law of iterated logarithms and fractal properties of the {KPZ}
  equation.
\newblock {\em Ann. Probab.}, 51(3):930--986, 2023.

\bibitem[DGK23]{yu2}
Alexander Dunlap, Yu~Gu, and Tomasz Komorowski.
\newblock {Fluctuation exponents of the KPZ equation on a large torus}.
\newblock {\em Communications on Pure and Applied Mathematics},
  76(11):3104--3149, 2023.

\bibitem[DIM77]{durrett1977weak}
Richard~T Durrett, Donald~L Iglehart, and Douglas~R Miller.
\newblock {Weak convergence to Brownian meander and Brownian excursion}.
\newblock {\em The Annals of Probability}, pages 117--129, 1977.

\bibitem[DJP18]{dey2018longest}
Partha Dey, Mathew Joseph, and Ron Peled.
\newblock Longest increasing path within the critical strip.
\newblock {\em arXiv preprint arXiv:1808.08407}, 2018.

\bibitem[DNKLDT20]{de}
Jacopo De~Nardis, Alexandre Krajenbrink, Pierre Le~Doussal, and Thimoth{\'e}e
  Thiery.
\newblock Delta-{B}ose gas on a half-line and the {K}ardar--{P}arisi--{Z}hang
  equation: boundary bound states and unbinding transitions.
\newblock {\em J. Stat. Mech.}, 2020(4):043207, 2020.

\bibitem[DS25]{ds25}
Sayan Das and Christian Serio.
\newblock {The half-space KPZ line ensemble and its scaling limit}.
\newblock {\em arXiv preprint arXiv:2506.07939}, 2025.

\bibitem[DY25]{halfairy}
Evgeni Dimitrov and Zongrui Yang.
\newblock Half-space {Airy} line ensembles.
\newblock {\em arXiv preprint arXiv:2505.01798}, 2025.

\bibitem[DZ24a]{dz23}
Sayan Das and Weitao Zhu.
\newblock The half-space log-gamma polymer in the bound phase.
\newblock {\em Commun. Math. Phys.}, 405(8):184, 2024.

\bibitem[DZ24b]{dz22}
Sayan Das and Weitao Zhu.
\newblock Localization of the continuum directed random polymer.
\newblock {\em Probab. Theory Relat. Fields}, pages 1--77, 2024.

\bibitem[DZ24c]{dz22b}
Sayan Das and Weitao Zhu.
\newblock Short-and long-time path tightness of the continuum directed random
  polymer.
\newblock {\em Ann. Inst. H. Poincar\'{e} Probab. Statist.}, 60(1):343--372,
  2024.

\bibitem[Gan22]{gan21}
Shirshendu Ganguly.
\newblock {Random metric geometries on the plane and Kardar-Parisi-Zhang
  universality}.
\newblock {\em Notices of the American Mathematical Society}, 69(1), 2022.

\bibitem[GHL24]{GHL23b}
Luca Gerolla, Martin Hairer, and Xue-Mei Li.
\newblock {Scaling limit of the KPZ equation with non-integrable spatial
  correlations}.
\newblock {\em arXiv preprint arXiv:2407.13215}, 2024.

\bibitem[GHL25]{GHL23a}
Luca Gerolla, Martin Hairer, and Xue-Mei Li.
\newblock {Fluctuations of stochastic PDEs with long-range correlations}.
\newblock {\em The Annals of Applied Probability}, 35(2):1198--1232, 2025.

\bibitem[Gin24]{vic}
Victor Ginsburg.
\newblock Pinning, diffusive fluctuations, and {Gaussian} limits for half-space
  directed polymer models.
\newblock {\em Electron. J. Probab.}, 29:1--34, 2024.

\bibitem[GLD12]{gld}
Thomas Gueudr{\'e} and Pierre Le~Doussal.
\newblock Directed polymer near a hard wall and {KPZ} equation in the
  half-space.
\newblock {\em EPL}, 100(2):26006, 2012.

\bibitem[GT24]{tao}
Yu~Gu and Ran Tao.
\newblock {Fluctuation exponents of the half-space KPZ at stationarity}.
\newblock {\em arXiv preprint arXiv:2410.01653}, 2024.

\bibitem[GY23]{gy23}
Promit Ghosal and Jaeyun Yi.
\newblock {Fractal geometry of the PAM in 2D and 3D with white noise
  potential}.
\newblock {\em arXiv preprint arXiv:2303.16063}, 2023.

\bibitem[He24]{he2}
Jimmy He.
\newblock {Boundary current fluctuations for the half-space ASEP and six-vertex
  model}.
\newblock {\em Proc. London Math. Soc.}, 128(2):e12585, 2024.

\bibitem[He25]{he1}
Jimmy He.
\newblock Shift invariance of half space integrable models.
\newblock {\em Probab. Theory Relat. Fields}, pages 1--71, 2025.

\bibitem[HHT15]{hht15}
Timothy Halpin-Healy and Kazumasa~A Takeuchi.
\newblock {A KPZ cocktail-shaken, not stirred... toasting 30 years of
  kinetically roughened surfaces}.
\newblock {\em Journal of Statistical Physics}, 160:794--814, 2015.

\bibitem[Him24]{zoe}
Zoe Himwich.
\newblock {Stationary Measure of the Open KPZ Equation through the
  Enaud-Derrida Representation}.
\newblock {\em arXiv preprint arXiv:2404.13444}, 2024.

\bibitem[HL18]{hu2018asymptotics}
Yaozhong Hu and Khoa L{\^e}.
\newblock Asymptotics of the density of parabolic anderson random fields.
\newblock {\em arXiv preprint arXiv:1801.03386}, 2018.

\bibitem[HY04]{hy}
Yuu Hariya and Marc Yor.
\newblock Limiting distributions associated with moments of exponential
  brownian functionals.
\newblock {\em Studia Scientiarum Mathematicarum Hungarica}, 41(2):193--242,
  2004.

\bibitem[IMS22]{ims22}
Takashi Imamura, Matteo Mucciconi, and Tomohiro Sasamoto.
\newblock Solvable models in the {KPZ} class: approach through periodic and
  free boundary schur measures.
\newblock {\em arXiv preprint arXiv:2204.08420}, 2022.

\bibitem[IT18]{ito}
Yasufumi Ito and Kazumasa~A Takeuchi.
\newblock When fast and slow interfaces grow together: connection to the
  half-space problem of the {K}ardar--{P}arisi--{Z}hang class.
\newblock {\em Phys. Rev. E}, 97(4):040103, 2018.

\bibitem[KLD18]{kr}
Alexandre Krajenbrink and Pierre Le~Doussal.
\newblock Large fluctuations of the {KPZ} equation in a half-space.
\newblock {\em SciPost Physics}, 5(4):032, 2018.

\bibitem[KM22]{km22}
Alisa Knizel and Konstantin Matetski.
\newblock {The strong Feller property of the open KPZ equation}.
\newblock {\em arXiv preprint arXiv:2211.04466}, 2022.

\bibitem[KN24]{KN24}
Sefika Kuzgun and David Nualart.
\newblock Convergence of densities of spatial averages of the parabolic
  anderson model driven by colored noise.
\newblock {\em Stochastics}, 96(2):968--984, 2024.

\bibitem[KPZ86]{kpz}
Mehran Kardar, Giorgio Parisi, and Yi-Cheng Zhang.
\newblock Dynamic scaling of growing interfaces.
\newblock {\em Phys. Rev. Lett.}, 56(9):889, 1986.

\bibitem[Liu18]{tasep6}
Zhipeng Liu.
\newblock {Height fluctuations of stationary TASEP on a ring in relaxation time
  scale}.
\newblock In {\em Annales de l’Institut Henri Poincar{\'e}-Probabilit{\'e}s
  et Statistiques}, volume~54, pages 1031--1057, 2018.

\bibitem[Par19]{parekh2019kpz}
Shalin Parekh.
\newblock {The KPZ limit of ASEP with boundary}.
\newblock {\em Communications in Mathematical Physics}, 365:569--649, 2019.

\bibitem[Par22]{parekh2022ergodicity}
Shalin Parekh.
\newblock {Ergodicity results for the open KPZ equation}.
\newblock {\em arXiv preprint arXiv:2212.08248}, 2022.

\bibitem[Pro16]{tasep0}
Sylvain Prolhac.
\newblock Finite-time fluctuations for the totally asymmetric exclusion
  process.
\newblock {\em Physical review letters}, 116(9):090601, 2016.

\bibitem[QS15]{qs15}
Jeremy Quastel and Herbert Spohn.
\newblock {The one-dimensional KPZ equation and its universality class}.
\newblock {\em Journal of Statistical Physics}, 160:965--984, 2015.

\bibitem[SI04]{sis}
Tomohiro Sasamoto and Takashi Imamura.
\newblock Fluctuations of the one-dimensional polynuclear growth model in
  half-space.
\newblock {\em J. Stat. Phys.}, 115(3):749--803, 2004.

\bibitem[Tao24]{Tao24}
Ran Tao.
\newblock Gaussian fluctuations of a nonlinear stochastic heat equation in
  dimension two.
\newblock {\em Stochastics and Partial Differential Equations: Analysis and
  Computations}, 12(1):220--246, 2024.

\bibitem[Wu23a]{wu1}
Xuan Wu.
\newblock Convergence of the {KPZ} line ensemble.
\newblock {\em Int. Math. Res. Not.}, 2023(22):18901--18957, 2023.

\bibitem[Wu23b]{wu2}
Xuan Wu.
\newblock The {KPZ} equation and the directed landscape.
\newblock {\em arXiv:2301.00547}, 2023.

\end{thebibliography}

\end{document}